\tikzset{string/.style={ultra thick}}
\tikzset{smallstring/.style={thick,scale=0.75,every node/.style={transform shape}}}
\tikzset{
    triple/.style args={[#1] in [#2] in [#3]}{
        #1,preaction={preaction={draw,#3},draw,#2}
    }
}
\tikzset{
    quadruple/.style args={[#1] in [#2] in [#3] in [#4]}{
        #1,preaction={preaction={preaction={draw,#4},draw,#3}, draw,#2}
    }
} 
\tikzset{
	super thick/.style={line width=3pt},
	more thick/.style={line width=1pt},
}
\definecolor{dark-red}{rgb}{0.7,0.25,0.25}
\definecolor{dark-blue}{rgb}{0.15,0.15,0.55}
\definecolor{medium-blue}{rgb}{0,0,0.65}
\definecolor{DarkGreen}{RGB}{0,150,0}
\newcommand{\googlebooks}[1]{(preview at \href{https://books.google.com/books?id=#1}{google books})}
\newcommand{\numdam}[1]{}
\theoremstyle{plain}
\newtheorem{prop}{Proposition}[section]
\newtheorem{thm}[prop]{Theorem}
\newtheorem{lem}[prop]{Lemma}
\newtheorem{cor}[prop]{Corollary}
\newtheorem*{thm*}{Theorem}
\numberwithin{equation}{section}
\theoremstyle{remark}
\newtheorem{example}[prop]{Example}
\newtheorem{remark}[prop]{Remark}
\newtheorem{warning}[prop]{Warning}
\theoremstyle{definition}
\newtheorem{defn}[prop]{Definition}         
\newtheorem{nota}[prop]{Notation}
\newcommand{\sslash}{\mathbin{/\mkern-6mu/}}
\DeclareMathOperator{\ev}{ev}
\DeclareMathOperator{\coev}{coev}
\DeclareMathOperator{\mate}{mate}
\DeclareMathOperator{\op}{op}
\newcommand{\dslash}[1]{_{{\sslash}#1}}
\newcommand{\id}{\mathbf{1}}
\newcommand{\sVec}{{\mathsf {sVec}}}
\renewcommand{\Vec}{{\mathsf {Vec}}}
\newcommand{\Fun}{{\mathsf {Fun}}}
\newcommand{\Set}{{\mathsf {Set}}}
\def\semicolon{;}
\def\applytolist#1{
    \expandafter\def\csname multi#1\endcsname##1{
        \def\multiack{##1}\ifx\multiack\semicolon
            \def\next{\relax}
        \else
            \csname #1\endcsname{##1}
            \def\next{\csname multi#1\endcsname}
        \fi
        \next}
    \csname multi#1\endcsname}
\def\calc#1{\expandafter\def\csname c#1\endcsname{{\mathcal #1}}}
\def\bbc#1{\expandafter\def\csname bb#1\endcsname{{\mathbb #1}}}
\def\bfc#1{\expandafter\def\csname bf#1\endcsname{{\mathbf #1}}}
\newlength{\L@UnitsRaiseDisplaystyle}
\newlength{\L@UnitsRaiseTextstyle}
\newlength{\L@UnitsRaiseScriptstyle}
\DeclareRobustCommand*{\@UnitsNiceFrac}[3][]{%
  \ifthenelse{\boolean{mmode}}{%
    \settoheight{\L@UnitsRaiseDisplaystyle}{%
      \ensuremath{\displaystyle#1{M}}%
    }%
    \settoheight{\L@UnitsRaiseTextstyle}{%
      \ensuremath{\textstyle#1{M}}%
    }%
    \settoheight{\L@UnitsRaiseScriptstyle}{%
      \ensuremath{\scriptstyle#1{M}}%
    }%
    \settoheight{\@tempdima}{%
      \ensuremath{\scriptscriptstyle#1{M}}%
    }%
    \addtolength{\L@UnitsRaiseDisplaystyle}{%
      -\L@UnitsRaiseScriptstyle%
    }%
    \addtolength{\L@UnitsRaiseTextstyle}{%
      -\L@UnitsRaiseScriptstyle%
    }%
    \addtolength{\L@UnitsRaiseScriptstyle}{-\@tempdima}%
    \mathchoice
      {%
        \raisebox{\L@UnitsRaiseDisplaystyle}{%
          \ensuremath{\scriptstyle#1{#2}}%
        }%
      }%
      {%
        \raisebox{\L@UnitsRaiseTextstyle}{%
          \ensuremath{\scriptstyle#1{#2}}%
        }%
      }%
      {%
        \raisebox{\L@UnitsRaiseScriptstyle}{%
          \ensuremath{\scriptscriptstyle#1{#2}}%
        }%
      }%
      {%
        \raisebox{\L@UnitsRaiseScriptstyle}{%
          \ensuremath{\scriptscriptstyle#1{#2}}%
        }%
      }%
    \mkern-2mu{\sslash}\mkern-1mu%
    \bgroup
      \mathchoice
        {\scriptstyle}%
        {\scriptstyle}%
        {\scriptscriptstyle}%
        {\scriptscriptstyle}%
      #1{#3}%
    \egroup
  }%
  {%
    \settoheight{\L@UnitsRaiseTextstyle}{#1{M}}%
    \settoheight{\@tempdima}{%
      \ensuremath{%
        \mbox{\fontsize\sf@size\z@\selectfont#1{M}}%
      }%
    }%
    \addtolength{\L@UnitsRaiseTextstyle}{-\@tempdima}%
    \raisebox{\L@UnitsRaiseTextstyle}{%
      \ensuremath{%
        \mbox{\fontsize\sf@size\z@\selectfont#1{#2}}%
      }%
    }%
    \ensuremath{\mkern-2mu}{\sslash}\ensuremath{\mkern-1mu}%
    \ensuremath{%
      \mbox{\fontsize\sf@size\z@\selectfont#1{#3}}%
    }%
  }%
}
\DeclareRobustCommand*{\nicefrac}{\@UnitsNiceFrac}%
\begin{document}

\title{Completion for braided enriched monoidal categories}
\author{Scott Morrison, David Penneys, and Julia Plavnik}
\date{\today}
\begin{abstract}
Monoidal categories enriched in a braided monoidal category $\mathcal{V}$ are classified by braided oplax monoidal functors from $\mathcal{V}$ to the Drinfeld centers of ordinary monoidal categories.
In this article, we prove that this classifying functor is strongly monoidal if and only if the original $\mathcal{V}$-monoidal category is tensored over $\mathcal{V}$.
We then define a completion operation which produces a tensored $\cV$-monoidal category $\overline{\mathcal{C}}$ from an arbitrary $\mathcal{V}$-monoidal category $\mathcal{C}$, and we determine many equivalent conditions which imply $\mathcal{C}$ and $\overline{\mathcal{C}}$ are $\mathcal{V}$-monoidally equivalent.

Since being tensored is a property of the underlying $\mathcal{V}$-category of a $\mathcal{V}$-monoidal category, we begin by studying the equivalence between (tensored) $\mathcal{V}$-categories and oplax (strong) $\mathcal{V}$-module categories respectively.
We then define the completion operation for $\mathcal{V}$-categories, and adapt these results to the $\mathcal{V}$-monoidal setting.

\end{abstract}
\maketitle

\section{Introduction}

In \cite{1701.00567}, the first two authors studied the notion of a monoidal category enriched in a braided monoidal category $\cV$.
While the notion of a monoidal category enriched in a symmetric closed monoidal category has been extensively studied in the enriched category theory literature (e.g. \cite{MR749468,MR2177301,MR2219705,MR3775482}), 
the fact that the base for enrichment may be a braided monoidal category $\cV$ that is not symmetric has not been extensively explored. We note that Remark 5.2 of \cite{MR1250465} foreshadows this development, and \cite{MR2927355} studies an even more general setting,
of a category enriched in a duoidal category, which specialises to the present one when the two tensor products agree.
In \cite{1701.00567}, we proved a classification result for $\cV$-monoidal categories, which we improve slightly in \S\ref{sec:ClassificationOfClosedVMonoidalCategories} below to obtain

\begin{thm}
\label{thm:ClosedOplaxVMonoidal}
Let $\cV$ be a braided monoidal category.
There is a bijective correspondence
\[
\left\{\,
\parbox{7cm}{\rm Closed $\cV$-monoidal categories $\cC$ such that $\cC^\cV(1_\cC\to -)$ admits a left adjoint}\,\right\}
\,\,\cong\,\,
\left\{\,\parbox{8.3cm}{\rm Pairs $(\cT,\cF^{\scriptscriptstyle Z})$ with $\cT$ a closed monoidal category and $\cF^{\scriptscriptstyle Z}: \cV\to Z(\cT)$ braided oplax monoidal, such that $\cF:=\cF^{\scriptscriptstyle Z}\circ R$ admits a right adjoint}\,\right\}.
\]
\end{thm}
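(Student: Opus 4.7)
My plan is to obtain Theorem~\ref{thm:ClosedOplaxVMonoidal} by refining the main classification theorem of \cite{1701.00567}. That earlier result establishes a bijection between $\cV$-monoidal categories $\cC$ and pairs $(\cT, \cF^Z)$ with $\cT$ an ordinary monoidal category and $\cF^Z \colon \cV \to Z(\cT)$ a braided oplax monoidal functor. The task here is to show that adding the word ``closed'' on both sides, together with the corresponding adjointness conditions, corresponds correctly under this bijection.

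First I would recall the explicit form of the bijection from \cite{1701.00567}. Going from $\cC$ to $(\cT, \cF^Z)$, the underlying monoidal category $\cT$ has the same objects as $\cC$, with $\Hom_\cT(c,d) := \cV(1_\cV, \cC^\cV(c,d))$, while $\cF^Z$ encodes how the $\cV$-action interacts with the half-braiding structure on $1_\cC$. Composing with the forgetful $R \colon Z(\cT) \to \cT$ yields $\cF \colon \cV \to \cT$, and by construction there is a natural isomorphism $\Hom_\cT(\cF(v), c) \cong \cV(v, \cC^\cV(1_\cC, c))$. From this it is immediate that ``$\cF$ admits a right adjoint'' and ``$\cC^\cV(1_\cC \to -)$ admits a left adjoint'' are equivalent, both amounting to the assertion that this candidate adjoint pair actually exists.

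Next I would verify that the closedness conditions match. If $\cC$ is closed as a $\cV$-monoidal category, then applying $\cV(1_\cV, -)$ to the $\cV$-enriched right adjoints of $-\otimes c$ yields ordinary right adjoints in $\cT$, so $\cT$ is closed. Conversely, given $(\cT, \cF^Z)$ with $\cT$ closed and $\cF$ admitting a right adjoint $G$, one reconstructs the $\cV$-enriched internal hom by the formula $\cC^\cV(c, d) := G([c, d])$, where $[-, -]$ denotes the ordinary internal hom of $\cT$. A direct calculation then gives $\cV(v, \cC^\cV(c, d)) \cong \Hom_\cT(\cF(v) \otimes c, d)$, which is exactly what one expects from a tensored closed enriched structure. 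The braided oplax monoidal structure of $\cF^Z$ is then invoked to verify that this formula is compatible with the $\cV$-enriched composition and monoidal structure.

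I expect the main obstacle to be this last reconstruction step, namely checking that the formula $G([c, -])$ really defines a valid $\cV$-enriched internal hom respecting all the required $\cV$-enriched and monoidal coherences. This is where the braided oplax monoidal structure on $\cF^Z$ enters nontrivially, since upgrading an ordinary adjunction in $\cT$ to a $\cV$-enriched adjunction requires compatibility with the $\cV$-action rather than just an adjunction at the level of underlying ordinary functors. Once this is in hand, the refined bijection of Theorem~\ref{thm:ClosedOplaxVMonoidal} follows by restricting the bijection of \cite{1701.00567} to the subsets identified above.
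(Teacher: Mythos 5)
There is a genuine gap, and it starts with your very first step. The result of \cite{1701.00567} that you propose to ``restrict'' is not an unconditional bijection between all $\cV$-monoidal categories and all pairs $(\cT,\cF^{\scriptscriptstyle Z})$: as recalled in Theorem \ref{thm:OplaxVMonoidal}, it is a bijection between \emph{rigid} $\cV$-monoidal categories with $\cR_{1_\cC}$ admitting a left adjoint and pairs with $\cT$ \emph{rigid} and $\cF$ admitting a right adjoint, and its proof uses rigidity in an essential way. Concretely, to produce $\cF^{\scriptscriptstyle Z}$ one must equip each $\cF(v)$ with a half-braiding $e_{a,\cF(v)}\in\cC^\cV(a\cF(v)\to \cF(v)a)$, and this is built by taking mates under the adjunction $\cC^\cV(a\cF(v)\to b)\cong\cV(v\to\cC(a\to b))$, whose existence already requires rigidity or closedness (Lemma \ref{lem:ClosedAdjoints}); the rigid proof of \cite{1701.00567} relies on duals/Frobenius reciprocity at exactly this point. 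The actual content of Theorem \ref{thm:ClosedOplaxVMonoidal} is to re-run that construction with closedness replacing rigidity, and the new ingredient making this work is the mate computation of Lemma \ref{lem:MateOfIdOfTensorClosed} (which substitutes for \cite[Lem.~4.6]{1701.00567} and uses the $\cV$-adjunction $a\otimes-\dashv_\cV[a,-]$ together with the braided interchange). None of this can be obtained by ``restricting'' a pre-existing bijection, because no such bijection is available without the rigidity/closedness hypothesis.

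In the reverse direction your formula $\cC(c\to d):=G([c,d])$ does coincide with the paper's $\cT\sslash\cF(a\to b):=\cR([a,b])$, and your observation that the two adjointness conditions match is correct; likewise the easy implication that $\cC$ being $\cV$-closed makes $\cC^\cV$ closed. But the step you yourself flag as ``the main obstacle'' --- showing that ordinary closedness of $\cT$ yields $\cV$-enriched closedness of the reconstructed $\cV$-monoidal category --- is precisely the nontrivial half of the theorem, and your proposal offers no argument for it beyond saying the oplax structure ``is invoked.'' In the paper this is done by explicitly defining $[a,-]_{b\to c}$ via mates so that $[a,-]$ becomes a $\cV$-functor, checking the underlying adjunction $(a\otimes-)^\cV\dashv[a,-]^\cV$ (Lemma \ref{lem:UnderlyingAdjunctionOfTensorInT}), and then lifting it to a $\cV$-adjunction by the tensoredness criterion of Theorem \ref{thm:LiftToVAdjunction}, with an explicit computation that $\mu^{a\otimes-}_{b,v}=\id$. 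Without supplying these constructions and verifications (or some substitute for them), the proposal does not prove the statement.
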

(Note that the main result of \cite{1701.00567} required `rigid' in place of `closed'.)

Here, $\cC^\cV$ denotes the \emph{underlying category} of $\cC$ with the same objects and hom sets $\cC^\cV(a\to b) := \cV(1_\cV \to \cC(a\to b))$.
We call $\cC$ \emph{closed} if every $\cV$-functor $a\otimes - : \cC \to \cC$ admits a right $\cV$-adjoint $[a,-]$.
We refer the reader to \S\ref{sec:VAdjunctions} below or to \cite[\S1.11]{MR2177301} for a discussion about $\cV$-adjunctions between $\cV$-functors.
On the other side, $R: Z(\cT) \to \cT$ denotes the forgetful functor, and by an abuse of nomenclature, we assume all oplax monoidal functors are \emph{strongly unital}, i.e., $\cF(1_\cV) = 1_\cT$, and the oplaxitor morphisms $\nu_{u, v}$ are isomorphisms whenever $u$ or $v$ is $1_\cV$.

As in \cite{1701.00567}, we adopt the convention that we write composition of maps from \emph{left to right}, contrary to the convention of most of mathematics.
This has implications for other conventions, like our conventions for the internal hom in a closed monoidal category (Notation \ref{nota:InternalHomAndCompostionInVHat}) 
and the evaluation and coevaluation in a rigid monoidal category (Example \ref{ex:VRigid}).
We also suppress all associators and unitors of monoidal categories to ease the notation.

Theorem \ref{thm:ClosedOplaxVMonoidal} is somewhat surprising due to the presence of the adjective \emph{oplax}, and the absence of any type of rigidity or pivotal structure.
In comparison, the article \cite{1607.06041} shows there is an equivalence of categories between
\emph{anchored planar algebras} in a braided pivotal category $\cV$ and triples $(\cC, \cF^{\scriptscriptstyle Z}, c)$ where 
$\cC$ is a pivotal category, 
$\cF^{\scriptscriptstyle Z}: \cV \to Z(\cC)$ is a braided pivotal strong monoidal functor, 
and
$c$ generates $\cC$ as a \emph{module tensor category} \cite{1701.00567}.

It is thus natural to ask the question: if this braided oplax monoidal functor $\cV \to Z(\cT)$ is in fact strong monoidal, what can we say about the corresponding  $\cV$-monoidal category?
In \cite[\S1.2]{1701.00567}, we claimed this property is exactly that $\cC$ is \emph{tensored} over $\cV$.
To define the property of being tensored, we need $\cV$ to be closed to define the \emph{self-enrichment} $\widehat{\cV}$ (see Examples \ref{example:SelfEnrichment} and \ref{ex:SelfEnrichedVMonoidal}).
A $\cV$-category $\cC$ is called tensored if every $\cV$-representable functor $\cC(a\to -): \cC \to \widehat{\cV}$ (see \cite[\S 1.6]{MR2177301} or \S\ref{ex:VRepresentable}) admits a left $\cV$-adjoint; notice this is a property of the underlying $\cV$-category of a $\cV$-monoidal category.
In \S\ref{sec:ClassificationOfTensoredClosedVMonoidalCategories} below, we prove the following.

\begin{thm}
\label{thm:TensoredVMonoidalEquivalence}
Let $\cV$ be a closed monoidal category.
Under Theorem \ref{thm:ClosedOplaxVMonoidal}, there is a bijective correspondence
\[
\left\{\,
\parbox{6.9cm}{\rm Tensored closed $\cV$-monoidal categories}
\,\right\}
\,\,\cong\,\,
\left\{\,\parbox{8.3cm}{\rm Pairs $(\cT,\cF^{\scriptscriptstyle Z})$ with $\cT$ a closed monoidal category and $\cF^{\scriptscriptstyle Z}: \cV\to Z(\cT)$ braided strong monoidal, such that $\cF:=\cF^{\scriptscriptstyle Z}\circ R$ admits a right adjoint}\,\right\}.
\]
We also get a bijective correspondence replacing closed with rigid on both sides above.
\end{thm}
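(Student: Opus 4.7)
The plan is to leverage Theorem \ref{thm:ClosedOplaxVMonoidal} and reduce Theorem \ref{thm:TensoredVMonoidalEquivalence} to a purely $\cV$-categorical claim. As anticipated in the introduction, being tensored is a property of the underlying $\cV$-category, so the natural first step is to prove the analogous statement for $\cV$-categories: that under the equivalence between $\cV$-categories and oplax $\cV$-module categories, the tensored $\cV$-categories correspond precisely to the strong $\cV$-module categories. Theorem \ref{thm:TensoredVMonoidalEquivalence} will then follow by running that $\cV$-categorical equivalence inside the bijection of Theorem \ref{thm:ClosedOplaxVMonoidal}.

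In more detail, given a closed $\cV$-monoidal $\cC$, Theorem \ref{thm:ClosedOplaxVMonoidal} produces $\cT = \cC^\cV$ together with $\cF^{\scriptscriptstyle Z}:\cV\to Z(\cT)$ braided oplax monoidal, and $\cF:=\cF^{\scriptscriptstyle Z}\circ R$ admits a right adjoint. The underlying $\cV$-category of $\cC$ induces, via $\cF$, an oplax $\cV$-module structure on $\cT$ with oplaxitor
\[
\nu_{u,v}: \cF(u\otimes v)\longrightarrow \cF(u)\otimes \cF(v).
\]
The key technical step is to identify $\nu_{u,v}$ with the natural comparison between the candidate tensorings $(u\otimes v)\cdot a$ and $u\cdot(v\cdot a)$ as $a$ varies in $\cC$, after passing to representables $\cC(a\to-)$. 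If $\cC$ is tensored, the tensorings exist and this comparison is invertible by uniqueness of $\cV$-adjoints, giving an inverse to $\nu_{u,v}$; conversely, the invertibility of $\nu_{u,v}$ combined with the right adjoint to $\cF$ allows one to build each left $\cV$-adjoint $v\cdot(-)$ to $\cC(a\to-)$ directly from the data $(\cT,\cF^{\scriptscriptstyle Z})$, so $\cC$ is tensored. Since $R:Z(\cT)\to \cT$ is strong monoidal and conservative on oplaxitor data, $\cF^{\scriptscriptstyle Z}$ is strong monoidal if and only if $\cF$ is.

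For the ``rigid'' variant, I would invoke the fact, established in \cite{1701.00567}, that under Theorem \ref{thm:ClosedOplaxVMonoidal} rigidity of $\cC$ corresponds to rigidity of $\cT$ and automatically supplies both the left-adjoint hypothesis on $\cC^\cV(1_\cC\to-)$ and the right-adjoint hypothesis on $\cF$. Thus the rigid bijection in Theorem \ref{thm:TensoredVMonoidalEquivalence} is obtained by restricting the closed bijection to the full subclasses where $\cT$ (equivalently, $\cC$) is rigid, with the tensored-versus-strong equivalence already established in the closed case. The main obstacle is the identification in the middle paragraph: writing the oplaxitor $\nu_{u,v}$ in terms of the hom-objects $\cC(a\to b)$ and matching it with the unit/counit of the tensoring $\cV$-adjunction cleanly enough that invertibility of one structure produces invertibility of the other. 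I expect this to be most transparent when carried out first at the non-monoidal level, so that Theorem \ref{thm:TensoredVMonoidalEquivalence} drops out as a corollary of Theorem \ref{thm:ClosedOplaxVMonoidal} together with that $\cV$-categorical result.
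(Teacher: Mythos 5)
Your proposal is correct and follows essentially the same route as the paper: the paper also reduces Theorem \ref{thm:TensoredVMonoidalEquivalence} to the $\cV$-categorical correspondence of Theorem \ref{thm:StrongVMod} (via Propositions \ref{prop:AlphaIsInvertible}, \ref{prop:LaTensored} and Corollary \ref{cor:AlphaInvertibleImpliesCTensored}) together with your ``key technical step'', namely the identification $\alpha_{a,u,v}=\id_a\,\nu_{u,v}$ of the module oplaxitor with the oplaxitor of the classifying functor (Remark \ref{rem:OtherMateOfMuForVMonoidal}), the lifting criterion of Theorem \ref{thm:LiftToVAdjunction}, and the observation that $R$ reflects invertibility, all packaged in Lemma \ref{lem:VMonoidalAdjoint}. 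The one slip is in your rigid aside: rigidity does \emph{not} automatically supply the left-adjoint hypothesis on $\cC^\cV(1_\cC\to-)$ or the right-adjoint hypothesis on $\cF$ (both remain explicit hypotheses in Theorem \ref{thm:OplaxVMonoidal}); in Theorem \ref{thm:TensoredVMonoidalEquivalence} the left-hand hypothesis is instead supplied by tensoredness and the right-hand one is retained in the statement, so your restriction argument still works simply because rigidity of $\cC$ is by definition rigidity of $\cC^\cV\cong\cT$.
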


As a final application, we discuss the notion of \emph{completion} for an arbitrary 
$\cV$-monoidal category $\cC$ when $\cV$ is closed.
The completion $\overline{\cC}$ is a tensored $\cV$-monoidal category which comes with a canonical inclusion $\cV$-monoidal functor $\cI:\cC \to \overline{\cC}$ which satisfies a universal property.
There are interesting open questions related to this universal property, and we refer the reader to Remark \ref{remark:OpenQuestionAboutCompletion} for more details.
We give many equivalent conditions under which $\cC$ is $\cV$-monoidally equivalent to its completion $\overline{\cC}$ in Theorem \ref{thm:EquivalentConditionsForVMonoidalCComplete}.

\begin{remark}
When $\cV$ is symmetric, completion of a $\cV$-category can also be thought of as a partial cocompletion, closing the representable presheaves in $\cV-Fun(\cC^{op} \to \hat{\cV})$ under certain weighted colimits. We have not seen a development of this idea when $\cV$ is merely braided (or indeed a generalisation of weighted colimits to that setting). Even before passing to the case of $\cV$-monoidal categories, this article takes a more pedestrian approach.

Indeed Ross Street has suggested to us that the monoidal structure we specify here on the completion of a $\cV$-monoidal category should be thought of as the restriction of Day convolution on the entire category of presheaves.
At this point, we do not know how relaxed one can by about $\cV$ and have this work: when $\cV$ is symmetric and cocomplete, it should be straightforward, but we have not yet studied Day convolution on presheaves when $\cV$ is merely braided and so have not verified that Day convolution provides an alternative route to the completion of a $\cV$-monoidal category.
\end{remark}

\subsection{Tensored \texorpdfstring{$\cV$}{V}-categories}

As mentioned above, being tensored is a property, and not extra structure, of the underlying $\cV$-category of a $\cV$-monoidal category, which we view as enriched in the closed monoidal category $\cV$ where we forget the braiding.
Thus to understand tensored $\cV$-monoidal categories and the completion operation, we begin by working \emph{one categorical level down} with ordinary $\cV$-categories.

Versions of the following folklore theorems have been proven many times, and \cite{MR649797} is the earliest appearance of which we are aware.\footnote{
Other proofs appear in \cite{MR1897810}, \cite{MR1466618} and \cite[Lem.~4.7]{1110.3567}, all of which do not cite \cite{MR649797}!
 }

\begin{thm}
\label{thm:OplaxVMod}
Let $\cV$ be a monoidal category.
There is a bijective correspondence
\[
\left\{\,
\parbox{5.7cm}{\rm$\cV$-categories $\cC$ such that each $\cC^\cV(a\to -) : \cC^\cV \to \cV$ admits a left adjoint}
\,\right\}
\,\,\cong\,\,
\left\{\,
\parbox{7cm}{\rm Strongly unital oplax $\cV$-modules $\cM$ such that each $m\vartriangleleft -$ admits a right adjoint}
\,\right\}.
\]
\end{thm}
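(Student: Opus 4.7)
I would prove the bijective correspondence by explicitly describing both directions and then invoking Yoneda to verify they are mutually inverse, with most of the work being a careful bookkeeping of adjunctions versus the axioms of the two structures.

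\textbf{From $\cV$-category to oplax module.} Starting with a $\cV$-category $\cC$ such that each $\cC^\cV(a\to-)$ admits a left adjoint, I would write $a \vartriangleleft v$ for the image of $v$ under this left adjoint, so that by definition we have a natural bijection $\cC^\cV(a \vartriangleleft v \to b) \cong \cV(v \to \cC(a\to b))$. Functoriality of $\vartriangleleft$ in the first variable comes from composing the unit of the adjunction with postcomposition by $f\in\cC^\cV(a\to a')$ inside the $\cV$-hom. To produce the oplaxitor $\nu_{v,w}: a \vartriangleleft (v\otimes w) \to (a\vartriangleleft v) \vartriangleleft w$, I would pass through the adjunction and build the transpose $v\otimes w \to \cC(a\to (a\vartriangleleft v)\vartriangleleft w)$ by tensoring the two units $v\to \cC(a\to a\vartriangleleft v)$ and $w \to \cC(a\vartriangleleft v \to (a\vartriangleleft v)\vartriangleleft w)$ and composing in the $\cV$-category. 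The fact that $\nu$ is an isomorphism when $v$ or $w$ is $1_\cV$ (strong unitality) follows by Yoneda from $\cV(1_\cV \to \cC(a\to b)) = \cC^\cV(a\to b)$, which identifies $a\vartriangleleft 1_\cV$ with $a$. Finally, the mixed-pentagon coherence for $\nu$ with respect to the associator in $\cV$ should reduce precisely to the associativity axiom for composition in $\cC$, again via adjunction and Yoneda.

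\textbf{From oplax module to $\cV$-category.} Starting with a strongly unital oplax $\cV$-module $\cM$ in which each $m\vartriangleleft-$ admits a right adjoint $[m,-]$, I would set $\cC(m\to n) := [m,n]$, so that $\cM(m\vartriangleleft v \to n) \cong \cV(v\to \cC(m\to n))$. Composition $\cC(a\to b) \otimes \cC(b\to c) \to \cC(a\to c)$ is defined as the transpose of the composite
\[
a \vartriangleleft (\cC(a\to b)\otimes \cC(b\to c)) \xrightarrow{\nu} (a\vartriangleleft \cC(a\to b))\vartriangleleft \cC(b\to c) \xrightarrow{\varepsilon\vartriangleleft \id} b\vartriangleleft \cC(b\to c) \xrightarrow{\varepsilon} c,
\]
and the identity $1_\cV\to \cC(a\to a)$ is obtained from $\id_a$ under the strong-unital identification $a\vartriangleleft 1_\cV \cong a$. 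Associativity of composition will follow from the oplax pentagon for $\nu$, while the unit axioms will follow from strong unitality of $\nu$.

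\textbf{Inversion and main obstacle.} The two constructions are visibly inverse on the underlying data: from a $\cV$-category we get $a\vartriangleleft v$ representing $\cC^\cV(a\vartriangleleft v\to -) \cong \cV(v \to \cC(a\to -))$, and the reverse construction recovers $\cC(a\to b)$ as the right adjoint of $a\vartriangleleft -$ evaluated at $b$, which by Yoneda is the original $\cC(a\to b)$. The step I expect to be most delicate is verifying that the oplaxitor defined from a $\cV$-categorical composition satisfies its own pentagon axiom, and conversely that the composition defined from an oplax module is associative; both statements amount to a single commutative diagram of morphisms in $\cV$ that, after transposing under the adjunction, becomes the mixed pentagon for $\nu$ on the one hand, and the associativity triangle for the $\cV$-enriched composition on the other. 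I would handle this by transposing everything under the adjunction and checking equality of two parallel morphisms $a\vartriangleleft(u\otimes v\otimes w) \to ((a\vartriangleleft u)\vartriangleleft v)\vartriangleleft w$, using naturality of $\nu$ and the interchange law for the oplaxitor with respect to morphisms in $\cM$. Strong unitality and the bijection between oplax module functors $\vartriangleleft$ and the adjoint data of the $\cV$-category should then wrap up the proof with minimal additional work.
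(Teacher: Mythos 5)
Your proposal is correct and follows essentially the same route as the paper: define $a\vartriangleleft v$ via the left adjoints, take the oplaxitor to be the mate of the composed units and the enriched composition to be the mate of oplaxitor-then-counits, establish (strong) unitality via Yoneda, and check coherence/associativity by transposing under the adjunctions. The only place the paper is more explicit is in the round-trip step, where it constructs the equivalences $\cM\simeq\cC^\cV$ (using invertibility of $\rho$) and $\cC\simeq\cC'$ as honest (oplax-module, resp.\ $\cV$-) functors rather than just identifying hom-objects, but this is exactly the ``strong unitality wraps up'' point you flag and it goes through as you expect.
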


Here, an oplax $\cV$-module means the associators $\alpha_{m,u,v} \in \cM(m\vartriangleleft uv \to m\vartriangleleft u\vartriangleleft v)$ need not be an isomorphism, and strongly unital means that each unitor $\rho_m \in \cM(m \to m\vartriangleleft 1_\cV)$ is an isomorphism, as is $\alpha_{m,u,v}$ whenever $u$ or $v$ is $1_\cV$.

\begin{thm}
\label{thm:StrongVMod}
Let $\cV$ be a closed monoidal category.
Under Theorem \ref{thm:OplaxVMod}, there is a bijective correspondence
\[
\left\{\,
\parbox{4cm}{\rm Tensored $\cV$-categories}
\,\right\}
\,\,\cong\,\,
\left\{\,
\parbox{6cm}{\rm Strong $\cV$-modules such that each $m\vartriangleleft -$ admits a right $\cV$-adjoint}
\,\right\}.
\]
\end{thm}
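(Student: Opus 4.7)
By Theorem~\ref{thm:OplaxVMod}, a $\cV$-category $\cC$ satisfying the representable-left-adjoint hypothesis corresponds to a strongly unital oplax $\cV$-module $\cM$, with underlying category $\cC^\cV$ and action characterised by the unenriched adjunction
\[
\cC^\cV(m\vartriangleleft u\to c)\cong\cV(u\to\cC(m\to c)).
\]
To prove Theorem~\ref{thm:StrongVMod}, the task is to show that under this correspondence, \emph{$\cC$ is tensored} is equivalent to \emph{$\cM$ is strong and each $m\vartriangleleft-$ admits a right $\cV$-adjoint}. Uniqueness of ordinary adjoints guarantees that any such right $\cV$-adjoint of $m\vartriangleleft-$ must agree underlying-ly with $\cC(m\to-):\cC\to\widehat{\cV}$, so the question reduces to whether the displayed hom-set bijection promotes to a $\cV$-natural isomorphism of internal homs in $\cV$.

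For the forward implication, assume $\cC$ is tensored. Then $\cC(m\to-)$ has a left $\cV$-adjoint whose underlying adjoint is $m\vartriangleleft-$ by uniqueness, and the $\cV$-adjunction upgrades the hom-set bijection to a $\cV$-natural isomorphism
\[
[u,\cC(m\to c)]\cong\cC(m\vartriangleleft u\to c)\qquad\text{in }\cV.
\]
Flipping roles exhibits $\cC(m\to-)$ as the required right $\cV$-adjoint of $m\vartriangleleft-$. Iterating and invoking closedness of $\cV$ (so that $[v,[u,-]]\cong[uv,-]$) yields
\[
\cC(m\vartriangleleft u\vartriangleleft v\to c)\cong[v,[u,\cC(m\to c)]]\cong[uv,\cC(m\to c)]\cong\cC(m\vartriangleleft(uv)\to c),
\]
$\cV$-naturally in $c$. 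The enriched Yoneda lemma then produces an isomorphism $m\vartriangleleft(uv)\to m\vartriangleleft u\vartriangleleft v$, which one identifies with the oplax associator $\alpha_{m,u,v}$ by unwinding its universal characterisation. Hence $\cM$ is strong.

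For the converse, suppose $\cM$ is strong and each $m\vartriangleleft-$ admits a right $\cV$-adjoint. By the preceding discussion this right $\cV$-adjoint is $\cC(m\to-)$, and the $\cV$-natural isomorphism in $\cV$ coming with the $\cV$-adjunction exhibits $m\vartriangleleft-$ as a left $\cV$-adjoint to $\cC(m\to-)$. Thus $\cC$ is tensored. Here strongness is used only insofar as it is needed for the $\cV$-functor structure on $\cC(m\to-)$ transported from $[m,-]_\cM$ to agree with that of the representable $\cV$-functor built by Theorem~\ref{thm:OplaxVMod}.

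The main obstacle I anticipate is bookkeeping: tracking the two layers of $\cV$-structure (ordinary vs.\ $\cV$-enriched adjoints; weak vs.\ strong module coherences) and, above all, verifying that the Yoneda-produced isomorphism $m\vartriangleleft(uv)\to m\vartriangleleft u\vartriangleleft v$ in the forward direction really is the oplax module associator rather than some other comparison map. This is a coherence check, to be carried out by retracing the universal property used to define $\vartriangleleft$ in Theorem~\ref{thm:OplaxVMod}.
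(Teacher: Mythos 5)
Your forward direction is, at bottom, the paper's own argument: the chain $\cC^\cV(a\vartriangleleft u\vartriangleleft v\to b)\cong\cV(v\to\widehat{\cV}(u\to\cC(a\to b)))\cong\cV(uv\to\cC(a\to b))\cong\cC^\cV(a\vartriangleleft uv\to b)$ followed by Yoneda is exactly Proposition \ref{prop:AlphaIsInvertible}, and the ``coherence check'' you defer --- that the Yoneda isomorphism really is $\alpha_{a,u,v}$, with inverse $\mu^{\cL^a}_{u,v}$ --- is precisely the computation done there by tracing $\id_{a\vartriangleleft uv}$ and $\id_{a\vartriangleleft u\vartriangleleft v}$ through the chain, using the explicit formulas \eqref{eq:ThetaKappa} for $\theta$ and $\kappa$. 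So on that side you have the right idea but have postponed the only step that carries content.

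The genuine gap is in the converse. Under Theorem \ref{thm:OplaxVMod} the module side carries only ordinary adjunctions $\cL_m\dashv\cR_m$; the hypothesis ``$m\vartriangleleft-$ admits a right $\cV$-adjoint'' presupposes a $\cV$-functor structure on $m\vartriangleleft-:\widehat{\cV}\to\cC$, and the module data alone does not provide one. The substance of the paper's converse is to produce it: $\cL^a_{u\to v}$ is defined in \eqref{eq:L^aDefinition} as the mate of $\alpha^{-1}_{a,u,\widehat{\cV}(u\to v)}\circ(\id_a\vartriangleleft\varepsilon^{\widehat{\cV}}_{u\to v})$ --- this is exactly where strongness is used --- after which one computes $\mu^{\cL^a}_{u,v}=\alpha^{-1}_{a,u,v}$ (Proposition \ref{prop:LaTensored}) and applies Theorem \ref{thm:LiftToVAdjunction} to lift the \emph{ordinary} adjunction $\cL_a\dashv\cR_a$ to $\cL^a\dashv_\cV\cR^a$ (Corollary \ref{cor:AlphaInvertibleImpliesCTensored}). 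Your converse instead takes the $\cV$-adjunction as given and concludes via ``uniqueness of ordinary adjoints,'' but agreement of underlying functors does not identify $\cV$-functors: knowing some $\cV$-functor $\cG$ with $\cG^\cV\cong\cR_m$ has a left $\cV$-adjoint does not yet show that the representable $\cR^m=\cC(m\to-)$ does, which is what ``tensored'' demands; you would still need a $\cV$-natural isomorphism $\cG\cong\cR^m$ along which to transport the adjunction, and you give no construction. Relatedly, your remark on where strongness enters is misplaced: the representable $\cV$-structure on $\cC(m\to-)$ exists for any $\cV$-category and needs no strongness; what needs $\alpha^{-1}$ is the $\cV$-structure on $m\vartriangleleft-$ and the lifting of the underlying adjunction to a $\cV$-adjunction. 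The paper packages both directions through the notion of tensored $\cV$-functor and Theorem \ref{thm:LiftToVAdjunction}; your proposal implicitly reproves a special case of that theorem in the forward direction and assumes its conclusion in the converse.
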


It is also interesting to note that the article \cite{MR649797} also points out that many results proven when $\cV$ is symmetric closed still hold when $\cV$ is merely braided closed!

Unfortunately, many of the above proofs available leave substantial parts to the reader.
In order to fully understand the operation of completion for $\cV$-categories, we provide yet another independent proof of Theorems \ref{thm:OplaxVMod} and \ref{thm:StrongVMod} in \S\ref{sec:V-cat and V-mod} and \S\ref{sec:Tensored V-cat and Strong V-mod} respectively.
Along the way, we prove a helpful result on lifting underlying adjunctions to $\cV$-adjunctions in Theorem \ref{thm:LiftToVAdjunction}, which introduces the notion of a \emph{tensored $\cV$-functor} (see \S\ref{sec:IntroCompletionForVCategories} right below and \S\ref{sec:TensoredVFunctors} for more details).

Now in the case of a closed $\cV$-monoidal category $\cC$ for which the functor $\cC^\cV(1_\cC \to -): \cC^\cV \to \cV$ admits a left adjoint, the oplax $\cV$-module structure of $\cC^\cV$ can be easily written in terms of the classifying oplax monoidal functor $\cF = \cF^{\scriptscriptstyle Z}: \cV \to \cC^\cV$ from Theorem \ref{thm:ClosedOplaxVMonoidal} by $c\vartriangleleft v := c\cF(v)$ and $\alpha_{c,u,v} := \id_c \nu_{u,v}$.

\subsection{Completion for \texorpdfstring{$\cV$}{V}-(monoidal) categories}
\label{sec:IntroCompletionForVCategories}

We discuss the operation of completion for $\cV$-categories in \S\ref{sec:CompletionForVCats}.
Starting with a closed monoidal category $\cV$ and a $\cV$-category $\cC$, 
we define the 
objects of the $\cV$-category $\overline{\cC}$ as the formal expression $a\blacktriangleleft u$ for $a\in \cC$ and $u \in \cV$, and we define the hom objects by $\overline{\cC}(a\blacktriangleleft u \to b\blacktriangleleft v) := \widehat{\cV}(u \to \cC(a\to b)v)$.
Similar to the self-enrichment $\widehat{\cV}$, the composition morphism $-\circ_{\overline{\cC}}-$ is given by taking the mate of 
$$
(\varepsilon^{\widehat{\cV}}_{u \to \cC(a\to b)v} \id_{\widehat{\cV}(v\to \cC(b\to c)w)})
\circ
(\id_{\cC(a\to b)} \varepsilon^{\widehat{\cV}}_{v \to \cC(b\to c)w})
\circ
((-\circ_\cC -)\id_w)
$$
under the adjunction
\begin{align*}
\cV(\overline{\cC}(a\blacktriangleleft u \to b\blacktriangleleft v)
&\overline{\cC}(b\blacktriangleleft v\to c\blacktriangleleft w) \to
\overline{\cC}(a\blacktriangleleft u\to c\blacktriangleleft w))
\\&=
\cV(\widehat{\cV}(u\to \cC(a\to b)v)\widehat{\cV}(v\to \cC(b\to c)w) \to \widehat{\cV}(u\to \cC(a\to c)w))
\\&\cong
\cV(u\widehat{\cV}(u\to \cC(a\to b)v)\widehat{\cV}(v\to \cC(b\to c)w) \to \cC(a\to c)w)
\end{align*}
Here, $\varepsilon^{\widehat{\cV}}_{u \to v} : \cV(u \widehat{V}(u \to v) \to v)$ is the unit of the adjunction $\cV(uw \to v) \cong \cV(w \to \widehat{\cV}(u\to v))$
(notice our convention for internal hom is not the usual one).

We get a canonical $\cV$-functor $\cI : \cC \to \overline{\cC}$ by defining $\cI(c) = c\blacktriangleleft 1_\cV$ and $\cI_{c\to d}$ is the mate of $\id_{\cC(c\to d)}$ under the adjunction
$$
\cV(\cC(c\to d) \to \overline{\cC}(\cI(c) \to \cI(d)))
=
\cV(\cC(c\to d) \to \widehat{\cV}(1_\cV \to \cC(c\to d)))
\cong
\cV(\cC(c\to d) \to \cC(c\to d)).
$$

Now when we assume our $\cV$-category $\cC$ is tensored, we give many equivalent conditions under which $\cC$ is equivalent to its completion $\overline{\cC}$ in Theorem \ref{thm:WhenTensoredIsEquivalentToCompletion}.
We state a few of these conditions here.

\begin{thm}
\label{thm:IntroEquivalentCharacterizationsOfEquivalence}
Suppose $\cC$ is a tensored $\cV$-category.
The following are equivalent:
\begin{enumerate}
\item
Every $\cV$-representable functor $\cR^a=\cC(a\to -):\cC\to \widehat{\cV}$ is tensored.
\item
The $\cV$-functor $\cI: \cC \to \overline{\cC}$ is tensored.
\item
The $\cV$-functor $\cI: \cC \to \overline{\cC}$ witnesses a $\cV$-equivalence.
\end{enumerate}
\end{thm}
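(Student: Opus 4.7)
My plan is to prove the cyclic chain $(3) \Rightarrow (2) \Rightarrow (1) \Rightarrow (3)$, after three preparatory observations about $\overline{\cC}$ and $\cI$.

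\textbf{Preparatory observations.} First, $\overline{\cC}$ is itself a tensored $\cV$-category under the formal action $(a\blacktriangleleft u) \triangleleft v := a \blacktriangleleft (uv)$; the required adjunction $\overline{\cC}((a\blacktriangleleft u)\triangleleft v \to b\blacktriangleleft w) \cong \widehat{\cV}(v \to \overline{\cC}(a\blacktriangleleft u\to b\blacktriangleleft w))$ unpacks to the defining adjunction $\widehat{\cV}(uv \to X) \cong \widehat{\cV}(v \to \widehat{\cV}(u \to X))$ with $X = \cC(a\to b)w$. In particular $\cI(a) \triangleleft v = a \blacktriangleleft v$. Second, $\cI$ is $\cV$-fully faithful by construction: the map $\cI_{c\to d}$ is the canonical isomorphism $\cC(c\to d) \cong \widehat{\cV}(1_\cV \to \cC(c\to d)) = \overline{\cC}(\cI(c)\to \cI(d))$. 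Third, unpacking both sides and using $\widehat{\cV}(1_\cV \to -) \cong \mathrm{id}$ gives
\[
\overline{\cC}(a \blacktriangleleft u \to Y) \;\cong\; \widehat{\cV}(u \to \overline{\cC}(\cI(a) \to Y))
\]
for every $Y \in \overline{\cC}$. Consequently a $\cV$-natural transformation between $\overline{\cC}(-\to Y)$ and $\overline{\cC}(-\to Y')$ is determined by its restriction to objects in the image of $\cI$, so $\cV$-Yoneda in $\overline{\cC}$ need only be tested along $\cI$.

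\textbf{(3) $\Rightarrow$ (2).} $\cV$-equivalences preserve left $\cV$-adjoints. Both $\cI(b)\triangleleft -$ and the assignment $v \mapsto \cI(b \triangleleft v)$ are left $\cV$-adjoint to $\overline{\cC}(\cI(b)\to -) \cong \cC(b \to \cI^{-1}(-))$ (the latter via the chain $\overline{\cC}(\cI(b\triangleleft v)\to Y) \cong \cC(b\triangleleft v \to \cI^{-1}(Y)) \cong \widehat{\cV}(v\to \cC(b\to \cI^{-1}(Y))) \cong \widehat{\cV}(v\to \overline{\cC}(\cI(b)\to Y))$), so they are canonically $\cV$-isomorphic; hence $\cI$ is tensored.

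\textbf{(2) $\Rightarrow$ (1).} The representable $\overline{\cR}^{\cI(a)} = \overline{\cC}(\cI(a) \to -)$ is tensored: its value at $b\blacktriangleleft w$ is $\cC(a\to b)w$ (by the density identity), and the tensoring comparison at $b\blacktriangleleft w$ reduces to the trivial identity $\cC(a\to b)w \cdot v = \cC(a\to b)(wv)$. Using $\cV$-fully faithfulness of $\cI$, one has $\cR^a(b) \cong \overline{\cR}^{\cI(a)}(\cI(b))$ $\cV$-naturally in $b$; since $\cI$ is tensored by hypothesis (2) and compositions of tensored $\cV$-functors are tensored, $\cR^a$ is tensored for every $a$.

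\textbf{(1) $\Rightarrow$ (3).} Full faithfulness of $\cI$ being automatic, it suffices to show $\cI$ is essentially surjective. Given $b\blacktriangleleft v \in \overline{\cC}$, I claim $\cI(b\triangleleft v) \cong b\blacktriangleleft v$. By the density reduction it suffices to produce $\cV$-natural isomorphisms
\[
\overline{\cC}(\cI(a) \to \cI(b\triangleleft v)) \;\cong\; \overline{\cC}(\cI(a) \to b\blacktriangleleft v)
\]
in $a\in \cC$. Full faithfulness of $\cI$ identifies the left side with $\cC(a\to b\triangleleft v)$, and the definition of the hom in $\overline{\cC}$ identifies the right side with $\widehat{\cV}(1_\cV \to \cC(a\to b)v) \cong \cC(a\to b)v$. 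The required isomorphism is precisely the tensoring comparison for $\cR^a$, invertible by hypothesis (1).

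\textbf{Main obstacle.} The computations themselves are routine manipulations of adjunctions and hom-object definitions; the real work is bookkeeping, ensuring that each isomorphism produced above is $\cV$-natural (not merely natural on the underlying ordinary categories), and that the forms of $\cV$-Yoneda and of ``compositions of tensored $\cV$-functors are tensored'' invoked here coincide with what the earlier sections — especially Theorem \ref{thm:LiftToVAdjunction} on lifting underlying adjunctions to $\cV$-adjunctions — have put in place.
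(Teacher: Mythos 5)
Your cyclic route $(3)\Rightarrow(2)\Rightarrow(1)\Rightarrow(3)$ is genuinely different from the paper's, which proves $(1)\Rightarrow(2)$ via Proposition \ref{prop:IsomorphismOfRepresentableFunctors}, identifies $(2)$ with invertibility of $\tau_{a\blacktriangleleft u}=\mu^\cI_{a,u}$, and obtains the equivalence from the explicitly constructed quasi-inverse $\overline{\id^\cC}$ of Proposition \ref{prop:LiftToCBar}, closing the loop with $(4)\Rightarrow(1)$. Most of your intermediate claims are true (in particular your $(2)\Rightarrow(1)$, factoring $\cR^a$ through $\cI$ followed by $\overline{\cC}(\cI(a)\to -)$ and noting the latter is always tensored, is an attractive alternative to the paper's argument, modulo the unproved but routine lemmas that this functor is tensored and that tensoredness passes along composites and $\cV$-natural isomorphisms). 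But two steps have genuine gaps, and they are exactly where the paper's work lies. In $(3)\Rightarrow(2)$ you conclude ``hence $\cI$ is tensored'' from the existence of \emph{some} isomorphism $\cI(b)\vartriangleleft v\cong\cI(b\vartriangleleft v)$ supplied by uniqueness of left $\cV$-adjoints. Being tensored (Definition \ref{defn:TensoredVFunctor}) is the invertibility of the \emph{specific} comparison $\mu^\cI_{b,v}$ of Lemma \ref{lem:UnderlyingModuleFunctor}; an unidentified isomorphism between the two objects proves nothing until you show it coincides with $\mu^\cI_{b,v}$ (the paper does exactly this kind of identification, e.g.\ via \eqref{eq:Mu=Eta} and the Yoneda bookkeeping \eqref{eq:YonedaInverses}), or until you extract an honest $\cV$-adjunction $\cI\dashv_\cV\cJ$ from the equivalence and invoke Theorem \ref{thm:LiftToVAdjunction}. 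Either repair consists of the very mate computations you deferred as bookkeeping.

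In $(1)\Rightarrow(3)$ you establish $\cV$-full faithfulness and essential surjectivity and stop, but ``witnesses a $\cV$-equivalence'' requires producing a quasi-inverse $\cV$-functor together with $\cV$-natural isomorphisms; an enriched ``fully faithful plus essentially surjective implies equivalence'' lemma is neither proved in the paper nor free at this level of generality, and the paper's proof avoids needing it by constructing the quasi-inverse explicitly as $\overline{\id^\cC}$ (this is where tensoredness of $\cC$ is used) and reducing the equivalence to invertibility of $\tau$. Relatedly, your density reduction is phrased in terms of $\cV$-natural transformations between the contravariant functors $\overline{\cC}(-\to Y)$, which are not available here: as the paper remarks, $\cC^{\op}$ cannot be formed when $\cV$ is merely closed monoidal, so the Yoneda-style argument must be run on underlying categories with ordinary naturality in the covariant variable --- this is precisely the content of Proposition \ref{prop:IsomorphismOfRepresentableFunctors}, whose naturality verifications (Lemma \ref{lem:NaturalityForRepresentableFunctor}, used twice) are substantive rather than cosmetic. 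So the skeleton is salvageable, but as written the proposal omits the identifications and constructions that constitute the theorem's actual proof.
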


Suppose $\cC$ and $\cD$ are $\cV$-categories, such that all representable functors for $\cC^\cV$ and for $\cD^\cV$ admit left adjoints. (In Definition \ref{def:oplax-tensor} we called such categories \emph{oplax tensored}.) In Definition \ref{defn:TensoredVFunctor} we define the notion of a $\cV$-functor $\cF:\cC \to \cD$ being \emph{tensored}, and we sketch this here.
Given a $\cV$-functor $\cF: \cC \to \cD$ 
the underlying functor $\cF^\cV : \cC^\cV \to \cD^\cV$ can be canonically endowed with the structure of a strongly unital \emph{lax} $\cV$-module functor between the \emph{oplax} $\cV$-module categories obtained from Theorem \ref{thm:OplaxVMod}.
Indeed, we define the \emph{laxitor} $\mu_{c,v}^\cF \in \cD^\cV(\cF(c)\vartriangleleft v \to \cF(c\vartriangleleft v))$ to be the mate of 
$\eta^\cC \circ \cF_{c \to c\vartriangleleft v}$ under the adjunction
$$
\cD^\cV(\cF(c)\vartriangleleft v \to \cF(c\vartriangleleft v)) 
= 
\cV(1_\cV \to \cD(\cF(c)\vartriangleleft v \to \cF(c\vartriangleleft v)))
\cong
\cV(v \to \cD(\cF(c) \to \cF(c\vartriangleleft v))),
$$
where $\eta^\cC$ is the unit of the adjunction
$\cV(v \to \cC(c \to c\vartriangleleft v)) \cong \cC^\cV(c\vartriangleleft v\to c\vartriangleleft v)$.
Now we say the $\cV$-functor $\cF$ is tensored if $\mu^\cF_{c,v}$ is an isomorphism for all $c\in \cC$ and $v\in \cV$.

As an interesting aside, we prove in Lemma \ref{lem:RigidCharacterization} that a closed monoidal category $\cV$ is \emph{rigid} if and only if every $\cV$-representable functor $\widehat{\cV}(v \to -)$ is tensored.
As a corollary, we see that when $\cV$ is rigid, then a $\cV$-category $\cC$ is tensored if and only if it is $\cV$-equivalent to its $\cV$-completion via the inclusion $\cI: \cC \to \overline{\cC}$.

Returning to the $\cV$-monoidal setting, when $\cC$ is an arbitrary $\cV$-monoidal category, in \S\ref{sec:CompletionForVMonoidalCategories},
we endow $\overline{\cC}$ with a $\cV$-monoidal structure by defining 
$(a\blacktriangleleft u)(b\blacktriangleleft v) := ab\blacktriangleleft uv$ 
and
the tensor product morphism $-\otimes_{\overline{\cC}}-$ as the mate of
$$
(\id_u \beta_{w, \widehat{\cV}(u\to\cC(a\to b)v)} \id_{\widehat{\cV}(w\to \cC(c\to d)x)})
\circ
(\varepsilon^{\widehat{\cV}}_{u \to \cC(a\to b)v} \varepsilon^{\widehat{\cV}}_{w \to \cC(c\to d)x})
\circ
(\id_{\cC(a\to b)} \beta_{v,\cC(c\to d)}^{-1} \id_x)
\circ
((-\otimes_\cC-)\id_{vx})
$$
under the adjunction
\begin{align*}
\cV(\widehat{\cV}(u\to \cC(a\to b)v)& \widehat{\cV}(w\to \cC(c\to d)x) \to \widehat{\cV}(uw\to \cC(ac\to bd)vx)
\\&\cong
\cV(uw \widehat{\cV}(u\to \cC(a\to b)v)\widehat{\cV}(w\to \cC(c\to d)x) \to \cC(ac\to bd)vx).
\end{align*}
In this setting, the $\cV$-functor $\cI : \cC \to \overline{\cC}$ can be trivially equipped with the structure of a $\cV$-monoidal functor.
Again as being tensored is a property of the underlying $\cV$-category, we get the following formal consequence of Theorem \ref{thm:IntroEquivalentCharacterizationsOfEquivalence}.

\begin{cor}
Suppose $\cC$ is a tensored closed $\cV$-monoidal category.
The following are equivalent:
\begin{enumerate}
\item
Every $\cV$-representable functor $\cR^a=\cC(a\to -):\cC\to \widehat{\cV}$ is tensored.
\item
The $\cV$-functor $\cI: \cC \to \overline{\cC}$ is tensored.
\item
The $\cV$-monoidal functor $\cI: \cC \to \overline{\cC}$ witnesses a $\cV$-monoidal equivalence.
\end{enumerate}
\end{cor}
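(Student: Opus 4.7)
The plan is to reduce everything to Theorem~\ref{thm:IntroEquivalentCharacterizationsOfEquivalence}, exploiting the fact, emphasized throughout the introduction, that being tensored for a $\cV$-category (or for a $\cV$-functor) depends only on the underlying $\cV$-category structure, not on any monoidal structure present. Since conditions (1) and (2) are phrased purely in terms of tensoredness of $\cV$-functors, applying Theorem~\ref{thm:IntroEquivalentCharacterizationsOfEquivalence} to the underlying $\cV$-category of $\cC$ immediately yields (1)$\iff$(2), and also yields the equivalence of these two conditions with the statement that $\cI$ is an underlying $\cV$-equivalence. Thus the entire content of the corollary is to promote this underlying $\cV$-equivalence to a $\cV$-\emph{monoidal} equivalence.

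The implication (3)$\Rightarrow$(2) is immediate: a $\cV$-monoidal equivalence has an underlying $\cV$-equivalence, so Theorem~\ref{thm:IntroEquivalentCharacterizationsOfEquivalence} gives (2). For (2)$\Rightarrow$(3), I would start from the underlying $\cV$-equivalence provided by Theorem~\ref{thm:IntroEquivalentCharacterizationsOfEquivalence} and verify that the $\cV$-monoidal structure placed on $\cI$ in \S\ref{sec:CompletionForVMonoidalCategories} is strong. Given the formula $(a\blacktriangleleft u)(b\blacktriangleleft v):=ab\blacktriangleleft uv$ on $\overline{\cC}$ and $\cI(c):=c\blacktriangleleft 1_\cV$, the candidate tensorator for $\cI$ is built from the unitor $1_\cV\otimes 1_\cV\cong 1_\cV$ of $\cV$, hence is automatically an isomorphism; similarly the unitor of $\cI$ is trivial. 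So $\cI$ is already a strong $\cV$-monoidal functor, and combining this with its being a $\cV$-equivalence on underlying $\cV$-categories yields the $\cV$-monoidal equivalence claimed.

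The only non-cosmetic step is the verification that the tensorator of $\cI$, extracted from the formula for $-\otimes_{\overline\cC}-$ displayed just above the corollary, does reduce to the unit isomorphism on $\cV$ after both slots are fed $1_\cV$; this is a routine unwinding of the definitions (collapsing the $\varepsilon^{\widehat\cV}$ terms and the braiding $\beta_{1_\cV,-}$ using strong unitality) and I expect no surprises. The main conceptual obstacle, if there is one, is bookkeeping: one must be careful that the $\cV$-monoidal equivalence is witnessed by the same data as the underlying $\cV$-equivalence from Theorem~\ref{thm:IntroEquivalentCharacterizationsOfEquivalence}, so that no independent construction of a quasi-inverse with monoidal structure is needed. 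This is guaranteed here because Theorem~\ref{thm:IntroEquivalentCharacterizationsOfEquivalence} takes $\cI$ itself as the witness, and we have just equipped $\cI$ with a strong $\cV$-monoidal structure.
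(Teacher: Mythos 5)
Your reduction of (1)$\Leftrightarrow$(2), and of (3)$\Rightarrow$(2), to Theorem \ref{thm:IntroEquivalentCharacterizationsOfEquivalence} is fine, and your observation about $\cI$ itself is correct but not where the difficulty lies: the paper defines $\nu^{\cI}_{a,b}=j_{ab\blacktriangleleft 1_\cV}$, so $\cI$ is trivially a (strong) $\cV$-monoidal functor. The genuine gap is in your final step for (2)$\Rightarrow$(3), where you assert that a strong $\cV$-monoidal functor whose underlying $\cV$-functor is a $\cV$-equivalence ``yields the $\cV$-monoidal equivalence claimed.'' A $\cV$-monoidal equivalence requires a quasi-inverse carrying a $\cV$-monoidal structure together with \emph{monoidal} $\cV$-natural isomorphisms between the composites and the identities, and nothing in the paper (nor in your argument) provides the enriched analogue of the classical transport-of-structure fact you are implicitly invoking. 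In the braided-enriched setting this is not formal: the paper obtains the monoidal structure on the quasi-inverse $\overline{\id^\cC}$ from Theorem \ref{thm:UniversalPropertyForVMonoidal}, whose tensorator $\nu^{\overline{\cF}}_{a\blacktriangleleft u,b\blacktriangleleft v}=(\nu^\cF_{a,b}\nu^\cG_{u,v})\circ(\id\, e_{\cF(b),\cG(u)}\,\id)$ is built from the half-braiding of the strong monoidal classifying functor $\cV\to Z(\cC^\cV)$ supplied by Theorem \ref{thm:TensoredVMonoidalEquivalence} --- this is exactly where the hypothesis that $\cC$ is \emph{closed} (and tensored) enters, a hypothesis your argument never uses, which is a sign the essential content has been bypassed.

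Beyond equipping $\overline{\id^\cC}$ with this structure, one must also verify that the two comparison transformations are monoidal: the isomorphism $\sigma:\cF\Rightarrow\cI\circ\overline{\cF}$ is shown to be monoidal inside the proof of Theorem \ref{thm:UniversalPropertyForVMonoidal}, and the transformation $\tau_{a\blacktriangleleft u}:=\mu^\cI_{a,u}$ of Lemma \ref{lem:MuIsANaturalTransformation} is shown to be monoidal in a separate lemma (again using the half-braiding and strong monoidality of the classifying functor). Only then does the corollary ``follow formally'' from Theorem \ref{thm:WhenTensoredIsEquivalentToCompletion}, as in the paper's proof of Theorem \ref{thm:EquivalentConditionsForVMonoidalCComplete}. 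To repair your proposal you would either have to carry out these verifications, or prove an enriched version of the statement ``a strong $\cV$-monoidal functor that is a $\cV$-equivalence of underlying $\cV$-categories is a $\cV$-monoidal equivalence,'' checking in particular that the transported tensorator on the quasi-inverse satisfies the enriched naturality condition \eqref{eq:NaturalityForVMonoidal}; neither is a one-line formality.
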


As an example, which has many obvious generalizations, one may consider the category of vector spaces $\Vec$ as enriched in super vector spaces $\sVec$ in the trivial way which does not see the odd line.
The completion $\overline{\Vec}$ is again $\sVec$.

Finally, in \S\ref{sec:CompletionClosedWhenVRigid}, we discuss the open question of whether $\cC$ being closed $\cV$-monoidal implies $\overline{\cC}$ is closed.
This problem is similar in spirit to the open question raised in Remark \ref{remark:OpenQuestionAboutCompletion} pertaining to the universal property of the completion.
Clearly any of the hypotheses of the above corollary imply $\overline{\cC}$ is closed, as it is then $\cV$-monoidally equivalent to the closed tensored $\cV$-monoidal category $\cC$.
We show that $\overline{\cC}$ is closed when $\cC$ is closed whenever $\cV$ is rigid in Proposition \ref{prop:ClosedImpliesCompletionClosed} below.

\subsection*{Acknowledgements}

Substantial portions of this project were discussed at
\begin{itemize}
\item
the 2016 Trimester on von Neumann algebras at the Hausdorff Institute for Mathematics (HIM)
\item
the 2016 Noncommutative Geometry and Operator Algebras Spring Institute at the Isaac Newton Institute
supported by EPSRC grant no EP/K032208/1
\item
the 2016 and 2017 AIM SQuaREs on Classifying fusion categories, and
\item
the 2018 AMS MRC on Quantum Symmetries: Subfactors and Fusion Categories
supported by NSF DMS grant 1641020.
\end{itemize}
The authors would like to thank all the various organizers and granting agencies involved.

The authors would like to thank 
Marcel Bischoff,
Andr\'{e} Henriques, 
Ross Street, and
Kevin Walker
for helpful conversations.
Scott Morrison was partially supported by Australian Research Council grants
`Subfactors and symmetries' DP140100732 and `Low dimensional categories' DP160103479.
David Penneys was partially supported by NSF DMS grants 1500387/1655912 and 1654159.
Julia Plavnik was partially supported by NSF DMS grant 1802503.

\section{Enriched categories}

For this article, $\cV$ will denote a monoidal category.
As in \cite{1701.00567}, we will always write composition of morphisms from left to right, contrary to the way most mathematics is written.
This results in other conventional differences which we address as they arise.
To ease the notation, we will omit writing tensor product symbols whenever possible and suppress all unitors and associators in $\cV$.

\begin{nota}
\label{nota:InternalHomAndCompostionInVHat}
When $\cV$ is closed, our convention for the internal hom $\widehat{\cV}(u\to v)$ for $u,v: \cV$ is given by the following adjunction:
\begin{equation}
\label{eq:InternalHom}
\cV(uw \to v)
\cong
\cV(w \to \widehat{\cV}(u\to  v)).
\end{equation}
The \emph{evaluation morphism} or \emph{counit} $\varepsilon^{\widehat{\cV}}_{u\to v}\in \cV(u\widehat{\cV}(u\to v) \to v)$ is the mate of $\id_{\widehat{\cV}(u\to v)}$ under the adjunction
$$
\cV(u\widehat{\cV}(u\to v)\to v)
\cong
\cV(\widehat{\cV}(u\to v)\to \widehat{\cV}(u\to v)).
$$
\end{nota}

\subsection{\texorpdfstring{$\cV$}{V}-modules}

We briefly discuss module categories and module functors.

\begin{defn}
An \emph{oplax} right $\cV$-module category is a 1-category $\cM$ together with the following data:
\begin{itemize}
\item
a bifunctor $\vartriangleleft : \cM \times \cV \to \cM$
\item
\emph{oplaxitor} morphisms
$\alpha_{m,u,v}\in
\cM(m \vartriangleleft uv
\to
m\vartriangleleft u\vartriangleleft v)$
for $m\in \cM$, $u,v\in \cV$,
and
\item
distinguished morphisms $\rho_m\in \cM(m\vartriangleleft 1_\cV \to m)$
\end{itemize}
satisfying the following axioms:
\begin{itemize}
\item
(naturality)
the $\alpha_{m,u,v}$ and $\rho_m$ are natural in all variables,
\item
(associativity) for all $m\in \cM$ and $u,v,w\in \cV$, the following diagram commutes (where we suppress the associator in $\cV$)
$$
\begin{tikzcd}
m\vartriangleleft uvw
\ar[rr, "\alpha_{m,u,vw}"]
\ar[d, "\alpha_{m,uv,w}"]
&&
m\vartriangleleft u\vartriangleleft vw
\ar[d, "\alpha_{m\vartriangleleft u,v,w}"]
\\
m\vartriangleleft uv \vartriangleleft w
\ar[rr, "\alpha_{m,u,v}\vartriangleleft \id_w"]
&&
m\vartriangleleft u\vartriangleleft v \vartriangleleft w
\end{tikzcd},
$$
and
\item
(unitality)
for all $m\in \cM$ and $v\in \cV$, the following diagram commutes:
$$
\begin{tikzcd}
m\vartriangleleft 1_\cV v
\ar[dr, "{=}"]
\ar[rr, "\alpha_{m,1_\cV, v}"]
&&
m\vartriangleleft 1_\cV \vartriangleleft v
\ar[dl, "\rho_m\vartriangleleft \id_v"]
\\
{}
&
m\vartriangleleft v
\end{tikzcd}
$$
\end{itemize}
An oplax right $\cV$-module category is called \emph{strongly unital}\footnote{In \cite{1701.00567} we called this property `strictly unital', but now prefer `strongly unital'.} if all morphisms $\rho$ above are isomorphisms.
An oplax right $\cV$-module category is called a \emph{strong} right $\cV$-module category if all morphisms $\alpha, \rho$ above are isomorphisms.
\end{defn}

Generally, the term `strong' is omitted, and we merely refer to $\cV$-module categories and oplax $\cV$-module categories.
There is a similar notion of a(n oplax) left $\cV$-module category.

\begin{defn}
\label{defn:LaxVModuleFunctor}
Suppose $\cM, \cN$ are right oplax $\cV$-modules.
A \emph{lax $\cV$-module functor $\cM \to \cN$} is a pair $(\cF, \mu)$ where $\cF: \cM \to \cN$ is a functor and $\mu$ is a family of maps
$
\{
\mu_{m,v}\in \cN(\cF(m)\vartriangleleft v \to \cF(m\vartriangleleft v))
\}
$
such that
\begin{itemize}
\item 
(naturality)
$\mu_{m,v}$ is natural in $m\in \cM$ and $v\in \cV$,
\item
(associativity)
for all $m\in\cM$ and $u,v\in \cV$,
$\mu_{m,uv}\circ \cF(\alpha^\cC_{m,u,v}) = \alpha^\cD_{\cF(m), u,v}\circ (\mu_{m,u}\vartriangleleft \id_v) \circ \mu_{m\vartriangleleft u, v}$ as morphisms in $\cN(\cF(m)\vartriangleleft uv \to \cF(m\vartriangleleft u\vartriangleleft v))$, and
\item
(unitality)
for all $m\in \cM$, $\mu_{m,1_\cV}\circ  \cF(\rho^\cM_m) = \rho^\cN_{m} \in \cN(\cF(m)\vartriangleleft 1_\cV \to \cF(m))$.
\end{itemize} 
A lax $\cV$-module functor is called \emph{strongly unital} if every $\mu_{m,1_\cV}$ is an isomorphism.
A lax $\cV$-module functor is called a \emph{(strong) $\cV$-module functor} if every $\mu_{m,v}$ is an isomorphism.

An \emph{equivalence} between oplax $\cV$-modules consists of a pair of strong $\cV$-module functors between oplax $\cV$-modules which is an equivalence of the underlying categories.
\end{defn}

\begin{remark}
\label{rem:AutomaticStrictUnital}
If $\cM$ and $\cN$ are strongly unital so that $\rho^\cM_m$ and $\rho^\cN_{n}$ are isomorphisms for $m\in\cM$ and $n\in \cN$, then any lax $\cV$-module functor $\cF$ is automatically strongly unital.
Observe two out of three morphisms in the unital relation being invertible implies the third is invertible as well.
\end{remark}

\subsection{\texorpdfstring{$\cV$}{V}-categories, \texorpdfstring{$\cV$}{V}-functors, and \texorpdfstring{$\cV$}{V}-natural transformations}

We now recall the basics of enriched categories from \cite[\S1]{MR2177301}.

\begin{defn}
A $\cV$-category $\cC$ consists of a collection of objects,
an assignment of a \emph{hom object} $\cC(a\to b)$ to every pair of objects $a,b\in \cC$,
to each object $a\in \cC$, an \emph{identity element} $j_a\in \cV(1_\cV \to \cC(a\to a))$,
and to each triple of objects $a,b,c\in \cC$, a \emph{composition morphism}
$
-\circ_\cC - \in \cV(\cC(a\to b)\cC(b\to c) \to \cC(a\to c))
$.
The composition morphisms must be associative, and the identity element must satisfy $(j_a \id_{\cC(a\to b)})\circ (-\circ_\cC-) = \id_{\cC(a\to b)} = (\id_{\cC(a\to b)}j_b)\circ (-\circ_\cC-)$ for all $a,b\in \cC$.
\end{defn}

\begin{example}[Self enrichment]
\label{example:SelfEnrichment}
When $\cV$ is closed, we may define the self enriched category $\widehat{\cV}$, which is $\cV$ thought of as a $\cV$-category.
The objects are the same as before, and hom objects are given by internal hom $\widehat{\cV}(u \to v)$.
The composition map $-\circ_{\widehat{\cV}}-:\widehat{\cV}(u \to v)\widehat{\cV}(v \to w) \to \widehat{\cV}(u \to w)$ is the mate of $(\varepsilon^{\widehat{\cV}}_{u\to v} \id_{\widehat{\cV}(v \to w)})\circ \varepsilon^{\widehat{\cV}}_{v\to w}$ under the adjunction
$$
\cV(u\widehat{\cV}(u \to v)\widehat{\cV}(v \to w) \to w) \cong \cV(\widehat{\cV}(u \to v)\widehat{\cV}(v \to w)\to \widehat{\cV}(u \to w)).
$$
\end{example}

\begin{defn}
Suppose $\cC, \cD$ are $\cV$-categories.
A $\cV$-functor $\cF: \cC \to \cD$ is a function on objects together with an assignment of a morphism $\cF_{a\to b}\in \cV(\cC(a\to b) \to \cD(\cF(a) \to \cF(b)))$ to each pair of objects $a,b\in\cC$  such that $(\cF_{a\to b}\cF_{b\to c})\circ (-\circ_\cD -) = (-\circ_\cC-) \circ \cF_{a\to c}$.
\end{defn}

\begin{example}[$\cV$-representable functors]
\label{ex:VRepresentable}
Suppose $\cC$ is a $\cV$-category with $\cV$ closed and $a\in \cC$.
We define the $\cV$-\emph{representable functor}\footnote{
Kelly defines $\cV$-representable functors in \cite[\S 1.6]{MR2177301} under the assumption $\cV$ is symmetric, but this assumption is not necessary.
Indeed, $\cV$ does not even need to be braided, merely monoidal.
This is also observed by \cite{MR649797}.
}
$\cC(a\to -): \cC\to \widehat{\cV}$ on objects by $b\mapsto \cC(a\to b)$ and $\cC(a\to b)_{b\to c}$ is the mate of $-\circ_\cC-$ under the adjunction
$$
\cV(\cC(b\to c) \to \widehat{\cV}(\cC(a\to b)\to \cC(a\to c)))
\cong
\cV(\cC(a\to b)\cC(b\to c) \to \cC(a\to c)).
$$
\end{example}

Suppose $\cF, \cG: \cD \to \cD$ are $\cV$-functors.

\begin{defn}
A $1_\cV$-graded $\cV$-natural transformation $\sigma : \cF \Rightarrow \cG$ is a collection of morphisms $\sigma_a\in \cV(1_\cV \to \cD(\cF(a) \to \cG(a)))$ such that for all $a,b\in \cC$, the following diagram commutes:
\begin{equation}
\label{eq:VNaturalTransformation}
\begin{tikzcd}
\cC(a\to b)
\ar[rr, "\sigma_a\cG_{a\to b}"]
\ar[d, "\cF_{a\to b}\sigma_b"]
&&
\cD(\cF(a) \to \cG(a))\cD(\cG(a)\to \cG(b))
\ar[d, "-\circ_\cD-"]
\\
\cD(\cF(a)\to \cF(b))\cD(\cF(b) \to \cG(b))
\ar[rr, "-\circ_\cD-"]
&&
\cD(\cF(a)\to \cG(b)).
\end{tikzcd}
\end{equation}
\end{defn}

\subsection{The underlying category/functor}

Let $\cC$ be a $\cV$-category.

\begin{defn}
The \emph{underlying category} $\cC^\cV$ of $\cC$ has the same objects as $\cC$, and the morphism sets are given by $\cC^\cV(a\to b) = \cV(1_\cV \to \cC(a\to b))$.
The identity in $\cC^\cV(a\to a) = \cV(1_\cV \to \cC(a\to a))$ is $j_a$, and composition is given for $f\in \cC^\cV(a\to b)$ and $g\in \cC^\cV(b\to c)$ by $f\circ g = (fg)\circ (-\circ_\cC-)$.
It is straightforward to verify using the axioms of a $\cV$-category that $\cC^\cV$ is an ordinary category.
\end{defn}

\begin{example}
\label{ex:UnderlyingCategoryOfVhat}
When $\cV$ is closed, the underlying category $\widehat{\cV}^\cV$ can be identified with $\cV$ under the adjunction
$$
\widehat{\cV}^\cV(u \to v) = \cV(1_\cV \to [u,v]) \cong \cV(u \to v).
$$
\end{example}

Suppose now $\cF: \cC \to \cD$ is a $\cV$-functor between $\cV$-categories.

\begin{defn}
The \emph{underlying functor} $\cF^\cV : \cC^\cV \to \cD^\cV$ is defined on objects by $\cF^\cV(c) = \cF(c)$ and on morphims by mapping $f\in \cC^\cV(a\to b)$ to $f\circ \cF_{a\to b}\in \cD^\cV(\cF(a)\to \cF(b))$.
It is straightforward to verify using the axioms of a $\cV$-functor that $\cF^\cV$ is an ordinary functor.
\end{defn}

\begin{example}[Representable functors]
\label{ex:Representable}
Suppose $\cC$ is a $\cV$-category and $a\in \cC$.
We define the \emph{representable functor} $\cR_a = \cC(a\to -): \cC^\cV \to \cV$ on objects by $b\mapsto \cC(a\to b)$ and on morphisms $f\in \cC^\cV(b \to c) = \cV(1_\cV \to \cC(b\to c))$ by $\cR_a(f)=(\id_{\cC(a\to b)} f)\circ (-\circ_\cC-)$.
It is straightforward to verify tht $\cR_a$ is a functor using the axioms of a $\cV$-category.
\end{example}

\begin{lem}
\label{lem:UnderlyingFunctorOfVRepresentable}
Suppose $\cV$ is closed, $\cC$ is a $\cV$-category, and $a\in \cC$.
Under the identification of $\widehat{\cV}^\cV = \cV$ in Example \ref{ex:UnderlyingCategoryOfVhat},
the underlying functor of the $\cV$-representable functor $\cR^a: \cC \to \widehat{\cV}$ is equal to the representable functor $\cR_a = (\cR^a)^\cV: \cC^\cV \to \cV$.
\end{lem}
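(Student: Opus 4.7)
The plan is simply to unwind all the definitions and observe that they match after applying the identification $\widehat{\cV}^\cV = \cV$ of Example \ref{ex:UnderlyingCategoryOfVhat}. On objects there is nothing to check: both $(\cR^a)^\cV$ and $\cR_a$ send $b \mapsto \cC(a\to b)$. So the content lies in comparing the two prescriptions on morphisms $f\in \cC^\cV(b\to c) = \cV(1_\cV \to \cC(b\to c))$.

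First I would unwind $(\cR^a)^\cV(f)$. By definition of the underlying functor, this is the composite $f\circ \cR^a_{b\to c}$, viewed as an element of $\widehat{\cV}^\cV(\cC(a\to b)\to \cC(a\to c)) = \cV(1_\cV \to \widehat{\cV}(\cC(a\to b)\to \cC(a\to c)))$. Under the identification of Example \ref{ex:UnderlyingCategoryOfVhat}, which comes from the adjunction \eqref{eq:InternalHom} with $w = 1_\cV$, this element corresponds to its mate in $\cV(\cC(a\to b) \to \cC(a\to c))$, namely
\[
(\id_{\cC(a\to b)}\, (f\circ \cR^a_{b\to c}))\circ \varepsilon^{\widehat{\cV}}_{\cC(a\to b)\to \cC(a\to c)}.
\]
Using functoriality of the tensor product, this factors as $(\id_{\cC(a\to b)} f)\circ (\id_{\cC(a\to b)}\cR^a_{b\to c})\circ \varepsilon^{\widehat{\cV}}_{\cC(a\to b)\to \cC(a\to c)}$.

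Next I would invoke the defining property of $\cR^a_{b\to c}$ from Example \ref{ex:VRepresentable}: it is the mate of $-\circ_\cC-$ under the adjunction \eqref{eq:InternalHom}, which by the description of the counit in Notation \ref{nota:InternalHomAndCompostionInVHat} says exactly
\[
(\id_{\cC(a\to b)}\cR^a_{b\to c})\circ \varepsilon^{\widehat{\cV}}_{\cC(a\to b)\to \cC(a\to c)} = -\circ_\cC -.
\]
Substituting, the expression above reduces to $(\id_{\cC(a\to b)} f)\circ (-\circ_\cC-)$, which is exactly $\cR_a(f)$ as in Example \ref{ex:Representable}. This completes the identification.

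The only subtlety is bookkeeping: one must carefully track the direction of the mate correspondence and make sure the counit $\varepsilon^{\widehat{\cV}}$ appears on the correct side (recall the left-to-right composition convention). There is no hard step—no naturality square or coherence diagram to chase—only a single use of the universal property of the counit. If desired, one can also phrase the whole argument more cleanly by noting that $\cR_a(f)$ is by construction the mate of $f\circ \cR^a_{b\to c}$, which is precisely the content of the identification $\widehat{\cV}^\cV = \cV$.
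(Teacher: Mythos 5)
Your proof is correct and follows essentially the same route as the paper's: both identify $(\cR^a)^\cV(f)=f\circ\cR^a_{b\to c}$ with its mate under the identification $\widehat{\cV}^\cV=\cV$ and use the defining property of $\cR^a_{b\to c}$ from Example \ref{ex:VRepresentable} to recognize it as $(\id_{\cC(a\to b)}f)\circ(-\circ_\cC-)=\cR_a(f)$. Your version merely makes the counit computation explicit, which the paper leaves implicit.
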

\begin{proof}
On objects, we have $(\cR^a)^\cV(b) = \cC(a\to b) = \cR_a(b)$.
If $f\in \cC^\cV(b\to c) = \cV(1_\cV \to \cC(a\to b))$,
$(\cR^a)^\cV(f)$ is the mate of $f\circ \cR^a_{b\to c}$ under the adjunction
$$
\widehat{\cV}^\cV( \cC(a\to b) \to \cC(a \to c))
=
\cV(1_\cV \to \widehat{\cV}(\cC(a\to b)\to \cC(a\to c)))
\cong
\cV(\cC(a\to b)\to  \cC(a\to c)),
$$
which is exactly $(\id_{\cC(a\to b)} f)\circ (-\circ_\cC-) = \cR_a(f)$ by Example \ref{ex:VRepresentable}.
\end{proof}

Suppose that $\cF,\cG: \cC \to \cD$ are $\cV$-functors and $\sigma: \cF \Rightarrow \cG$ is a $\cV$-natural transformation.

\begin{defn}
The \emph{underlying natural transformation} $\sigma^\cV: \cF^\cV \Rightarrow \cG^\cV$ is defined by
$\sigma^\cV_a = \sigma_a \in \cV(1_\cV \to \cD(\cF(a) \to \cG(a))) = \cD^\cV(\cF^\cV(a)\to \cG^\cV(a))$.
It is straightforward to verify using \eqref{eq:VNaturalTransformation} that $\sigma^\cV$ is an ordinary natural transformation.
\end{defn}

\subsection{Representable functors, mates, and the Yoneda Lemma}

Suppose $\cC$ and $\cD$ are ordinary categories, $\cL: \cC \to \cD$ and $\cR: \cD \to \cC$ are functors, and $\cL$ is left adjoint to $\cR$, denoted $\cL\dashv \cR$.
This means for all $a\in \cC$ and $d\in \cD$, we have a natural isomorphism
\begin{equation}
\cD(\cL(a) \to d) \cong \cC(a \to \cR(d)).
\end{equation}
For $f\in \cD(\cL(a) \to d)$ and $g\in \cC(a \to \cR(d))$, we say $f$ is the mate of $g$ if $g$ is corresponds to $f$ under the above natural isomorphism. 
We also say $g$ is the mate of $f$.

The following helpful identities hold via naturality.
\begin{align}
\label{eq:MateWithL}
&\text{
If $f_1\in  \cC(a\to \cR(c))$ and $f_2\in \cC(\cR(c)\to \cR(d))$, we have $\mate(f_1\circ f_2) = \cL(f_1)\circ\mate(f_2)$.
}
\\
\label{eq:MateWithR}
&\text{
If $g_1\in  \cD(\cL(a)\to \cL(b))$ and $g_2\in \cD( \cL(b)\to d)$, we have $\mate(g_1\circ g_2) = \mate(g_1)\circ\cR(g_2)$.
}
\end{align}
The \emph{counit} of the adjunction $\cL \dashv \cR$ is the natural isomorphism $\eta: \id_\cC \Rightarrow \cL\circ \cR$ defined by $\eta_a \in \cC(a \to \cR(\cL(a)))\cong \cD(\cL(a) \to \cL(a))$ is the mate of $\id_{\cL(a)}$.
The \emph{unit} of the adjunction is the natural isomorphism $\varepsilon: \cR\circ \cL \Rightarrow \id_\cD$ where $\varepsilon_d \in \cD(\cL(\cR(d)) \to d)\cong \cC(\cR(d) \to \cR(d))$ is the mate of $\id_{\cR(d)}$.

The Yoneda lemma gives fully faithful functors $\cC \hookrightarrow \Fun(\cC \to \Set)$ by $c \mapsto \cC(c \to -)$ and  $\cC \hookrightarrow \Fun(\cC^{\op} \to \Set)$ by $c \mapsto \cC( -\to c)$.
Recall that a functor $\cF: \cC \to \Set$ is called \emph{representable} if there is a pair $(c, \gamma)$ consisting of a representing object $c\in \cC$ and a natural isomorphism $\gamma:\cF\Rightarrow \cC(c \to -)$.
Given any two representations $(a,\alpha)$ and $(b, \beta)$ of $\cF$, there is a canonical isomorphism $f\in \cC(a\to b)$ such that $\cC(f\to -) = \alpha^{-1} \circ \beta$.
Moreover, we obtain $f$ and its inverse by taking the `mate'\footnote{The use of the term `mate' in this context is not standard nomenclature, but it gives a good feel for the style of the proofs that follow using mates.} of the appropriate identity morphisms under the natural isomorphisms
$$
\cC(a\to c) \cong \cF(c) \cong \cC(b \to c).
$$
Taking $c= b$, we have $f=\alpha_b(\beta^{-1}_b(\id_b))$.
Taking $c = a$, we have $f^{-1} = \beta_a(\alpha_a^{-1}(\id_a))$.
We summarize this fact as follows:
\begin{equation}
\label{eq:YonedaInverses}
\xymatrix@R=2pt{
\cC(a\to b)
\ar@{<->}[r]^(.57){\cong}
&
\cF(b)
\ar@{<->}[r]^(.43){\cong}
&
\cC(b\to b)
&
\cC(a \to a)
\ar@{<->}[r]^(.57){\cong}
&
\cF(a)
\ar@{<->}[r]^(.43){\cong}
&
\cC(b\to a)
\\
f
\ar@{<->}[r]
&
\beta_b^{-1}(\id_b)
\ar@{<->}[r]
&
\id_b
&
\id_{a}
\ar@{<->}[r]
&
\alpha_a^{-1}(\id_a)
\ar@{<->}[r]
&
f^{-1}
}
\end{equation}

\subsection{\texorpdfstring{$\cV$}{V}-adjunctions}
\label{sec:VAdjunctions}

Suppose $\cC$ and $\cD$ are $\cV$-categories and $\cL: \cC \to \cD$ and $\cR: \cD \to \cC$ are $\cV$-functors.

\begin{defn}
We say that $\cL$ is a left $\cV$-adjoint of $\cR$ (equivalently $\cR$ is a right $\cV$-adjoint of $\cL$), denoted $\cL \dashv_\cV \cR$, if
there is a family of isomorphisms $\theta_{a, d} \in\cV(\cD(\cL(a)\to d)\to \cC(a\to \cR(d)))$ for $a\in \cC$ and $d\in \cD$,
such that for all $a,b\in \cC$ and all $c,d\in \cD$, the following two diagrams commute:
\begin{equation}
\label{eq:V-Adjoint1}
\begin{tikzcd}
\cC(a\to b) \cC(b \to \cR(d))
\ar[d, "\cL_{a\to b}\theta_{b, d}^{-1}"]
\ar[rr, "-\circ_\cC-"]
&&
\cC(a\to \cR(d))
\ar[d, "\theta_{a, d}^{-1}"]
\\
\cD(\cL(a)\to \cL(b))\cD(\cL(b) \to d)
\ar[rr, "-\circ_\cD-"]
&&
\cD(\cL(a)\to d)
\end{tikzcd}
\end{equation}
\begin{equation}
\label{eq:V-Adjoint2}
\begin{tikzcd}
\cD(\cL(a) \to c)\cD(c\to d)
\ar[d, "\theta_{a, c}\cR_{c\to d}"]
\ar[rr, "-\circ_\cD-"]
&&
\cD(\cL(a) \to d)
\ar[d, "\theta_{a, d}"]
\\
\cC(a \to \cR(c))\cC(\cR(c)\to \cR(d))
\ar[rr, "-\circ_\cC-"]
&&
\cC(a\to \cR(d))
\end{tikzcd}
\end{equation}
\end{defn}

\begin{remark}
In the above definition, we would like to be able to simply say that there is a $1_\cV$-graded natural isomorphism between functors
\begin{equation}
\label{eq:VNaturalIso}
\cD(\cL(a) \to d) \cong \cC(a \to \cR(d)).
\end{equation}
from $\cC^{\op} \times \cD \to \widehat{\cV}$
but this doesn't quite make sense at our level of generality; without assuming $\cV$ is braided we cannot form $\cC^{\op}$ nor the product $\cV$-category.
\end{remark}

\begin{remark}
Existence of a left $\cV$-adjoint of $\cR$ is equivalent to each $\cV$-functor $\cC(a \to \cR(-)) : \cD \to \widehat{\cV}$ being $\cV$-representable as in Example \ref{ex:VRepresentable}, and similarly for the existence of a right $\cV$-adjoint \cite[\S1.11]{MR2177301}.
Here, the $\cV$-functor $\cC(a \to \cR(-))$ is defined by setting
$\cC(a \to \cR(-))_{c\to d}$ to be the mate of $(\id \cR_{c\to d})\circ(-\circ_\cC-)$ under the adjunction
$$
\cV( \cD(c\to d) \to \widehat{\cV}(\cC(a\to \cR(c)) \to \cC(a\to \cR(d)))
\cong
\cV( \cC(a\to \cR(c))\cD(c\to d) \to  \cC(a\to \cR(d))).
$$
Indeed, by applying Adjunction \eqref{eq:InternalHom}, a $\cV$-natural isomorphism $\theta$ as in \eqref{eq:VNaturalIso} is equivalent to a collection of $1_\cV$-graded $\cV$-natural isomorphisms $\sigma^a: \cD(\cL(a)\to -) \Rightarrow \cC(a \to \cR(-))$.
One simply sets $\sigma^a_d$ to be the mate of $\theta_{a\to d}$ under the adjunction
$$
\cV(1_\cV \to \widehat{\cV}(\cD(\cL(a)\to d) \to \cC(a \to \cR(d)))
\cong
\cV(\cD(\cL(a)\to d) \to \cC(a \to \cR(d))).
$$
\end{remark}

\begin{remark}
\label{rem:UnderlyingAdjunction}
Note that if $\cL\dashv_\cV\cR$, then we have an adjunction of underlying functors $\cL^\cV\dashv \cR^\cV$ \cite[\S1.11]{MR2177301}.
Indeed the unit and counit of the underlying adjunction, denoted 
 $\eta^\cV: \id_{\cC^\cV} \Rightarrow \cL^\cV\circ \cR^\cV$ and $\varepsilon^\cV: \cR^\cV\circ \cL^\cV \Rightarrow \id_{\cD^\cV}$,
are given by 
$\eta^\cV_a := j_{\cL(a)}\circ \theta_{a,\cL(a)} \in \cC^\cV(a \to \cR(\cL(a)))$ 
and 
$\varepsilon^\cV_d := j_{\cR(d)}\circ \theta^{-1}_{\cR(d), d} \in \cD^\cV(\cL(\cR(d)) \to d)$.
For later use, we record the helpful relations
\begin{equation}
\label{eq:ThetaKappa}
\begin{split}
\theta_{a,d}
&=
(\eta^\cV_{a}\cR_{\cL(a)\to d} )\circ (-\circ_\cC-)
\in
\cV(\cD(\cL(a)\to d) \to \cC(a \to \cR(d)))
\\
\kappa_{a,d}
:=
\theta^{-1}_{a,d}
&=
(\cL_{a\to \cR(d)} \varepsilon^\cV_{d})\circ (-\circ_\cD-)
\in
\cV(\cC(a\to \cR(d)) \to \cD(\cL(a) \to d))
\end{split}
\end{equation}
which is easily verified using the naturality conditions \eqref{eq:V-Adjoint1} and \eqref{eq:V-Adjoint2}.
\end{remark}

\begin{warning}
\label{warning:BadNotationForUnderlyingAdjunction}
We warn the reader that the notation for the unit $\varepsilon^\cV$ and counit $\eta^\cV$ of the underlying adjunction of an arbitrary $\cV$-adjunction $\cL\dashv\cR$ is very similar to the notation for the unit $\varepsilon^{\widehat{\cV}}$ and the counit $\eta^{\widehat{\cV}}$ of the adjunction 
$\cV(uv \to w) \cong \cV(v \to \widehat{\cV}(u \to w))$.
The unit of this adjunction $\varepsilon^{\widehat{\cV}}$ is also called the evaluation morphism
for the self-enrichment $\widehat{\cV}$ from Notation \ref{nota:InternalHomAndCompostionInVHat} and Example \ref{example:SelfEnrichment}.
\end{warning}

We now prove a helpful lemma on lifting underlying adjunctions to $\cV$-adjunctions which is distilled from the first paragraph in \cite[p.~24]{MR2177301}.
Suppose $\cC,\cD$ are $\cV$-categories with $\cV$ closed, and $\cL: \cC\to \cD$ and $\cR: \cD \to \cC$ are $\cV$-functors.
Suppose $\cL^\cV\dashv \cR^\cV$, and denote the unit and counit of this adjunction by
$\eta^\cV_a \in \cV(1_\cV \to \cC(a \to \cR(\cL(a))))$
and
$\varepsilon^\cV_d \in \cV(1_\cV \to \cD(\cL(\cR(d))\to d))$.
For $a\in \cC$ and $d\in \cD$, define $\theta_{a,d}$ and $\kappa_{a,d}$ via the formulas \eqref{eq:ThetaKappa} above.
(We write $\kappa$ instead of $\theta^{-1}$ as we don't yet know $\theta$ is invertible.)
\begin{lem}
\label{lem:PromoteToVAdjunction}
If $\kappa_{a,d} = \theta_{a,d}^{-1}$ for all $a\in \cC$ and $d\in \cD$, then $\cL\dashv_\cV \cR$.
\end{lem}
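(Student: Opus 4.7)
The hypothesis $\kappa_{a,d} = \theta_{a,d}^{-1}$ already supplies the invertibility required of a $\cV$-adjunction, so the entire content of the lemma is verifying the two commuting diagrams \eqref{eq:V-Adjoint1} and \eqref{eq:V-Adjoint2}. The plan for each is uniform: substitute the explicit formulas from \eqref{eq:ThetaKappa}, use interchange in $\cV$ to move the morphism $\varepsilon^\cV_d$ (respectively $\eta^\cV_a$) past the composition maps, invoke the $\cV$-functor axiom for $\cL$ (respectively $\cR$), and finally apply associativity of composition in $\cD$ (respectively $\cC$) to regroup the result into another instance of $\kappa$ (respectively $\theta$).

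Concretely, for \eqref{eq:V-Adjoint1} I would expand $\theta_{a,d}^{-1} = \kappa_{a,d} = (\cL_{a\to\cR(d)}\,\varepsilon^\cV_d)\circ(-\circ_\cD-)$ and precompose with $-\circ_\cC-$. Interchange rewrites $(-\circ_\cC-)\circ(\cL_{a\to\cR(d)}\,\varepsilon^\cV_d)$ as $((-\circ_\cC-)\circ\cL_{a\to\cR(d)})\,\varepsilon^\cV_d$; the $\cV$-functor axiom for $\cL$ then converts $(-\circ_\cC-)\circ\cL_{a\to\cR(d)}$ into $(\cL_{a\to b}\,\cL_{b\to\cR(d)})\circ(-\circ_\cD-)$; and associativity of $-\circ_\cD-$ finally permits regrouping the trailing $-\circ_\cD-$ with $\cL_{b\to\cR(d)}\,\varepsilon^\cV_d$ to recover $\kappa_{b,d}$. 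The expression collapses to $(\cL_{a\to b}\,\kappa_{b,d})\circ(-\circ_\cD-)$, which is the right side of \eqref{eq:V-Adjoint1}.

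The argument for \eqref{eq:V-Adjoint2} is completely dual. Expanding $\theta_{a,d} = (\eta^\cV_a\,\cR_{\cL(a)\to d})\circ(-\circ_\cC-)$ and precomposing with $-\circ_\cD-$, one uses interchange to pull $\eta^\cV_a$ to the outside, the $\cV$-functor axiom for $\cR$ in the form $(-\circ_\cD-)\circ\cR_{\cL(a)\to d} = (\cR_{\cL(a)\to c}\,\cR_{c\to d})\circ(-\circ_\cC-)$, and associativity of $-\circ_\cC-$ to create a factor of $(\eta^\cV_a\,\cR_{\cL(a)\to c})\circ(-\circ_\cC-) = \theta_{a,c}$, giving $(\theta_{a,c}\,\cR_{c\to d})\circ(-\circ_\cC-)$ as required.

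The main (and essentially only) obstacle is clerical: careful bookkeeping of the suppressed unitors and associators, and of the placement of tensor factors when morphisms with source $1_\cV$ are threaded through the composition maps. No new ideas beyond $\cV$-functoriality of $\cL$ and $\cR$ and associativity of $-\circ_\cC-$ and $-\circ_\cD-$ are invoked; the hypothesis that $\kappa_{a,d}$ is a two-sided inverse to $\theta_{a,d}$ enters only to secure invertibility.
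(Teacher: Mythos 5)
Your proposal is correct and matches the paper's proof: the paper likewise reduces the lemma to the two naturality diagrams, proves \eqref{eq:V-Adjoint2} by expanding $\theta$ via \eqref{eq:ThetaKappa} and using interchange, the $\cV$-functor axiom for $\cR$, and associativity of $-\circ_\cC-$, and dismisses \eqref{eq:V-Adjoint1} as ``as easy''---which is exactly the dual $\kappa$-computation with $\cL$ that you spell out. No substantive difference in approach.
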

\begin{proof}
It remains to prove the naturality conditions \eqref{eq:V-Adjoint1} and \eqref{eq:V-Adjoint2} for $\theta$.
We prove \eqref{eq:V-Adjoint2} and the other is as easy.
For all $a\in \cC$ and $c,d\in \cD$, 
\begin{align*}
(\theta_{a,c}\cR_{c\to d})\circ (-\circ_\cC-)
&=
(
[(\eta^\cV_{a}\cR_{\cL(a)\to c} )\circ (-\circ_\cC-)]
\cR_{c\to d}
)
\circ
(-\circ_\cC-)
\\&=
(
\eta^\cV_{a}
[(\cR_{\cL(a)\to c} \cR_{c\to d})\circ (-\circ_\cC-)]
)
\circ
(-\circ_\cC-)
\\&=
(
\eta^\cV_{a}
[(-\circ_\cD-) \circ\cR_{\cL(a)\to d}]
)
\circ
(-\circ_\cC-)
\\&=
(-\circ_\cD-)
\circ
(
[\eta^\cV_{a}\cR_{\cL(a)\to d}]
\circ
(-\circ_\cC-)
)
\\&=
(-\circ_\cD-) \circ \theta_{a,d}.
\qedhere
\end{align*}
\end{proof}

In Section \ref{sec:TensoredVFunctors}, we will prove another result, Theorem \ref{thm:LiftToVAdjunction}, about promoting ordinary adjunctions to $\cV$-adjunctions.

\subsection{Tensored \texorpdfstring{$\cV$}{V}-categories}

Suppose that $\cV$ is closed so that we may form $\widehat{\cV}$.

\begin{example}
\label{example:InnerHomVAdjunction}
We may consider $u\otimes-$ as a $\cV$-functor $\widehat{\cV}\to \widehat{\cV}$ by setting $(u\otimes-)_{v\to w}$ to be the mate of $\id_u \varepsilon^{\widehat{\cV}}_{v\to w}$ under the adjunction
$$
\cV(uv\widehat{\cV}(v\to w) \to uw)
\cong
\cV(\widehat{\cV}(v\to w) \to \widehat{\cV}(uv\to uw))
=
\cV(\widehat{\cV}(v\to w)\to \widehat{\cV}(uv\to uw))
.
$$
Similarly, we may consider $\widehat{\cV}(u\to -)$ as a $\cV$-functor $\widehat{\cV}\to \widehat{\cV}$ by setting $\widehat{\cV}(u\to -)_{v\to w}$ to be the mate of $-\circ_{\widehat{\cV}}-$ under the adjunction
\begin{align*}
\cV(\widehat{\cV}(u\to v)\widehat{\cV}(v\to w), \widehat{\cV}(u\to w))
&\cong
\cV(\widehat{\cV}(v\to w) \to \widehat{\cV}(\widehat{\cV}(u\to v)\to  \widehat{\cV}(u\to w)))
\\&=
\cV(\widehat{\cV}(v\to w) \to \widehat{\cV}(\widehat{\cV}(u\to v)\to \widehat{\cV}(u\to w)))
.
\end{align*}
It is straightforward to compute that $u\otimes -$ is a left $\cV$-adjoint to $\widehat{\cV}(u\to -)$ \cite[Eq.~(1.27)]{MR2177301}.
\end{example}

\begin{defn}
Following \cite{MR649797}, we call a $\cV$-category \emph{tensored} if each $\cV$-representable functor $\cR^a=\cC(a\to -): \cC \to \widehat{\cV}$ as defined in Example \ref{ex:VRepresentable} admits a left $\cV$-adjoint $\cL^a: \widehat{\cV}\to \cC$.
\end{defn}

\begin{remark}
We will define the notion of a tensored $\cV$-functor between tensored $\cV$-categories in \S\ref{sec:TensoredVFunctors} after we establish the bijective correspondence between $\cV$-categories and $\cV$-module categories in the next section.
We will then prove in Theorem \ref{thm:LiftToVAdjunction} that we may lift an adjunction of underlying functors $\cL^\cV \dashv \cR^\cV$ to a $\cV$-adjunction if and only if $\cL$ is tensored.
\end{remark}

\section{Equivalence between \texorpdfstring{$\cV$}{V}-categories and oplax right \texorpdfstring{$\cV$}{V}-modules}
\label{sec:V-cat and V-mod}

\subsection{\texorpdfstring{$\cV$}{V}-categories to oplax \texorpdfstring{$\cV$}{V}-modules}
\label{sec:V-cat to V-mod}

Suppose $\cC$ is a $\cV$-category.

\begin{defn}
\label{def:oplax-tensor}
We call $\cC$ \emph{oplax tensored} if for all $a\in \cC$, the functor $\cR_a : \cC^\cV \to \cV$ given by $b\mapsto \cC(a\to b)$ has a left adjoint $\cL_a: \cV \to \cC^\cV$.
\end{defn}
(Throughout \cite[\S 4.1]{1701.00567} we assumed our categories satisfied this property without naming it.)

When $\cC$ is oplax tensored, we can endow $\cC^\cV$ with the structure of an oplax right $\cV$-module category.
First, we define $a\vartriangleleft v = \cL_a(v)$ for $a\in \cC^\cV$ and $v\in \cV$, so that we have an adjunction
\begin{equation}
\label{eq:V-cat to V-mod adjunction}
\cC^\cV(a\vartriangleleft v \to b)
=
\cC^\cV(\cL_a(v) \to b)
\cong
\cV(v\to \cR_a(b))
=
\cV(v\to \cC(a\to b)).
\end{equation}
Now for a fixed $a\in \cC^\cV$, for all $v\in \cV$, the unit of the adjunction $\cL_a \dashv\cR_a$ is $\eta^\cC_{a,v} \in \cV(u \to \cC(a\to a\vartriangleleft u))$, which is given by the mate of the identity in $\cC^\cV(a\vartriangleleft u\to a\vartriangleleft u)$.

Clearly $\vartriangleleft$ is functorial in $\cV$ as $a\vartriangleleft v = \cL_a(v)$ is defined via a functor.
To show it is functorial in $\cC^\cV$, we use Adjunction \eqref{eq:V-cat to V-mod adjunction}.
First, given $f\in \cC^\cV(a\to b)$, we get a natural tranformation $\theta^f : \cR_b \Rightarrow \cR_a$ by precomposition with $f$.
We then define for $v\in \cV$ the morphism $f\vartriangleleft \id_v \in \cC^\cV(a\vartriangleleft v \to b\vartriangleleft v)$ to be the mate of
$$
v 
\xrightarrow{\eta^\cC_{b,v}} 
\cR_b(b\vartriangleleft v)
\xrightarrow{\theta^f_{b\vartriangleleft v}}
\cR_a(b\vartriangleleft v)
$$
under Adjunction \eqref{eq:V-cat to V-mod adjunction}.
It is now straightforward to show that $-\vartriangleleft-: \cC^\cV \otimes \cV \to \cC^\cV$ is a bifunctor.
Indeed, to verify the exchange relation
$
(f\vartriangleleft \id_u) \circ (\id_b \vartriangleleft g)
=
(\id_b \vartriangleleft g)\circ (f\vartriangleleft \id_u)
$
for $f\in \cC^\cV(a\to b)$ and $g\in \cV(u\to v)$, we take mates under Adjunction \eqref{eq:V-cat to V-mod adjunction} to see
\begin{align*}
\mate[(f\vartriangleleft \id_u) \circ (\id_b \vartriangleleft g)]
&=
\mate[(f\vartriangleleft \id_u) \circ \cL_b(g)]
=
\mate[f\vartriangleleft \id_u] \circ \cR_a\cL_b(g)
\\&=
\eta^\cC_{b,u} \circ \theta^f_{b\vartriangleleft u} \circ \cR_a\cL_b(g)
=
\eta^\cC_{b,u} \circ \cR_b\cL_b(g)\circ \theta^f_{b\vartriangleleft v}
=
g\circ \eta^\cC_{b,v} \circ \theta^f_{b\vartriangleleft v}
\\&=
\mate[\cL_a(g)\circ \mate[\eta^\cC_{b,v} \circ \theta^f_{b\vartriangleleft v}]]
=
\mate[(\id_a\vartriangleleft g)\circ (f\vartriangleleft \id_v)].
\end{align*}

We now show that $\cC^\cV$ is strongly unital.
For $a\in \cC^\cV$, we define the distinguished morphism $\rho_a$ to be the mate of $j_a = \id_{a}$ under the adjunction
$$
\cC^\cV(a\vartriangleleft 1_\cV \to a)
\cong
\cV(1_\cV \to \cC(a\to a))
=
\cC^\cV(a\to a).
$$

\begin{lem}
\label{lem:StrictlyUnital}
The morphism $\rho_a \in \cC^\cV(a\vartriangleleft 1_\cV \to a)$ is an isomorphism with inverse $\eta^\cC_{a,{1_\cV}}\in \cC^\cV(a \to a\vartriangleleft 1_\cV)$.
\end{lem}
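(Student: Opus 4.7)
My plan is to establish both $\eta^\cC_{a, 1_\cV} \circ \rho_a = \id_a$ and $\rho_a \circ \eta^\cC_{a, 1_\cV} = \id_{a \vartriangleleft 1_\cV}$ by mate calculus for the ordinary adjunction $\cL_a \dashv \cR_a$, leveraging two defining features. First, $\rho_a$ is by construction the mate of $j_a$, which in left-to-right notation reads $\eta^\cC_{a, 1_\cV} \circ \cR_a(\rho_a) = j_a$ in $\cV$. Second, $\eta^\cC_{a, 1_\cV}$ is by construction the mate of $\id_{a \vartriangleleft 1_\cV}$ under the bijection $\cC^\cV(a \vartriangleleft 1_\cV \to a \vartriangleleft 1_\cV) \cong \cV(1_\cV \to \cC(a \to a \vartriangleleft 1_\cV))$. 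The translation tool between the two pictures will be the explicit formula $\cR_a(f) = (\id \, f) \circ (-\circ_\cC -)$ from Example \ref{ex:Representable}, which converts $\cC^\cV$-composition with $f$ into $\cV$-composition with $\cR_a(f)$.

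For the first identity I would unfold $\eta^\cC_{a, 1_\cV} \circ \rho_a$ as a morphism $1_\cV \to \cC(a \to a)$ via the $\cC^\cV$-composition rule $f \circ g = (fg) \circ (-\circ_\cC -)$, obtaining $(\eta^\cC_{a, 1_\cV} \rho_a) \circ (-\circ_\cC -)$. A short unitor rearrangement rewrites this as $\eta^\cC_{a, 1_\cV} \circ (\id \, \rho_a) \circ (-\circ_\cC -) = \eta^\cC_{a, 1_\cV} \circ \cR_a(\rho_a)$, which is $j_a$ by the first feature, i.e., $\id_a$ in $\cC^\cV$.

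For the second identity I would invoke bijectivity of the adjunction and compare mates in $\cV(1_\cV \to \cC(a \to a \vartriangleleft 1_\cV))$. The mate of $\id_{a \vartriangleleft 1_\cV}$ is $\eta^\cC_{a, 1_\cV}$ by the second feature. For the mate of $\rho_a \circ \eta^\cC_{a, 1_\cV}$, the mate formula together with functoriality of the ordinary representable $\cR_a$ gives $\eta^\cC_{a, 1_\cV} \circ \cR_a(\rho_a \circ \eta^\cC_{a, 1_\cV}) = \eta^\cC_{a, 1_\cV} \circ \cR_a(\rho_a) \circ \cR_a(\eta^\cC_{a, 1_\cV}) = j_a \circ \cR_a(\eta^\cC_{a, 1_\cV})$ using the first feature in the last step. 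The final expression, unfolded as in the first identity, is the $\cC^\cV$-composition $j_a \circ \eta^\cC_{a, 1_\cV}$, which equals $\eta^\cC_{a, 1_\cV}$ by the left unit axiom of the $\cV$-category $\cC$. Since both morphisms have the same mate, they are equal.

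I do not expect a real obstacle; the only thing requiring discipline is systematically translating between $\cC^\cV$-composition and $\cV$-composition with $\cR_a$ applied, and keeping the distinction between $\id_a$ in $\cC^\cV$ and the element $j_a \in \cV(1_\cV \to \cC(a \to a))$ that represents it. In particular, the second identity is essentially a Yoneda-style cancellation disguised as mate matching, and once the translation dictionary is in place it reduces to the left unit axiom.
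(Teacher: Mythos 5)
Your proof is correct. It differs from the paper's argument in execution rather than in substance: the paper simply sets $v=1_\cV$ in Adjunction \eqref{eq:V-cat to V-mod adjunction} to obtain a natural isomorphism of representable functors $\cC^\cV(a\vartriangleleft 1_\cV\to -)\cong\cC^\cV(a\to -)$, and then cites the Yoneda bookkeeping of \eqref{eq:YonedaInverses}, which says in one stroke that the mates of $\id_a=j_a$ and of $\id_{a\vartriangleleft 1_\cV}$ (namely $\rho_a$ and $\eta^\cC_{a,1_\cV}$) are mutually inverse. You instead verify the two composites directly, using the explicit mate formula $\mate(f)=\eta^\cC_{a,1_\cV}\circ\cR_a(f)$, functoriality of $\cR_a$, the formula $\cR_a(f)=(\id\,f)\circ(-\circ_\cC-)$ from Example \ref{ex:Representable}, and the unit axiom of the $\cV$-category $\cC$; in effect you unpack the proof of \eqref{eq:YonedaInverses} in this special case, as you yourself note. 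The paper's route buys brevity and reuse of a device it invokes repeatedly elsewhere; yours buys a self-contained check that makes visible exactly where the enriched unit axiom enters (in the step $j_a\circ\cR_a(\eta^\cC_{a,1_\cV})=\eta^\cC_{a,1_\cV}$) and that at $v=1_\cV$ composition in $\cC^\cV$ and $\cV$-composition after applying $\cR_a$ agree. Both identities are handled consistently with the left-to-right composition convention, and I see no gap.
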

\begin{proof}
Setting $v=1_\cV$ in Adjunction \eqref{eq:V-cat to V-mod adjunction}, we get a natural isomorphism $\cC^\cV(a\vartriangleleft 1_\cV \to b) \cong \cC^\cV(a\to b)$.
Thus by the Yoneda lemma as in \eqref{eq:YonedaInverses}, $a\vartriangleleft 1_\cV \cong a$, with explicit isomorphism given by the mate of $\id_a \in \cC^\cV(a\to a) \cong \cC^\cV(a\vartriangleleft 1_\cV \to a)$, i.e., $\rho_a$.
The inverse is given by the mate of $\id_{a\vartriangleleft 1_\cV} \in \cC^\cV(a\vartriangleleft 1_\cV \to a\vartriangleleft 1_\cV) \cong \cV(1_\cV \to \cC(a\to a\vartriangleleft v))$, i.e., $\eta_{a,{1_\cV}}^\cC$.
\end{proof}

We now define the oplaxitor
$$
\alpha_{a,u,v}
\in
\cC^\cV(a\vartriangleleft uv
\to
a\vartriangleleft u\vartriangleleft v)
\cong
\cV(uv \to \cC(a \to a\vartriangleleft u\vartriangleleft v))
$$
as the mate of
\begin{equation}
\label{eq:MateOfAlpha}
uv
\xrightarrow{\eta^\cC_{a,u} \eta^\cC_{a\vartriangleleft u,v}}
\cC(a \to a\vartriangleleft u)
\cC(a\vartriangleleft u \to a\vartriangleleft u\vartriangleleft v)
\xrightarrow{-\circ_\cC-}
\cC(a\to a\vartriangleleft u\vartriangleleft v).
\end{equation}
It is now straightforward to verify by taking mates of appropriate composites that $(\cC^\cV, \alpha,\rho)$ is a strongly unital oplax right $\cV$-module category.
We provide the proof that all $\alpha_{c,1_\cV, u}$ and $\alpha_{c,u,1_\cV}$ are invertible below.

\begin{lem}
\label{lem:AlphaInvertibleWhenOneArgumentIs1}
For all $c\in \cC$ and $u\in \cV$,
$\alpha^{-1}_{c,u,1_\cV} = \rho_{c\vartriangleleft u}$
and 
$\alpha^{-1}_{c, 1_\cV, u} = \rho_{c}\vartriangleleft \id_u$.
\end{lem}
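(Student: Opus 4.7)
The plan is to handle the two claims separately, using different tools for each. The second identity $\alpha^{-1}_{c,1_\cV,u} = \rho_c \vartriangleleft \id_u$ follows directly from the unitality axiom of an oplax right $\cV$-module, while the first identity $\alpha^{-1}_{c,u,1_\cV} = \rho_{c \vartriangleleft u}$ requires a short mate calculation unwinding the definition of $\alpha$.

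For the second claim, I would invoke the unitality triangle with $m=c$ and $v = u$, which (after suppressing the left unitor identifying $c \vartriangleleft 1_\cV u$ with $c \vartriangleleft u$) reads
\[
\alpha_{c,1_\cV,u} \circ (\rho_c \vartriangleleft \id_u) = \id_{c \vartriangleleft u}.
\]
By Lemma \ref{lem:StrictlyUnital} the morphism $\rho_c$ is an isomorphism, and since $-\vartriangleleft-$ is a bifunctor, $\rho_c \vartriangleleft \id_u$ is also an isomorphism. Composing on the right with its inverse then gives $\alpha_{c,1_\cV,u} = (\rho_c \vartriangleleft \id_u)^{-1}$, which is the claim.

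For the first claim, I would compute the mate of $\alpha_{c,u,1_\cV}$ under Adjunction \eqref{eq:V-cat to V-mod adjunction} applied with anchor $a=c$ and parameter $v = u$ (suppressing the right unitor $u \cdot 1_\cV \cong u$). By the defining formula \eqref{eq:MateOfAlpha}, this mate equals
\[
(\eta^\cC_{c,u}\, \eta^\cC_{c \vartriangleleft u,1_\cV}) \circ (-\circ_\cC-) \in \cV(u \to \cC(c \to c \vartriangleleft u \vartriangleleft 1_\cV)).
\]
On the other hand, by Lemma \ref{lem:StrictlyUnital} the inverse $\rho_{c \vartriangleleft u}^{-1}$ equals $\eta^\cC_{c \vartriangleleft u, 1_\cV}$, viewed as an element of $\cC^\cV(c \vartriangleleft u \to c \vartriangleleft u \vartriangleleft 1_\cV)$. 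Using the general formula $\mate(g) = (\eta^\cC_{c,u}\, g) \circ (-\circ_\cC-)$ for the mate of $g \in \cC^\cV(c \vartriangleleft u \to b)$ (which follows from $\mate(g) = \eta^\cC_{c,u} \circ \cR_c(g)$ together with the description of $\cR_c$ on morphisms in Example \ref{ex:Representable}), the mate of $\rho_{c \vartriangleleft u}^{-1}$ is exactly the same composite. Hence $\alpha_{c,u,1_\cV} = \rho_{c \vartriangleleft u}^{-1}$, as required.

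There is no real obstacle here; the only care needed is bookkeeping around the suppressed unitors in $\cV$ to ensure both mates are being compared in $\cV(u \to \cC(c \to c\vartriangleleft u \vartriangleleft 1_\cV))$. The key insight is simply that the two factors $\eta^\cC_{c,u}$ and $\eta^\cC_{c\vartriangleleft u, 1_\cV}$ appearing in the mate of $\alpha_{c,u,1_\cV}$ play exactly the roles of the unit $\eta^\cC_{c,u}$ coming from the adjunction and the morphism $g$ being transported, which is why only the second factor survives as the mate-image and is identified with $\rho_{c\vartriangleleft u}^{-1}$.
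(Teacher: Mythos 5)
Your proof is correct, but its organization differs from the paper's in two respects worth noting. For $\alpha^{-1}_{c,u,1_\cV}=\rho_{c\vartriangleleft u}$ you compare mates directly: the defining composite \eqref{eq:MateOfAlpha} at $v=1_\cV$ is literally $(\eta^\cC_{c,u}\,\eta^\cC_{c\vartriangleleft u,1_\cV})\circ(-\circ_\cC-)$, which is also the mate of $\eta^\cC_{c\vartriangleleft u,1_\cV}=\rho^{-1}_{c\vartriangleleft u}$ (Lemma \ref{lem:StrictlyUnital}) under $\cL_c\dashv\cR_c$, so $\alpha_{c,u,1_\cV}=\rho^{-1}_{c\vartriangleleft u}$ at once; the paper instead computes the mate of the composite $\alpha_{c,u,1_\cV}\circ\rho_{c\vartriangleleft u}$ and matches it with the mate of $\id_{c\vartriangleleft u}$. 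The two arguments are equivalent, and yours is if anything slightly cleaner. For $\alpha^{-1}_{c,1_\cV,u}=\rho_c\vartriangleleft\id_u$ you quote the unitality triangle of the oplax module structure together with invertibility of $\rho_c\vartriangleleft\id_u$; the paper does not take that axiom as given here, but verifies the composite $\alpha_{c,1_\cV,u}\circ(\rho_c\vartriangleleft\id_u)=\id_{c\vartriangleleft u}$ by a mate computation, using the auxiliary identity $(\eta^\cC_{c\vartriangleleft 1_\cV,u}\,(\rho_c\vartriangleleft\id_u))\circ(-\circ_\cC-)=(\rho_c\,\eta^\cC_{c,u})\circ(-\circ_\cC-)$, so its proof is self-contained and in effect supplies exactly the unitality instance you invoke. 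Since the paper only asserts (``straightforward to verify'') that $(\cC^\cV,\alpha,\rho)$ satisfies the module axioms, your appeal to unitality is logically legitimate but delegates that verification to the reader; to make your argument stand on its own you should include that short mate check, which is precisely the calculation the paper chose to write out.
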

\begin{proof}
First, under Adjunction \eqref{eq:V-cat to V-mod adjunction} setting $a=c\vartriangleleft u$, $v=1_\cV$, and $b = c\vartriangleleft u$, 
using Lemma \ref{lem:StrictlyUnital}, we have that the mate of $\alpha_{c,u,1_\cV} \circ \rho_{c\vartriangleleft u}$ is given by
$$
(\eta^\cC_{c,u} \eta^\cC_{c\vartriangleleft u,1_\cV} \rho_{c\vartriangleleft u})
\circ
(-\circ_\cC-\circ_\cC-)
=
\eta^\cC_{c,u},
$$
which is exactly the mate of $\id_{c\vartriangleleft u}$.
Second, under Adjunction \eqref{eq:V-cat to V-mod adjunction} setting $a=c\vartriangleleft 1_\cV$, $v=u$, and $b=c\vartriangleleft u$,
again using Lemma \ref{lem:StrictlyUnital}, we have that the mate of $\alpha_{c,1_\cV,u} \circ (\rho_{c} \vartriangleleft \id_u)$ is given by
$$
(\eta^\cC_{c,1_\cV} \eta^\cC_{c\vartriangleleft 1_\cV,u} (\rho_{c}\vartriangleleft \id_u))
\circ
(-\circ_\cC-\circ_\cC-)
=
(\eta^\cC_{c,1_\cV} \rho_{c} \eta^\cC_{c,u})
\circ
(-\circ_\cC-\circ_\cC-)
=
\eta^\cC_{c,u},
$$
which is exactly the mate of $\id_{c\vartriangleleft u}$.
Here, we used the identity
$$
(\eta^\cC_{c\vartriangleleft 1_\cV,u} (\rho_{c}\vartriangleleft \id_u))
\circ
(-\circ_\cC-)
=
(\rho_{c} \eta^\cC_{c,u})
\circ
(-\circ_\cC-),
$$
which is easily verified as the mate of each side is $\rho_c \vartriangleleft \id_u$ under the adjunction.
\end{proof}

\subsection{Oplax \texorpdfstring{$\cV$}{V}-modules to  \texorpdfstring{$\cV$}{V}-categories}
\label{sec:V-mod to V-cat}

Suppose $\cM$ is an oplax $\cV$-module category.
Similar to \cite[\S 6]{1701.00567}, we assume that for all $a\in \cM$, the functor $\cL_a: \cV \to \cM$ by $v\mapsto a\vartriangleleft v$ has a right adjoint $\cR_a : \cM \to \cV$.
We define a $\cV$-enriched category $\cC$ by
$\cC (a\to b) = \cR_a(b)$, so that we have an adjunction
\begin{equation}
\label{eq:V-mod to V-cat adjunction}
\cM(a \vartriangleleft v \to b)
=
\cM(\cL_a(v) \to b)
\cong
\cV(v \to \cR_a(b))
=
\cV(v\to \cC (a\to b)).
\end{equation}
We have for all $a,b\in \cM$ the evaluation map (counit morphism) $\varepsilon^\cC_{a\to b} \in \cM(a\vartriangleleft \cC(a\to b) \to b)$ given by the mate of the identity in $\cV(\cC (a\to b)\to \cC (a\to b))$.
We define our identity elements $j_a \in \cV(1_\cV \to \cC (a\to a))$ to be the mate of the distinguished morphism
$\rho_a\in\cM(a\vartriangleleft 1_\cV\to a)$.
We define the composition morphism
$$
-\circ_{\cC} - \in
\cV(\cC(a\to b)\cC(b\to c) \to \cC(a\to c))
\cong
\cM(a\vartriangleleft (\cC(a\to b)\cC(b\to c)) \to c)
$$
as the mate of
\begin{align*}
a\vartriangleleft \cC(a\to b)\cC(b\to c)
&\xrightarrow{\alpha_{a, \cC(a\to b), \cC(b\to c)}}
a\vartriangleleft \cC(a\to b)\vartriangleleft \cC(b\to c)
\\&\xrightarrow{\varepsilon^\cC_{a\to b}\id_{\cC(b\to c)}}
b\vartriangleleft \cC(b\to c)
\\&\xrightarrow{\varepsilon^\cC_{b\to c}}
c.
\end{align*}
It is straightforward to verify that $\cC$ is a $\cV$-category.
Indeed, the proof is entirely similar to \cite[\S 6.1 and 6.2]{1701.00567}.
We provide the proof below that the $j_a$ are identity elements in $\cC$ only assuming $\cM$ is oplax and not strongly unital.

\begin{lem}
The morphisms $j_a$ satisfy the identity axioms.
\end{lem}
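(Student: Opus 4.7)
The plan is to verify both identity axioms by transporting them across the defining adjunction $\cM(a\vartriangleleft v \to b) \cong \cV(v \to \cC(a\to b))$ from \eqref{eq:V-mod to V-cat adjunction}, with $v = \cC(a\to b)$. The mate of a morphism $\phi : \cC(a\to b) \to \cC(a\to b)$ is $(\id_a \vartriangleleft \phi) \circ \varepsilon^\cC_{a\to b}$, so the mate of $\id_{\cC(a\to b)}$ is $\varepsilon^\cC_{a\to b}$, and it suffices to show the mate of each left-hand side is also $\varepsilon^\cC_{a\to b}$. The key computational handle, arising directly from the definition of $-\circ_\cC-$ as the mate of $\Phi$, is the identity
\[
(\id_a\vartriangleleft(-\circ_\cC-))\circ \varepsilon^\cC_{a\to c} = \alpha_{a,\cC(a\to b),\cC(b\to c)} \circ (\varepsilon^\cC_{a\to b}\vartriangleleft \id) \circ \varepsilon^\cC_{b\to c},
\]
together with the defining relation $\rho_m = (\id_m\vartriangleleft j_m)\circ \varepsilon^\cC_{m\to m}$ for the unit $j_m$.

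For the left identity $(j_a\id_{\cC(a\to b)})\circ(-\circ_\cC-) = \id_{\cC(a\to b)}$, I would compute its mate with $c = b$, then apply naturality of $\alpha$ in its middle argument to slide $(\id_a\vartriangleleft(j_a\id))$ past $\alpha$, producing $\alpha_{a,1_\cV,\cC(a\to b)}$ followed by $(\id_a\vartriangleleft j_a \vartriangleleft \id)\circ(\varepsilon^\cC_{a\to a}\vartriangleleft \id)$. Bifunctoriality of $\vartriangleleft$ collapses this to $\rho_a\vartriangleleft \id_{\cC(a\to b)}$, and the triangle axiom $\alpha_{a,1_\cV,\cC(a\to b)}\circ(\rho_a\vartriangleleft \id) = \id$ closes off the mate as $\varepsilon^\cC_{a\to b}$, as required. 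The right identity $(\id_{\cC(a\to b)} j_b)\circ (-\circ_\cC-) = \id_{\cC(a\to b)}$ is the mirror argument: naturality of $\alpha$ in its last argument pulls $j_b$ through $\alpha$, bifunctoriality of $\vartriangleleft$ swaps $(\id \vartriangleleft j_b)$ past $(\varepsilon^\cC_{a\to b}\vartriangleleft \id)$, the defining relation for $j_b$ then replaces $(\id_b\vartriangleleft j_b)\circ \varepsilon^\cC_{b\to b}$ by $\rho_b$, and naturality of $\rho$ commutes $\varepsilon^\cC_{a\to b}$ past $\rho$, leaving $\alpha_{a,\cC(a\to b),1_\cV}\circ \rho_{a\vartriangleleft \cC(a\to b)}\circ \varepsilon^\cC_{a\to b}$.

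The main subtlety is closing out this last expression: one needs the right-handed triangle identity $\alpha_{m,u,1_\cV}\circ \rho_{m\vartriangleleft u} = \id_{m\vartriangleleft u}$, the companion of the left triangle displayed in the oplax-module definition. Either one reads the unitality clause as implicitly covering both triangles (as is standard in the literature on oplax actions and is forced if one thinks of the module structure as coming from an oplax monoidal functor into $\mathrm{End}(\cM)$), or one first derives this identity from the displayed triangle together with the associativity pentagon and bifunctoriality of $\vartriangleleft$. Everything else is formal bookkeeping with naturality of $\alpha$ and $\rho$, bifunctoriality of $\vartriangleleft$, and the defining relation for the $j_m$.
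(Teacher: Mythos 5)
Your treatment of the left identity is exactly the paper's proof: take mates under the adjunction $\cM(a\vartriangleleft \cC(a\to b)\to b)\cong \cV(\cC(a\to b)\to \cC(a\to b))$, unfold $-\circ_\cC-$ via $(\id_a\vartriangleleft(-\circ_\cC-))\circ\varepsilon^\cC_{a\to c}=\alpha_{a,\cC(a\to b),\cC(b\to c)}\circ(\varepsilon^\cC_{a\to b}\vartriangleleft\id)\circ\varepsilon^\cC_{b\to c}$, use naturality of $\alpha$, the relation $(\id_a\vartriangleleft j_a)\circ\varepsilon^\cC_{a\to a}=\rho_a$, and the displayed unitality triangle. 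The paper only writes out this half and calls the other ``similar,'' so your worked-out right identity is a real addition, and the subtlety you flag is genuine: running the analogous mate computation for $(\id_{\cC(a\to b)}\,j_b)\circ(-\circ_\cC-)$ does terminate at $\alpha_{a,\cC(a\to b),1_\cV}\circ\rho_{a\vartriangleleft \cC(a\to b)}\circ\varepsilon^\cC_{a\to b}$, and closing this off requires the right-hand triangle $\alpha_{m,u,1_\cV}\circ\rho_{m\vartriangleleft u}=\id_{m\vartriangleleft u}$, which is not literally among the axioms listed in the paper's definition of an oplax right $\cV$-module.

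The one point that needs repair is your second escape route, the claim that this missing triangle can be ``derived from the displayed triangle together with the associativity pentagon and bifunctoriality.'' With a non-invertible $\alpha$ this derivation is not available: inserting $1_\cV$ into the associativity square only produces identities such as $\alpha_{m,u,v}\circ\alpha_{m\vartriangleleft u,1_\cV,v}=\alpha_{m,u,v}\circ(\alpha_{m,u,1_\cV}\vartriangleleft\id_v)$, and one cannot cancel $\alpha_{m,u,v}$; the situation is that of a coassociative comultiplication equipped with only one counit law, where the other counit law is independent. So do not offer the derivation as an option (if you believe it, you must prove it, and I expect it fails in general); instead state explicitly that you read the unitality clause as imposing both unit triangles. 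That is clearly the intended reading --- it is the module-category analogue of the paper's convention that strongly unital oplax monoidal functors satisfy $\nu_{v,1_\cV}=\id_{\cF(v)}=\nu_{1_\cV,v}$, and it is consistent with Lemma \ref{lem:AlphaInvertibleWhenOneArgumentIs1}, where both composites are shown to be mutually inverse for the modules arising from $\cV$-categories. With that reading fixed, your argument is complete and agrees with the paper's wherever the paper supplies details.
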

\begin{proof}
We verify that $(j_a \id_{\cC(a\to b)})\circ (-\circ_\cC-) = \id_{\cC(a\to b)}$, and the other equation is similar.
The mate of $(j_a \id_{\cC(a\to b)})\circ (-\circ_\cC-)$ under the adjunction
$$
\cV(\cC(a\to b) \to \cC(a\to b)) \cong \cM(a \vartriangleleft \cC(a\to b) \to b)
$$
is given by
\begin{align*}
[\id_a\vartriangleleft \cF(j_a \id_{\cC(a\to b)})]
&\circ
\alpha_{a,\cC(a\to a), \cC(a\to b)}
\circ
[\varepsilon^\cC_{a\to a} \vartriangleleft \id_{\cC(a\to b)}]
\circ
\varepsilon^\cC_{a\to b}
\\&=
\alpha_{a,1_\cV, \cC(a\to b)}
\circ
[\id_a \vartriangleleft j_a \vartriangleleft \id_{\cC(a\to b)}]
\circ
[\varepsilon^\cC_{a\to a} \vartriangleleft \id_{\cC(a\to b)}]
\circ
\varepsilon^\cC_{a\to b}
\\&=
\alpha_{a,1_\cV, \cC(a\to b)}
\circ
[\rho_a \vartriangleleft \id_{\cC(a\to b)}]
\circ
\varepsilon^\cC_{a\to b}
\\&=
\varepsilon^\cC_{a\to b}
\end{align*}
which is exactly the mate of $\id_{\cC(a\to b)}$.
In the first equality above, we used naturality of $\alpha$, in the second we used $(\id_a \vartriangleleft j_a) \circ \varepsilon^\cC_{a\to a}= \rho_a$ (which is proven by taking mates), and in the third we used the unitality axiom for $\cM$.
\end{proof}

\subsection{Equivalence}
\label{sec:Equivalence}

We now restrict to the setting of oplax $\cV$-modules which are strongly unital, and prove Theorem \ref{thm:OplaxVMod}.
The proof is very close to \cite[\S 7]{1701.00567}.


Starting with a strongly unital oplax $\cV$-module $\cM$, we can construct the $\cV$-category $\cC$ as in Section \ref{sec:V-mod to V-cat}, and obtain the strongly unital oplax $\cV$-module $\cC^\cV$ as in Section \ref{sec:V-cat to V-mod}.
Since $\cC$ and $\cC^\cV$ have the same objects as $\cM$, we can define $\cF: \cM \to \cC^\cV$ to be the identity on objects.
On morphims, we define $\cF$ by the adjunction
\begin{equation}
\label{eq:V-mod Equivalence}
\cC^\cV(a \to b)
\cong
\cV(1_\cV \to \cC(a\to b))
\cong
\cM(a\vartriangleleft 1_\cV \to b)
\cong
\cM(a \to b)
\end{equation}
This is a functor similar to \cite[Prop.~6.12]{1701.00567}.
We define $\mu_{m,v}\in \cC^\cV(\cF(m \vartriangleleft v)\to \cF(m) \vartriangleleft v)=\cC^\cV(m\vartriangleleft v \to m\vartriangleleft v)$ to be $j_{m\vartriangleleft v}$.
It is easy to check $(\cF, \mu)$ is a functor of oplax $\cV$-modules.
Since $\cF$ is the identity on objects, $\cF$ is essentially surjective.
By \eqref{eq:V-mod Equivalence}, $\cF$ is clearly fully faithful.
Hence $(\cF, \mu)$ is an equivalence of oplax $\cV$-modules.

Conversely, starting with a $\cV$-category $\cC$, we endow $\cC^\cV$ with the structure of a strongly unital oplax $\cV$-module as in Section \ref{sec:V-cat to V-mod}, and we obtain a $\cV$-category $\cC'$ from $\cC^\cV$ as in Section \ref{sec:V-mod to V-cat}.
We define $\cV$-functors $\cG: \cC \to \cC'$ and $\cH: \cC'\to \cC$ as in \cite[Def.~7.5]{1701.00567}.
On objects, $\cG(c)=\cH(c)=c$, since $\cC$, $\cC^\cV$, and $\cC'$ all have the same objects.
We define $\cG_{a\to b}$ and $\cH_{a\to b}$ via adjunction:
$$
\xymatrix@R=2pt{
\cV(\cC(a\to b) \to \cC(a\to b))
\ar@{<->}[r]^(.53){\cong}_(.53){\eqref{eq:V-cat to V-mod adjunction}}
&
\cC^\cV(a \vartriangleleft \cC(a\to b) \to b)
\ar@{<->}[r]^(.47){\cong}_(.47){\eqref{eq:V-mod to V-cat adjunction}}
&
\cV(\cC(a\to b) \to \cC'(a\to b))
\\
\id_{\cC(a\to b)}
\ar@{<->}[r]
&
\varepsilon^\cC_{a\to b}
\ar@{<->}[r]
&
\cG_{a\to b}
\\
\cV(\cC'(a\to b) \to \cC'(a\to b))
\ar@{<->}[r]^(.53){\cong}_(.53){\eqref{eq:V-mod to V-cat adjunction}}
&
\cC^\cV(a \vartriangleleft \cC'(a\to b) \to b)
\ar@{<->}[r]^(.47){\cong}_(.47){\eqref{eq:V-cat to V-mod adjunction}}
&
\cV(\cC'(a\to b) \to \cC(a\to b))
\\
\id_{\cC'(a\to b)}
\ar@{<->}[r]
&
\varepsilon^{\cC'}_{a\to b}
\ar@{<->}[r]
&
\cH_{a\to b}
}
$$
Here, we write $\varepsilon^\cC$ for the counit of Adjunction \eqref{eq:V-cat to V-mod adjunction}, which should not be confused with the counit of Adjunction \eqref{eq:V-mod to V-cat adjunction} from Section \ref{sec:V-mod to V-cat}.
That $\cG$ and $\cH$ are $\cV$-functors which witness a $\cV$-equivalence of $\cV$-categories is identical to the proof of \cite[Thm.~7.4]{1701.00567}.
Indeed, to see $\cG$ and $\cH$ are mutually inverse, we use the Yoneda Lemma as in \eqref{eq:YonedaInverses}.
The adjunctions \eqref{eq:V-cat to V-mod adjunction} and \eqref{eq:V-mod to V-cat adjunction} give us a natural isomorphism between representable functors $\cV(-\to \cC(a\to b)) \cong \cV(- \to \cC'(a\to b))$:
$$
\cV(v \to \cC(a\to b))
\underset{\eqref{eq:V-cat to V-mod adjunction}}{\cong}
\cC^\cV(a \vartriangleleft v \to b)
\underset{\eqref{eq:V-mod to V-cat adjunction}}{\cong}
\cV(v \to \cC'(a\to b)).
$$

\section{\texorpdfstring{$\cV$}{V}-adjunctions and strong \texorpdfstring{$\cV$}{V}-modules}
\label{sec:Tensored V-cat and Strong V-mod}

In this section we assume $\cV$ is closed so that we may form $\widehat{\cV}$.
We now give a condition on a $\cV$-enriched category $\cC$ which corresponds to $\cC^\cV$ being a strong right $\cV$-module category.
We begin by defining the notion of \emph{tensored} $\cV$-functors and use them to prove a helpful result about promoting underlying adjunctions to $\cV$-adjunctions. 

\subsection{Tensored \texorpdfstring{$\cV$}{V}-functors}
\label{sec:TensoredVFunctors}

Throughout this subsection $\cC$ is a $\cV$-category such that for all $a\in\cC$, the functor $\cR_a: \cC^\cV \to \cV$ given by $b\mapsto \cC(a\to b)$ has a left adjoint $\cL_a$ so that $\cC^\cV$ is a strongly unital right oplax $\cV$-module by Theorem \ref{thm:OplaxVMod}, and similarly for $\cD$.

\begin{lem}
\label{lem:UnderlyingModuleFunctor}
If $\cF: \cC \to \cD$ is a $\cV$-functor, the underlying functor $\cF^\cV: \cC^\cV \to \cD^\cV$ can be canonically endowed with the structure of a strongly unital lax $\cV$-module functor by defining, 
for $c\in\cC^\cV$ and $v\in \cV$, 
\begin{align*}
\mu_{c,v}\in \cD^\cV(\cF(c)\vartriangleleft v \to \cF(c\vartriangleleft v)) 
&= 
\cV(1_\cV \to \cD(\cF(c)\vartriangleleft v \to \cF(c\vartriangleleft v)))
\\&\cong
\cV(v \to \cD(\cF(c) \to \cF(c\vartriangleleft v)))
\end{align*}
as the  mate of $\eta^\cC_{c,v} \circ \cF_{c \to c\vartriangleleft v}$, where $\eta^\cC_{c,v} \in \cV(v \to \cC(c \to c\vartriangleleft v)) \cong \cC^\cV(c\vartriangleleft v\to c\vartriangleleft v)$ is the unit of the adjunction.
\end{lem}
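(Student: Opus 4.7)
The plan is to verify the three axioms of a lax $\cV$-module functor (naturality, unitality, associativity) for $(\cF^\cV, \mu)$ by systematic mate computations under the adjunction $\cD^\cV(\cF(c)\vartriangleleft v \to d) \cong \cV(v\to \cD(\cF(c)\to d))$ of \eqref{eq:V-cat to V-mod adjunction} applied to $\cD$. The tools are the mate identities \eqref{eq:MateWithL} and \eqref{eq:MateWithR}, the explicit formulas for $\eta^\cC$, $\rho^\cC$, $\alpha^\cC$ (and their $\cD$-analogues) from Section \ref{sec:V-cat to V-mod}, and the defining property that $\cF$ is a $\cV$-functor. Strong unitality of $(\cF^\cV, \mu)$ is then free from Remark \ref{rem:AutomaticStrictUnital}, since both $\cC^\cV$ and $\cD^\cV$ are strongly unital.

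First I would verify naturality of $\mu_{c,v}$ jointly in $c\in\cC^\cV$ and $v\in\cV$. Given $f\in\cC^\cV(c\to c')$ and $g\in \cV(v\to v')$, I would take mates of $(\cF^\cV(f)\vartriangleleft g)\circ \mu_{c',v'}$ and $\mu_{c,v}\circ \cF^\cV(f\vartriangleleft g)$ under the adjunction. Unwinding the definition of $-\vartriangleleft -$ on morphisms from Section \ref{sec:V-cat to V-mod} (which itself is a mate formula built from $\eta^\cC$) and using \eqref{eq:MateWithR} together with the $\cV$-functoriality of $\cF$, both mates reduce to a common composite involving $g$, $\eta^\cC_{c',v'}$, precomposition with $f$ in $\cC$, and $\cF$.

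Next, for the unit axiom $\mu_{c,1_\cV}\circ \cF^\cV(\rho^\cC_c) = \rho^\cD_{\cF(c)}$, I would mate the left side under $\cD^\cV(\cF(c)\vartriangleleft 1_\cV\to \cF(c))\cong \cV(1_\cV\to \cD(\cF(c)\to \cF(c)))$. Using \eqref{eq:MateWithR}, the defining formula for $\mu_{c,1_\cV}$, Lemma \ref{lem:StrictlyUnital} (which identifies $\rho^\cC_c$ and $\eta^\cC_{c,1_\cV}$ as mutually inverse), and the fact that the $\cV$-functor $\cF$ preserves identity elements, the mate collapses to $j_{\cF(c)}$, which is exactly the mate of $\rho^\cD_{\cF(c)}$.

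The main obstacle is the associativity axiom
\[
\mu_{c,uv}\circ \cF^\cV(\alpha^\cC_{c,u,v}) \;=\; \alpha^\cD_{\cF(c),u,v}\circ (\mu_{c,u}\vartriangleleft \id_v)\circ \mu_{c\vartriangleleft u,v}
\]
in $\cD^\cV(\cF(c)\vartriangleleft uv\to \cF(c\vartriangleleft u\vartriangleleft v))$. I would mate both sides under $\cV(uv\to \cD(\cF(c)\to \cF(c\vartriangleleft u\vartriangleleft v)))$. On the left, \eqref{eq:MateWithR}, the definition of $\mu_{c,uv}$, and the defining formula \eqref{eq:MateOfAlpha} for $\alpha^\cC_{c,u,v}$ produce a composite that applies $(-\circ_\cC-)$ in $\cC$ first and only then applies $\cF_{c\to c\vartriangleleft u\vartriangleleft v}$. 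On the right, repeated applications of \eqref{eq:MateWithR}, the definitions of each $\mu$, the expansion of $\mu_{c,u}\vartriangleleft \id_v$ through the formula for $-\vartriangleleft \id_v$ on morphisms from Section \ref{sec:V-cat to V-mod}, and the $\cD$-analogue of \eqref{eq:MateOfAlpha} produce a composite that first applies $\cF_{c\to c\vartriangleleft u}$ and $\cF_{c\vartriangleleft u\to c\vartriangleleft u\vartriangleleft v}$ separately and only then performs $(-\circ_\cD-)$. Equality of these two composites is precisely the $\cV$-functoriality relation $(\cF_{c\to c\vartriangleleft u}\,\cF_{c\vartriangleleft u\to c\vartriangleleft u\vartriangleleft v})\circ (-\circ_\cD-) = (-\circ_\cC-)\circ \cF_{c\to c\vartriangleleft u\vartriangleleft v}$. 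The challenge here is purely bookkeeping—tracking which adjunction each mate is taken under and carefully expanding $\mu_{c,u}\vartriangleleft \id_v$ using the definition of $\vartriangleleft$ on morphisms—but no conceptual difficulty arises.
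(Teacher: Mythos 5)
Your proposal is correct and follows essentially the same route as the paper's proof: each axiom is checked by taking mates under the adjunction \eqref{eq:V-cat to V-mod adjunction} for $\cD$, with unitality collapsing to $j_{\cF(c)}$ via Lemma \ref{lem:StrictlyUnital} and preservation of identity elements, associativity reducing (after expanding the $\mu$'s, $\alpha$'s via \eqref{eq:MateOfAlpha}, and $\mu_{c,u}\vartriangleleft\id_v$) to the $\cV$-functoriality relation for $\cF$, naturality following from naturality of $\eta^\cC$ and functoriality, and strong unitality coming for free from Remark \ref{rem:AutomaticStrictUnital}.
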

\begin{proof}
We first note that for $c\in\cC$, $\mu_{c, 1_\cV}$ is the mate of $\cF^\cV(\eta^\cC_{c,1_\cV}) =\eta^\cC_{c,1_\cV} \circ \cF_{c \to c\vartriangleleft 1_\cV}\in \cD^\cV(\cF(c) \to \cF(c)\vartriangleleft 1_\cV))$.
Thus the mate of $\mu_{c, 1_\cV}\circ \cF^\cV(\rho^\cC_c)$ under adjunction $\cD^\cV(\cF(c)\vartriangleleft 1_\cV \to \cF(c)) \cong \cV(1_\cV\to \cD(\cF(c)\to \cF(c)))$ is given by
\begin{align*}
[(\eta^\cC_{c,1_\cV} \circ \cF_{c\to c\vartriangleleft 1_\cV})(\rho^\cC_c \circ \cF_{c\vartriangleleft 1_\cV \to c})]\circ (-\circ_\cD -)
&=
(\eta^\cC_{c,1_\cV} \rho^\cC_c)\circ (\cF_{c\to c\vartriangleleft 1_\cV}\cF_{c\vartriangleleft 1_\cV \to c})\circ (-\circ_\cD -)
\\&=
(\eta^\cC_{c,1_\cV} \rho^\cC_c)\circ (-\circ_\cC -)\circ \cF_{c\to c}.
\\&=
j_c \circ \cF_{c\to c} 
\\&= 
j_{\cF(c)}
\end{align*}
where we used Lemma \ref{lem:StrictlyUnital} to conclude $(\eta^\cC_{c,1_\cV} \rho^\cC_c)\circ (-\circ_\cC -) = j_c$ in the third equality.
Now $j_{\cF(c)}$ is exactly the mate of $\rho^\cD_{\cF(c)}$ under the adjunction, and thus the unital axiom holds.

We now prove the associative condition.
The mate of $\alpha_{\cF(c), u, v} \circ (\mu_{c,u}\vartriangleleft \id_v) \circ \mu_{c\vartriangleleft u, v}$ under the adjunction 
$$
\cD^\cV(\cF(c) \vartriangleleft uv \to \cF(c \vartriangleleft u\vartriangleleft v))
\cong
\cV(uv \to \cD(\cF(c) \to \cF(c \vartriangleleft u\vartriangleleft v)))
$$
is given by
\begin{align*}
(\eta^\cD_{\cF(c),u} \eta^\cD_{\cF(c)\vartriangleleft u,v})
&\circ 
(-\circ_\cD - ) 
\circ 
(\id_{\cD(\cF(c)\to \cF(c)\vartriangleleft u \vartriangleleft v)} (\mu_{c,u}\vartriangleleft \id_v))
\circ 
(-\circ_\cD - ) 
\circ 
(\id_{\cD(\cF(c)\to \cF(c\vartriangleleft u) \vartriangleleft v)} \mu_{c,v})
\circ 
(-\circ_\cD - ) 
\\&=
(\eta^\cD_{\cF(c),u}[[\eta^\cD_{\cF(c)\vartriangleleft u,v} (\mu_{c,u}\vartriangleleft \id_v)]\circ (-\circ_\cD -)])
\circ 
(-\circ_\cD - ) 
\circ 
(\id_{\cD(\cF(c)\to \cF(c\vartriangleleft u) \vartriangleleft v)} \mu_{c,v})
\circ 
(-\circ_\cD - ) 
\\&=
(\eta^\cD_{\cF(c),u}[(\mu_{c,u}\eta^\cD_{\cF(c),v})\circ (-\circ_\cD -)])
\circ 
(-\circ_\cD - ) 
\circ 
(\id_{\cD(\cF(c)\to \cF(c\vartriangleleft u) \vartriangleleft v)} \mu_{c,v})
\circ 
(-\circ_\cD - ) 
\\&=
[(\eta^\cD_{\cF(c),u}\mu_{c,u})(\eta^\cD_{\cF(c),v}\mu_{c,v})]
\circ 
[(-\circ_\cD -)(-\circ_\cD -)]
\circ 
(-\circ_\cD - ) 
\\&=
[(\eta^\cC_{c,u}\circ \cF_{c \to c\vartriangleleft u})(\eta^\cD_{\cF(c),v}\cF_{c \to c\vartriangleleft v})]
\circ
(-\circ_\cD - ) 
\\&=
(\eta^\cC_{c,u}\eta^\cC_{c,v})
\circ
(-\circ_\cC - ) 
\circ
\cF_{c \to c\vartriangleleft u\vartriangleleft v}
\\&=
\eta^\cC_{c,uv}
\circ
(\id_{\cC(c \to c\vartriangleleft uv)} \alpha_{c,u,v}^\cC)
\circ
(-\circ_\cC - ) 
\circ
\cF_{c \to c\vartriangleleft u\vartriangleleft v}
\\&=
(\eta^\cC_{c,uv}\circ \cF_{c \to c\vartriangleleft uv})
\circ
(\id_{\cC(c \to c\vartriangleleft uv)} (\alpha^\cC_{c,u,v}\circ \cF_{c\vartriangleleft uv \to c \vartriangleleft u\vartriangleleft v}))
\circ
(-\circ_\cC - ) 
\end{align*}
which is exactly the mate of $\mu_{c,uv}\circ \cF^\cV(\alpha_{c,u,v})$ under the above adjunction.

To verify $\mu_{a,u}$ is natural in $a\in\cC$ and $u\in \cV$, suppose $f\in \cC^\cV(a\to b)$ and $g\in \cV(u\to v)$.
Then the mate of $\mu_{a,u} \circ \cF(f \vartriangleleft g)$ under the adjunction
$$
\cD^\cV(\cF(a)\vartriangleleft u \to \cF(b\vartriangleleft v)) \cong \cV(u \to \cD(\cF(a) \to \cF(b\vartriangleleft v)))
$$
is given by
\begin{align*}
(\eta^\cC_{a,u}  (f\vartriangleleft g))
&\circ 
(\cF_{a\to a\vartriangleleft u} \cF_{a\vartriangleleft u \to b \vartriangleleft v})
\circ
(-\circ_\cD-)
\\&=
(\eta^\cC_{a,u}  (f\vartriangleleft g))
\circ
(-\circ_\cC-)
\circ 
\cF_{a\to b \vartriangleleft v}
\\&=
(f( g\circ \eta^\cC_{b,v}))
\circ
(-\circ_\cC-)
\circ 
\cF_{a\to b \vartriangleleft v}
\\&=
(f (g\circ \eta^\cC_{b,v}))
\circ
(\cF_{a\to b \vartriangleleft v} \cF_{a\to b \vartriangleleft v})
\circ 
(-\circ_\cD-)
\\&=
[(f\circ\cF_{a\to b}) (g\circ (\eta^\cD_{\cF(b),v} \mu_{b,v})\circ (-\circ_\cD-))]
\circ
(-\circ_\cD-)
\\&=
(f (g\circ \eta^\cC_{b,v}))
\circ
(\cF_{a\to b \vartriangleleft v} \cF_{a\to b \vartriangleleft v})
\circ 
(-\circ_\cD-)
\\&=
((f\circ\cF_{a\to b}) (g\circ \eta^\cD_{\cF(b),v})
\circ
(-\circ_\cD-)
\circ
(\id_{\cD(a\to \cF(b)\vartriangleleft v)}\mu_{b,v})
\circ
(-\circ_\cD-)
\end{align*}
which is exactly the mate of $(\cF(f)\vartriangleleft g) \circ \mu_{b,v}$ under the above adjunction.
\end{proof}

\begin{defn}
\label{defn:TensoredVFunctor}
We call a $\cV$-functor $\cF: \cC \to \cD$ \emph{tensored} if the canonical maps $\mu_{c,v}\in \cD(\cF(c)\vartriangleleft v \to \cF(c\vartriangleleft v))$ from Lemma \ref{lem:UnderlyingModuleFunctor} are isomorphisms for all $c\in \cC$ and $v\in\cV$.
\end{defn}

We now prove a helpful result on lifting underlying adjunctions to $\cV$-adjunctions.
Suppose $\cL: \cC \to \cD$ and $\cR: \cD \to \cC$ are $\cV$-functors such that $\cL^\cV \dashv \cR^\cV$.
Let $\eta^\cV: \id_{\cC^\cV} \Rightarrow \cL^\cV\circ\cR^\cV$ and $\varepsilon^\cV: \cR^\cV \circ \cL^\cV \Rightarrow \id_{\cD^\cV}$ be the unit and counit of the underlying adjunction respectively.
Recall that for $a\in \cC$ and $d\in \cD$, we defined 
$\theta_{a,d}=(\eta^\cV_{a}\cR_{\cL(a)\to d} )\circ (-\circ_\cC-)$
and
$\kappa_{a,d}= (\cL_{a\to \cR(d)} \varepsilon^\cV_{d})\circ (-\circ_\cD-)$.
Lemma \ref{lem:PromoteToVAdjunction} says that $\cL\dashv_\cV \cR$ if $\theta_{a,d}^{-1} = \kappa_{a,d}$ for all $a\in\cC$ and $d\in\cD$.

\begin{lem}
\label{lem:MateOfKappa}
For $a\in \cC$ and $d\in \cD$, $\kappa_{a,d}$ is equal to the mate of $\mu^\cL_{a, \cC(a\to \cR(d))} \circ \cL(\varepsilon^\cC_{a\to \cR(d)})\circ \varepsilon^\cV_d$ under the adjunction
$$
\cD^\cV(\cL(a)\vartriangleleft \cC(a\to \cR(d)) \to d)
\cong
\cV( \cC(a\to \cR(d)) \to \cD(\cL(a) \to d)).
$$
\end{lem}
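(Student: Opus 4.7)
The plan is to compute the mate of the morphism $X := \mu^\cL_{a, u} \circ \cL(\varepsilon^\cC_{a\to \cR(d)})\circ \varepsilon^\cV_d$ (with $u := \cC(a\to \cR(d))$) under the displayed adjunction $\cD^\cV(\cL(a)\vartriangleleft u \to d)\cong \cV(u\to \cD(\cL(a)\to d))$ and verify directly that it equals $(\cL_{a\to \cR(d)}\,\varepsilon^\cV_d)\circ(-\circ_\cD-)$, which is the defining formula for $\kappa_{a,d}$ in \eqref{eq:ThetaKappa}.

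The first step is to apply the mate rule \eqref{eq:MateWithR} twice to peel off the two right-hand factors of $X$, obtaining $\mate(X) = \mate(\mu^\cL_{a,u}) \circ \cR_{\cL(a)}(\cL(\varepsilon^\cC_{a\to \cR(d)})) \circ \cR_{\cL(a)}(\varepsilon^\cV_d)$, and then to substitute the defining formula for $\mate(\mu^\cL_{a,u})$ from Lemma \ref{lem:UnderlyingModuleFunctor}, namely $\eta^\cC_{a,u}\circ \cL_{a\to a\vartriangleleft u}$. The core computation is then to prove the identity $\cL_{a\to a\vartriangleleft u}\circ \cR_{\cL(a)}(\cL(\varepsilon^\cC_{a\to\cR(d)})) = \cR_a(\varepsilon^\cC_{a\to\cR(d)}) \circ \cL_{a\to \cR(d)}$ by expanding $\cL(\varepsilon^\cC_{a\to \cR(d)}) = \varepsilon^\cC_{a\to \cR(d)}\circ \cL_{a\vartriangleleft u\to \cR(d)}$ (from the definition of the underlying functor $\cL^\cV$) and then combining the two factors of $\cL$ via the $\cV$-functoriality identity $(\cL_{x\to y}\cL_{y\to z})\circ(-\circ_\cD-) = (-\circ_\cC-)\circ \cL_{x\to z}$. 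Once this rearrangement is in place, the prefix $\eta^\cC_{a,u}\circ \cR_a(\varepsilon^\cC_{a\to\cR(d)})$ collapses to $\id_u$ by the triangle identity for the adjunction $\cL_a\dashv \cR_a$ of Section \ref{sec:V-cat to V-mod}, and the remaining $\cL_{a\to\cR(d)}\circ \cR_{\cL(a)}(\varepsilon^\cV_d)$ unfolds as $(\cL_{a\to \cR(d)}\,\varepsilon^\cV_d)\circ(-\circ_\cD-) = \kappa_{a,d}$, completing the verification.

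The main (but still modest) obstacle is the middle rearrangement: one must carefully invoke the interchange law for $\otimes$ in $\cV$ to trade the composition happening inside the post-composition operator $\cR_{\cL(a)}(\cL(-))$ for a composition at the outer level, at which point the two occurrences of $\cL$ at different hom objects can be merged by $\cV$-functoriality. Everything else is a formal manipulation with mates and the triangle identities, so this step is the only one where one actually has to commit pen to paper.
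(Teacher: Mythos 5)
Your proposal is correct, and it is essentially the mate-transpose of the paper's argument, so the differences are worth recording. The paper computes the mate of $\kappa_{a,d}$ in the other direction, landing in $\cD^\cV(\cL(a)\vartriangleleft \cC(a\to\cR(d))\to d)$ and working on the module side: its two ingredients are the identity $\mu^{\cL}_{a,\cC(a\to b)}\circ\cL^\cV(\varepsilon^\cC_{a\to b})=(\id_{\cL(a)}\vartriangleleft\cL_{a\to b})\circ\varepsilon^\cD_{\cL(a)\to\cL(b)}$, justified by observing that both sides are the mate of $\cL_{a\to b}$ (Equation \eqref{eq:MateOfKappaProofStep1}), and naturality of the counit $\varepsilon^\cD$ (Equation \eqref{eq:MateOfKappaProofStep2}). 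You instead transport everything to $\cV$: you peel off $\varepsilon^\cV_d$ and $\cL(\varepsilon^\cC_{a\to\cR(d)})$ by naturality of the mate bijection in the codomain, substitute the defining mate $\eta^\cC_{a,u}\circ\cL_{a\to a\vartriangleleft u}$ of $\mu^\cL_{a,u}$ from Lemma \ref{lem:UnderlyingModuleFunctor}, merge the two occurrences of $\cL$ by $\cV$-functoriality, and finish with the triangle identity for $\cL_a\dashv\cR_a$; each step type-checks and the core rearrangement $\cL_{a\to a\vartriangleleft u}\circ\cR_{\cL(a)}(\cL^\cV(\varepsilon^\cC_{a\to\cR(d)}))=\cR_a(\varepsilon^\cC_{a\to\cR(d)})\circ\cL_{a\to\cR(d)}$ is valid. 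In effect your route makes explicit the verification that $\mu^\cL_{a,u}\circ\cL^\cV(\varepsilon^\cC_{a\to\cR(d)})$ is the mate of $\cL_{a\to\cR(d)}$, which the paper asserts without computation in \eqref{eq:MateOfKappaProofStep1}, at the price of invoking the triangle identity that the paper avoids by its ``both are mates of $\cL_{a\to b}$'' observation. One pedantic remark: your peeling step applies the naturality rule with intermediate objects $\cL(a\vartriangleleft u)$ and $\cL(\cR(d))$, which are not of the form $\cL(a)\vartriangleleft v$, so you are using the (true and standard) naturality of the adjunction $\cD^\cV(\cL(a)\vartriangleleft v\to -)\cong\cV(v\to\cD(\cL(a)\to -))$ in its full generality rather than the literal phrasing of \eqref{eq:MateWithL}--\eqref{eq:MateWithR}; this is harmless but worth saying if you write the argument out.
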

\begin{proof}
First, for all $a,b\in \cC$,
\begin{equation}
\label{eq:MateOfKappaProofStep1}
\mu^{\cL}_{a,\cC(a\to b)} \circ \cL^\cV(\varepsilon^\cC_{a\to b}) 
= 
(\id_{\cL(a)} \vartriangleleft \cL_{a\to b}) \circ \varepsilon^\cD_{\cL(a) \to \cL(d)},
\end{equation}
since both are the mate of $\cL_{a\to b}$ under the adjunction
$$
\cD^\cV(\cL(a) \vartriangleleft \cC(a\to b) \to \cL(b))
\cong
\cV(\cC(a \to b) \to \cD(\cL(a) \to \cL(b))).
$$
Next, by naturality of $\varepsilon^\cD$, we have
\begin{equation}
\label{eq:MateOfKappaProofStep2}
\varepsilon^\cD_{c\to \cL(\cR(d))}\circ \varepsilon^\cV_d 
= 
(\id_c \vartriangleleft [(\id_{\cD(c\to \cL(\cR(d))} \varepsilon^\cV_d)\circ (-\circ_\cD-)]) \circ \varepsilon^\cD_{c \to d}
\end{equation}
Now we have that the mate of $\kappa_{a,d}$ under the adjunction is given by
\begin{align*}
(\id_{\cL(a)} \vartriangleleft [(\cL_{a\to \cR(d)} \varepsilon^\cV_d) \circ (-\circ_\cD-)]) 
\circ
\varepsilon^\cD_{\cL(a) \to d}
&
\underset{\text{\eqref{eq:MateOfKappaProofStep2}}}{=}
(\id_{\cL(a)}\vartriangleleft \cL_{a \to \cR(d)})
\circ
\varepsilon^\cD_{\cL(a) \to \cL\cR(d)}
\circ 
\varepsilon^\cV_d
\\&
\underset{\text{\eqref{eq:MateOfKappaProofStep1}}}{=}
\mu^\cL_{a, \cC(a\to \cR(d))}
\circ
\cL^\cV(\varepsilon^\cC_{a\to \cR(d)})
\circ 
\varepsilon^\cV_d.
\qedhere
\end{align*}
\end{proof}

\begin{lem}
\label{lem:MateOfTheta}
If $\cL$ is tensored, then for $a\in \cC$ and $d\in \cD$, $\theta_{a,d}$ is the mate of $\eta^\cV_{a \vartriangleleft  \cD(\cL(a) \to d)} \circ \cR(\mu^\cL_{a, \cD(\cL(a) \to d)})^{-1}\circ \cR(\varepsilon^\cD_{\cL(a) \to d})$ under the adjunction
$$
\cC^\cV(a\vartriangleleft \cD(\cL(a) \to d) \to \cR(d))
\cong
\cV(\cD(\cL(a) \to d) \to \cC(a\to \cR(d))).
$$
\end{lem}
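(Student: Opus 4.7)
The plan is to prove this by direct mate calculation, paralleling the proof of Lemma \ref{lem:MateOfKappa} but with the unit $\eta^\cV$ playing the role that $\varepsilon^\cV$ did there, and with invertibility of $\mu^\cL$ (from tensoredness of $\cL$) replacing the direct use of $\mu^\cL$. Abbreviate $u := \cD(\cL(a) \to d)$ and $V := \cC(a \to \cR(d))$. The claim is equivalent to showing that the mate of $\theta_{a,d}$ under the module adjunction for $\cC$, namely $(a \vartriangleleft \theta_{a,d}) \circ \varepsilon^\cC_{a \to \cR(d)} \in \cC^\cV(a \vartriangleleft u \to \cR(d))$, coincides with $g := \eta^\cV_{a \vartriangleleft u} \circ \cR(\mu^\cL_{a,u})^{-1} \circ \cR(\varepsilon^\cD_{\cL(a) \to d})$.

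First I would unpack $\theta_{a,d} = (\eta^\cV_a \cR_{\cL(a) \to d}) \circ (-\circ_\cC-)$ using functoriality of $\cL_a = a \vartriangleleft (-)$, together with the identity (valid in any $\cV$-category via the equivalence of Section \ref{sec:V-mod to V-cat})
\[
(a \vartriangleleft (-\circ_\cC-)) \circ \varepsilon^\cC_{a \to \cR(d)} \;=\; \alpha_{a, B, W} \circ (\varepsilon^\cC_{a \to \cR\cL(a)} \vartriangleleft \id_W) \circ \varepsilon^\cC_{\cR\cL(a) \to \cR(d)},
\]
where $B := \cC(a \to \cR\cL(a))$ and $W := \cC(\cR\cL(a) \to \cR(d))$. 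Two applications of naturality of $\alpha$ (once in each slot, to push $\eta^\cV_a$ and $\cR_{\cL(a) \to d}$ past it), followed by bifunctoriality of $\vartriangleleft$ to merge tensor factors, then yield an expression of the form $\alpha_{a, 1_\cV, u} \circ (\psi_a \vartriangleleft \cR_{\cL(a) \to d}) \circ \varepsilon^\cC_{\cR\cL(a) \to \cR(d)}$, where $\psi_a := (a \vartriangleleft \eta^\cV_a) \circ \varepsilon^\cC_{a \to \cR\cL(a)}$ is the module-adjunction mate of $\eta^\cV_a$.

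The final step is to match this against $g$, and here tensoredness of $\cL$ enters. The main obstacle is the compatibility
\[
\eta^\cV_{a \vartriangleleft u} \circ \cR(\mu^\cL_{a, u})^{-1} \;=\; (\eta^\cV_a \vartriangleleft \id_u) \circ \mu^\cR_{\cL(a), u}
\]
in $\cC^\cV(a \vartriangleleft u \to \cR(\cL(a) \vartriangleleft u))$, where $\mu^\cR$ is the canonical laxitor of $\cR$ from Lemma \ref{lem:UnderlyingModuleFunctor}; this expresses that when $\cL$ is tensored, the unit of the underlying adjunction commutes with the module structure. I expect to verify it by taking mates under the module adjunction and combining the defining property of $\mu^\cL_{a,u}$ as the mate of $\eta^\cC_{a, u} \circ \cL_{a \to a \vartriangleleft u}$ with the triangle identities for $\cL^\cV \dashv \cR^\cV$. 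Once this compatibility is in hand, Lemma \ref{lem:AlphaInvertibleWhenOneArgumentIs1} rewrites $\alpha_{a, 1_\cV, u}$ via $\rho$, the $\cV$-functoriality of $\cR$ converts the remaining $\varepsilon^\cC_{\cR\cL(a) \to \cR(d)} \circ (\cdot \vartriangleleft \cR_{\cL(a) \to d})$ into $\cR(\varepsilon^\cD_{\cL(a) \to d})$ after absorbing $\mu^\cR$, and the two expressions match factor by factor.
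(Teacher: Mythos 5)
Your proposal follows essentially the same route as the paper's proof: both compute the mate of $\theta_{a,d}$ under the module adjunction, identify $(\id \vartriangleleft \cR_{\cL(a)\to d})\circ \varepsilon^\cC_{\cR\cL(a)\to\cR(d)}$ with $\mu^\cR_{\cL(a),\cD(\cL(a)\to d)}\circ\cR(\varepsilon^\cD_{\cL(a)\to d})$ (both being mates of $\cR_{\cL(a)\to d}$), and then invoke exactly the compatibility $(\eta^\cV_a\vartriangleleft\id_u)\circ\mu^\cR_{\cL(a),u} = \eta^\cV_{a\vartriangleleft u}\circ\cR(\mu^\cL_{a,u})^{-1}$, which is the paper's identity \eqref{eq:MateOfThetaProofStep1} rearranged using invertibility of $\mu^\cL$ from tensoredness. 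Your detour through $\alpha_{a,1_\cV,u}$ and $\psi_a$ cancels via Lemma \ref{lem:AlphaInvertibleWhenOneArgumentIs1} and is only a cosmetic difference from the paper's direct collapse to $(\eta^\cV_a\vartriangleleft\cR_{\cL(a)\to d})\circ\varepsilon^\cC_{\cR\cL(a)\to\cR(d)}$.
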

\begin{proof}
Similar to the beginning of the proof of Lemma \ref{lem:MateOfKappa}, for all $c,d\in \cD$, 
\begin{equation}
\label{eq:MateOfThetaProofStep2}
(\id_{\cR(c)} \vartriangleleft \cR_{c\to d}) \circ \varepsilon^\cC_{\cR(c) \to \cR(d)} = \mu^\cR_{c, \cD(c\to d)}\circ \cR(\varepsilon^\cD_{c\to d}),
\end{equation}
since both are the mate of $\cR_{c\to d}$ under the adjunction
$$
\cC^\cV(\cR(c) \vartriangleleft \cD(c\to d) \to \cR(d))
\cong
\cV(\cD(c \to d) \to \cC(\cR(c) \to \cD(d))).
$$
Next, for all $a\in \cC$ and $v\in \cV$, 
\begin{equation}
\label{eq:MateOfThetaProofStep1}
(\eta^\cV_a\vartriangleleft \id_v) \circ \mu^{\cR}_{\cL(a), v}\circ \cR(\mu^\cL_{a,v}) = \eta^\cV_{a\vartriangleleft v}.
\end{equation}
Indeed, under the adjunction
$$
\cC^\cV(a\vartriangleleft v \to \cR\cL(a\vartriangleleft v)) \cong \cV(v \to \cC(a\to \cR\cL(a\vartriangleleft v))),
$$
the mate of the left hand side of \eqref{eq:MateOfThetaProofStep1} is equal to
\begin{align*}
[\eta^\cV_{a}(\eta_{v,\cL(a)}^{\cC} \circ \cR_{\cL(a)\to \cL(a)\vartriangleleft v})&(\mu^{\cL}_{a,v} \circ \cR_{\cL(a)\vartriangleleft v \to \cL(a\vartriangleleft v)})] 
\circ 
(-\circ_\cC-\circ_\cC-)
\\&=
(\eta^\cV_{a}[(\eta_{v,\cL(a)}^{\cC}\mu^{\cL}_{a,v})\circ (-\circ_\cD - )])
\circ
(\id_{\cC(a\to \cR(\cL(a)))}\cR_{\cL(a) \to \cL(a\vartriangleleft v)})
\circ
(-\circ_\cC-)
\\&=
(\eta^\cV_a [\eta^\cC_{a,v} \circ \cL_{a \to a\vartriangleleft v} \circ \cR_{\cL(a) \to \cL(a\vartriangleleft v)}])
\circ
(-\circ_\cC-)
\\&=
(\eta^\cC_{a,v} \eta^\cV_{a\vartriangleleft v})
\circ
(-\circ_\cC-)
\end{align*}
which is exactly the mate of the right hand side of \eqref{eq:MateOfThetaProofStep1}.
Finally, the mate of $\theta_{a,d} = (\eta^\cV_a \cR_{\cL(a) \to d})\circ (-\circ_\cC-)$ under the adjunction is given by
\begin{align*}
(\id_{a} \vartriangleleft (\eta^\cV_a \cR_{\cL(a) \to d}))
&\circ 
\alpha^\cC_{a,\cC(a \to \cR(\cL(a))),\cC(\cR(\cL(a)) \to \cR(d))}
\circ
(\varepsilon^\cC_{a\to \cR(\cL(a))} \vartriangleleft \id_{\cC(\cR(\cL(a)) \to \cR(d))})
\circ 
\varepsilon^\cC_{\cR(\cL(a)) \to \cR(d)}
\\&\hspace{.15cm}=
(\eta^\cV_a\vartriangleleft \cR_{\cL(a) \to d})\circ \varepsilon^\cC_{\cR(\cL(a)) \to \cR(d)}
\\&\underset{\text{\eqref{eq:MateOfThetaProofStep2}}}{=}
(\eta^\cV_a\vartriangleleft \id_{\cD(\cL(a) \to d)})\circ \mu^\cR_{\cL(a), \cD(\cL(a) \to d)} \circ \cR(\varepsilon^\cD_{\cL(a) \to d})
\\&\underset{\text{\eqref{eq:MateOfThetaProofStep1}}}{=}
\eta^\cV_{a \vartriangleleft  \cD(\cL(a) \to d)} \circ \cR(\mu^\cL_{a, \cD(\cL(a) \to d)})^{-1}\circ \cR(\varepsilon^\cD_{\cL(a) \to d}).
\qedhere
\end{align*}
\end{proof}

\begin{thm}
\label{thm:LiftToVAdjunction}
We have $\cL\dashv_\cV \cR$ if and only if $\cL$ is tensored.
\end{thm}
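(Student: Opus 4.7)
The strategy is to apply Lemma \ref{lem:PromoteToVAdjunction} and exploit the mate descriptions of $\theta_{a,d}$ and $\kappa_{a,d}$ provided by Lemmas \ref{lem:MateOfKappa} and \ref{lem:MateOfTheta}. Since Lemma \ref{lem:PromoteToVAdjunction} reduces $\cL\dashv_\cV \cR$ to the identity $\kappa_{a,d}=\theta_{a,d}^{-1}$, the real content is to pin down when this identity holds in terms of $\mu^\cL$.

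\textbf{Backward direction ($\cL$ tensored $\Rightarrow \cL\dashv_\cV\cR$).} Assume $\mu^\cL_{c,v}$ is invertible for all $c,v$, so Lemma \ref{lem:MateOfTheta} applies and gives an explicit mate formula for $\theta_{a,d}$ to complement the one for $\kappa_{a,d}$ from Lemma \ref{lem:MateOfKappa}. I would check $\kappa_{a,d}\circ\theta_{a,d}=\id$ (and dually the other composition) by taking the mate under
$$\cC^\cV(a\vartriangleleft \cD(\cL(a)\to d)\to \cR(d))\cong \cV(\cD(\cL(a)\to d)\to \cC(a\to \cR(d))),$$
using identities \eqref{eq:MateWithL} and \eqref{eq:MateWithR} together with naturality of $\varepsilon^\cC$, $\varepsilon^\cD$, $\mu^\cL$ and the triangle identities for the underlying adjunction $\cL^\cV\dashv\cR^\cV$. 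The computation is entirely analogous to the mate chases carried out in the proofs of Lemmas \ref{lem:MateOfKappa} and \ref{lem:MateOfTheta}, with the $\mu^\cL$ and $(\mu^\cL)^{-1}$ terms cancelling because $\cL$ is tensored. Lemma \ref{lem:PromoteToVAdjunction} then yields $\cL\dashv_\cV \cR$.

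\textbf{Forward direction ($\cL\dashv_\cV\cR \Rightarrow \cL$ tensored).} Assume $\theta_{a,d}$ is a $\cV$-natural isomorphism. For fixed $c\in\cC$ and $v\in \cV$, chaining the $\cV$-adjunction $\theta$ with the underlying tensoring adjunctions on both sides yields, for every $d\in \cD$, a natural isomorphism
\begin{align*}
\cD^\cV(\cL(c\vartriangleleft v)\to d)
&\cong \cC^\cV(c\vartriangleleft v\to \cR(d))
\cong \cV(v\to \cC(c\to \cR(d)))\\
&\cong \cV(v\to \cD(\cL(c)\to d))
\cong \cD^\cV(\cL(c)\vartriangleleft v \to d).
\end{align*}
By the Yoneda lemma as recalled in \eqref{eq:YonedaInverses}, this produces a canonical isomorphism $\phi_{c,v}:\cL(c)\vartriangleleft v \xrightarrow{\sim} \cL(c\vartriangleleft v)$. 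The final step is to identify $\phi_{c,v}$ with $\mu^\cL_{c,v}$: set $d=\cL(c\vartriangleleft v)$ and chase $\id_{\cL(c\vartriangleleft v)}$ through the four isomorphisms above, using \eqref{eq:ThetaKappa} and the definition of $\mu^\cL$ from Lemma \ref{lem:UnderlyingModuleFunctor}, to recognise the result as exactly the map $\mu^\cL_{c,v}$.

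\textbf{Main obstacle.} The nontrivial part is this last identification in the forward direction. The Yoneda argument freely produces \emph{some} isomorphism between $\cL(c)\vartriangleleft v$ and $\cL(c\vartriangleleft v)$, but showing it coincides with the canonical laxitor $\mu^\cL_{c,v}$ requires a careful multi-step mate computation of the same flavour as Lemmas \ref{lem:MateOfKappa} and \ref{lem:MateOfTheta}. An alternative route is to argue directly from Lemma \ref{lem:MateOfKappa}: invertibility of $\kappa_{a,d}$ for all $d$ forces invertibility of its mate, and by specialising $d$ appropriately and using naturality in $v$ one can extract invertibility of $\mu^\cL_{c,v}$ for arbitrary $v$. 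Either way, the bookkeeping of mates is where the real work sits.
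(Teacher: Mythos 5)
Your overall architecture matches the paper's: reduce to $\kappa_{a,d}=\theta_{a,d}^{-1}$ via Lemma \ref{lem:PromoteToVAdjunction}, use Lemmas \ref{lem:MateOfKappa} and \ref{lem:MateOfTheta} as the key mate descriptions, and in the forward direction your argument is exactly the paper's: the chain of isomorphisms natural in $d$, Yoneda as in \eqref{eq:YonedaInverses} producing a canonical isomorphism $\cL(c)\vartriangleleft v\cong\cL(c\vartriangleleft v)$, and the explicit chase of $\id_{\cL(c\vartriangleleft v)}$ identifying it with $\mu^\cL_{c,v}$ (the ``main obstacle'' you flag is precisely the computation the paper carries out, using \eqref{eq:ThetaKappa} and the triangle identity).

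Where you genuinely diverge is the backward direction. You propose to compose the two mate formulas and verify $\theta_{a,d}\circ\kappa_{a,d}=\id$ and $\kappa_{a,d}\circ\theta_{a,d}=\id$ directly. The paper never composes them: it builds the chain of isomorphisms
$\cV(v\to\cD(\cL(a)\to d))\cong\cD^\cV(\cL(a)\vartriangleleft v\to d)\cong\cD^\cV(\cL(a\vartriangleleft v)\to d)\cong\cC^\cV(a\vartriangleleft v\to\cR(d))\cong\cV(v\to\cC(a\to\cR(d)))$,
natural in $v$, where tensoredness of $\cL$ enters only to make the middle step (precomposition with $\mu^\cL_{a,v}$) invertible; then Lemmas \ref{lem:MateOfKappa} and \ref{lem:MateOfTheta} identify the two identity-chases with $\kappa_{a,d}$ and $\theta_{a,d}$, and \eqref{eq:YonedaInverses} hands you $\theta_{a,d}=\kappa_{a,d}^{-1}$ for free. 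Your direct route should also work, but be aware that the cancellation you assert is the entire content of the statement and is not a routine consequence of the underlying triangle identities: $\eta^\cV$ and $\varepsilon^\cV$ are only known to be natural for the underlying functors, and the enriched naturality one would naively want is essentially equivalent to the $\cV$-adjunction being proved. So the $\mu^\cL$-bookkeeping in your planned mate chase is essential rather than cosmetic, and until it is written out your backward direction is a plausible plan rather than a proof; the paper's Yoneda packaging is precisely the device that makes this verification unnecessary.
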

\begin{proof}
Suppose $\cL\dashv_\cV \cR$.
Then for all $a\in \cC$ and $v\in \cV$, we have the following isomorphisms, which are easily seen to be natural in $d\in \cD^\cV$ by construction:
\begin{align*}
\cD^\cV(\cL(a\vartriangleleft v) \to d)
&\cong
\cC^\cV(a\vartriangleleft v \to \cR(d))
\cong
\cV(v \to \cC(a\to \cR(d)))
\\&\cong
\cV(v \to \cD(\cL(a) \to d))
\cong
\cD^\cV(\cL(a)\vartriangleleft v \to d).
\end{align*}
Thus we get a natural isomorphism of representable functors $\cD^\cV(\cL(a)\vartriangleleft v \to - ) \cong \cD^\cV(\cL(a\vartriangleleft v) \to - )$.
Setting $d=\cL(a\vartriangleleft v)$, by the Yoneda Lemma as in Equation \eqref{eq:YonedaInverses}, the mate of $\id_{\cL(a\vartriangleleft v)}$ under the above series of isomorphisms gives a canonical isomorphism in $\cD^\cV(\cL(a)\vartriangleleft v \to \cL(a\vartriangleleft v))$.
We now see that under the above isomorphisms, $\id_{\cL(a\vartriangleleft v)}$ transforms as follows:
\begin{align*}
\id_{\cL(a\vartriangleleft v)}
&\leftrightarrow
\eta^\cV_{a\vartriangleleft v}
\\&\leftrightarrow
(\eta^\cC_{a,v} \eta^\cV_{a\vartriangleleft v}) \circ (-\circ_\cC-)
\\&\leftrightarrow
(\eta^\cC_{a,v} \eta^\cV_{a\vartriangleleft v}) \circ (-\circ_\cC-)\circ \cL_{a \to \cR(\cL(a\vartriangleleft v))}\circ (\id \varepsilon^\cV_{\cL(a\vartriangleleft v)}) \circ (-\circ_\cD -)
\\
&\hspace{1cm}=
[(\eta^\cC_{a,v}\circ \cL_{a\to a\vartriangleleft v})( (\cL^\cV(\eta^\cV_{a\vartriangleleft v} \varepsilon^\cV_{\cL(a\vartriangleleft v)}))\circ(-\circ_\cD-))] \circ (-\circ_\cD-)
=
\eta^\cC_{a,v}\circ \cL_{a\to a\vartriangleleft v}
\\&\leftrightarrow 
\mu^\cL_{a,v}.
\end{align*}
So $\mu^\cL_{a,v}$ is an isomorphism for all $a\in\cC$ and $v\in \cV$, and thus $\cL$ is tensored.

Conversely, suppose $\cL$ is tensored.
For $v\in \cV$, $a\in \cC$, and $d\in\cD$, using Adjunction \eqref{eq:V-cat to V-mod adjunction}, we get the following isomorphisms, which are natural in $v\in\cV$ by construction:
\begin{equation}
\label{eq: Lift To V-Adjunction Natural Isos}
\begin{split}
\cV(v \to \cD(\cL(a) \to d))
&\cong
\cD^\cV(\cL(a)\vartriangleleft v \to d)
\\&\cong
\cD^\cV(\cL(a\vartriangleleft v) \to d)
\\&\cong
\cC^\cV(a\vartriangleleft v \to \cR(d))
\\&\cong
\cV(v \to \cC(a\to \cR(d))).
\end{split}
\end{equation}
Setting $v = \cC(a\to \cR(d))$, by Lemma \ref{lem:MateOfKappa}, $\id_{\cC(a\to \cR(d))}$ transforms as follows under the above isomorphisms:
\begin{align*}
\kappa_{a,d} 
&\leftrightarrow 
\mu^\cL_{a, \cC(a\to \cR(d))} \circ \cL(\varepsilon^\cC_{a\to \cR(d)})\circ \varepsilon^\cV_d
\leftrightarrow
\cL(\varepsilon^\cC_{a\to \cR(d)})\circ \varepsilon^\cV_d
\leftrightarrow
\varepsilon^\cC_{a\to \cR(d)}
\leftrightarrow
\id_{\cC(a\to \cR(d))}.
\end{align*}
Likewise, setting $v = \cD(\cL(a) \to d)$,  by Lemma \ref{lem:MateOfTheta}, $\id_{\cD(\cL(a) \to d)}$ transforms as follows under the above isomorphisms:
\begin{align*}
\id_{\cD(\cL(a) \to d)}
&\leftrightarrow 
\varepsilon^\cD_{\cL(a) \to d}
\\&\leftrightarrow
(\mu^\cL_{a, \cD(\cL(a) \to d)})^{-1}\circ \varepsilon^\cD_{\cL(a) \to d}
\\&\leftrightarrow 
\eta^\cV_{a \vartriangleleft  \cD(\cL(a) \to d)} \circ \cR(\mu^\cL_{a, \cD(\cL(a) \to d)})^{-1}\circ \cR(\varepsilon^\cD_{\cL(a) \to d})
\\&\leftrightarrow 
\theta_{a,d}
\end{align*}
As Equation \eqref{eq: Lift To V-Adjunction Natural Isos} is a string of isomorphisms, we have $\theta_{a,d} = \kappa_{a,d}^{-1}$ for all $a\in \cC$ and $d\in \cD$ by the Yoneda Lemma as in Equation \eqref{eq:YonedaInverses}.
By Lemma \ref{lem:PromoteToVAdjunction}, we have $\cL\dashv_\cV \cR$.
\end{proof}

Thus Theorem \ref{thm:LiftToVAdjunction} gives a necessary and sufficient condition to lift the underlying adjunction to a $\cV$-adjunction under the assumption that $\cC$ and $\cD$ are oplax tensored.
By strengthening the hypothesis to $\cC$ and $\cD$ being tensored, we obtain the following corollary.

\begin{cor}
\label{cor:LiftToVAdjunction}
Suppose $\cC$ and $\cD$ are tensored and $\cL: \cC \to \cD$ and $\cR: \cD \to \cL$ are $\cV$-functors such that $\cL^\cV \dashv_\cV \cR^\cV$.
Then $\cL\dashv_\cV \cR$ if and only if $\cL$ is tensored.
\end{cor}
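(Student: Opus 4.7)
The plan is to deduce this corollary directly from Theorem \ref{thm:LiftToVAdjunction}, by verifying that the hypothesis of being tensored is stronger than the hypothesis of being oplax tensored required by that theorem.

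First, I would observe that if $\cC$ is tensored, then $\cC$ is automatically oplax tensored in the sense of Definition \ref{def:oplax-tensor}. Indeed, by definition of tensored, each $\cV$-representable functor $\cR^a = \cC(a\to -): \cC \to \widehat{\cV}$ admits a left $\cV$-adjoint $\cL^a : \widehat{\cV} \to \cC$. By Remark \ref{rem:UnderlyingAdjunction}, a $\cV$-adjunction descends to an adjunction of underlying functors $(\cL^a)^\cV \dashv (\cR^a)^\cV$. By Lemma \ref{lem:UnderlyingFunctorOfVRepresentable}, $(\cR^a)^\cV = \cR_a : \cC^\cV \to \cV$, so setting $\cL_a := (\cL^a)^\cV$ exhibits $\cL_a \dashv \cR_a$ as required. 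The same reasoning applies to $\cD$.

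Once both $\cC$ and $\cD$ are known to be oplax tensored, Theorem \ref{thm:LiftToVAdjunction} applies verbatim and yields the equivalence $\cL \dashv_\cV \cR \iff \cL$ is tensored, completing the proof.

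Since the substantive content is entirely contained in Theorem \ref{thm:LiftToVAdjunction}, there is no real obstacle here; the only subtlety is making sure one correctly identifies the underlying adjunction of a $\cV$-representable $\cV$-adjunction with the adjoint pair $(\cL_a, \cR_a)$ appearing in the hypothesis of Theorem \ref{thm:LiftToVAdjunction}, which is exactly the content of Lemma \ref{lem:UnderlyingFunctorOfVRepresentable}. Thus the proof should be just a few lines long, essentially a reference back to the main theorem after this routine unpacking.
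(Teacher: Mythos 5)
Your proposal is correct and is exactly the paper's argument: the paper treats the corollary as an immediate consequence of Theorem \ref{thm:LiftToVAdjunction}, noting only that tensored strengthens the standing oplax-tensored hypothesis, which is what you verify via Remark \ref{rem:UnderlyingAdjunction} and Lemma \ref{lem:UnderlyingFunctorOfVRepresentable}. (You also implicitly and correctly read the hypothesis ``$\cL^\cV \dashv_\cV \cR^\cV$'' as the ordinary underlying adjunction $\cL^\cV \dashv \cR^\cV$ required by the theorem.)
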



\subsection{\texorpdfstring{$\cV$}{V}-adjunctions to strong \texorpdfstring{$\cV$}{V}-modules}

Suppose $\cC$ is a $\cV$-category.
In Section \ref{sec:V-cat to V-mod}, we looked at the functors $\cR_a: \cC^\cV \to \cV$ given by $b\mapsto \cC(a\to b)$.
It is important to note that $\cR_a$ can be promoted to a $\cV$-functor $\cR^a:\cC \to \widehat{\cV}$ as in Example \ref{ex:VRepresentable}.

We assume now that $\cC$ is tensored, so that each $\cR^a$ admits a left $\cV$-adjoint $\cL^a: \widehat{\cV} \to \cC$.
As before, we set $a\vartriangleleft v =\cL^a(v)$ so that we have a $\cV$-adjunction
\begin{equation}
\label{eq:V-Adjunction}
\cC(a\vartriangleleft v \to b)
=
\cC(\cL^a(v)\to b)
\cong
\widehat{\cV}(v \to \cR^a(b))
=
\widehat{\cV}(v \to \cC(a\to b)).
\end{equation}
We endow $\cC^\cV$ with the structure of an oplax right $\cV$-module as in Section \ref{sec:V-cat to V-mod}.
We claim now that $\cC^\cV$ is actually a strong $\cV$-module, i.e., the morphisms $\alpha_{a,u,v}\in \cC^\cV(a\vartriangleleft uv\to a\vartriangleleft u\vartriangleleft v)$ are isomorphisms.
Note that since $\cL^a \dashv_\cV \cR^a$, by Corollary \ref{cor:LiftToVAdjunction}, $\cL^a: \widehat{\cV} \to \cC$ is tensored.

\begin{lem}
Under the identification $\widehat{\cV}^\cV=\cV$ in Example \ref{ex:UnderlyingCategoryOfVhat}, the right $\cV$-module structure of $\cV$ is given by $u\vartriangleleft v: = uv$.
\end{lem}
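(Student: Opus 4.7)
The plan is to identify the left adjoint $\cL_a$ of $\cR_a: \widehat{\cV}^\cV \to \cV$ with the tensor product functor $a \otimes -: \cV \to \cV$, so that the resulting action $a \vartriangleleft v = \cL_a(v)$ coincides (up to the chosen natural isomorphism of adjoints) with $a \otimes v = av$.

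First I would apply Lemma \ref{lem:UnderlyingFunctorOfVRepresentable} to recognize the functor $\cR_a$ of Section \ref{sec:V-cat to V-mod} (when specialized to $\cC = \widehat{\cV}$) as the underlying functor of the $\cV$-representable $\cR^a = \widehat{\cV}(a \to -): \widehat{\cV} \to \widehat{\cV}$, under the identification $\widehat{\cV}^\cV = \cV$ from Example \ref{ex:UnderlyingCategoryOfVhat}. Next, Example \ref{example:InnerHomVAdjunction} already provides a $\cV$-adjunction $a \otimes - \dashv_\cV \widehat{\cV}(a \to -)$, which by Remark \ref{rem:UnderlyingAdjunction} descends to an adjunction of underlying functors $(a \otimes -)^\cV \dashv \cR_a$. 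The underlying functor of the $\cV$-functor $a \otimes -: \widehat{\cV} \to \widehat{\cV}$ is, again using the identification $\widehat{\cV}^\cV = \cV$, just $a \otimes -: \cV \to \cV$.

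By uniqueness of left adjoints up to natural isomorphism, $\cL_a \cong a \otimes -$ as functors $\cV \to \cV$; with the canonical choice of adjoint we obtain $a \vartriangleleft v = \cL_a(v) = av$, as claimed. There is no real obstacle in the argument; it is a straightforward combination of definitions, and the only potentially tedious check is that the natural isomorphisms of adjunctions are compatible, which can be done by tracing identities through Adjunction \eqref{eq:InternalHom} and Adjunction \eqref{eq:V-cat to V-mod adjunction}.
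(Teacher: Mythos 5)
Your proposal is correct and follows essentially the same route as the paper: both identify the chosen left adjoint $\cL_u$ with the underlying functor of the $\cV$-functor $u\otimes -$, using the $\cV$-adjunction $u\otimes - \dashv_\cV \widehat{\cV}(u\to -)$ from Example \ref{example:InnerHomVAdjunction} together with the underlying adjunction of Remark \ref{rem:UnderlyingAdjunction}, and then read off $(u\otimes -)^\cV(v)=uv$ under the identification $\widehat{\cV}^\cV=\cV$. The paper simply takes $\cL_u=(u\otimes -)^\cV$ as its choice of left adjoint rather than invoking uniqueness of adjoints, but this is the same argument.
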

\begin{proof}
Recall that $u \vartriangleleft v = \cL_u(v)$ where $\cL_u= (u\otimes -)^\cV$ and $(u\otimes -)$ is a left $\cV$-adjoint of $\widehat{\cV}(u \to -)$ from Example \ref{example:InnerHomVAdjunction}.
Now on objects, $(u\otimes -)^\cV(v) = uv$, and for a morphism $f\in \cV(v\to w)$, $(u\otimes f)^\cV \in \cV(uv \to uw)$ is equal to $\id_u f$.
\end{proof}

Thus for all $a\in\cC$ and $u,v\in\cV$, 
$$
\mu^{\cL^a}_{u,v}\in 
\cC^\cV(\cL^a(u)\vartriangleleft v\to \cL^a(u\vartriangleleft v))
=
\cC^\cV(a\vartriangleleft u \vartriangleleft v \to a \vartriangleleft uv)
$$
is an isomorphism.

\begin{prop}
\label{prop:AlphaIsInvertible}
For all $a\in \cC$ and $u,v\in\cV$, $\alpha_{a,u,v}=(\mu^{\cL^a}_{u,v})^{-1}$.
Since $\mu^{\cL^a}_{u,v}$ is an isomorphism, so is $\alpha_{a,u,v}$.
\end{prop}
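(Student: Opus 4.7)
The plan is to establish the one-sided composite identity $\alpha_{a,u,v}\circ\mu^{\cL^a}_{u,v}=\id_{a\vartriangleleft uv}$ and separately observe that $\mu^{\cL^a}_{u,v}$ is automatically invertible; together these identify $\alpha_{a,u,v}$ as the two-sided inverse of $\mu^{\cL^a}_{u,v}$ and yield the proposition.

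Invertibility of $\mu^{\cL^a}_{u,v}$ is essentially free. The tensored hypothesis on $\cC$ provides the $\cV$-adjunction $\cL^a\dashv_\cV\cR^a$, so Theorem~\ref{thm:LiftToVAdjunction} forces $\cL^a$ to be a tensored $\cV$-functor, which by Definition~\ref{defn:TensoredVFunctor} means every $\mu^{\cL^a}_{u,v}$ is an isomorphism.

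To verify the composite identity, I would take mates on both sides under the hom-adjunction
\[
\cC^\cV(a\vartriangleleft uv\to a\vartriangleleft uv)\cong\cV(uv\to\cC(a\to a\vartriangleleft uv))
\]
coming from $\cL_a\dashv\cR_a$. The mate of $\id_{a\vartriangleleft uv}$ is the unit $\eta^\cC_{a,uv}$. Applying the naturality formula~\eqref{eq:MateWithR} together with the description of $\cR_a$ as post-composition (Example~\ref{ex:Representable}), and substituting the defining formula $\mate(\alpha_{a,u,v})=(\eta^\cC_{a,u}\,\eta^\cC_{a\vartriangleleft u,v})\circ(-\circ_\cC-)$ from~\eqref{eq:MateOfAlpha}, rewrites the mate of $\alpha_{a,u,v}\circ\mu^{\cL^a}_{u,v}$ as
\[
(\eta^\cC_{a,u}\,\eta^\cC_{a\vartriangleleft u,v}\,\mu^{\cL^a}_{u,v})\circ(-\circ_\cC-\circ_\cC-).
\]
The task reduces to showing that this composite equals $\eta^\cC_{a,uv}$.

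The principal obstacle is this final reduction. The plan is to apply the defining mate property of $\mu^{\cL^a}_{u,v}$ from Lemma~\ref{lem:UnderlyingModuleFunctor}, namely
\[
(\eta^\cC_{a\vartriangleleft u,v}\,\mu^{\cL^a}_{u,v})\circ(-\circ_\cC-)=\eta^{\widehat\cV}_{u,v}\circ\cL^a_{u\to uv},
\]
to collapse the inner pair of factors. What remains is the identity
\[
(\eta^\cC_{a,u}\cdot(\eta^{\widehat\cV}_{u,v}\circ\cL^a_{u\to uv}))\circ(-\circ_\cC-)=\eta^\cC_{a,uv},
\]
which exhibits $\eta^\cC_{a,uv}$ as the image of the internal-hom coevaluation $\eta^{\widehat\cV}_{u,v}=\mate(\id_{uv})$ under the mate correspondence of the $\cV$-adjunction $\cL^a\dashv_\cV\cR^a$ at $a\vartriangleleft uv$. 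This in turn follows from the explicit formula~\eqref{eq:ThetaKappa} relating $\theta$ to the underlying unit, together with the characterisation of $\eta^\cC_{a,uv}$ as the mate of $\id_{a\vartriangleleft uv}$.
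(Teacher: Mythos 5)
Your argument is correct, but it is organized differently from the paper's. The paper proves this proposition by a Yoneda-style argument: it assembles the chain of isomorphisms $\cC^\cV(a\vartriangleleft u\vartriangleleft v\to b)\cong\cV(v\to\cC(a\vartriangleleft u\to b))\cong\cV(v\to\widehat{\cV}(u\to\cC(a\to b)))\cong\cV(uv\to\cC(a\to b))\cong\cC^\cV(a\vartriangleleft uv\to b)$, natural in $b$, transports $\id_{a\vartriangleleft uv}$ one way to recover $\mu^{\cL^a}_{u,v}$ and $\id_{a\vartriangleleft u\vartriangleleft v}$ the other way to recover $\alpha_{a,u,v}$, and then invokes \eqref{eq:YonedaInverses} to conclude the two are mutually inverse; in particular it never multiplies $\alpha$ and $\mu$ and gets both composite identities simultaneously. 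You instead verify the single composite $\alpha_{a,u,v}\circ\mu^{\cL^a}_{u,v}=\id_{a\vartriangleleft uv}$ by a direct mate computation and then upgrade to a two-sided inverse using the invertibility of $\mu^{\cL^a}_{u,v}$ from Theorem \ref{thm:LiftToVAdjunction} (equivalently Corollary \ref{cor:LiftToVAdjunction}), which is legitimate since $\alpha\circ\mu=\id$ and $\mu$ invertible force $\alpha=\mu^{-1}$. The trade-off: you avoid the paper's longer second transport (the identification of the transported identity with $\alpha_{a,u,v}$, which is the more involved half of its proof), but your route genuinely needs the prior invertibility of $\mu^{\cL^a}$, which the Yoneda argument does not. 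The computational core overlaps: your collapsed identity $(\eta^\cC_{a\vartriangleleft u,v}\,\mu^{\cL^a}_{u,v})\circ(-\circ_\cC-)=\eta^{\widehat{\cV}}_{u,v}\circ\cL^a_{u\to uv}$ is exactly the defining mate property from Lemma \ref{lem:UnderlyingModuleFunctor}, and your remaining identity $(\eta^\cC_{a,u}\,(\eta^{\widehat{\cV}}_{u,v}\circ\cL^a_{u\to uv}))\circ(-\circ_\cC-)=\eta^\cC_{a,uv}$ is precisely the content of the paper's first transport; it does hold, and the ingredients you cite suffice, namely \eqref{eq:ThetaKappa} (which together with the $\cV$-adjunction axioms gives $\cL^a_{u\to uv}\circ\theta_{u,a\vartriangleleft uv}=$ internal post-composition with the unit, i.e.\ the enriched naturality of $\eta$) followed by the triangle identity $(\id_u\,\eta^{\widehat{\cV}}_{u,v})\circ\varepsilon^{\widehat{\cV}}_{u\to uv}=\id_{uv}$ obtained from $\eta^{\widehat{\cV}}_{u,v}=\mate(\id_{uv})$; this last step is only sketched in your write-up and deserves the two displayed lines just indicated. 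One harmless imprecision: \eqref{eq:MateWithR} is stated in the paper with middle object of the form $\cL(b)$, whereas you use the general naturality $\mate(f\circ g)=\mate(f)\circ\cR_a(g)$ for arbitrary intermediate object; that general form is standard and is what the paper itself uses elsewhere.
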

\begin{proof}
For $a,b\in\cC$ and $u,v \in \cV$, the following isomorphisms are natural in $b\in \cC^\cV$ by construction.
\begin{align*}
\cC^\cV(a\vartriangleleft u\vartriangleleft v \to b)
&\cong
\cV(v \to \cC(a\vartriangleleft u \to b))
\\&\cong
\cV(v \to \widehat{\cV}(u \to \cC(a\to b)))
\\&\cong
\cV(uv \to \cC(a\to b))
\\&\cong
\cC^\cV(a\vartriangleleft uv \to b)
\end{align*}
Setting $b= a\vartriangleleft uv$ and transforming $\id_{a\vartriangleleft uv}$ under the series of isomorphisms yields
\begin{align*}
\mu^{\cL^a}_{u,v}
&\leftrightarrow
\eta^{\widehat{\cV}}_{u,v} \circ \cL^a_{u\to uv}
=
\eta^{\widehat{\cV}}_{u,v}\circ(\id_{\widehat{\cV}(u \to uv)}\eta^\cC_{a,uv})\circ (-\circ_{\widehat{\cV}}-)\circ \kappa_{u, a\vartriangleleft uv}
\\&\leftrightarrow
\eta^{\widehat{\cV}}_{u,v}\circ(\id_{\widehat{\cV}(u \to uv)}\eta^\cC_{a,uv})\circ (-\circ_{\widehat{\cV}}-)
\\&\leftrightarrow
\eta^\cC_{a,uv}
\\&\leftrightarrow
\id_{a\vartriangleleft uv}.
\end{align*}
Note we used the expression for $\kappa$ in terms of $\cL^a$ and the unit $\varepsilon^\cC$ of the underlying adjunction from \eqref{eq:ThetaKappa}.
Now setting $b=a\vartriangleleft u\vartriangleleft v$ and transforming $\id_{a\vartriangleleft u\vartriangleleft v}$ under the series of isomorphisms yields
\begin{align*}
\id_{a\vartriangleleft u\vartriangleleft v}
&\leftrightarrow
\eta^\cC_{a\vartriangleleft u,v}
\\&\leftrightarrow
\eta^\cC_{a\vartriangleleft u,v} \circ \theta_{u, a\vartriangleleft u \vartriangleleft v}
\\&\leftrightarrow
[\id_u (\eta^\cC_{a\vartriangleleft u,v} \circ \theta_{u, a\vartriangleleft u \vartriangleleft v})]
\circ 
\varepsilon^{\widehat{\cV}}_{u \to \cC(a \to a\vartriangleleft u \vartriangleleft v)}
\\&\leftrightarrow
[\id_a\vartriangleleft ([\id_u (\eta^\cC_{a\vartriangleleft u,v} \circ \theta_{u, a\vartriangleleft u \vartriangleleft v})]
\circ 
\varepsilon^{\widehat{\cV}}_{u \to \cC(a \to a\vartriangleleft u \vartriangleleft v)})]
\circ 
\varepsilon^\cC_{a\to a\vartriangleleft u \vartriangleleft v}
\\&\qquad =
[\id_a\vartriangleleft ([(\eta^\cC_{a,u}\eta^\cC_{a\vartriangleleft u,v}) \circ (\id_u \cR_{a\vartriangleleft u \to a\vartriangleleft u \vartriangleleft v})]
\circ 
\varepsilon^{\widehat{\cV}}_{\cC(a\to a\vartriangleleft u) \to \cC(a \to a\vartriangleleft u \vartriangleleft v)})]
\circ 
\varepsilon^\cC_{a\to a\vartriangleleft u \vartriangleleft v}
\\&\qquad=
(\id_a\vartriangleleft [(\eta^\cC_{a,u}\eta^\cC_{a\vartriangleleft u,v}) \circ (-\circ_\cC-)])
\circ 
\varepsilon^\cC_{a\to a\vartriangleleft u \vartriangleleft v}
\\&\qquad=
\alpha_{a,u,v}.
\end{align*}
Note we used the expression for $\theta$ in terms of $\cR^a$ and $\eta^\cC$ from \eqref{eq:ThetaKappa}, together with Example \ref{ex:VRepresentable}.
We are now finished by the Yoneda Lemma as in \eqref{eq:YonedaInverses}.
\end{proof}

\subsection{Strong \texorpdfstring{$\cV$}{V}-modules to \texorpdfstring{$\cV$}{V}-adjunctions}

Now suppose $\cM$ is a strong right $\cV$-module category.
As in Section \ref{sec:V-mod to V-cat}, we assume the functors $\cL_a: \cV \to \cM$ given by $v\mapsto m\vartriangleleft v$ have right adjoints, and we use the adjunctions $\cL_a \dashv \cR_a$ to construct a $\cV$-enriched category $\cC$.

We now show that when $\cM$ is strong, the functors $\cL_a: \cV \to \cM$ can be promoted to $\cV$-functors $\cL^a : \widehat{\cV} \to \cC$.
Indeed, we define $\cL^a_{u\to v}$ in
\begin{equation}
\label{eq:L^aDefinition}
\cV(\widehat{\cV}(u\to v) \to \cC(a\vartriangleleft u \to a\vartriangleleft v))
\cong
\cM(a \vartriangleleft u \vartriangleleft \widehat{\cV}(u\to v) \to a\vartriangleleft v)
\end{equation}
to be the mate of $\alpha^{-1}_{a,u,\widehat{\cV}(u\to v)}\circ (\id_a\vartriangleleft \varepsilon^{\widehat{\cV}}_{u\to v})$, where the existence of $\alpha^{-1}$ requires that $\cM$ is strong.
Note that the usual calculations with mates imply
\begin{equation}
\label{eq:L^aMateIdentity}
\alpha^{-1}_{a,u,\widehat{\cV}(u\to v)}\circ (\id_a\vartriangleleft \varepsilon^{\widehat{\cV}}_{u\to v})
=
(\id_{a\vartriangleleft u} \vartriangleleft \cL^a_{u\to v})\circ\varepsilon^\cC_{a\vartriangleleft u\to a\vartriangleleft v}.
\end{equation}

\begin{lem}
We have $\cL^a$ is a $\cV$-functor.
Under the identification $\widehat{\cV}^\cV = \cV$ from Example \ref{ex:UnderlyingCategoryOfVhat}, the underlying functor of $\cL^a$ is $(\cL^a)^\cV=\cL_a$.
\end{lem}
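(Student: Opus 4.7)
The plan is to prove the two $\cV$-functor axioms for $\cL^a$ (preservation of identities and of composition) by systematic mate calculations under the adjunction \eqref{eq:L^aDefinition}, and then separately identify the underlying functor with $\cL_a$. Throughout, the relation \eqref{eq:L^aMateIdentity} is the key translation tool between $\cL^a_{u\to v}$ and the module-theoretic data $\alpha^{-1}$ and $\varepsilon^{\widehat{\cV}}$.

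For identity preservation, I would compute the mate of $j_u \circ \cL^a_{u \to u}$ under the adjunction $\cV(1_\cV \to \cC(a\vartriangleleft u \to a\vartriangleleft u)) \cong \cM(a\vartriangleleft u\vartriangleleft 1_\cV \to a\vartriangleleft u)$. Using \eqref{eq:MateWithL}, \eqref{eq:L^aMateIdentity}, and the observation that $j_u \in \cV(1_\cV \to \widehat{\cV}(u\to u))$ is the element corresponding to $\id_u$ under the identification of Example \ref{ex:UnderlyingCategoryOfVhat} (so that $(\id_u\, j_u)\circ \varepsilon^{\widehat{\cV}}_{u\to u} = \id_u$), the mate collapses to $\alpha^{-1}_{a,u,1_\cV}\circ (\id_a\vartriangleleft \id_u) = \alpha^{-1}_{a,u,1_\cV}$. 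By Lemma \ref{lem:AlphaInvertibleWhenOneArgumentIs1}, this equals $\rho_{a\vartriangleleft u}$, which by definition (Section \ref{sec:V-mod to V-cat}) is the mate of $j_{a\vartriangleleft u}$.

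For composition preservation, I would verify $(\cL^a_{u\to v}\,\cL^a_{v\to w})\circ (-\circ_\cC -) = (-\circ_{\widehat{\cV}}-)\circ \cL^a_{u\to w}$ by taking mates under the adjunction $\cV(\widehat{\cV}(u\to v)\widehat{\cV}(v\to w) \to \cC(a\vartriangleleft u \to a\vartriangleleft w)) \cong \cM(a\vartriangleleft u \vartriangleleft \widehat{\cV}(u\to v)\widehat{\cV}(v\to w) \to a\vartriangleleft w)$. The mate of the right-hand side unfolds using the definition of $-\circ_{\widehat{\cV}}-$ from Example \ref{example:SelfEnrichment} together with \eqref{eq:L^aMateIdentity}, producing a composite built out of two $\varepsilon^{\widehat{\cV}}$'s and one $\alpha^{-1}$. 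The mate of the left-hand side unfolds using the definition of $-\circ_\cC-$ from Section \ref{sec:V-mod to V-cat} (which is $\alpha \circ (\varepsilon^\cC\vartriangleleft\id)\circ \varepsilon^\cC$), combined with two applications of \eqref{eq:L^aMateIdentity}, producing a composite built out of two $\alpha^{-1}$'s, two $\varepsilon^{\widehat{\cV}}$'s, and (via cancellation of $\alpha\circ \alpha^{-1}$) one further $\alpha^{-1}$. Equating the two sides then reduces to the pentagon axiom for $\alpha$, transported to an equivalent pentagon for $\alpha^{-1}$ using the fact that $\cM$ is strong, together with naturality of $\alpha^{-1}$ against the $\varepsilon^{\widehat{\cV}}$'s. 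I expect this to be the main obstacle: the calculation is a long mate chase juggling three different composition/action morphisms across $\widehat{\cV}$, $\cC$, and $\cM$ simultaneously, and the key nontrivial algebraic input is the dualized pentagon.

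For the underlying functor, on objects $(\cL^a)^\cV(v) = \cL^a(v) = a\vartriangleleft v = \cL_a(v)$ directly from the definition. On morphisms, given $f\in \cV(u\to v)$, let $\tilde f\in \cV(1_\cV\to \widehat{\cV}(u\to v))$ be its image under the identification of Example \ref{ex:UnderlyingCategoryOfVhat}, characterized by $(\id_u\,\tilde f)\circ \varepsilon^{\widehat{\cV}}_{u\to v} = f$. Then $(\cL^a)^\cV(\tilde f) = \tilde f \circ \cL^a_{u\to v}$, and transforming this under the adjunction $\cC^\cV(a\vartriangleleft u \to a\vartriangleleft v) \cong \cM(a\vartriangleleft u \to a\vartriangleleft v)$ of Section \ref{sec:Equivalence} (which uses strong unitality $\rho_{a\vartriangleleft u}$), applying \eqref{eq:L^aMateIdentity} and naturality of $\alpha^{-1}$ in its last argument along $\tilde f$, collapses the expression to $\id_a\vartriangleleft f$, which is $\cL_a(f)$ by definition.
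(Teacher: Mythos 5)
Your proposal takes essentially the same route as the paper: the only axiom the paper's definition of $\cV$-functor actually demands is compatibility with composition, and you verify it exactly as the paper does, by mating both sides under Adjunction \eqref{eq:L^aDefinition}, applying \eqref{eq:L^aMateIdentity} twice, and finishing with naturality plus the (invertible) associativity constraint of the strong module $\cM$; your extra checks (identity preservation, and the explicit identification $(\cL^a)^\cV=\cL_a$, which the paper asserts without proof) are fine in substance. One small caveat: in those extra checks you invoke Lemma \ref{lem:AlphaInvertibleWhenOneArgumentIs1} to get $\alpha^{-1}_{a,u,1_\cV}=\rho_{a\vartriangleleft u}$, but that lemma concerns the oplax module structure induced on $\cC^\cV$ by a $\cV$-category, not the given strong module $\cM$; the identity you need for $\cM$ is still true (derive it from the module pentagon with middle variable $1_\cV$, invertibility of $\alpha$ and $\rho$, and naturality of $\rho$, or transport Lemma \ref{lem:AlphaInvertibleWhenOneArgumentIs1} along the module equivalence of \S\ref{sec:Equivalence}), so this is a citation slip rather than a genuine gap.
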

\begin{proof}
We must show that for all $u,v,w\in\cV$, $(-\circ_{\widehat{\cV}}-)\circ \cL^a_{u\to w} =(\cL^a_{v\to w}\cL^a_{v\to w})\circ (-\circ_\cC-)$.
The mate of $(-\circ_{\widehat{\cV}}-)\circ \cL^a_{u\to w}$ under Adjunction \eqref{eq:L^aDefinition} is given by
\begin{align*}
(\id_{a \vartriangleleft u} \vartriangleleft (-\circ_{\widehat{\cV}}-))
\circ
\alpha^{-1}_{a,u,\widehat{\cV}(u\to w)}
\circ
(\id_a \vartriangleleft \varepsilon^{\widehat{\cV}}_{u\to w})
&=
\alpha^{-1}_{a,u,\widehat{\cV}(u\to v)\widehat{\cV}(v\to w)}
\circ
(\id_{a} \vartriangleleft [\id_u(-\circ_{\widehat{\cV}}-)])
\circ
(\id_a \vartriangleleft \varepsilon^{\widehat{\cV}}_{u\to w})
\\&=
\alpha^{-1}_{a,u,\widehat{\cV}(u\to v)\widehat{\cV}(v\to w)}
\circ
(\id_a \vartriangleleft (\varepsilon^{\widehat{\cV}}_{u\to v} \id_w))
\circ
(\id_a \vartriangleleft \varepsilon^{\widehat{\cV}}_{v\to w})
\end{align*}
On the other hand, the mate of $(\cL^a_{v\to w}\cL^a_{v\to w})\circ (-\circ_\cC-)$ under Adjunction \eqref{eq:L^aDefinition} is given by
\begin{align*}
(\id_{a\vartriangleleft u} &\vartriangleleft (\cL^a_{u\to v} \cL^a_{v\to w}))
\circ
\alpha_{a\vartriangleleft u,\cC(a\vartriangleleft u\to a\vartriangleleft v), \cC(a \vartriangleleft v\to a \vartriangleleft w)}
\circ
(\bar{\varepsilon}^\cC_{a\vartriangleleft u\to a\vartriangleleft v} \vartriangleleft \id_{\cC(a \vartriangleleft v\to a \vartriangleleft w)})
\circ
\bar{\varepsilon}^\cC_{a \vartriangleleft v\to a \vartriangleleft w}
\\&=
\alpha_{a\vartriangleleft u,\widehat{\cV}(u\to v),\widehat{\cV}(v\to w)}
\circ
(\id_{a\vartriangleleft u} \vartriangleleft \cL^a_{u\to v} \vartriangleleft\cL^a_{v\to w})
\circ
(\bar{\varepsilon}^\cC_{a\vartriangleleft u\to a\vartriangleleft v} \vartriangleleft \id_{\cC(a \vartriangleleft v\to a \vartriangleleft w)})
\circ
\bar{\varepsilon}^\cC_{a \vartriangleleft v\to a \vartriangleleft w}
\\&=
\alpha_{a\vartriangleleft u,\widehat{\cV}(u\to v),\widehat{\cV}(v\to w)}
\circ
(\alpha^{-1}_{a,u,\widehat{\cV}(u\to v)}\vartriangleleft \id_w)
\circ
(\id_a \vartriangleleft \varepsilon^{\widehat{\cV}}_{u\to v} \vartriangleleft \id_w)
\circ
\alpha^{-1}_{a,v,\widehat{\cV}(v\to w)}
\circ
(\id_a \vartriangleleft \varepsilon^{\widehat{\cV}}_{v\to w})
\\&=
\alpha^{-1}_{a,u,\widehat{\cV}(u\to v)\widehat{\cV}(v\to w)}
\circ
(\id_a \vartriangleleft (\varepsilon^{\widehat{\cV}}_{u\to v} \id_w))
\circ
(\id_a \vartriangleleft \varepsilon^{\widehat{\cV}}_{v\to w})
\end{align*}
The second equality used two instances of \eqref{eq:L^aMateIdentity}, and the last equality used the naturality and associativity of $\alpha$.
\end{proof}

Now since we defined $\cC$ by taking the same objects as $\cM$ and setting $\cC(a\to b)=\cR_a(b)$, we see that each $\cR_a$ can be promoted to a $\cV$-representable $\cV$-functor $\cR^a:\cC\to \widehat{\cV}$ whose underlying functor is $\cR_a$ by Lemma \ref{lem:UnderlyingFunctorOfVRepresentable}.

\begin{prop}
\label{prop:LaTensored}
For all $a\in \cC$ and $u,v\in\cV$, $\mu^{\cL^a}_{u,v} = \alpha_{a,u,v}^{-1}$.
Since $\alpha_{a,u,v}$ is an isomorphism, so is $\mu^{\cL^a}_{u,v}$.
\end{prop}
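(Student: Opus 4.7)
The strategy is to unfold the definition of $\mu^{\cL^a}_{u,v}$ given by Lemma \ref{lem:UnderlyingModuleFunctor}, rewrite one of the resulting factors using the identity \eqref{eq:L^aMateIdentity}, and then simplify by naturality of $\alpha$ and a triangle identity for the internal hom adjunction. The final expression will be $\alpha^{-1}_{a,u,v}$.

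By Lemma \ref{lem:UnderlyingModuleFunctor} applied to the $\cV$-functor $\cL^a:\widehat{\cV}\to\cC$ (using the identification $\widehat{\cV}^\cV=\cV$ of Example \ref{ex:UnderlyingCategoryOfVhat}, under which $u\vartriangleleft v = uv$), the morphism $\mu^{\cL^a}_{u,v}\in\cC^\cV(a\vartriangleleft u\vartriangleleft v\to a\vartriangleleft uv)$ is the mate of $\eta^{\widehat\cV}_{u,v}\circ \cL^a_{u\to uv}\in \cV(v\to\cC(a\vartriangleleft u\to a\vartriangleleft uv))$ under the adjunction $\cC^\cV(a\vartriangleleft u\vartriangleleft v\to b)\cong \cV(v\to\cC(a\vartriangleleft u\to b))$ arising from $\cL_{a\vartriangleleft u}\dashv \cR_{a\vartriangleleft u}$. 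Since the mate of $f\in\cV(v\to\cC(a\vartriangleleft u\to b))$ is given by $(\id_{a\vartriangleleft u}\vartriangleleft f)\circ \varepsilon^\cC_{a\vartriangleleft u\to b}$, we obtain
$$
\mu^{\cL^a}_{u,v}
=(\id_{a\vartriangleleft u}\vartriangleleft \eta^{\widehat\cV}_{u,v})
\circ(\id_{a\vartriangleleft u}\vartriangleleft \cL^a_{u\to uv})
\circ \varepsilon^\cC_{a\vartriangleleft u\to a\vartriangleleft uv}.
$$

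Next I apply identity \eqref{eq:L^aMateIdentity} with the second index replaced by $uv$ to the last two factors, which yields
$$
(\id_{a\vartriangleleft u}\vartriangleleft \cL^a_{u\to uv})\circ\varepsilon^\cC_{a\vartriangleleft u\to a\vartriangleleft uv}
=\alpha^{-1}_{a,u,\widehat\cV(u\to uv)}\circ(\id_a\vartriangleleft \varepsilon^{\widehat\cV}_{u\to uv}).
$$
Substituting and then invoking naturality of $\alpha^{-1}$ in its third argument with respect to $\eta^{\widehat\cV}_{u,v}:v\to\widehat\cV(u\to uv)$ to slide $\alpha^{-1}_{a,u,v}$ past $(\id_{a\vartriangleleft u}\vartriangleleft \eta^{\widehat\cV}_{u,v})$, I arrive at
$$
\mu^{\cL^a}_{u,v}
=\alpha^{-1}_{a,u,v}\circ \bigl(\id_a\vartriangleleft \bigl[(\id_u\,\eta^{\widehat\cV}_{u,v})\circ \varepsilon^{\widehat\cV}_{u\to uv}\bigr]\bigr).
$$

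Finally, the triangle identity for the adjunction $(u\otimes -)\dashv \widehat\cV(u\to -)$ of Example \ref{example:InnerHomVAdjunction} gives $(\id_u\,\eta^{\widehat\cV}_{u,v})\circ \varepsilon^{\widehat\cV}_{u\to uv}=\id_{uv}$, whence $\mu^{\cL^a}_{u,v}=\alpha^{-1}_{a,u,v}$. Since $\cM$ is strong, $\alpha_{a,u,v}$ is an isomorphism, so $\mu^{\cL^a}_{u,v}$ is as well.

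The main obstacles I anticipate are bookkeeping rather than conceptual: (i) correctly computing the mate of $\eta^{\widehat\cV}_{u,v}\circ \cL^a_{u\to uv}$ under the ordinary (not $\cV$-enriched) adjunction $\cL_{a\vartriangleleft u}\dashv \cR_{a\vartriangleleft u}$, since at this stage we have not yet promoted this to a $\cV$-adjunction; and (ii) applying naturality of $\alpha$ with the correct orientation under the left-to-right composition convention of the paper. Once these are handled cleanly, the identification with $\alpha^{-1}_{a,u,v}$ reduces to a single triangle identity.
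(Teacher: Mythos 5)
Your proof is correct and follows essentially the same route as the paper's: unfold $\mu^{\cL^a}_{u,v}$ as the mate of $\eta^{\widehat{\cV}}_{u,v}\circ\cL^a_{u\to uv}$, rewrite via \eqref{eq:L^aMateIdentity}, slide $\alpha^{-1}$ out by naturality in the third variable, and collapse the remaining factor $(\id_u\,\eta^{\widehat{\cV}}_{u,v})\circ\varepsilon^{\widehat{\cV}}_{u\to uv}$ by the triangle identity. The paper's argument is the same computation, just stated more tersely, so no changes are needed.
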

\begin{proof}
As $\mu^{\cL^a}_{u,v}$ is defined by taking mates, we have
\begin{align*}
\mu^{\cL^a}_{u,v}
&=
(\id_{a\vartriangleleft u} \vartriangleleft \eta^{\widehat{\cV}}_{u,v} \circ \cL^a_{u \to uv}) 
\circ 
\varepsilon^\cC_{a\vartriangleleft u \to a\vartriangleleft uv}
\\&=
(\id_{a\vartriangleleft u} \vartriangleleft \eta^{\widehat{\cV}}_{u,v}) 
\circ 
\alpha^{-1}_{a,u,uv} 
\circ 
(\id_{a\vartriangleleft u} \vartriangleleft \varepsilon^{\widehat{\cV}}_{u\to uv})
&&\text{(by \eqref{eq:L^aMateIdentity})}
\\&=
\alpha^{-1}_{a,u,v} 
\circ 
(\id_{a \vartriangleleft u} \vartriangleleft \eta^{\widehat{\cV}}_{u,v}) 
\circ 
(\id_a \vartriangleleft \varepsilon^{\widehat{\cV}}_{u\to uv})
&&\text{(by naturality)}
\\&=
\alpha^{-1}_{a,u,v}.
&&\qedhere
\end{align*}
\end{proof}

\begin{cor}
\label{cor:AlphaInvertibleImpliesCTensored}
The $\cV$-functor $\cL^a$ is a left $\cV$-adjoint for $\cR^a$.
Thus $\cC$ is tensored.
\end{cor}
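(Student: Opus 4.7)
The plan is to deduce this corollary as a direct application of Theorem \ref{thm:LiftToVAdjunction} (the lifting criterion for $\cV$-adjunctions) combined with Proposition \ref{prop:LaTensored} (the identification $\mu^{\cL^a}_{u,v} = \alpha_{a,u,v}^{-1}$). The strategy is to check that the hypotheses of Theorem \ref{thm:LiftToVAdjunction} are satisfied for the $\cV$-functors $\cL^a: \widehat{\cV} \to \cC$ and $\cR^a: \cC \to \widehat{\cV}$, and then conclude.

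First, I would verify that the underlying functors satisfy $(\cL^a)^\cV \dashv (\cR^a)^\cV$. By the lemma preceding Proposition \ref{prop:LaTensored}, under the identification $\widehat{\cV}^\cV = \cV$ of Example \ref{ex:UnderlyingCategoryOfVhat}, the underlying functor of $\cL^a$ is $\cL_a$. By Lemma \ref{lem:UnderlyingFunctorOfVRepresentable}, the underlying functor of $\cR^a$ is $\cR_a$. So the underlying adjunction $\cL_a \dashv \cR_a$ is precisely the adjunction we began with in Section \ref{sec:V-mod to V-cat}.

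Second, I would note that Proposition \ref{prop:LaTensored} shows $\mu^{\cL^a}_{u,v} = \alpha_{a,u,v}^{-1}$, and since $\cM$ is a \emph{strong} right $\cV$-module the oplaxitor $\alpha_{a,u,v}$ is invertible; hence $\mu^{\cL^a}_{u,v}$ is an isomorphism for all $u,v \in \cV$. By Definition \ref{defn:TensoredVFunctor}, this says $\cL^a$ is a tensored $\cV$-functor. The categories $\widehat{\cV}$ and $\cC$ are both oplax tensored: the former by Example \ref{example:InnerHomVAdjunction}, and the latter because $\cC^\cV$ is equivalent to the $\cV$-module $\cM$ by the equivalence of Section \ref{sec:Equivalence}, and the required left adjoints $\cL_a$ exist by hypothesis.

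Theorem \ref{thm:LiftToVAdjunction} then immediately gives $\cL^a \dashv_\cV \cR^a$. Since $a \in \cC$ was arbitrary, every $\cV$-representable functor $\cR^a = \cC(a\to -): \cC \to \widehat{\cV}$ admits a left $\cV$-adjoint, which is the definition of $\cC$ being tensored. There is no real obstacle; everything has been set up by Proposition \ref{prop:LaTensored} and Theorem \ref{thm:LiftToVAdjunction}, and the corollary is essentially a bookkeeping step that assembles them.
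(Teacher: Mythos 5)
Your proposal is correct and follows essentially the same route as the paper: the paper's proof likewise invokes Theorem \ref{thm:LiftToVAdjunction} using the underlying adjunction $\cL_a \dashv \cR_a$ and then cites Proposition \ref{prop:LaTensored} to see that each $\cL^a$ is tensored. The extra bookkeeping you include (identifying $(\cL^a)^\cV = \cL_a$, $(\cR^a)^\cV = \cR_a$, and checking the oplax tensored hypotheses) is left implicit in the paper but is a faithful elaboration of the same argument.
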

\begin{proof}
Since $\cL_a \dashv \cR_a$ for all $a\in\cC$, by Theorem \ref{thm:LiftToVAdjunction}, it suffices to prove that each $\cL^a$ is tensored.
This is exactly the content of Proposition \ref{prop:LaTensored}.
\end{proof}

\section{Completion for \texorpdfstring{$\cV$}{V}-categories}
\label{sec:CompletionForVCats}

For this section, we assume $\cV$ is closed so that we may form $\widehat{\cV}$, and $\cC$ is oplax tensored.

\subsection{The completion operation}

Suppose $\cC$ is a $\cV$-category.

\begin{defn}
\label{defn:CompletionOfVCategory}
We define the completion $\overline{\cC}$ to be the $\cV$-category whose objects are of the form $a\blacktriangleleft u$ where $a\in \cC$ and $u\in \cV$,
and whose hom objects are given by $\overline{\cC}(a\blacktriangleleft u \to b \blacktriangleleft v) = \widehat{\cV}(u\to \cC(a\to b)v)$.
The identity element $j_{a\blacktriangleleft u}$ is the mate of $j_a \id_u$ under the adjunction
$$
\cV(1_\cV \to \overline{\cC}(a\blacktriangleleft u \to a\blacktriangleleft u))
=
\cV(1_\cV \to \widehat{\cV}(u\to \cC(a\to a)u))
\cong
\cV(u \to \cC(a\to a)u).
$$
The composition morphism $-\circ_{\overline{\cC}}-$ is the mate of
$$
(\varepsilon^{\widehat{\cV}}_{u \to \cC(a\to b)v} \id_{\widehat{\cV}(v\to \cC(b\to c)w)})
\circ
(\id_{\cC(a\to b)} \varepsilon^{\widehat{\cV}}_{v \to \cC(b\to c)w})
\circ
((-\circ_\cC -)\id_w)
$$
under the adjunction
\begin{align*}
\cV(\overline{\cC}(a\blacktriangleleft u \to b\blacktriangleleft v)
&\overline{\cC}(b\blacktriangleleft v\to c\blacktriangleleft w) \to
\overline{\cC}(a\blacktriangleleft u\to c\blacktriangleleft w))
\\&=
\cV(\widehat{\cV}(u\to \cC(a\to b)v)\widehat{\cV}(v\to \cC(b\to c)w) \to \widehat{\cV}(u\to \cC(a\to c)w))
\\&\cong
\cV(u\widehat{\cV}(u\to \cC(a\to b)v)\widehat{\cV}(v\to \cC(b\to c)w) \to \cC(a\to c)w)
\end{align*}
It is straightforward to verify that $\overline{\cC}$ is a $\cV$-category.
\end{defn}

\begin{lem}
For every $a\blacktriangleleft u \in \overline{\cC}$, there is a canonical $\cV$-functor $\cL^{a \blacktriangleleft u}=a \blacktriangleleft u- : \widehat{\cV} \to \overline{\cC}$.
\end{lem}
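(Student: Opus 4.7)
The plan is to define $\cL^{a\blacktriangleleft u}: \widehat{\cV} \to \overline{\cC}$ on objects by sending $v \mapsto a \blacktriangleleft uv$, so that the notation $a\blacktriangleleft u-$ matches. On hom objects, I would define
\[
\cL^{a\blacktriangleleft u}_{v\to w}\in \cV(\widehat{\cV}(v\to w) \to \overline{\cC}(a\blacktriangleleft uv\to a\blacktriangleleft uw)) = \cV(\widehat{\cV}(v\to w) \to \widehat{\cV}(uv\to \cC(a\to a)uw))
\]
as the mate, under the adjunction \eqref{eq:InternalHom}, of the composite
\[
uv\,\widehat{\cV}(v\to w) \xrightarrow{\id_u\, \varepsilon^{\widehat{\cV}}_{v\to w}} uw \xrightarrow{j_a\, \id_{uw}} \cC(a\to a)\,uw.
\]
This is the natural candidate: we simply evaluate the morphism in the $\widehat{\cV}$-coordinate and then insert the identity element $j_a$ on the left.

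To check that $\cL^{a\blacktriangleleft u}$ is a $\cV$-functor, I would verify the unit and composition axioms by the same style of mate calculation used throughout \S\ref{sec:Tensored V-cat and Strong V-mod} and Definition \ref{defn:CompletionOfVCategory}. For the unit, I would compute the mate of $j_v^{\widehat{\cV}} \circ \cL^{a\blacktriangleleft u}_{v\to v}$ under the adjunction $\cV(1_\cV \to \widehat{\cV}(uv\to \cC(a\to a)uv)) \cong \cV(uv \to \cC(a\to a)uv)$, using that $j_v^{\widehat{\cV}}$ corresponds to $\id_v$ under the analogous adjunction; both sides unfold to $j_a\, \id_{uv}$, which is precisely the mate of $j_{a\blacktriangleleft uv}$.

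For the composition axiom, I would take the mate of
\[
(\cL^{a\blacktriangleleft u}_{v\to w}\, \cL^{a\blacktriangleleft u}_{w\to x})\circ(-\circ_{\overline{\cC}}-)
\]
under the adjunction $\cV(\widehat{\cV}(v\to w)\widehat{\cV}(w\to x) \to \widehat{\cV}(uv\to \cC(a\to a)ux)) \cong \cV(uv\,\widehat{\cV}(v\to w)\widehat{\cV}(w\to x)\to \cC(a\to a)ux)$, unfold the formula for $-\circ_{\overline{\cC}}-$ from Definition \ref{defn:CompletionOfVCategory}, and simplify using the naturality of $\varepsilon^{\widehat{\cV}}$ together with the identity axiom $(j_a\, \id_{\cC(a\to a)})\circ (-\circ_\cC-) = \id_{\cC(a\to a)}$ of the $\cV$-category $\cC$ (which collapses the two inserted copies of $j_a$ into one). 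The mate of $(-\circ_{\widehat{\cV}}-)\circ \cL^{a\blacktriangleleft u}_{v\to x}$ unfolds to the same expression by a similar chase using the definition of $-\circ_{\widehat{\cV}}-$ from Example \ref{example:SelfEnrichment}.

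The main obstacle is purely bookkeeping: the formula for $-\circ_{\overline{\cC}}-$ involves two nested evaluation morphisms $\varepsilon^{\widehat{\cV}}$, so the mate calculation in the composition axiom has several layers that must be untangled in the correct order. No conceptual difficulty arises, and the argument is entirely parallel to the verification that $\overline{\cC}$ itself is a $\cV$-category.
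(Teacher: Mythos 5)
Your definition of $\cL^{a\blacktriangleleft u}_{v\to w}$ as the mate of $(\id_u\,\varepsilon^{\widehat{\cV}}_{v\to w})$ followed by insertion of $j_a$ is exactly the paper's mate of $j_a\,\id_u\,\varepsilon^{\widehat{\cV}}_{v\to w}$, and your verification plan (take mates of both sides of the composition axiom, unfold $-\circ_{\overline{\cC}}-$, and simplify via naturality of $\varepsilon^{\widehat{\cV}}$ and the unit axiom of $\cC$ until both reduce to $[j_a\,\id_u][(-\circ_{\widehat{\cV}}-)\circ\varepsilon^{\widehat{\cV}}_{v\to x}]$) is precisely the computation carried out in the paper. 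So the proposal is correct and takes essentially the same approach as the paper's proof, with the extra (harmless) check of the identity elements thrown in.
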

\begin{proof}
On objects, we define $\cL^{a \blacktriangleleft u}(v) = a\blacktriangleleft uv$.
For $v,w\in\cV$, we define $\cL^{a \blacktriangleleft u}_{v \to w}$ to be the mate of $j_a\id_u \varepsilon^{\widehat{\cV}}_{v \to w}$ under the adjunction
\begin{align*}
\cV(\widehat{\cV}(v\to w) \to \overline{\cC}(a \blacktriangleleft uv \to a\blacktriangleleft uw))
&=
\cV(\widehat{\cV}(v\to w) \to \widehat{\cV}(uv\to \cC(a\to a)uw))
\\&\cong
\cV(uv\widehat{\cV}(v\to w) \to \cC(a\to a)uw).
\end{align*}
To verify that $\cL^{a \blacktriangleleft u}$ is a $\cV$-functor, we see that the mate of $(\cL^{a \blacktriangleleft u}_{v\to w}\cL^{a \blacktriangleleft u}_{w\to x})\circ(-\circ_{\overline{\cC}}-)$ is given by
\begin{align*}
(\id_{uv} \cL^{a \blacktriangleleft u}_{v\to w}\cL^{a \blacktriangleleft u}_{w\to x})
&\circ
(\varepsilon^{\widehat{\cV}}_{uv \to \cC(a\to a)uw} \id_{[uw, \cC(a\to a)ux]})
\circ
(\id_{\cC(a\to a)} \varepsilon^{\widehat{\cV}}_{uw \to \cC(a\to a)ux})
\circ
((-\circ_\cC -)\id_{ux})
\\&=
[j_a\id_u][(\varepsilon^{\widehat{\cV}}_{v\to w} \id_{x}) \circ \varepsilon^{\widehat{\cV}}_{w\to x}]
\\&=
[j_a\id_u][(-\circ_{\widehat{\cV}}-) \circ \varepsilon^{\widehat{\cV}}_{v\to x}]
\end{align*}
which is exactly the mate of $(-\circ_{\widehat{\cV}}-)\circ \cL^{a \blacktriangleleft u}_{v\to x}$.
\end{proof}

We define the underlying functor $\cL_{a \blacktriangleleft u}$ on objects by $\cL_{a \blacktriangleleft u}(v) = a \blacktriangleleft uv$,
and on the morphism $f\in \cV(v\to w)$, we have that $\cL_{a \blacktriangleleft u}(f)$ is the mate of $j_a \id_u f \in \cV(uv \to \cC(a\to a)uw)$ under the adjunction.

\begin{prop}
The underlying functor $\cL_{a \blacktriangleleft u}$ is left adjoint to the underlying functor $\cR_{a \blacktriangleleft u}$.
\end{prop}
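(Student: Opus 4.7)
The plan is to establish the adjunction by constructing the natural isomorphism explicitly as a composite of two instances of the internal hom adjunction, and then verifying the two required naturality conditions via the standard mate calculus employed throughout the paper.

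First I would write down the natural isomorphism. Unwinding definitions using Example \ref{ex:UnderlyingCategoryOfVhat}, we have
\begin{align*}
\overline{\cC}^\cV(\cL_{a\blacktriangleleft u}(v) \to b\blacktriangleleft w)
&= \overline{\cC}^\cV(a\blacktriangleleft uv \to b\blacktriangleleft w)
= \cV(1_\cV \to \widehat{\cV}(uv \to \cC(a\to b)w))\\
&\cong \cV(uv \to \cC(a\to b)w)
\cong \cV(v \to \widehat{\cV}(u \to \cC(a\to b)w))\\
&= \cV(v \to \cR_{a\blacktriangleleft u}(b\blacktriangleleft w)),
\end{align*}
where the unnamed isomorphisms come from Adjunction \eqref{eq:InternalHom}. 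This gives the bijection of hom sets; the substance of the proof is checking it is natural in both $v \in \cV$ and $b\blacktriangleleft w \in \overline{\cC}^\cV$.

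Next I would verify naturality in $v$. For $g \in \cV(v \to v')$, the underlying functor $\cL_{a\blacktriangleleft u}$ sends $g$ to the mate of $j_a\id_u g$ under the adjunction defining $\cL^{a\blacktriangleleft u}_{v \to v'}$. Tracing this mate through the two instances of \eqref{eq:InternalHom} in the chain above reduces, after standard mate manipulations \eqref{eq:MateWithL}--\eqref{eq:MateWithR}, to pre-composition by $g$ on the right-hand side; the identity elements $j_a$ disappear upon using the unit axiom of the $\cV$-category $\cC$.

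Then I would verify naturality in $b\blacktriangleleft w$, which is the main obstacle. For a morphism $f \in \overline{\cC}^\cV(b\blacktriangleleft w \to b'\blacktriangleleft w')$, post-composition with $f$ in $\overline{\cC}^\cV$ is defined in terms of $-\circ_{\overline{\cC}}-$, which was itself given as a mate of a nontrivial expression involving two evaluation morphisms $\varepsilon^{\widehat{\cV}}$ and $-\circ_\cC-$ (see Definition \ref{defn:CompletionOfVCategory}). The strategy is to unwind this definition: take an element $h \in \overline{\cC}^\cV(a\blacktriangleleft uv \to b\blacktriangleleft w)$, form $h \circ f$ using the composition formula, and transport the result across the two internal hom adjunctions. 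Using \eqref{eq:MateWithL}--\eqref{eq:MateWithR} together with the defining adjunctions for $\varepsilon^{\widehat{\cV}}$ from Notation \ref{nota:InternalHomAndCompostionInVHat}, every appearance of $\varepsilon^{\widehat{\cV}}$ collapses into an application of \eqref{eq:InternalHom} in the opposite direction, and what remains is exactly post-composition with the image of $f$ under $\cR_{a\blacktriangleleft u}$. Once both naturalities are established, the isomorphism is a natural bijection of hom-sets and hence witnesses the adjunction $\cL_{a\blacktriangleleft u} \dashv \cR_{a\blacktriangleleft u}$.
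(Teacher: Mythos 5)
Your proposal is correct and takes essentially the same approach as the paper: the identical chain of two applications of Adjunction \eqref{eq:InternalHom} gives the hom-set bijection, naturality in $v$ is immediate, and naturality in $b\blacktriangleleft w$ is checked exactly as you outline, by unwinding the mate definition of $-\circ_{\overline{\cC}}-$ and letting the evaluation morphisms collapse. The only difference is that you leave that final mate computation as a sketch, whereas the paper carries it out explicitly, and it goes through precisely as you describe.
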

\begin{proof}
Notice we have a series of isomorphisms which are clearly natural in $v\in\cV$:
\begin{equation}
\label{eq:UnderlyingAdjunctionCompleteL}
\begin{split}
\overline{\cC}^\cV(\cL_{a\blacktriangleleft u}(v) \to b\blacktriangleleft w)
&=
\cV(\id_{\cV}\to \widehat{\cV}(uv\to \cC(a\to b) w))
\\&\cong
\cV(uv\to \cC(a\to b) w) 
\\&\cong
\cV(v\to \widehat{\cV}(u\to\cC(a\to b) w)) 
\\&=
\cV(v \to \cR_{a\blacktriangleleft u}(b\blacktriangleleft w)).
\end{split}
\end{equation}
It remains to show the above isomorphisms are natural in $b\blacktriangleleft w \in \overline{\cC}^\cV$.
We must show that for every
$f\in \overline{\cC}^\cV(b\blacktriangleleft w \to c\blacktriangleleft x)
=
\cV(1_\cV \to [w, \cC(b\to c)x])$,
the following diagram commutes:
$$
\xymatrix{
\overline{\cC}^\cV(a\blacktriangleleft uv \to b\blacktriangleleft w)
\ar[rr]^{\psi}
\ar[d]^{-\circ f}
&&
\cV(v\to \widehat{\cV}(u \to \cC(a\to b) w)
\ar[d]^{-\circ\cR_{a\blacktriangleleft u}(f)}
\\
\overline{\cC}^\cV(a\blacktriangleleft uv \to c\blacktriangleleft x)
\ar[rr]^{\psi}
&&
\cV(v\to \widehat{\cV}(u\to \cC(a\to c) x)
}
$$
where the horizontal arrows $\psi$ are instances of the series of isomorphisms \eqref{eq:UnderlyingAdjunctionCompleteL}.
It is easiest to do so by taking mates under the adjunction
\begin{equation}
\label{eq:UndoOneStep}
\cV(v \to \widehat{\cV}(u\to \cC(a\to c)x)) \cong \cV(uv \to \cC(a\to c)x)
\end{equation}
which effectively undoes one step of \eqref{eq:UnderlyingAdjunctionCompleteL}.
Let
$g \in \overline{\cC}^\cV(a \blacktriangleleft uv \to b \blacktriangleleft w)
=
\cV(1_\cV \to \widehat{\cV}(uv\to \cC(a\to b)w))$, and note that the mate of $\psi(g)$ under \eqref{eq:UndoOneStep} with $c=b$ and $x=w$ is given by
$(\id_{uv} g)\circ \varepsilon_{uv \to \cC(a\to b)w}$.
Now since
$$
\cR_{a\blacktriangleleft u}(f)
\in
\cV(
\overline{\cC}(a\blacktriangleleft u \to b\blacktriangleleft w)
\to
\overline{\cC}(a\blacktriangleleft u \to c\blacktriangleleft x)
)
=
\cV(\widehat{\cV}(u\to \cC(a\to b)w) \to \widehat{\cV}(u\to \cC(a\to c)x))
$$
is $(\id_{\overline{\cC}(a\blacktriangleleft u \to b\blacktriangleleft w)} f)\circ (-\circ_{\overline{\cC}}-)$,
the mate of $\psi(g) \circ \cR_{a\blacktriangleleft u}(f)$ under \eqref{eq:UndoOneStep} is given by
\begin{align*}
(\mate(\psi(g)) f)
&\circ
(-\circ_{\overline{\cC}}-)
\\&=
(\id_{u}\mate(\psi(g))f)
\circ
(\varepsilon^{\widehat{\cV}}_{u \to \cC(a\to b)w}\id_{\widehat{\cV}(w\to \cC(b\to c)x)})
\circ
(\id_{\cC(a\to b)} \varepsilon^{\widehat{\cV}}_{w \to \cC(b\to c)x})
\circ
((-\circ_\cC - )\id_x)
\\&=
(\id_{uv} gf)
\circ
(\varepsilon^{\widehat{\cV}}_{uv \to \cC(a\to b)w} \id_{\widehat{\cV}(w\to \cC(b\to c)x)})
\circ
(\id_{\cC(a\to b)} \varepsilon^{\widehat{\cV}}_{w \to \cC(b\to c)x})
\circ
((-\circ_\cC - )\id_x)
\\&=
(\id_{uv} gf)
\circ
(\id_{uv} (-\circ_{\overline{\cC}}-))
\circ
\varepsilon^{\widehat{\cV}}_{uv \to \cC(a\to c)x},
\end{align*}
which is exactly the mate of $\psi(g \circ f)$ under Adjunction \eqref{eq:UndoOneStep}.
\end{proof}

\begin{cor}
\label{cor:CBarTensored}
The completion $\overline{\cC}$ is tensored.
\end{cor}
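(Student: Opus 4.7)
The plan is to apply Theorem~\ref{thm:LiftToVAdjunction}. For each object $a \blacktriangleleft u \in \overline{\cC}$, the preceding proposition establishes the underlying adjunction $\cL_{a \blacktriangleleft u} \dashv \cR_{a \blacktriangleleft u}$, and hence in particular that $\overline{\cC}$ is oplax tensored; together with the fact that $\widehat{\cV}$ is oplax tensored by Example~\ref{example:InnerHomVAdjunction}, all hypotheses of Theorem~\ref{thm:LiftToVAdjunction} are satisfied for the $\cV$-functors $\cL^{a \blacktriangleleft u}\colon \widehat{\cV}\to\overline{\cC}$ and $\cR^{a \blacktriangleleft u}\colon \overline{\cC}\to\widehat{\cV}$ (noting that the underlying functor of $\cR^{a\blacktriangleleft u}$ is $\cR_{a\blacktriangleleft u}$ by Lemma~\ref{lem:UnderlyingFunctorOfVRepresentable}). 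The theorem therefore reduces the desired $\cV$-adjunction $\cL^{a \blacktriangleleft u} \dashv_\cV \cR^{a \blacktriangleleft u}$, and hence the fact that $\overline{\cC}$ is tensored, to showing that $\cL^{a \blacktriangleleft u}$ is a tensored $\cV$-functor in the sense of Definition~\ref{defn:TensoredVFunctor}.

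To analyze the laxitor, I would first identify source and target. Under $\widehat{\cV}^\cV \cong \cV$, the right module action is $v \vartriangleleft w = vw$, so $\cL^{a \blacktriangleleft u}(v \vartriangleleft w) = a \blacktriangleleft uvw$. In $\overline{\cC}^\cV$, the module action produced from the preceding proposition satisfies $(a \blacktriangleleft uv) \vartriangleleft w = \cL_{a \blacktriangleleft uv}(w) = a \blacktriangleleft uvw$ as well. Hence $\mu^{\cL^{a \blacktriangleleft u}}_{v,w}$ is an endomorphism of $a \blacktriangleleft uvw$ in $\overline{\cC}^\cV$, and by Lemma~\ref{lem:UnderlyingModuleFunctor} it is the mate of $\eta^{\widehat{\cV}}_{v,w} \circ \cL^{a \blacktriangleleft u}_{v \to vw}$ under the adjunction
\[
\overline{\cC}^\cV(a\blacktriangleleft uvw\to a\blacktriangleleft uvw) \cong \cV(w\to \overline{\cC}(a\blacktriangleleft uv \to a\blacktriangleleft uvw)).
\]
Unpacking this mate using \eqref{eq:ThetaKappa}, the explicit description of $\cL^{a\blacktriangleleft u}_{v\to vw}$ as the mate of $j_a \id_u \varepsilon^{\widehat{\cV}}_{v\to vw}$, and the triangle identity relating $\varepsilon^{\widehat{\cV}}$ and $\eta^{\widehat{\cV}}$, the computation should collapse — driven by the identity axiom $j_a\circ_\cC-=\id$ — to show that $\mu^{\cL^{a\blacktriangleleft u}}_{v,w}$ equals the identity element $j_{a\blacktriangleleft uvw}$, and hence is invertible.

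The main obstacle is the mate bookkeeping: the composition in $\overline{\cC}$, its identity elements, the $\cV$-functor structure of $\cL^{a \blacktriangleleft u}$, and the laxitor $\mu$ are each defined by nested mate operations through distinct adjunctions, and chaining them together requires repeated use of the naturality identities \eqref{eq:MateWithL}--\eqref{eq:MateWithR}. An alternative that bypasses Theorem~\ref{thm:LiftToVAdjunction} is to exhibit the $\cV$-adjunction directly: since $\overline{\cC}(\cL^{a\blacktriangleleft u}(v)\to b\blacktriangleleft w) = \widehat{\cV}(uv\to \cC(a\to b)w)$ and $\widehat{\cV}(v\to \cR^{a\blacktriangleleft u}(b\blacktriangleleft w)) = \widehat{\cV}(v\to \widehat{\cV}(u\to \cC(a\to b)w))$, the internal-hom adjunction in $\cV$ supplies a canonical $\cV$-natural isomorphism of hom objects and one need only verify the diagrams \eqref{eq:V-Adjoint1}--\eqref{eq:V-Adjoint2}; but this route involves essentially the same bookkeeping.
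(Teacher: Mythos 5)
Your proposal is correct, but it is packaged differently from the paper's argument. The paper's proof is a one-line appeal to Theorem \ref{thm:StrongVMod}: since the oplax $\cV$-module structure induced on $\overline{\cC}^\cV$ by the underlying adjunctions is $a\blacktriangleleft u \vartriangleleft v = a\blacktriangleleft uv$, its oplaxitor is manifestly invertible, so the module is strong and the correspondence between strong $\cV$-modules and tensored $\cV$-categories (whose proof runs through Proposition \ref{prop:LaTensored}, Corollary \ref{cor:AlphaInvertibleImpliesCTensored}, and ultimately Theorem \ref{thm:LiftToVAdjunction}) yields the claim. You instead cut out the module-category middleman: you apply Theorem \ref{thm:LiftToVAdjunction} directly to the canonical $\cV$-functors $\cL^{a\blacktriangleleft u}\colon \widehat{\cV}\to\overline{\cC}$ already constructed in the completion section, using the preceding proposition for the underlying adjunction and then checking that the laxitor $\mu^{\cL^{a\blacktriangleleft u}}_{v,w}$ is invertible. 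Your sketch of that check is sound: $\mu^{\cL^{a\blacktriangleleft u}}_{v,w}$ is the mate of $\eta^{\widehat{\cV}}_{v,w}\circ \cL^{a\blacktriangleleft u}_{v\to vw}$, and since $\cL^{a\blacktriangleleft u}_{v\to vw}$ is the mate of $j_a\id_u\varepsilon^{\widehat{\cV}}_{v\to vw}$, the zig-zag identity $(\id_v\,\eta^{\widehat{\cV}}_{v,w})\circ\varepsilon^{\widehat{\cV}}_{v\to vw}=\id_{vw}$ collapses the composite to the mate of $j_a\id_{uvw}$, i.e.\ to the unit $\eta_{a\blacktriangleleft uv,w}$, so $\mu^{\cL^{a\blacktriangleleft u}}_{v,w}=j_{a\blacktriangleleft uvw}$ (the driving identity is really this triangle identity rather than the axiom $(j_a\,\id)\circ(-\circ_\cC-)=\id$, but that is cosmetic). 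Two small bookkeeping points you rely on, both available in the paper: the underlying functor of $\cL^{a\blacktriangleleft u}$ is the $\cL_{a\blacktriangleleft u}$ of the preceding proposition, and $(\cR^{a\blacktriangleleft u})^\cV=\cR_{a\blacktriangleleft u}$ by Lemma \ref{lem:UnderlyingFunctorOfVRepresentable}. What your route buys is self-containedness: it avoids passing through the reconstruction of a $\cV$-category from the strong module $\overline{\cC}^\cV$ and the implicit identification of that reconstruction with $\overline{\cC}$, producing the left $\cV$-adjoints of the representable functors of $\overline{\cC}$ explicitly; what it costs is the short mate computation that the paper's citation of Theorem \ref{thm:StrongVMod} sweeps into ``which is immediate.''
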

\begin{proof}
By Theorem \ref{thm:StrongVMod}, it suffices to show that the stictly unital oplax $\cV$-module structure on $\overline{\cC}^\cV$ induced by $a\blacktriangleleft u \vartriangleleft v := \cL_{a\blacktriangleleft u}(v)= a\blacktriangleleft uv$ is strong, which is immediate.
\end{proof}

\subsection{Universal property of completion}

We now show that completion satisfies a universal property.

\begin{defn}
\label{defn:InclusionVFunctor}
Given a $\cV$-category $\cC$ (not necessarily tensored), the inclusion $\cV$-functor $\cI:\cC \to \overline{\cC}$ is given on objects by
$a\mapsto a\blacktriangleleft 1_\cV$ and $\cI_{a\to b}$ is the mate of $\id_{\cC(a\to b)}$ under the adjunction
\begin{align*}
\cV(\cC(a\to b) \to \overline{\cC}(a\blacktriangleleft 1_\cV \to b \blacktriangleleft 1_\cV))
&=
\cV(\cC(a\to b)\to \widehat{\cV}(1_\cV \to \cC(a\to b)1_\cV))
\\&\cong
\cV(\cC(a\to b)\to \cC(a\to b)).
\end{align*}
It is straightforward to verify by taking mates under the above adjunction that $\cI$ is a $\cV$-functor.
(The key relation is $(\id_{1_\cV}\cI_{a\to b})\circ \varepsilon^{\widehat{\cV}}_{1_\cV \to \cC(a\to b) 1_\cV} = \id_{\cC(a\to b)}$.)
\end{defn}

\begin{remark}
Recall that for $a\in \cC$ and $u\in \cV$, 
$\mu^\cI_{a,u}$ is defined as the mate of $\eta_{u} \circ \cI_{a \to a\vartriangleleft u}$ under the adjunction
$$
\overline{\cC}^\cV(a\blacktriangleleft u \to a\vartriangleleft u \blacktriangleleft 1_\cV)
\cong
\cV(u \to \overline{\cC}(a\to a\vartriangleleft u \blacktriangleleft 1_\cV)).
$$
We get the following two identities for $\mu^\cI_{a,u}$ and $\eta^\cC_{a,u}$ depending on whether we pass through the second equality below under the identification $\widehat{\cV}^\cV=\cV$:
$$
\overline{\cC}^\cV(a\blacktriangleleft u \to a\vartriangleleft u \blacktriangleleft 1_\cV)
=
\cV(1_\cV \to \widehat{\cV}(u \to \cC(a\to a\vartriangleleft u)))
=
\cV(u \to \cC(a\to a\vartriangleleft u)).
$$
Just passing through the first equality, we get the first identity below, and passing to the second, we get the second identity.
\begin{align}
\label{eq:MateOfMu=Eta}
\eta^\cC_{a,u}
&=
(\id_u \mu^\cI_{a,u}) \circ \varepsilon^{\widehat{\cV}}_{u \to \cC(a\to a\vartriangleleft u)}
\\
\label{eq:Mu=Eta}
\eta^\cC_{a,u}
&=
(\id_{1_\cV} (\eta^\cC_{a,u}\circ \cI_{a\to a\vartriangleleft u})) \circ \varepsilon^{\widehat{\cV}}_{1_\cV \to \cC(a\to a\vartriangleleft u)1_\cV}
=
\mate(\eta^\cC_{a,u}\circ \cI_{a\to a\vartriangleleft u})
=
\mu^\cI_{a,u}
\end{align}
\end{remark}

\begin{prop}
\label{prop:LiftToCBar}
Suppose $\cC, \cD$ are $\cV$-categories with $\cD$ tensored and $\cF: \cC \to \cD$ is a $\cV$-functor.
There exists a tensored $\cV$-functor $\overline{\cF} : \overline{\cC}\to \cD$ 
such that $\cI\circ \overline{\cF}\cong \cF$ as $\cV$-functors.
\end{prop}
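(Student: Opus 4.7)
The plan is to construct $\overline{\cF}$ explicitly using the tensored structure of $\cD$. On objects, I define $\overline{\cF}(a\blacktriangleleft u) := \cF(a)\vartriangleleft u$, where $\vartriangleleft$ is the strong right $\cV$-module action on $\cD^\cV$ provided by Theorem \ref{thm:StrongVMod}. For the action on hom objects, the tensored structure of $\cD$ gives a natural $\cV$-adjunction isomorphism
$$
\cD(\cF(a)\vartriangleleft u \to \cF(b)\vartriangleleft v) \;\cong\; \widehat{\cV}(u \to \cD(\cF(a) \to \cF(b)\vartriangleleft v)),
$$
so it suffices to specify a morphism $\widehat{\cV}(u \to \cC(a\to b)v) \to \widehat{\cV}(u \to \cD(\cF(a) \to \cF(b)\vartriangleleft v))$ in $\cV$. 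I take this to be $\widehat{\cV}(u \to \phi_{a,b,v})$, where
$$
\phi_{a,b,v} := (\cF_{a\to b}\,\eta^\cD_{\cF(b), v})\circ (-\circ_\cD -) \;\in\; \cV(\cC(a\to b) v \to \cD(\cF(a) \to \cF(b)\vartriangleleft v)).
$$

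The bulk of the work is verifying that $\overline{\cF}$ is a $\cV$-functor. The identity axiom follows by a short mate computation using that $j_{a\blacktriangleleft u}$ is the mate of $j_a \id_u$ together with the unitality axiom for $\cF$. The composition axiom is the main technical step: taking mates of both sides of the desired equality under the iterated adjunction reduces to showing that two composites built from $\cF_{a\to b}$, $\cF_{b\to c}$, instances of $\eta^\cD$, composition in $\cC$ and $\cD$, and evaluation counits $\varepsilon^{\widehat{\cV}}$ agree. The match follows from the $\cV$-functoriality of $\cF$ (i.e., $(\cF_{a\to b}\cF_{b\to c})\circ (-\circ_\cD-) = (-\circ_\cC-)\circ \cF_{a\to c}$), naturality of $\eta^\cD$, and associativity of composition in $\cD$. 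This is the most involved step, but routine once set up carefully.

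Next, I argue $\overline{\cF}$ is tensored. Recall from Corollary \ref{cor:CBarTensored} that $(a\blacktriangleleft u)\vartriangleleft w = a\blacktriangleleft uw$ in $\overline{\cC}^\cV$. Unwinding Definition \ref{defn:TensoredVFunctor}, I must verify that $\mu^{\overline{\cF}}_{a\blacktriangleleft u, w} \in \cD^\cV(\cF(a)\vartriangleleft u \vartriangleleft w \to \cF(a)\vartriangleleft uw)$ is an isomorphism. A mate computation using the definition of $\mu$ in Lemma \ref{lem:UnderlyingModuleFunctor} identifies this morphism with $\alpha^{-1}_{\cF(a), u, w}$ for the strong right $\cV$-module structure on $\cD^\cV$, which is invertible by Theorem \ref{thm:StrongVMod}.

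Finally, for the $\cV$-natural isomorphism $\cF \Rightarrow \cI \circ \overline{\cF}$: on objects, $(\cI \circ \overline{\cF})(a) = \overline{\cF}(a\blacktriangleleft 1_\cV) = \cF(a)\vartriangleleft 1_\cV$, so I take the component at $a$ to be $\rho^{-1}_{\cF(a)}$, viewed as an element of $\cV(1_\cV \to \cD(\cF(a) \to \cF(a)\vartriangleleft 1_\cV))$; this is an isomorphism by Lemma \ref{lem:StrictlyUnital}. The $\cV$-naturality diagram \eqref{eq:VNaturalTransformation} unwinds, after another mate computation, into a compatibility between $\cF_{a\to b}$, composition in $\cD$, and the unitors $\rho$ of $\cD^\cV$, which holds by the unital axiom for the strongly unital oplax $\cV$-module $\cD^\cV$. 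The main obstacle throughout is the composition axiom for $\overline{\cF}$.
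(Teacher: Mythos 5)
Your construction is essentially the paper's proof: the same object assignment $\overline{\cF}(a\blacktriangleleft u)=\cF(a)\vartriangleleft u$, a hom-component that (after unwinding $\theta^{-1}$ via the counit and using the mate description of $-\circ_\cD-$) is exactly the paper's mate formula, the same identification $\mu^{\overline{\cF}}_{a\blacktriangleleft u,w}=\alpha^{-1}_{\cF(a),u,w}$ giving tensoredness, and the same comparison isomorphism $\sigma_a=\rho^{-1}_{\cF(a)}=\eta^{\cD}_{\cF(a),1_\cV}$. The paper likewise leaves the composition axiom for $\overline{\cF}$ as a routine (if tedious) mate calculation, so your proposal matches its argument in both substance and level of detail.
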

\begin{proof}
We define a $\cV$-functor $\overline{\cF}: \overline{\cC} \to \cD$ by $\overline{\cF}(a\blacktriangleleft u) = \cF(a)\vartriangleleft u$, and we define $\overline{\cF}_{a\blacktriangleleft u \to b\blacktriangleleft v}$ to be the mate of 
$$
\alpha^{-1}_{\cF(a),u, \widehat{\cV}(u \to\cC(a\to b) v)} 
\circ
(\id_{\cF(a)} \vartriangleleft \varepsilon^{\widehat{\cV}}_{u \to \cC(a\to b) v})
\circ
(\id_{\cF(a)} \vartriangleleft \cF_{a\to b} \id_v)
\circ
\alpha_{\cF(a), \cD(\cF(a)\to \cF(b)), v}
\circ
(\varepsilon^\cD_{\cF(a) \to \cF(b)}\vartriangleleft \id_v)
$$
under the adjunction
\begin{align*}
\cV(\overline{\cC}(a\blacktriangleleft u \to b\blacktriangleleft v) 
&\to
\cD(\cF(a)\vartriangleleft u \to \cF(b)\vartriangleleft v))
\\&=
\cV(\widehat{\cV}(u \to\cC(a\to b) v) \to \cD(\cF(a)\vartriangleleft u \to \cF(b)\vartriangleleft v))
\\&\cong
\cD^\cV(\cF(a)\vartriangleleft u \vartriangleleft \widehat{\cV}(u \to\cC(a\to b) v) \to \cF(b)\vartriangleleft v).
\end{align*}
To verify that $\overline{\cF}$ is a $\cV$-functor, we show that the mates of
$(\overline{\cF}_{a\blacktriangleleft u \to b\blacktriangleleft v}\overline{\cF}_{b\blacktriangleleft v \to c\blacktriangleleft w})\circ (-\circ_\cD-)$
and
$(-\circ_{\overline{\cC}}-)\circ \overline{\cF}_{a\blacktriangleleft u \to c\blacktriangleleft w}$
agree under the above adjunction.
We leave this tedious and straightforward calculation to the reader.

Now for $a\blacktriangleleft u \in \overline{\cC}$ and $v\in \cV$, we have $\mu^{\overline{\cF}}_{a\blacktriangleleft u, v}$ is the mate of $\eta^{\overline{\cC}}_{a\blacktriangleleft u , v} \circ \overline{\cF}_{a\blacktriangleleft u \to a\blacktriangleleft uv}$ under the adjunction
\begin{align*}
\cD^\cV(\cF(a)\vartriangleleft u \vartriangleleft v \to \cF(a)\vartriangleleft uv)
&=
\cD^\cV(\overline{\cF}(a\blacktriangleleft u) \vartriangleleft v \to \overline{\cF}(a\blacktriangleleft u v))
\\&\cong
\cV(v \to \cD(\overline{\cF}(a\blacktriangleleft u) \to \overline{\cF}(a\blacktriangleleft u v))).
\end{align*}
Thus we have
\begin{align*}
\mu^{\overline{\cF}}_{a\blacktriangleleft u, v} 
&=
(\id_{\overline{\cF}(a\blacktriangleleft u)} \vartriangleleft (\eta^{\overline{\cC}}_{a\blacktriangleleft u , v} \circ \overline{\cF}_{a\blacktriangleleft u \to a\blacktriangleleft uv}))
\circ
\varepsilon^\cD_{\overline{\cF}(a\blacktriangleleft u) \to \overline{\cF}(a\blacktriangleleft uv)}
\\&=
(\id_{\cF(a)\vartriangleleft u} \vartriangleleft \eta^{\overline{\cC}}_{a\blacktriangleleft u, v})
\circ
\mate(\overline{\cF}_{a\blacktriangleleft u \to a\blacktriangleleft uv})
\\&=
\alpha^{-1}_{\cF(a), u, v}
\circ
(\id_{\cF(a)} \vartriangleleft 
[
(\id_u \eta^{\overline{\cC}}_{a\blacktriangleleft u, v})
\circ
\varepsilon^{\widehat{\cV}}_{u\to \cC(a\to a)uv}
\circ
(\cF_{a\to a}\id_{uv})
]
)
\circ
\alpha_{\cF(a), \cD(\cF(a)\to \cF(a)), uv}
\circ 
(\varepsilon^\cD_{\cF(a) \to \cF(a)} \vartriangleleft \id_{uv})
\\&=
\alpha^{-1}_{\cF(a), u, v}
\circ
(\id_{\cF(a)} \vartriangleleft 
[
(j_{a}\circ \cF_{a\to a})\id_{uv}
]
)
\circ
\alpha_{\cF(a), \cD(\cF(a)\to \cF(a)), uv}
\circ 
(\varepsilon^\cD_{\cF(a) \to \cF(a)} \vartriangleleft \id_{uv})
\\&=
\alpha^{-1}_{\cF(a), u, v}
\circ
([
(\id_{\cF(a)}\vartriangleleft j_{\cF(a)})
\circ 
\varepsilon^\cD_{\cF(a) \to \cF(a)} 
]\vartriangleleft \id_{uv})
\\&=
\alpha_{\cF(a),u,v}^{-1},
\end{align*}
and so $\mu^{\overline{\cF}}_{a\blacktriangleleft u, v}$ is invertible.
Hence $\overline{\cF}$ is tensored.

We define $\sigma : \cF\Rightarrow\cI \circ \overline{\cF}$ by $\sigma_a = \eta^\cD_{\cF(a),1_\cV} \in \cV(1_\cV \to \cD(\cF(a) \to \cF(a) \vartriangleleft 1_\cV))$, and we note that $\sigma_a^{-1} = \rho_a^{\cD}$ by Lemma \ref{lem:StrictlyUnital}.
To verify $\sigma$ is a $1_\cV$-graded $\cV$-natural isomorphism, it remains to verify \eqref{eq:VNaturalTransformation}.
Notice that \eqref{eq:VNaturalTransformation} is equivalent to 
$
(\sigma_a^{-1}\cF_{a\to b})\circ (-\circ_\cD-)
=
((\cI\circ \overline{\cF})_{a\to b}\sigma_b^{-1}) \circ (-\circ_\cD -)
$.
Under the adjunction
$$
\cV(\cC(a\to b) \to \cD(\cF(a)\vartriangleleft 1_\cV \to \cF(b)))
\cong
\cD^\cV(\cF(a)\vartriangleleft 1_\cV\vartriangleleft \cC(a\to b) \to \cF(b)),
$$
the mate of $((\cI\circ \overline{\cF})_{a\to b}\sigma_b^{-1}) \circ (-\circ_\cD -)$ is given by
\begin{align*}
(\id_{\cF(a)\vartriangleleft 1_\cV}\vartriangleleft & (\cI\circ \overline{\cF})_{a\to b}\sigma_b^{-1})
\circ
\alpha_{\cF(a)\vartriangleleft 1_\cV, \cD(\cF(a)\vartriangleleft 1_\cV \to \cF(b)\vartriangleleft 1_\cV), \cD(\cF(b)\vartriangleleft 1_\cV \to \cF(b))}
\\&\hspace{1cm}\circ
(\varepsilon^\cD_{\cF(a)\vartriangleleft 1_\cV \to \cF(b)\vartriangleleft 1_\cV} \vartriangleleft \id_{ \cD(\cF(b)\vartriangleleft 1_\cV \to \cF(b))})
\circ
\varepsilon_{\cF(b) \vartriangleleft 1_\cV \to \cF(b)}
\\&=
(\id_{\cF(a)\vartriangleleft 1_\cV} \vartriangleleft (\cI\circ \overline{\cF})_{a\to b})
\circ
\varepsilon^\cD_{\cF(a)\vartriangleleft 1_\cV \to \cF(b)\vartriangleleft 1_\cV}
\circ
\rho^\cD_{\cF(b)}
\\&=
(\id_{\cF(a)\vartriangleleft 1_\cV} \vartriangleleft \cI_{a\to b})
\circ
\alpha^{-1}_{\cF(a),1_\cV, \widehat{\cV}(1_\cV \to\cC(a\to b))} 
\circ
(\id_{\cF(a)} \vartriangleleft \varepsilon^{\widehat{\cV}}_{1_\cV \to \cC(a\to b)})
\\&\hspace{1cm}
\circ
(\id_{\cF(a)} \vartriangleleft \cF_{a\to b} \id_{1_\cV})
\circ
\alpha_{\cF(a), \cD(\cF(a)\to \cF(b)), 1_\cV}
\circ
(\varepsilon^\cD_{\cF(a) \to \cF(b)}\vartriangleleft \id_{1_\cV})
\circ
\rho^\cD_{\cF(b)}
\\&=
(\rho^\cD_{\cF(a)} \vartriangleleft \id_{\cC(a\to b)})
\circ
(\id_{\cF(a)} \vartriangleleft \cF_{a\to b})
\circ
(\rho^\cD_{\cF(a)\vartriangleleft \cD(\cF(a)\to \cF(b))})^{-1}
\circ
(\varepsilon^\cD_{\cF(a) \to \cF(b)}\vartriangleleft \id_{1_\cV})
\circ
\rho^\cD_{\cF(b)}
\\&=
(\rho_{\cF(a)}^\cD\vartriangleleft \cF_{a\to b})\circ \varepsilon^\cD_{\cF(a) \to \cF(b)},
\end{align*}
which is exactly the mate of $(\sigma_a^{-1}\cF_{a\to b})\circ (-\circ_\cD -)$.
Notice here we have used the identities
$\alpha^{-1}_{d,v,1_\cV} = \rho^\cD_{d\vartriangleleft v}$
and 
$\alpha^{-1}_{d, 1_\cV, v} = \rho^\cD_{d}\vartriangleleft \id_v$
for all $d\in \cD$ and $v\in \cV$ from Lemma \ref{lem:AlphaInvertibleWhenOneArgumentIs1}.
\end{proof}

\begin{remark}
\label{remark:OpenQuestionAboutCompletion}
The above proof raises the following two interesting questions.
\begin{enumerate}
\item
Given a tensored $\cV$-functor $\cG: \overline{\cC} \to \cD$ such that $\cI \circ \cG \cong \cF$, when do we have that $\cG$ is $\cV$-equivalent to $\overline{\cF}$?
We are unable to provide any candidate natural isomorphism at this time.
\item
When $\cC$ is tensored, 
setting $\cD= \cC$ and $\cF = \id^\cC$, we get a canonical tensored $\cV$-functor $\overline{\id^\cC}: \overline{\cC}\to \cC$ such that $\id_\cC \cong \cI \circ \overline{\id^\cC}$.
We show in Lemma \ref{lem:MuIsANaturalTransformation} below that $\tau_{a\blacktriangleleft u} : = \mu^\cI_{a,u}$ defines a $1_\cV$-graded natural transformation $\tau: \id^{\overline{\cC}} \Rightarrow  \overline{\id^\cC}\circ \cI$.
However we do not know how to show that $\overline{\id^\cC}\circ \cI$ is naturally isomorphic to $\id^{\overline{\cC}}$, since it may be the case that $\overline{\cC}$ is too big; we cannot yet identify $a\vartriangleleft u \blacktriangleleft 1_\cV$ and $a\blacktriangleleft u$.
We will solve this problem in the next section by adding additional hypotheses on $\cC$.
\end{enumerate}
\end{remark}

\begin{lem}
\label{lem:MuIsANaturalTransformation}
Setting $\tau_{a\blacktriangleleft u} : = \mu^\cI_{a,u}$ in
$$
\overline{\cC}^\cV(a\blacktriangleleft u \to a\vartriangleleft u\blacktriangleleft 1_\cV)
=
\overline{\cC}^\cV( \id^{\overline{\cC}}(a\blacktriangleleft u) \to \cI(\overline{\id}^\cC(a\blacktriangleleft u)))
$$
defines a $1_\cV$-graded natural transformation $\tau: \id^{\overline{\cC}} \Rightarrow  \overline{\id^\cC}\circ \cI$.
\end{lem}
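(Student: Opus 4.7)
The task is to verify that the $1_\cV$-graded naturality square \eqref{eq:VNaturalTransformation} for $\sigma = \tau$, with $\cF = \id^{\overline{\cC}}$ and $\cG = \overline{\id^\cC}\circ \cI$, commutes; that is, for all $a\blacktriangleleft u, b\blacktriangleleft v \in \overline{\cC}$,
$$
(\mu^\cI_{a,u}\,(\overline{\id^\cC}\circ \cI)_{a\blacktriangleleft u \to b\blacktriangleleft v})\circ(-\circ_{\overline{\cC}}-)
= (\id_{\overline{\cC}(a\blacktriangleleft u \to b\blacktriangleleft v)}\,\mu^\cI_{b,v})\circ (-\circ_{\overline{\cC}}-)
$$
as morphisms $\overline{\cC}(a\blacktriangleleft u \to b\blacktriangleleft v) \to \overline{\cC}(a\blacktriangleleft u \to b\vartriangleleft v \blacktriangleleft 1_\cV)$. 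Since every hom object of $\overline{\cC}$ is an internal hom, the natural first step is to take mates of both sides under
$$
\cV\bigl(\widehat{\cV}(u\to \cC(a\to b)v) \to \widehat{\cV}(u\to \cC(a\to b\vartriangleleft v))\bigr)
\cong
\cV\bigl(u\, \widehat{\cV}(u\to \cC(a\to b)v) \to \cC(a\to b\vartriangleleft v)\bigr),
$$
which transports the identity to be checked into a statement with codomain in $\cC$ and unfolds $-\circ_{\overline{\cC}}-$ into its explicit expression from Definition \ref{defn:CompletionOfVCategory}.

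Next, I would rewrite every instance of $\mu^\cI_{c,w}$ via \eqref{eq:MateOfMu=Eta}, i.e.\ $\eta^\cC_{c,w} = (\id_w\,\mu^\cI_{c,w})\circ \varepsilon^{\widehat{\cV}}_{w\to \cC(c\to c\vartriangleleft w)}$, substitute the explicit formula for $\overline{\id^\cC}_{a\blacktriangleleft u \to b\blacktriangleleft v}$ from the proof of Proposition \ref{prop:LiftToCBar} (which simplifies considerably because its middle factor $\id^\cC_{a\to b}\,\id_v$ is literally an identity), and substitute the definition of $\cI_{a\vartriangleleft u \to b\vartriangleleft v}$ from Definition \ref{defn:InclusionVFunctor}. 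The $\alpha^{-1}$'s appearing in $\overline{\id^\cC}$ that acquire a $1_\cV$ argument after insertion of $\cI$ are then resolved into $\rho$'s via Lemma \ref{lem:AlphaInvertibleWhenOneArgumentIs1}. After these reductions, both sides should collapse to the single morphism
$$
u\, \widehat{\cV}(u\to \cC(a\to b)v) \xrightarrow{\id_u\, \varepsilon^{\widehat{\cV}}_{u\to \cC(a\to b)v}} \cC(a\to b)\, v \xrightarrow{\id_{\cC(a\to b)}\, \eta^\cC_{b,v}} \cC(a\to b)\, \cC(b\to b\vartriangleleft v) \xrightarrow{-\circ_\cC-} \cC(a\to b\vartriangleleft v),
$$
the RHS by direct unfolding, the LHS after applying naturality of $\eta^\cC$ in its first variable to the map of $\cC^\cV$ obtained from $\eta^\cC_{b,v}$ composed with $\cI_{a\to b\vartriangleleft v}$.

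The argument introduces no new tool: it is an orchestrated application of the mate correspondence between composition in $\overline{\cC}$ and composition in $\cC$, together with the standard identity $(\id_a\vartriangleleft f)\circ \varepsilon^\cC_{a\to b}$ being the mate of $f$, the naturality of $\eta^\cC$, $\alpha$, and $\varepsilon^{\widehat{\cV}}$, and Lemma \ref{lem:AlphaInvertibleWhenOneArgumentIs1}. The main obstacle is purely organizational, namely keeping track of the several $\alpha^{\pm 1}$'s, $\varepsilon$'s, and $1_\cV$-unitors introduced by precomposing $\overline{\id^\cC}$ with $\cI$, so that the two sides visibly reduce to the same expression. The style and difficulty are analogous to the mate-chasing calculations in Section \ref{sec:TensoredVFunctors} and in Proposition \ref{prop:AlphaIsInvertible}.
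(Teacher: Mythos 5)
Your proposal is correct and follows essentially the same route as the paper: take mates under the adjunction $\cV(u\widehat{\cV}(u\to\cC(a\to b)v)\to\cC(a\to b\vartriangleleft v))$, convert each $\mu^\cI$ to $\eta^\cC$ via \eqref{eq:MateOfMu=Eta}, unfold $\overline{\id^\cC}$ and $\cI$, and observe that both sides reduce to the same composite $\varepsilon^{\widehat{\cV}}_{u\to\cC(a\to b)v}\circ(\id_{\cC(a\to b)}\eta^\cC_{b,v})\circ(-\circ_\cC-)$, which is exactly the paper's intermediate expression. The only blemish is notational: since $\varepsilon^{\widehat{\cV}}_{u\to\cC(a\to b)v}$ already has source $u\widehat{\cV}(u\to\cC(a\to b)v)$, the extra $\id_u$ in your first arrow label is superfluous.
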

\begin{proof}
To show the naturality condition \eqref{eq:VNaturalTransformation},
we compute mates under the adjunction
\begin{align*}
\cV(
\overline{\cC}(a\blacktriangleleft u \to b\blacktriangleleft v)
\to
\overline{\cC}(a\blacktriangleleft u \to b\vartriangleleft v\blacktriangleleft 1_\cV)
)
&=
\cV(
\widehat{\cV}(u \to \cC(a\to b)v) 
\to 
\widehat{\cV}(u \to \cC(a\to b\vartriangleleft v))
)
\\&\cong
\cV(
u\widehat{\cV}(u \to \cC(a\to b)v) 
\to 
\cC(a\to b\vartriangleleft v)
).
\end{align*}
Indeed, the mate of 
of $(\tau_{a\blacktriangleleft u} (\overline{\id^\cC}\circ \cI)_{a\blacktriangleleft u \to b\blacktriangleleft v})\circ (-\circ_{\overline{\cC}}-)$
is given by
\begin{align*}
(
[
(\id_u \tau_{a\blacktriangleleft u}) \circ \varepsilon^{\widehat{\cV}}_{u\to  \cC(a\to a\vartriangleleft u)}
]&
(\overline{\id^\cC}\circ \cI)_{a\blacktriangleleft u \to b\blacktriangleleft v})
)
\circ
(\id_{\cC(a\to a\vartriangleleft u) } \varepsilon^{\widehat{\cV}}_{1_\cV\to  \cC(a\vartriangleleft u \to b\vartriangleleft v)})
\circ
(-\circ_\cC-)
\\&
\underset{\text{\eqref{eq:MateOfMu=Eta}}}{=}
(\eta_{u,a}^\cC \overline{\id^\cC}_{a\blacktriangleleft u \to b\blacktriangleleft v})
\circ
(-\circ_\cC-)
\\&=
\varepsilon^{\widehat{\cV}}_{u \to\cC(a\to b)v}
\circ
(\id_{\cC(a\to b)} \eta^\cC_{b,v})
\circ
(-\circ_\cC-)
\\&
\underset{\text{\eqref{eq:MateOfMu=Eta}}}{=}
\varepsilon^{\widehat{\cV}}_{u \to \cC(a\to b)v}
\circ
(\id_{\cC(a\to b)} [(\id_v\tau_{b\blacktriangleleft v})\circ \varepsilon^{\widehat{\cV}}_{v \to \cC(b \to \vartriangleleft v)}])
\circ
(-\circ_\cC-)
\end{align*}
which is exactly the mate of  $(\id^{\overline{\cC}}_{a\blacktriangleleft u \to b\blacktriangleleft v}\tau_{b\blacktriangleleft v})\circ (-\circ_{\overline{\cC}}-)$.
\end{proof}

\subsection{When representable \texorpdfstring{$\cV$}{V}-functors are tensored}

Recall $\cV$ is closed so we may form $\widehat{\cV}$.
We begin with a lemma that we could have proved in Section \ref{sec:TensoredVFunctors}.

\begin{lem}
\label{lem:NaturalityForRepresentableFunctor}
Fix $a\in \cC$, and consider the $\cV$-functor $\cC(a\to -) : \cC \to \widehat{\cV}$. 
\begin{enumerate}
\item
Writing $\mu_{b,v} = \mu^{\cC(a\to -)}_{b,v}$ for $b\in\cC$ and $v\in \cV$, we have
 $\mu_{b,v} = (\id_{\cC(a\to b)} \eta^\cC_{b,v})\circ (-\circ_\cC -)$.
\item
For all $g\in \cV(v \to \cC(b\to c)w)$, $\mu$ satisfies the naturality condition
\begin{equation}
\label{eq:NaturalForMu}
[\mu_{b,v}(\id_b\vartriangleleft g) \alpha^\cC_{b,\cC(b\to c), w} (\varepsilon^\cC_{b \to c}\vartriangleleft \id_w)]
\circ
(-\circ_\cC-\circ_\cC-\circ_\cC -)
=
(\id_{\cC(a\to b)} g)
\circ 
((-\circ_\cC - ) \id_w)
\circ 
\mu_{c,w}.
\end{equation}
(Recall the $\cC^\cV$-morphisms $\id_b \vartriangleleft g$ and $\varepsilon^\cC_{b\to c} \vartriangleleft \id_w$ are $\cV$-morphisms originating at $1_\cV$.)
\end{enumerate}
\end{lem}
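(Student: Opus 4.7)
The plan is to prove both parts by direct mate computations.

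For Part (1), the morphism $\mu_{b,v}$ is defined in Lemma \ref{lem:UnderlyingModuleFunctor} as the mate of $\eta^\cC_{b,v}\circ \cC(a\to -)_{b\to b\vartriangleleft v}$, and Example \ref{ex:VRepresentable} tells us that $\cC(a\to -)_{b\to c}$ is itself the mate of $-\circ_\cC-$. Two successive applications of the composition rule \eqref{eq:MateWithL} then give the stated formula $\mu_{b,v} = (\id_{\cC(a\to b)} \eta^\cC_{b,v})\circ(-\circ_\cC-)$ in one stroke.

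For Part (2), I would substitute the formula from Part (1) into both sides of \eqref{eq:NaturalForMu}. Write $\Phi \in \cC^\cV(b\vartriangleleft v \to c\vartriangleleft w)$ for the composite $(\id_b\vartriangleleft g)\circ \alpha^\cC_{b,\cC(b\to c), w}\circ (\varepsilon^\cC_{b\to c}\vartriangleleft \id_w)$ appearing inside the LHS. Repeated use of interchange and associativity of $-\circ_\cC-$ rearranges the LHS into $(\id_{\cC(a\to b)}\, \tilde\Phi)\circ(-\circ_\cC-)$, where $\tilde\Phi := (\eta^\cC_{b,v}\, \Phi)\circ(-\circ_\cC-)$. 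A simpler parallel rearrangement converts the RHS into $(\id_{\cC(a\to b)}\,(g\circ\mu_{c,w}))\circ(-\circ_\cC-)$. Since $\tilde\Phi$ is exactly $\eta^\cC_{b,v}\circ\cR_b(\Phi) = \mate(\Phi)$ under Adjunction \eqref{eq:V-cat to V-mod adjunction}, the whole identity \eqref{eq:NaturalForMu} collapses to the single claim $\mate(\Phi) = g\circ \mu_{c,w}$ in $\cV(v\to \cC(b\to c\vartriangleleft w))$.

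To finish, I would apply \eqref{eq:MateWithL} together with naturality of $\eta^\cC$ to peel off the factor $\id_b\vartriangleleft g = \cL_b(g)$ from $\mate(\Phi)$, using that $\mate(\cL_b(g)) = g\circ \eta^\cC_{b,\cC(b\to c)w}$. This reduces the claim to $\mate(\alpha^\cC_{b,\cC(b\to c), w}\circ (\varepsilon^\cC_{b\to c}\vartriangleleft \id_w)) = \mu_{c,w}$. Applying $\mate^{-1}$ (i.e. the formula $h\mapsto (\id_b\vartriangleleft h)\circ \varepsilon^\cC_{b\to c\vartriangleleft w}$), this is equivalent to the $\cC^\cV$-identity
\[
\alpha^\cC_{b,\cC(b\to c), w}\circ (\varepsilon^\cC_{b\to c}\vartriangleleft \id_w) = (\id_b\vartriangleleft \mu_{c,w})\circ \varepsilon^\cC_{b\to c\vartriangleleft w},
\]
which I would verify by taking mates once more under Adjunction \eqref{eq:V-cat to V-mod adjunction}, substituting the explicit formula $\mate(\alpha^\cC_{b,u,v}) = (\eta^\cC_{b,u}\,\eta^\cC_{b\vartriangleleft u, v})\circ(-\circ_\cC-)$ from Section \ref{sec:V-cat to V-mod}, and collapsing the resulting expression via the triangle identity $\eta^\cC_{b,\cC(b\to d)}\circ \cR_b(\varepsilon^\cC_{b\to d}) = \id$. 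I expect the main obstacle to be this final mate calculation: while each individual step is routine, the bookkeeping across several adjunctions and naturality squares requires care.
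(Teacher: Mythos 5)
Your proof is correct and follows essentially the same route as the paper: substitute the formula from (1), absorb $\id_b\vartriangleleft g=\cL_b(g)$ via naturality of the unit, and finish with the mate formula \eqref{eq:MateOfAlpha} for $\alpha$ together with the triangle identity — the paper simply runs this as one chain of equalities in $\cV$ instead of isolating your intermediate $\cC^\cV$-identity. One notational caveat: the $\mu_{c,w}$ appearing in your reduction and in your displayed identity must be $\mu^{\cC(b\to -)}_{c,w}=(\id_{\cC(b\to c)}\,\eta^\cC_{c,w})\circ(-\circ_\cC-)$, i.e.\ part (1) applied with base object $b$ rather than $a$ (otherwise $g\circ\mu_{c,w}$ does not typecheck), which is harmless since (1) holds for every base object.
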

\begin{proof}
To prove (1), recall that
we have identified $\widehat{\cV}^\cV = \cV$, and thus $\mu_{b,v}$ is the mate of 
$\eta^\cC_{b,v} \circ \cC(a\to -)_{b \to b\vartriangleleft v}$ under the adjunction
$$
\cV(\cC(a\to b)v \to \cC(a\to b\vartriangleleft v))
\cong
\cV(v \to \widehat{\cV}(\cC(a\to b) \to \cC(a\to b\vartriangleleft v))).
$$
But the mate of $\eta^\cC_{b,v} \circ \cC(a\to -)_{b \to b\vartriangleleft v}$ is exactly $(\id_{\cC(a\to b)} \eta^\cC_{b,v})\circ (-\circ_\cC -)$ under the above adjunction by the definition of $\cC(a\to -)_{b \to b\vartriangleleft v}$ from Example \ref{ex:VRepresentable}.

We prove (2) using the identity from (1) twice.
Note that $\id_b \vartriangleleft g := \cL_b(g)$ and $\varepsilon^\cC_{b\to c} \vartriangleleft \id_w$ is defined via taking mates as in the beginning of Section \ref{sec:V-cat to V-mod}.
Hence the left hand side of \eqref{eq:NaturalForMu} is equal to
\begin{align*}
[\id_{\cC(a\to b)} \eta^\cC_{b,v}&(\id_b\vartriangleleft g) \alpha^\cC_{b,\cC(b\to c), w} (\varepsilon^\cC_{b \to c}\vartriangleleft \id_w)]
\circ
(-\circ_\cC-\circ_\cC-\circ_\cC-\circ_\cC -)
\\&=
(\id_{\cC(a\to b)}
[(\eta^\cC_{b,v}\cL_b(g)) \circ (-\circ_\cC-)]
\alpha^\cC_{b,\cC(b\to c), w} (\varepsilon^\cC_{b \to c}\vartriangleleft \id_w)]
\circ
(-\circ_\cC-\circ_\cC-\circ_\cC-)
\\&=
[\id_{\cC(a\to b)}
(g\circ \eta^\cC_{b,\cC(b\to c)w})
\alpha^\cC_{b,\cC(b\to c), w} (\varepsilon^\cC_{b \to c}\vartriangleleft \id_w)]
\circ
(-\circ_\cC-\circ_\cC-\circ_\cC-)
\\&=
[\id_{\cC(a\to b)}
[((g\circ \eta^\cC_{b,\cC(b\to c)w}) \alpha^\cC_{b,\cC(b\to c), w})\circ (-\circ_\cC -)])(\varepsilon^\cC_{b \to c}\vartriangleleft \id_w)]
\circ
(-\circ_\cC-\circ_\cC-)
\\&=
[\id_{\cC(a\to b)}
[g\circ (\eta^\cC_{b,\cC(b\to c)}\eta^\cC_{b\vartriangleleft \cC(b\to c), w})\circ (-\circ_\cC-)](\varepsilon^\cC_{b \to c}\vartriangleleft \id_w)]
\circ
(-\circ_\cC-\circ_\cC-)
\\&=
(
\id_{\cC(a\to b)}
[g\circ ([(\eta^\cC_{b,\cC(b\to c)}\varepsilon^\cC_{b \to c})\circ (-\circ_\cC-)] \eta^\cC_{c, w})
\circ (-\circ_\cC-)]
)
\circ
(-\circ_\cC-)
\\&=
[\id_{\cC(a\to b)} (g \circ (\id_{\cC(b\to c)} \eta^\cC_{c, w}) \circ (-\circ_\cC -))] 
\circ
(-\circ_\cC-)
\\&=
(\id_{\cC(a\to b)}g) \circ ((-\circ_\cC -) \id_w)\circ (\id_{\cC(a\to c)}\eta^\cC_{c,w})\circ (-\circ_\cC-),
\end{align*}
which is exactly the right hand side of \eqref{eq:NaturalForMu}.
\end{proof}

\begin{prop}
\label{prop:IsomorphismOfRepresentableFunctors}
Suppose $\cC$ is tensored and $a\in \cC$ and $u\in \cV$ such that the $\cV$-representable functors $\cC(a\to -)$ and $\cC(a\vartriangleleft u \to -)$ are tensored.
Then we have a natural isomorphism of representable functors
$
\overline{\cC}^\cV(a\vartriangleleft u\blacktriangleleft 1_\cV \to -)
\cong
\overline{\cC}^\cV(a\blacktriangleleft u \to -)
$.
\end{prop}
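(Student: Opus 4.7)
The plan is to exhibit the claimed iso by a zig-zag of natural isomorphisms of $\Set$-valued functors on $\overline{\cC}^\cV$. Concretely, for $b\blacktriangleleft v\in\overline{\cC}$ I would write
\begin{align*}
\overline{\cC}^\cV(a\blacktriangleleft u \to b\blacktriangleleft v)
&\overset{(1)}{\cong} \cV(u \to \cC(a\to b)v)
\overset{(2)}{\cong} \cV(u \to \cC(a \to b\vartriangleleft v))\\
&\overset{(3)}{\cong} \cV(1_\cV \to \cC(a\vartriangleleft u \to b\vartriangleleft v))
\overset{(4)}{\cong} \cV(1_\cV \to \cC(a\vartriangleleft u \to b)v)\\
&\overset{(5)}{\cong} \overline{\cC}^\cV(a\vartriangleleft u\blacktriangleleft 1_\cV \to b\blacktriangleleft v),
\end{align*}
where (1) and (5) come from unpacking the definition of $\overline{\cC}$ and the defining adjunction \eqref{eq:InternalHom} for $\widehat{\cV}$; iso (3) comes from applying $\cV(1_\cV\to-)$ to the $\cV$-iso $\cC(a\vartriangleleft u\to b\vartriangleleft v)\cong \widehat{\cV}(u\to\cC(a\to b\vartriangleleft v))$ witnessed by the $\cV$-adjunction $\cL^a\dashv_\cV\cR^a$ (which exists since $\cC$ is tensored); and (2), (4) are postcomposition with $\mu^{\cC(a\to-)}_{b,v}$ and $(\mu^{\cC(a\vartriangleleft u \to -)}_{b,v})^{-1}$, which are isomorphisms precisely by the standing hypotheses.

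The real work is checking naturality with respect to the combined morphisms $\tilde f\in\overline{\cC}^\cV(b\blacktriangleleft v\to c\blacktriangleleft w)$, which by definition of $\overline{\cC}$ correspond via iso (1) to elements $g\in \cV(v\to\cC(b\to c)w)$ and so intertwine a $\cC^\cV$-morphism with a $\cV$-morphism. First I would unpack how such an $\tilde f$ acts under iso (1): a short mate calculation from the definition of $-\circ_{\overline{\cC}}-$ in Definition \ref{defn:CompletionOfVCategory} shows that postcomposition with $\tilde f$ on the top row corresponds to the map $\cV(u\to\cC(a\to b)v)\to \cV(u\to\cC(a\to c)w)$ given by postcomposition with $(\id_{\cC(a\to b)}\, g)\circ((-\circ_\cC-)\id_w)$. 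Given this, naturality of iso (2) is precisely the identity \eqref{eq:NaturalForMu} of Lemma \ref{lem:NaturalityForRepresentableFunctor}(2) applied to the representable $\cC(a\to-)$; naturality of iso (3) is the defining property of a $\cV$-adjunction (for $\cL^a\dashv_\cV\cR^a$ together with naturality in the $\cD$-variable $b\vartriangleleft v$); and naturality of iso (4) is \eqref{eq:NaturalForMu} applied to $\cC(a\vartriangleleft u\to-)$, read in reverse.

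The main obstacle is not conceptual but bookkeeping: each intermediate naturality square requires threading $\alpha^\cC$, $\varepsilon^\cC$, $\varepsilon^{\widehat{\cV}}$, and the oplax action $\id_b\vartriangleleft g$ through in a consistent order, and one must verify that the twisted action of $\tilde f$ at each intermediate level matches exactly the left hand side of \eqref{eq:NaturalForMu}. Lemma \ref{lem:NaturalityForRepresentableFunctor}(2) is in effect tailored to exactly the inputs produced by unpacking composition in $\overline{\cC}^\cV$, so once the top and bottom rows have been translated into $\cC$-language via isos (1) and (5), the remaining naturality checks reduce to mechanical mate computations that directly invoke the established formula.
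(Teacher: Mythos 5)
Your proposal is correct and is essentially the paper's own argument: the same chain of isomorphisms as \eqref{eq:YonedaForRepresentableFunctors} (just traversed in the opposite direction), using Adjunction \eqref{eq:V-cat to V-mod adjunction} in the middle and the two tensoredness hypotheses via $\mu^{\cC(a\to-)}_{b,v}$ and $\mu^{\cC(a\vartriangleleft u\to-)}_{b,v}$. The naturality check is also handled the same way the paper handles it, by translating postcomposition in $\overline{\cC}^\cV$ through the mate description of $-\circ_{\overline{\cC}}-$ and invoking \eqref{eq:NaturalForMu} of Lemma \ref{lem:NaturalityForRepresentableFunctor} twice (once in inverted form), with the remaining mate computations left as routine.
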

\begin{proof}
Since $\cC$ is tensored and the representable functors $\cC(a\to -)$ and $\cC(a\vartriangleleft u\to -)$ are both tensored, we get the following natural isomorphism of representable functors:
\begin{equation}
\label{eq:YonedaForRepresentableFunctors}
\begin{split}
\overline{\cC}^\cV(a\vartriangleleft u\blacktriangleleft 1_\cV \to b\blacktriangleleft v)
&=
\cV(1_\cV \to \overline{\cC}( a\vartriangleleft u\blacktriangleleft 1_\cV\to b\blacktriangleleft v))
\\&=
\cV(1_\cV \to \widehat{\cV}(1_\cV\to \cC(a\vartriangleleft u\to b)v))
\\&=
\cV(1_\cV \to \cC(a\vartriangleleft u\to b)v)
\\&\cong
\cV(1_\cV \to \cC(a\vartriangleleft u\to b\vartriangleleft v))
\\&=
\cC^\cV(a\vartriangleleft u\to b\vartriangleleft v)
\\&\cong
\cV(u \to \cC(a\to b\vartriangleleft v))
\\&\cong
\cV(u \to \cC(a\to b) v)
\\&\cong
\cV(1_\cV \to \widehat{\cV}(u\to \cC(a\to b)v))
\\&=
\cV(1_\cV \to \overline{\cC}(a\blacktriangleleft u \to b\blacktriangleleft v))
\\&=
\overline{\cC}^\cV(a\blacktriangleleft u \to b\blacktriangleleft v).
\end{split}
\end{equation}
To show the above isomorphism is natural in $b\blacktriangleleft v$, one uses the naturality condition \eqref{eq:NaturalForMu} twice.
We use \eqref{eq:NaturalForMu} the first time for $\mu^{\cC(a\vartriangleleft u\to -)}_{b,v}$ in the first isomophism in \eqref{eq:YonedaForRepresentableFunctors}.
For the second use, notice that when $\cC(a \to -)$ tensored, \eqref{eq:NaturalForMu} is equivalent to
$$
[\id_{\cC(a\to b\vartriangleleft v)}(\id_b\vartriangleleft g) \alpha^\cC_{b,\cC(b\to c), w} (\varepsilon^\cC_{b \to c}\vartriangleleft \id_w)]
\circ
(-\circ_\cC-\circ_\cC-\circ_\cC -)
\circ 
\mu_{c,w}^{-1}
=
\mu_{b,v}^{-1}
\circ
(\id_{\cC(a\to b)} g)
\circ 
((-\circ_\cC - ) \id_w).
$$
We use this equivalent version of \eqref{eq:NaturalForMu} in the third isomorphism in \eqref{eq:YonedaForRepresentableFunctors}.
We leave the rest of the details to the reader.
%
%
%
\end{proof}

\begin{thm}
\label{thm:WhenTensoredIsEquivalentToCompletion}
Suppose $\cC$ is a tensored $\cV$-category.
The following are equivalent:
\begin{enumerate}
\item
Every $\cV$-representable functor $\cR^a=\cC(a\to -):\cC\to \widehat{\cV}$ is tensored.
\item
The $\cV$-functor $\cI: \cC \to \overline{\cC}$ given by $a\mapsto a\blacktriangleleft 1_\cV$ from Definition \ref{defn:InclusionVFunctor} is tensored.
\item
The $1_\cV$-graded natural transformation $\tau:\id^{\overline{\cC}} \Rightarrow \overline{\id^\cC} \circ \cI$ defined by 
$\tau_{a\blacktriangleleft u} :=\mu^\cI_{a,u}$ from Lemma \ref{lem:MuIsANaturalTransformation} is a natural isomorphism.
\item
The $\cV$-functors $\cI: \cC \to \overline{\cC}$ given by $a\mapsto a\blacktriangleleft 1_\cV$ from Definition \ref{defn:InclusionVFunctor} and 
$\overline{\id^\cC}: \overline{\cC} \to \cC$ given by $a\blacktriangleleft u \mapsto a \vartriangleleft u$ 
from Proposition \ref{prop:LiftToCBar} witness a $\cV$-equivalence.
\end{enumerate}
\end{thm}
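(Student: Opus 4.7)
The plan is to establish the cycle $(1) \Rightarrow (2) \iff (3) \Rightarrow (4) \Rightarrow (2)$ together with $(3) \Rightarrow (1)$, yielding the logical equivalence of all four conditions. The equivalence $(2) \iff (3)$ is essentially tautological: by Lemma \ref{lem:MuIsANaturalTransformation}, $\tau_{a\blacktriangleleft u} := \mu^\cI_{a,u}$, and since every object of $\overline{\cC}$ has the form $a \blacktriangleleft u$, $\tau$ being a natural isomorphism is the same as every laxitor $\mu^\cI_{a,u}$ being invertible, i.e., $\cI$ being tensored. For $(3) \Rightarrow (4)$, Proposition \ref{prop:LiftToCBar} applied to $\cF = \id^\cC$ supplies the tensored $\cV$-functor $\overline{\id^\cC}$ together with the natural isomorphism $\cI \circ \overline{\id^\cC} \cong \id^\cC$; combined with (3), the pair $(\cI, \overline{\id^\cC})$ witnesses a $\cV$-equivalence.

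The two substantive implications are $(1) \Rightarrow (2)$ and $(3) \Rightarrow (1)$; both proceed by a Yoneda-style identification. For $(1) \Rightarrow (2)$, I fix $a \in \cC$ and $u \in \cV$; the tensoredness of the representables $\cC(a \to -)$ and $\cC(a \vartriangleleft u \to -)$ lets me apply Proposition \ref{prop:IsomorphismOfRepresentableFunctors}, yielding a natural isomorphism $\overline{\cC}^\cV(a \vartriangleleft u \blacktriangleleft 1_\cV \to -) \cong \overline{\cC}^\cV(a\blacktriangleleft u \to -)$. By the Yoneda argument of \eqref{eq:YonedaInverses}, this produces a canonical isomorphism $a\blacktriangleleft u \to (a\vartriangleleft u)\blacktriangleleft 1_\cV$ in $\overline{\cC}^\cV$, and tracing $\id_{a\vartriangleleft u \blacktriangleleft 1_\cV}$ through the chain \eqref{eq:YonedaForRepresentableFunctors} (using \eqref{eq:Mu=Eta} to identify $\mu^\cI_{a,u}$ with $\eta^\cC_{a,u}$) shows this canonical morphism is precisely $\mu^\cI_{a,u}$. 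For $(3) \Rightarrow (1)$, invertibility of $\tau_{b\blacktriangleleft v}$ induces, via post-composition and the canonical identification $\widehat{\cV}(1_\cV \to X) \cong X$, an isomorphism in $\cV$ between $\cC(a\to b)v$ and $\cC(a\to b\vartriangleleft v)$; tracing the composite using the composition formula in $\overline{\cC}$ from Definition \ref{defn:CompletionOfVCategory} and \eqref{eq:Mu=Eta} identifies it with $\mu^{\cR^a}_{b,v}$ as expressed in Lemma \ref{lem:NaturalityForRepresentableFunctor}(1), establishing that $\cR^a$ is tensored.

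For $(4) \Rightarrow (2)$, I exploit that $\overline{\id^\cC}$ is tensored and, as part of a $\cV$-equivalence, fully faithful as a $\cV$-functor. The natural isomorphism $\cI \circ \overline{\id^\cC} \cong \id^\cC$ descends to an isomorphism of the canonical lax $\cV$-module functor structures of Lemma \ref{lem:UnderlyingModuleFunctor}, forcing the composite laxitor $\mu^{\cI \circ \overline{\id^\cC}}_{c, v} = \mu^{\overline{\id^\cC}}_{\cI(c), v} \circ \overline{\id^\cC}(\mu^\cI_{c, v})$ to be invertible; since $\mu^{\overline{\id^\cC}}_{\cI(c), v}$ is already an isomorphism, so is $\overline{\id^\cC}(\mu^\cI_{c, v})$, and reflection of isomorphisms by the fully faithful $\overline{\id^\cC}$ yields $\mu^\cI_{c, v}$ invertible.

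The main obstacle throughout is not conceptual but bookkeeping: the identification of the abstract isomorphisms produced by Yoneda and adjunction arguments with the specific morphisms $\mu^\cI_{a,u}$ and $\mu^{\cR^a}_{b,v}$ requires meticulous tracking through the mate calculus and the explicit composition formula for $\overline{\cC}$ in Definition \ref{defn:CompletionOfVCategory}, exactly the style of calculation exhibited throughout Section \ref{sec:CompletionForVCats}.
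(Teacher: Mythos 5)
Your proposal is correct, and for the implications $(1)\Rightarrow(2)$, $(2)\Leftrightarrow(3)$ and $(3)\Rightarrow(4)$ it coincides with the paper's argument (same use of Proposition \ref{prop:IsomorphismOfRepresentableFunctors}, the Yoneda identification of the canonical isomorphism with $\mu^\cI_{a,u}$, and Proposition \ref{prop:LiftToCBar}). Where you genuinely diverge is in closing the cycle: the paper proves $(4)\Rightarrow(1)$ by running $\id_{\cC(a\to b)v}$ through a chain of isomorphisms built from the equivalence data (using $\overline{\id^\cC}_{a\blacktriangleleft 1_\cV\to b\blacktriangleleft v}$, Lemma \ref{lem:StrictlyUnital} and Lemma \ref{lem:NaturalityForRepresentableFunctor}), whereas you prove $(3)\Rightarrow(1)$ and $(4)\Rightarrow(2)$ separately. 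Your $(3)\Rightarrow(1)$ is sound and in fact tighter than the paper's route back to $(1)$: since $\cR_{a\blacktriangleleft 1_\cV}$ is a functor it carries the invertible $\tau_{b\blacktriangleleft v}$ to an isomorphism in $\cV$, and unwinding the mate defining $-\circ_{\overline{\cC}}-$ together with \eqref{eq:MateOfMu=Eta} (this is the form you actually need, rather than \eqref{eq:Mu=Eta}) shows that post-composition by $\tau_{b\blacktriangleleft v}$, conjugated by the canonical isomorphisms $\widehat{\cV}(1_\cV\to X)\cong X$, is exactly $(\id_{\cC(a\to b)}\eta^\cC_{b,v})\circ(-\circ_\cC-)=\mu^{\cR^a}_{b,v}$; so no equivalence is needed, only invertibility of $\tau$. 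The cost of this shortcut is that you must separately prove $(4)\Rightarrow(2)$, and there you lean on two facts the paper never states: the composite-laxitor identity $\mu^{\cI\circ\overline{\id^\cC}}_{c,v}=\mu^{\overline{\id^\cC}}_{\cI(c),v}\circ\overline{\id^\cC}{}^\cV(\mu^\cI_{c,v})$, and the compatibility of a $1_\cV$-graded $\cV$-natural isomorphism with the canonical laxitors of Lemma \ref{lem:UnderlyingModuleFunctor} (i.e.\ $(\sigma^\cV_c\vartriangleleft\id_v)\circ\mu^\cG_{c,v}=\mu^\cF_{c,v}\circ\sigma^\cV_{c\vartriangleleft v}$). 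Both are true and routine: the second follows by precomposing the naturality square \eqref{eq:VNaturalTransformation} with $\eta^\cC_{c,v}$ and taking mates, the first by a direct mate computation using $\cV$-functoriality; together with $\mu^{\id^\cC}_{c,v}=\id_{c\vartriangleleft v}$, tensoredness of $\overline{\id^\cC}$ from Proposition \ref{prop:LiftToCBar}, and reflection of isomorphisms by the underlying equivalence, your two-out-of-three argument goes through. So the proof is complete modulo writing out these two auxiliary mate calculations, which you correctly flag as bookkeeping; the paper's choice of $(4)\Rightarrow(1)$ avoids them at the price of a longer explicit chain of identifications.
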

\begin{proof}
\mbox{}
\item[\underline{$(1)\Rightarrow (2)$:}]
First suppose (1) holds.
Notice that $\cI(a)\vartriangleleft u = a\blacktriangleleft 1_\cV \vartriangleleft u = a\blacktriangleleft u$ and
$\cI(a\vartriangleleft u) = a\vartriangleleft u \blacktriangleleft 1_\cV$.
Since $\cC$ is tensored and every $\cV$-representable functor is tensored, Proposition \ref{prop:IsomorphismOfRepresentableFunctors} gives us a canonical natural isomorphism of representable functors $\overline{\cC}^\cV(a\vartriangleleft u \blacktriangleleft 1_\cV \to -) \cong \overline{\cC}^\cV(a\blacktriangleleft u \to -)$.
By the Yoneda Lemma as in \eqref{eq:YonedaInverses}, we get a canonical isomorphism in $\overline{\cC}^\cV(a\blacktriangleleft u \to a\vartriangleleft u \blacktriangleleft 1_\cV)$.
We claim this isomorphism is exactly $\mu^\cI_{a,u}$, which is thus invertible by \eqref{eq:YonedaInverses}.
Indeed, setting $b= a\vartriangleleft u$ and $v=1_\cV$, 
Adjunction \eqref{eq:YonedaForRepresentableFunctors} becomes the following isomorphism:
\begin{align*}
\overline{\cC}^\cV(a\vartriangleleft u \blacktriangleleft 1_\cV \to a\vartriangleleft u \blacktriangleleft 1_\cV)
&=
\cV(1_\cV \to \cC(a\vartriangleleft u \to a\vartriangleleft u))
\\&=
\cC^\cV(a\vartriangleleft u \to a\vartriangleleft u)
\\&\cong
\cV(u\to \cC(a\to a\vartriangleleft u))
\\&=
\cV(1_\cV\to \widehat{\cV}(u \to \cC(a\to a\vartriangleleft u)))
\\&=
\overline{\cC}^\cV(a\blacktriangleleft u \to a\vartriangleleft u \blacktriangleleft 1_\cV).
\end{align*}
Moreover, $\id_{a\vartriangleleft u \blacktriangleleft 1_\cV}$ transforms as follows under the above isomorphism:
$$
\id_{a\vartriangleleft u \blacktriangleleft 1_\cV}
=
j_{a\vartriangleleft u}
\leftrightarrow
\eta^\cC_{a,u}
\underset{\text{\eqref{eq:Mu=Eta}}}{=}
\mu^\cI_{a,u}.
$$

\item[\underline{$(2)\Leftrightarrow (3)$:}]
The condition that $\cI$ is tensored is exactly the condition that $\mu^\cI_{a,u}$ is invertible for all $a\in \cC$ and $u\in \cV$, which is equivalent to $\tau_{a\blacktriangleleft u}$ being invertible for all $a\blacktriangleleft u \in \overline{\cC}$.
Hence $\cI$ is tensored if and only if $\tau:\id^{\overline{\cC}} \Rightarrow \overline{\id^\cC} \circ \cI$ is an isomorphism.

\item[\underline{$(3)\Rightarrow (4)$:}]
Suppose $\cI$ is tensored, so $\tau:\id^{\overline{\cC}} \Rightarrow \overline{\id^\cC} \circ \cI$ is an isomorphism.
Since we always have an equivalence of $\cV$-functors $\cI \circ \overline{\id^\cC} \cong \id^\cC$ by Proposition \ref{prop:LiftToCBar}, we have a $\cV$-equivalence $\cC \cong \overline{\cC}$.

\item[\underline{$(4)\Rightarrow (1)$:}]
Assume (4) holds.
Then for all $v\in\cV$ and $a,b\in\cC$, the following chain of isomorphisms is natural in $u\in \cV$:
\begin{align*}
\cV(u \to \cC(a\to b)v)
&\cong
\cV(u \to \widehat{\cV}(1_\cV\to \cC(a\to b)v))
\\&=
\cV(u \to \overline{\cC}(a\blacktriangleleft 1_\cV\to b\blacktriangleleft v)
\\&\cong
\cV(u \to \cC(a\vartriangleleft 1_\cV\to b\vartriangleleft v))
\\&\cong
\cV(u \to \cC(a\to b\vartriangleleft v))
\end{align*}
Hence we have an isomorphism of representable functors $\cV(- \to \cC(a\to b)v)\cong \cV(- \to \cC(a\to b\vartriangleleft v)) $, which gives us an isomorphism $\cC(a\to b)v\cong \cC(a\to b\vartriangleleft v)$.
We claim this isomorphism is exactly $\mu^{\cC(a\to -)}_{b,v}$.
Indeed, setting $u= \cC(a\to b)v$ and passing through the above chain of isomorphisms, $\id_{\cC(a\to b)v}$ transforms as follows:
\begin{align*}
\id_{\cC(a\to b)v}
&\leftrightarrow 
\eta^{\widehat{\cV}}_{1_\cV, \cC(a\to b)v}
\\&\leftrightarrow 
\eta^{\widehat{\cV}}_{1_\cV, \cC(a\to b)v}\circ \overline{\id^\cC}_{a\blacktriangleleft 1_\cV \to b\blacktriangleleft v}
\\&\leftrightarrow 
\eta^{\widehat{\cV}}_{1_\cV, \cC(a\to b)v}\circ \overline{\id^\cC}_{a\blacktriangleleft 1_\cV \to b\blacktriangleleft v} \circ (\rho_a^{-1} \id_{\cC(a\vartriangleleft 1_\cV \to b\vartriangleleft v)})\circ (-\circ_\cC-)
\\&
\underset{\text{Lem.~\ref{lem:StrictlyUnital}}}{=}
\eta^{\widehat{\cV}}_{1_\cV, \cC(a\to b)v}\circ \overline{\id^\cC}_{a\blacktriangleleft 1_\cV \to b\blacktriangleleft v} \circ (\eta^\cC_{a,1} \id_{\cC(a\vartriangleleft 1_\cV \to b\vartriangleleft v)})\circ (-\circ_\cC-)
\\&=
\eta^{\widehat{\cV}}_{1_\cV, \cC(a\to b)v}\circ 
\varepsilon^{\widehat{\cV}}_{1_\cV \to \cC(a\to b)v}
\circ
(\id_{\cC(a\to b)} \eta^\cC_{b,v})\circ (-\circ_\cC-)
\\&=
(\id_{\cC(a\to b)} \eta^\cC_{b,v})\circ (-\circ_\cC-)
\\&
\underset{\text{Lem.~\ref{lem:NaturalityForRepresentableFunctor}}}{=}
\mu^{\cC(a\to -)}_{b,v}.
\end{align*}
Hence $\mu^{\cC(a\to -)}_{b,v}$ is an isomorphism by the Yoneda Lemma as in \eqref{eq:YonedaInverses}.
\end{proof}

\subsection{Completion and rigidity of \texorpdfstring{$\cV$}{V}}
\label{sec:VRigid}

We now describe the connection between tensored $\cV$-functors and the notion of rigidity.
As before, we assume $\cV$ is closed.

\begin{example}
\label{ex:VRigid}
When $\cV$ is rigid, 
our convention for duals is given as in \cite[\S 2.7]{1701.00567} by $\ev_v \in \cV(vv^* \to 1_\cV)$ and $\coev_v \in \cV(1_\cV \to v^*v)$.
Hence $\cV$ is closed with $\widehat{\cV}(u\to v) = u^*v$.
Notice that $\widehat{\cV}(u\to v)w = u^*vw = \widehat{\cV}(u\to vw)$ for all $u,v,w\in\cV$, so every $\cV$-representable functor $\widehat{\cV}(u \to -)$ is clearly tensored.
In fact, we show this property \emph{characterizes} rigidity in Lemma \ref{lem:RigidCharacterization} below.

We can now describe $\widehat{\cV}$ in more detail without taking any mates.
The identity element $j_v \in \cV(1_\cV \to [v,v])=\cV(1_\cV \to v^*v)$ for $v\in \widehat{\cV}$ is equal to $\coev_v$.
The composition morphism $-\circ_{\widehat{\cV}}- \in \cV(\widehat{\cV}(u\to v)\widehat{\cV}(v\to w)\to \widehat{\cV}(u\to w))=\cV(u^*vv^*w \to u^*w)$ is equal to $\id_{u^*}\ev_v \id_{w}$.

Suppose $\cC$ is a $\cV$-category with $\cV$ rigid.
The $\cV$-representable functor $\cC(a\to -):\cC \to \widehat{\cV}$ is given by
$$
\cC(a\to -)_{b\to c}
=(\coev_{\cC(a\to b)}\id_{\cC(b\to c)})\circ (-\circ_\cC-)
\in
\cV(\cC(b\to c) \to \cC(a\to b)^* \cC(a \to c)).
$$

We now describe the tensored $\cV$-category $\overline{\cC}$ in greater detail.
We now have $\overline{\cC}(a\blacktriangleleft u \to b \blacktriangleleft v) = \widehat{\cV}(u\to \cC(a\to b)v) = u^*\cC(a\to b)v$, and the identity elements and composition are given by $j_{a\blacktriangleleft u} = \coev_{u} \circ(\id_{u^*} j_a \id_u) \in \cV(1_\cV \to u^*\cC(a\to a)u)$ and
$$
-\circ_{\overline{\cC}}-
=
(\id_{u^*}\id_{\cC(a\to b)}\ev_{v}\id_{\cC(b\to c)}\id_w) \circ (\id_{u^*}(-\circ_\cC-)\id_w)
\in
\cV(
u^*\cC(a\to b)vv^*\cC(b\to c)w \to u^*\cC(a\to c)w).
$$
Moreover, the left $\cV$-adjoint $\cL^{a\blacktriangleleft u}$ of $\cR^{a\blacktriangleleft u}$ is given by
$$
\cL_{v \to w}^{a\blacktriangleleft u}
=
(\id_{v^*} \coev_u \id_w)\circ (\id_{v^*}\id_{u^*} j_a \id_u\id_w)
\in
\cV(v^*w\to v^*u^*\cC(a\to a)u w^*)
$$
Finally, the $\cV$-functor
$\cI: \cC \to \overline{\cC}$ is given by $\cI_{a\to b}=\id_{\cC(a\to b)}\in \cV(\cC(a\to b)\to 1_\cV^*\cC(a\to b)1_\cV)$.
\end{example}


\begin{lem}
\label{lem:RigidCharacterization}
Consider the self-enriched $\cV$-category $\widehat{\cV}$.
Every $\cV$-representable functor $\widehat{\cV}(v\to -)$ is tensored if and only if every object of $\cV$ has a right dual.
\end{lem}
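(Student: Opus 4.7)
The plan is to analyze, for fixed $v \in \cV$, the canonical comparison maps
$$\mu_{u,w} = \mu^{\widehat{\cV}(v \to -)}_{u,w} \in \cV(\widehat{\cV}(v \to u)\,w \to \widehat{\cV}(v \to uw))$$
supplied by Lemma \ref{lem:UnderlyingModuleFunctor}, which by Lemma \ref{lem:NaturalityForRepresentableFunctor}(1) are given explicitly as $(\id_{\widehat{\cV}(v \to u)}\,\eta^{\widehat{\cV}}_{u,w}) \circ (-\circ_{\widehat{\cV}}-)$. By Definition \ref{defn:TensoredVFunctor}, $\widehat{\cV}(v \to -)$ is tensored precisely when every $\mu_{u,w}$ is invertible.

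For the $(\Leftarrow)$ direction, suppose $\cV$ is rigid. By Example \ref{ex:VRigid}, $\widehat{\cV}(v \to u) = v^* u$; the unit $\eta^{\widehat{\cV}}_{u,w}$ of the adjunction $u \otimes - \dashv \widehat{\cV}(u \to -)$ is $\coev_u\,\id_w \colon w \to u^*uw$; and $-\circ_{\widehat{\cV}}- \colon v^*uu^*uw \to v^*uw$ specialises to $\id_{v^*}\,\ev_u\,\id_{uw}$. Substituting, $\mu_{u,w}$ collapses to $\id_{v^*} \otimes [(\id_u\,\coev_u) \circ (\ev_u\,\id_u)] \otimes \id_w$, and the bracketed expression is $\id_u$ by the snake relation, so $\mu_{u,w} = \id_{v^* u w}$ is an isomorphism.

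For the $(\Rightarrow)$ direction, assume each $\cV$-representable $\widehat{\cV}(v \to -)$ is tensored. Fix $v \in \cV$, define $v^* := \widehat{\cV}(v \to 1_\cV)$, and specialise to $u = 1_\cV$: the morphism $\mu_{1_\cV, w} \colon v^* \otimes w \to \widehat{\cV}(v \to w)$ is then invertible and, by Lemma \ref{lem:UnderlyingModuleFunctor}, natural in $w$. This assembles into a natural isomorphism of ordinary functors $v^* \otimes - \cong \widehat{\cV}(v \to -) \colon \cV \to \cV$. Composing with the standard adjunction $v \otimes - \dashv \widehat{\cV}(v \to -)$ from Example \ref{example:InnerHomVAdjunction} yields an adjunction $v \otimes - \dashv v^* \otimes -$; evaluating the resulting unit at $1_\cV$ produces $\coev_v \colon 1_\cV \to v^* v$, evaluating the counit at $1_\cV$ produces $\ev_v \colon v v^* \to 1_\cV$, and the triangle identities of the adjunction translate directly into the snake relations, so $v^*$ is a right dual of $v$. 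The only delicate point is distinguishing the two kinds of naturality of $\mu$: here it is only the ordinary (unenriched) naturality in the second slot that is required, which is exactly what upgrades the pointwise isomorphisms $\mu_{1_\cV, w}$ to a natural isomorphism of functors $\cV \to \cV$.
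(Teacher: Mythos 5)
Your $(\Leftarrow)$ direction is correct, and in fact slightly more careful than the paper's, since you compute the canonical comparison map $\mu_{u,w}$ explicitly and see it collapse to an identity via the snake relation, rather than merely noting that $\widehat{\cV}(v\to u)w$ and $\widehat{\cV}(v\to uw)$ coincide. Your candidate duality data in the $(\Rightarrow)$ direction is also the right one: the unit of your transported adjunction at $1_\cV$ is $j_v\circ\mu_{1_\cV,v}^{-1}$ and the counit at $1_\cV$ is $\varepsilon^{\widehat{\cV}}_{v\to 1_\cV}$, exactly as in the paper.

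The gap is in the last step of $(\Rightarrow)$. Having discarded everything about $\mu_{1_\cV,-}$ except that it is a natural isomorphism of plain functors, you obtain only a bare adjunction $v\otimes-\dashv v^*\otimes-$ with some unit $\eta'$ and counit $\epsilon'$, and for such an adjunction the triangle identities do \emph{not} ``translate directly into the snake relations.'' The triangle identity at the object $1_\cV$ reads $(\id_v\,\eta'_{1_\cV})\circ\epsilon'_{v}=\id_v$, where $\epsilon'_v$ is the counit \emph{component at $v$}, whereas the snake relation requires $(\id_v\,\coev_v)\circ(\ev_v\,\id_v)=\id_v$, with $\ev_v\,\id_v=\epsilon'_{1_\cV}\,\id_v$; similarly for the other zigzag with $\eta'_{v^*}$ versus $\coev_v\,\id_{v^*}$. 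Ordinary naturality of $\eta'$ and $\epsilon'$ in no way forces $\epsilon'_y=\ev_v\,\id_y$ or $\eta'_x=\coev_v\,\id_x$ — that would say the unit and counit are $\cV$-module transformations for the evident strong module structures on $v\otimes-$ and $v^*\otimes-$, which is additional compatibility, not a formal consequence of an adjunction between these functors. So your closing remark, that only ordinary naturality in the second slot is needed, is exactly where the argument under-delivers: what is needed is the specific description of $\mu$ (the formula of Lemma \ref{lem:NaturalityForRepresentableFunctor}(1), equivalently its lax module-transformation property from Lemma \ref{lem:UnderlyingModuleFunctor}), which is what lets one identify the transported unit/counit with the tensored morphisms $\coev_v\,\id$ and $\ev_v\,\id$. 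This is precisely the content of the paper's proof, which verifies the two zigzag identities by explicit mate computations with $\mu^{-1}$, $j$, $\eta^{\widehat{\cV}}$ and $\varepsilon^{\widehat{\cV}}$ (the second zigzag requiring an auxiliary identity proved by taking mates). To repair your proof you must either carry out those verifications, or show — using what $\mu_{1_\cV,-}$ actually is, not just that it is natural and invertible — that the unit and counit of your composite adjunction are of the tensored form.
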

\begin{remark}
This result follows from \cite[Proposition 6.2]{MR3775482}, going back to \cite{MR749468}. We give an explicit proof here for the reader's convenience.
\end{remark}
\begin{proof}
If $\cV$ is rigid, then $\widehat{\cV}(u\to v):=u^*v$, so $\widehat{\cV}(u\to -) = u^*\otimes-$ which is obviously tensored.

Conversely, suppose that for $v\in \cV$, $\widehat{\cV}(v\to -)$ is tensored.
We define $v^*:=\widehat{\cV}(v\to 1_\cV)$ and
\begin{align*}
\ev_v &:= \varepsilon^{\widehat{\cV}}_{v\to 1_\cV}\in \cV(v\widehat{\cV}(v\to 1_\cV)\to 1_\cV)
\\
\coev_v &:= j_v \circ (\mu^{\cV(v\to -)}_{1_\cV, v})^{-1} \in \cV(1_\cV \to \widehat{\cV}(v\to 1_\cV)v)
\end{align*}
One zig-zag relation is readily verified using standard mate calculations:
$$
(\id_v \coev_v) \circ (\ev_v \id_v)
=
(\id_v j_v) \circ (\id_v (\mu^{\widehat{\cV}(v \to -)}_{v , 1_\cV})^{-1}) \circ (\varepsilon^{\widehat{\cV}}_{v \to 1_\cV} \id_v) 
=
(\id_v j_v) \circ \varepsilon^{\widehat{\cV}}_{v\to v}
=
\id_v
$$
The second is a bit more difficult.
First, one shows that $(\eta^{\widehat{\cV}}_{v,1_\cV} \id_{v^*}) \circ (-\circ_{\widehat{\cV}}-) = \varepsilon^{\widehat{\cV}}_{v\to 1_\cV} \circ j_{1_\cV}$ by taking mates under the adjunction
$$
\cV(v\widehat{\cV}(v\to 1_\cV) \to \widehat{\cV}(1_\cV \to 1_\cV))
\cong
\cV(c\widehat{\cV}(v\to 1_\cV) \to 1_\cV),
$$
It follows that $((\mu^{\widehat{\cV}(v\to -)}_{v,1_\cV})^{-1} \id_{v^*})\circ  (\id_{v^*} \varepsilon^{\widehat{\cV}}_{v\to 1}) = -\circ_{\widehat{\cV}}-\in \cV(\widehat{\cV}(v\to v) \widehat{\cV}(v\to 1_\cV) \to \widehat{\cV}(v\to 1_\cV))$.
Hence
$$
(\coev_v \id_{v^*})\circ (\id_{v^*} \ev_v)
=
(j_v\id_{v^*}) \circ ((\mu^{\widehat{\cV}(v \to -)}_{v , 1_\cV})^{-1}\id_{v^*}) \circ (\id_{v^*}\varepsilon^{\widehat{\cV}}_{v \to 1_\cV}) 
=
(j_v\id_{v^*})\circ (-\circ_{\widehat{\cV}}-)
=
\id_{v^*}.
$$
This completes the proof.
\end{proof}

\begin{cor}
Suppose $\cV$ is rigid.
Then every $\cV$-representable functor $\cR^{a\blacktriangleleft u}: \overline{\cC} \to \widehat{\cV}$ is tensored, and $\overline{\overline{\cC}}$ is $\cV$-equivalent to $\overline{\cC}$ by Theorem \ref{thm:WhenTensoredIsEquivalentToCompletion}.
\end{cor}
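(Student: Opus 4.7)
The strategy is to verify condition~(1) of Theorem~\ref{thm:WhenTensoredIsEquivalentToCompletion} for the tensored $\cV$-category $\overline{\cC}$ (tensored by Corollary~\ref{cor:CBarTensored}, with $(b\blacktriangleleft v)\vartriangleleft w = b\blacktriangleleft vw$). Once this is done, the implication $(1)\Rightarrow(4)$ of that theorem, applied with $\overline{\cC}$ in place of $\cC$, immediately yields the $\cV$-equivalence $\overline{\cC}\simeq_\cV \overline{\overline{\cC}}$. Thus the whole content of the corollary reduces to showing that $\mu^{\cR^{a\blacktriangleleft u}}_{b\blacktriangleleft v,w}$ is invertible for every $a\blacktriangleleft u,\,b\blacktriangleleft v\in\overline{\cC}$ and $w\in\cV$.

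The payoff of working in the rigid setting is the very clean description of the completion from Example~\ref{ex:VRigid}: $\overline{\cC}(a\blacktriangleleft u\to b\blacktriangleleft v)=u^*\cC(a\to b)v$, so that the source and target of $\mu^{\cR^{a\blacktriangleleft u}}_{b\blacktriangleleft v,w}$,
\[
\cR^{a\blacktriangleleft u}(b\blacktriangleleft v)\otimes w \;=\; u^*\cC(a\to b)v\otimes w
\qquad\text{and}\qquad
\cR^{a\blacktriangleleft u}(b\blacktriangleleft v\vartriangleleft w)\;=\;u^*\cC(a\to b)vw,
\]
are literally the same object of $\cV$. I will show that, under this identification, $\mu$ \emph{is} the identity. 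By Lemma~\ref{lem:NaturalityForRepresentableFunctor}(1), $\mu^{\cR^{a\blacktriangleleft u}}_{b\blacktriangleleft v,w} = (\id_{u^*\cC(a\to b)v}\, \eta^{\overline{\cC}}_{b\blacktriangleleft v,w})\circ(-\circ_{\overline{\cC}}-)$. To compute $\eta^{\overline{\cC}}_{b\blacktriangleleft v,w}\in\cV(w\to v^*\cC(b\to b)vw)$, I will unwind the adjunction $\overline{\cC}^\cV(b\blacktriangleleft vw\to b\blacktriangleleft vw)\cong\cV(w\to v^*\cC(b\to b)vw)$ applied to $j_{b\blacktriangleleft vw}=\coev_{vw}\circ(\id_{(vw)^*}j_b\id_{vw})$: passing first through $\cV(1\to(vw)^*\cC(b\to b)vw)\cong\cV(vw\to\cC(b\to b)vw)$, the $vw$-zig-zag together with naturality of $\ev_{vw}$ reduces the image to $j_b\id_{vw}$, and then the second duality adjunction $\cV(vw\to \cC(b\to b)vw)\cong\cV(w\to v^*\cC(b\to b)vw)$ produces $\eta^{\overline{\cC}}_{b\blacktriangleleft v,w}=(\coev_v\id_w)\circ(\id_{v^*}j_b\id_{vw})$.

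Plugging this formula together with the explicit rigid composition $-\circ_{\overline{\cC}}-$ from Example~\ref{ex:VRigid} into the expression for $\mu$, naturality lets me commute the inserted $j_b$ past $\ev_v$, after which the $v$-zig-zag $(\id_v\coev_v)\circ(\ev_v\id_v)=\id_v$ cancels the $\coev_v$ against $\ev_v$. What remains is $\id_{u^*}\cdot\bigl[(\id_{\cC(a\to b)}\,j_b)\circ(-\circ_\cC-)\bigr]\cdot\id_{vw}$, which collapses to $\id_{u^*\cC(a\to b)vw}$ by the right-identity axiom of the $\cV$-category $\cC$. Hence $\mu^{\cR^{a\blacktriangleleft u}}_{b\blacktriangleleft v,w}$ is the identity, and in particular an isomorphism. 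The only obstacle is the bookkeeping with the three duality adjunctions and the correct placement of identity morphisms in each tensor factor; no new ideas beyond unwinding definitions and the zig-zag identities are needed. With $\cR^{a\blacktriangleleft u}$ shown to be tensored, both assertions of the corollary follow from Theorem~\ref{thm:WhenTensoredIsEquivalentToCompletion} applied to~$\overline{\cC}$.
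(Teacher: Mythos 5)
Your proposal is correct and takes essentially the same approach as the paper: the paper's proof consists precisely of the observation that, for $\cV$ rigid, $\overline{\cC}(a\blacktriangleleft u \to b\blacktriangleleft v) = u^*\cC(a\to b)v = \overline{\cC}(a\blacktriangleleft u \to b\blacktriangleleft 1_\cV)v$, leaves the remaining verification to the reader, and then invokes Theorem \ref{thm:WhenTensoredIsEquivalentToCompletion} exactly as you do. Your explicit computation that $\mu^{\cR^{a\blacktriangleleft u}}_{b\blacktriangleleft v,w}$ is the identity (via the formula for $\eta^{\overline{\cC}}_{b\blacktriangleleft v,w}$, naturality, the zig-zag identity, and the unit axiom of $\cC$) correctly supplies the details the paper omits.
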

\begin{proof}
When $\cV$ is rigid,
$$
\overline{\cC}(a\blacktriangleleft u \to b\blacktriangleleft v)
=
\widehat{\cV}(u\to \cC(a\to b)v)
=
u^*\cC(a\to b)v
=
\overline{\cC}(a\blacktriangleleft u \to b\blacktriangleleft 1_\cV)v.
$$
We leave the rest of the details to the reader.
\end{proof}

\begin{cor}
Suppose $\cV$ is rigid and $\cC$ is tensored.
Then $\cI : \cC \to \overline{\cC}$ is tensored, 
and $\cC$ is $\cV$-equivalent to $\overline{\cC}$ by Theorem \ref{thm:WhenTensoredIsEquivalentToCompletion}.
\end{cor}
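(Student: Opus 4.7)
The plan is to invoke Theorem \ref{thm:WhenTensoredIsEquivalentToCompletion} by directly verifying its condition (4)---that $\cI : \cC \to \overline{\cC}$ witnesses a $\cV$-equivalence---since this automatically delivers every equivalent condition, in particular that $\cI$ is tensored. The rigidity hypothesis makes this tractable because Example \ref{ex:VRigid} gives the explicit formulas $\overline{\cC}(a\blacktriangleleft u \to b\blacktriangleleft v) = u^*\cC(a\to b)v$ and $\cI_{a\to b} = \id_{\cC(a\to b)}$, the latter immediately being a $\cV$-isomorphism, so $\cI$ is already $\cV$-fully faithful.

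It therefore remains only to establish essential surjectivity of $\cI^\cV$. Given an arbitrary object $a\blacktriangleleft u$ of $\overline{\cC}$, I will exhibit a $\cV$-isomorphism $a\blacktriangleleft u \cong a\vartriangleleft u \blacktriangleleft 1_\cV = \cI(a\vartriangleleft u)$ in $\overline{\cC}^\cV$, using the object $a\vartriangleleft u \in \cC$ which exists because $\cC$ is tensored. Concretely, the explicit hom-object formula gives
\[
\overline{\cC}(a\vartriangleleft u\blacktriangleleft 1_\cV \to b\blacktriangleleft v) = \cC(a\vartriangleleft u \to b)v, \qquad \overline{\cC}(a\blacktriangleleft u \to b\blacktriangleleft v) = u^*\cC(a\to b)v,
\]
and the $\cV$-adjunction $\cL^a \dashv_\cV \cR^a$ arising from $\cC$ being tensored furnishes a natural $\cV$-isomorphism $\cC(a\vartriangleleft u \to b) \cong u^*\cC(a\to b)$ in $b \in \cC$. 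Tensoring on the right by $v\in\cV$ promotes this to a natural isomorphism between the two hom-objects above in $b\blacktriangleleft v \in \overline{\cC}^\cV$. Applying $\cV(1_\cV \to -)$ and invoking the Yoneda lemma as in \eqref{eq:YonedaInverses} within $\overline{\cC}^\cV$ produces the desired isomorphism $a\blacktriangleleft u \cong a\vartriangleleft u\blacktriangleleft 1_\cV$.

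Having secured essential surjectivity, the inverse of $\cI$ witnessing the $\cV$-equivalence is precisely the $\cV$-functor $\overline{\id^\cC}: \overline{\cC} \to \cC$ from Proposition \ref{prop:LiftToCBar}: the composite $\overline{\id^\cC}\circ \cI$ is canonically isomorphic to $\id^\cC$ via strong unitality (Lemma \ref{lem:StrictlyUnital}), while $\cI\circ \overline{\id^\cC}$ is isomorphic to $\id^{\overline{\cC}}$ via the isomorphism just constructed. This verifies condition (4), so Theorem \ref{thm:WhenTensoredIsEquivalentToCompletion} supplies the full conclusion.

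The main technical obstacle will be establishing the naturality in $b\blacktriangleleft v \in \overline{\cC}^\cV$ of the hom-object comparison, since morphisms in $\overline{\cC}^\cV$ intertwine both the underlying morphisms of $\cC$ and the $\cV$-component in a coupled way. This reduces to a mate-chase through the formula for $-\circ_{\overline{\cC}}-$ from Definition \ref{defn:CompletionOfVCategory} together with the $\cV$-naturality of the adjunction isomorphism $\theta$ of \eqref{eq:ThetaKappa} for $\cL^a \dashv_\cV \cR^a$; it is tedious but contains no genuinely new ingredient beyond those developed for the preceding corollary and for Proposition \ref{prop:IsomorphismOfRepresentableFunctors}.
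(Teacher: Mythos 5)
Your constructions up through essential surjectivity are sound: $\cI$ is an isomorphism on hom objects, and the comparison $\cC(a\vartriangleleft u\to b)v\cong u^*\cC(a\to b)v$ coming from $\cL^a\dashv_\cV\cR^a$ is indeed compatible with $-\circ_{\overline{\cC}}-$ (the deferred mate-chase reduces to \eqref{eq:V-Adjoint2} plus a zig-zag), so the Yoneda argument as in \eqref{eq:YonedaInverses} does give an isomorphism $a\blacktriangleleft u\cong \cI(a\vartriangleleft u)$ in $\overline{\cC}^\cV$ for each object. The gap is the final jump ``this verifies condition (4).'' Condition (4) (equivalently (3)) of Theorem \ref{thm:WhenTensoredIsEquivalentToCompletion} requires a specific $1_\cV$-graded $\cV$-natural isomorphism relating $\id^{\overline{\cC}}$ to the composite of $\cI$ with $\overline{\id^\cC}$, whereas what you have produced is only an object-by-object family of isomorphisms: you never check naturality in the variable $a\blacktriangleleft u$, nor the enriched naturality condition \eqref{eq:VNaturalTransformation}. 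The paper contains no ``$\cV$-fully-faithful $+$ essentially surjective $\Rightarrow$ $\cV$-equivalence'' lemma you could invoke to bridge this, and even such a lemma would only give \emph{some} quasi-inverse, not the functor $\overline{\id^\cC}$ that condition (4) names. (A minor point: under the paper's left-to-right composition convention you have also interchanged which composite lives on $\cC$ and which on $\overline{\cC}$.)

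The repair is small and brings your argument in line with the paper's. Instead of aiming at (4), aim at (2)/(3): transport $\id_{a\vartriangleleft u\blacktriangleleft 1_\cV}$ through your chain of isomorphisms and check, exactly as in the $(1)\Rightarrow(2)$ step of Theorem \ref{thm:WhenTensoredIsEquivalentToCompletion} and using \eqref{eq:Mu=Eta}, that the resulting canonical isomorphism is precisely $\mu^\cI_{a,u}=\tau_{a\blacktriangleleft u}$. Lemma \ref{lem:MuIsANaturalTransformation} already provides the naturality of $\tau$, so invertibility of its components is all that is needed, and the theorem then yields both conclusions of the corollary. This is also where your route and the paper's coincide in substance: the paper uses rigidity to build an explicit equivalence $\Phi:\overline{\cC}^\cV\to\cC^\cV$ via the chain \eqref{eq:RigidMorphismSpaceIso} and shows $\Phi(\mu^\cI_{a,u})=\alpha_{a,u,1_\cV}$, again concluding only that $\cI$ is tensored and letting Theorem \ref{thm:WhenTensoredIsEquivalentToCompletion} deliver the $\cV$-equivalence.
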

\begin{proof}
Notice that when $\cC$ is tensored and $\cV$ is rigid, we have an adjunction
\begin{equation}
\label{eq:RigidMorphismSpaceIso}
\begin{split}
\overline{\cC}^\cV(a\blacktriangleleft u \to b\blacktriangleleft v)
&=
\cV(u \to \cC(a \to b)v)
\\&\cong
\cV(uv^* \to \cC(a\to b))
\\&\cong
\cC^\cV(a\vartriangleleft uv^*\to b)
\\&\cong
\cC^\cV(a\vartriangleleft u\to b\vartriangleleft v).
\end{split}
\end{equation}
Define $\Phi : \overline{\cC}^\cV \to \cC^\cV$ by $\Phi(a\blacktriangleleft u) = a\vartriangleleft u$, and on morphisms, $\Phi$ is defined by the adjunction \eqref{eq:RigidMorphismSpaceIso}.

We claim that $\Phi$ is an equivalence of categories.
It is clear that if $\Phi$ is a functor, then $\Phi$ is an equivalence of categories, since it is automatically essentially surjective on objects by definition and fully faithful by \eqref{eq:RigidMorphismSpaceIso}.
Setting $b=a$ and $v = u$, it is readily checked that $\Phi(\id_{a\blacktriangleleft u}) = \id_{a\vartriangleleft u}$.
Hence for an arbitrary $f\in \overline{\cC}^\cV(a\blacktriangleleft u \to b\blacktriangleleft v)=\cV(u \to \cC(a\to b)v)$,
we have $\Phi(f) = (\id_a \vartriangleleft f) \circ \alpha_{a,\cC(a\to b), v}\circ (\varepsilon^\cC_{a\to b} \vartriangleleft \id_v)$.
Now if 
$g\in \overline{\cC}^\cV(b\blacktriangleleft v \to c\blacktriangleleft w)=\cV(v \to \cC(b\to c)w)$,
we see that 
\begin{align*}
\Phi&(f\circ g)
\\&=
(\id_a \vartriangleleft [f\circ (\id_{\cC(a\to b)}g) \circ ((-\circ_\cC-) \id_w)])
\circ
\alpha_{a, \cC(a\to c) , w}
\circ 
(\varepsilon^\cC_{a\to c} \vartriangleleft \id_w)
\\&=
(\id_a \vartriangleleft [f\circ (\id_{\cC(a\to b)}g)])
\circ
\alpha_{a, \cC(a\to b)\cC(b\to c) , w}
\circ
(\alpha_{a, \cC(a\to b),\cC(b\to c)}\vartriangleleft \id_w)
\circ
(\varepsilon^\cC_{a\to b} \vartriangleleft \id_{\cC(b\to c)} \vartriangleleft \id_w)
\circ 
(\varepsilon^\cC_{b\to c} \vartriangleleft \id_w)
\\&=
(\id_a \vartriangleleft [f\circ (\id_{\cC(a\to b)}g)])
\circ
\alpha_{a, \cC(a\to b),\cC(b\to c)w}
\circ
\alpha_{a\vartriangleleft\cC(a\to b),\cC(b\to c),w}
\circ
(\varepsilon^\cC_{a\to b} \vartriangleleft \id_{\cC(b\to c)} \vartriangleleft \id_w)
\circ 
(\varepsilon^\cC_{b\to c} \vartriangleleft \id_w)
\\&=
(\id_a \vartriangleleft f)
\circ
\alpha_{a, \cC(a\to b),v}
\circ
(\varepsilon^\cC_{a\to b} \vartriangleleft \id_v)
\circ
(\id_b \vartriangleleft g)
\circ
\alpha_{b,\cC(b\to c),w}
\circ 
(\varepsilon^\cC_{b\to c} \vartriangleleft \id_w)
\\&=
\Phi(f)\circ \Phi(g).
\end{align*}
Thus $\Phi$ is a functor, and it gives an equivalence of categories $\overline{\cC}^\cV \cong \cC^\cV$.

We now claim that setting $b = a\vartriangleleft u$ and $v=1_\cV$, $\Phi(\mu^\cI_{a,u})=\alpha_{a,u,1_\cV}$.
Indeed,
using the identity $\mu^\cI_{a,u} = \eta^\cC_{a,u}$ from \eqref{eq:Mu=Eta}, we have
$$
\Phi(\mu^\cI_{a,u})
=
(\id_a \vartriangleleft \eta^\cC_{a,u}) \circ \alpha_{a,\cC(a\to a\vartriangleleft u), 1_\cV}\circ (\varepsilon^\cC_{a\to a\vartriangleleft u} \vartriangleleft \id_{1_\cV})
=
\alpha_{a,u, 1_\cV}
\circ
(\id_a \vartriangleleft \eta^\cC_v \vartriangleleft \id_{1_\cV})
\circ
(\varepsilon^\cC_{a\to a\vartriangleleft u} \vartriangleleft 1_\cV)
=
\alpha_{a,u,1_\cV}.
$$
Since $\Phi$ is an equivalence of categories, $\Phi(\mu^\cI_{a,u}) =\alpha_{a,u,1_\cV}$ being invertible implies $\mu^\cI_{a,u}$ is invertible. 
Hence $\cI$ is tensored, and we are finished.
\end{proof}

\section{Closed \texorpdfstring{$\cV$}{V}-monoidal categories}
\label{sec:ClosedVmonoidal}

For the remainder of this article, $\cV$ will denote a braided monoidal category.

\subsection{\texorpdfstring{$\cV$}{V}-monoidal categories}

We now recall the basics of (strict) $\cV$-monoidal categories from \cite{1701.00567}.

\begin{defn}
A (strict) $\cV$-monoidal category consists of a $\cV$-category $\cC$, together with the following additional data:
\begin{itemize}
\item (identity object) a distinguished object $1_\cC\in \cC$
\item (tensor product of objects) for all $a,b\in \cC$, an object $ab\in \cC$
\item (tensor product for hom-objects) for all $a,b,c,d\in \cC$, a distinguished morphism $-\otimes_\cC - \in \cV( \cC(a\to b) \cC(c\to d) \to \cC(ac\to bd))$.
\end{itemize}
subject to the following axioms:
\begin{itemize}
\item (strict unitality for objects) for all $a\in \cC$, $1_\cC a = a = a1_\cC$.
\item (strict associator for objects) for all $a,b,c,\in \cC$, $a(bc)=(ab)c$.
\item (unitality) for all $a,b\in\cC$, $(j_{1_\cC} \id_{\cC(a\to b)})\circ (-\otimes_\cC-) = \id_{\cC(a\to b} = (\id_{\cC(a\to b)}j_{1_\cC})\circ (-\otimes_\cC-)$ and $(j_a j_b)\circ (-\otimes_\cC -) = j_{ab}$,
\item (associatitivity)
as morphisms in $\cV(\cC(a\to b)\cC(c\to d) \cC(e\to f) \to \cC(ace\to bdf))$, we have
$(\id (-\otimes_\cC -)) \circ (-\otimes_\cC-)=((-\otimes_\cC -)\id) \circ (-\otimes_\cC-)$,
and
\item (braided interchange) for all $a,b,c,d,e,f$, the following diagram commutes:
\begin{equation}
\label{eq:BraidedInterchance}
\begin{tikzcd}
\cC(a\to b) \cC(d\to e)\cC(b\to c) \cC(e\to f)
\ar[rr, "(-\otimes_\cC-)(-\otimes_\cC-)"]
\ar[dd, "\id\beta_{\cC(d\to e),\cC(b\to c)}\id"]
&&
\cC(ad\to be)\cC(be\to cf)
\ar[d, "-\circ_\cC-"]
\\
&&
\cC(ad\to ef)
\\
\cC(a\to b) \cC(b\to c) \cC(d\to e)\cC(e\to f)
\ar[rr, "(-\circ_\cC-)(-\circ_\cC-)"]
&&
\cC(a\to c) \cC(d\to f)
\ar{u}[swap]{-\otimes_\cC-}
\end{tikzcd}
\end{equation}
where $\beta$ is the braiding in $\cV$.
\end{itemize}
\end{defn}
It is straightforward to verify that the underlying category $\cC^\cV$ of a $\cV$-monoidal category is a monoidal category.

\begin{example}
\label{ex:SelfEnrichedVMonoidal}
When $\cV$ is braided closed monoidal, can make $\widehat{\cV}$ a $\cV$-monoidal category by defining $-\otimes_{\widehat{\cV}}-$ to be the mate of $(\id_u \beta_{w,[u,v]} \id_x)\circ (\varepsilon^{\widehat{\cV}}_{u\to v}\varepsilon^{\widehat{\cV}}_{w\to x})$ under the adjunction
$$
\cV(\widehat{\cV}(u\to v)\widehat{\cV}(w\to x) \to \widehat{\cV}(uw\to vx))
\cong
\cV(uw \widehat{\cV}(u\to v)\widehat{\cV}(w\to x) \to vx).
$$
We leave the details to the reader as a valuable exercise.
\end{example}

\begin{defn}[\cite{1701.00567}]
\label{def:VMonoidalFunctor}
Suppose $\cC, \cD$ are $\cV$-monoidal categories.
A (strong) strongly unital $\cV$-monoidal functor $\cC \to \cD$ consists of a pair $(\cF, \nu)$ where $\cF: \cC \to \cD$ is a $\cV$-functor such that $\cF(1_\cC) = 1_\cD$ and $\cF_{1_\cC \to 1_\cC} = j_{1_\cD}$, 
and 
$\nu=(\nu_{a,b})_{a,b\in\cC}$ is family of isomorphisms $\nu_{a,b} \in \cV(1_\cV \to \cD(\cF(ab) \to \cF(a)\cF(b)))$ which satisfy the following conditions:
\begin{itemize}
\item (unitality)
For all $a\in \cC$, $\nu_{a,1_\cC} = \id_{\cF(a)} = \nu_{1_\cC,a}$,\footnote{The axiom that $\nu_{a,1_\cC} = \id_{\cF(a)} = \nu_{1_\cC,a}$ for all $a\in \cC$ is not stated in \cite[Def.~2.6]{1701.00567}; rather only the condition that $\nu_{1_\cC, 1_\cC} = j_{1_\cD}$ appears.
This was an error, as this axiom is used throughout, and its verification was omitted (this gap is covered here).
See also Lemma \ref{lem:AlphaInvertibleWhenOneArgumentIs1} and Remark \ref{remark:ClassifyingFunctorExtraStrictUnitalityAxiom}.
}
\item (associativity)
for all $a,b,c\in \cC$, as composites in $\cD^\cV(\cF(abc) \to \cF(a)\cF(b)\cF(c))$, $\nu_{a,bc}\circ (\id_{\cF(a)}\nu_{b,c}) = \nu_{ab,c}\circ (\nu_{a,b}\id_{\cF(c)})$,

\item (naturality)
\begin{equation}
\label{eq:NaturalityForVMonoidal}
\begin{tikzcd}
\cC(a \to c)\cC(b\to d)
	\ar[r, "-\otimes_\cC-"]
	\ar[d,"\cF_{a\to c}\cF_{b\to d}"]
& \cC(ab \to cd) \ar[dr, "\cF_{ab \to cd}"]	
&
\\
\cD(\cF(a)\to \cF(c))\cD(\cF(b)\to \cF(d)) \ar[dr, "-\otimes_\cD -"]
& &
\cD(\cF(ab) \to \cF(cd)) \ar[d, "-\circ \nu_{c,d}"]	  
\\
& 
\cD(\cF(a)\cF(b) \to \cF(c)\cF(d)) \ar[r, "\nu_{a,b}\circ-"] 
&
\cD(\cF(ab) \to \cF(c)\cF(d))
\end{tikzcd}
\end{equation}
%
%
\end{itemize}
We leave the definition of a lax/oplax $\cV$-monoidal functor to the reader.
Note that the underlying functor of a strong/lax/oplax $\cV$-monoidal functor is strong/lax/oplax monoidal with the same tensorator/laxitor/oplaxitor.
\end{defn}

\begin{defn}
Suppose $\cC,\cD$ are $\cV$-monoidal categories and $(\cF,\nu^\cF), (\cG,\nu^\cG): \cC \to \cD$ are $\cV$-monoidal functors.
A strongly unital $\cV$-monoidal natural transformation $\sigma:(\cF,\nu^\cF) \Rightarrow (\cG,\nu^\cG)$ is a $1_\cV$-graded natural transformation $\sigma: \cF \Rightarrow \cG$ satisfying the naturality condition \eqref{eq:VNaturalTransformation} and the additional axioms
\begin{itemize}
\item
(unitality) $\sigma_{1_\cC} \in \cV(1_\cV \to \cD(\cF(1_\cC) \to \cG(1_\cC))) = \cV(1_\cV \to \cD(1_\cD \to 1_\cD))$ is equal to $j_{1_\cD}$,
\item
(monoidality)
for all $a,b\in\cC$, as composites in $\cD^\cV(\cF(ab) \to \cG(a)\cG(b))$,
$\sigma_{ab}\circ \nu^\cG_{a,b} = \nu^\cF_{a,b}\circ (\sigma_a \sigma_b)$.
\end{itemize}
\end{defn}

As before, we denote by $\cR_a : \cC^\cV \to \cV$ the representable functor $b\mapsto \cC(a\to b)$ from Example \ref{ex:Representable}.
When $\cV$ is closed, we denote by $\cR^a: \cC \to \widehat{\cV}$ the $\cV$-representable functor from Example \ref{ex:VRepresentable}.

\subsection{Closed \texorpdfstring{$\cV$}{V}-monoidal categories}

\begin{example}
\label{ex:aOtimes- is V-monoidal}
Suppose $\cC$ is $\cV$-monoidal and $a\in \cC$.
We can define a $\cV$-functor $a\otimes- :\cC \to \cC$ on objects by $a\otimes b = ab$ and $(a\otimes -)_{b \to c} \in \cV(\cC(b\to c) \to \cC(ab \to ac))$ is $(j_a \id_{\cC(b\to c)})\circ (-\otimes_\cC-)$.
It is straightforward to verify $a\otimes -$ is a $\cV$-functor using the braided interchange relation.

Notice that the underlying functor is given by $a\otimes - : \cC^\cV \to \cC^\cV$, where $a\otimes b = ab$ and for $f\in \cC^\cV(b\to c)$, $a\otimes f = \id_a \otimes f \in \cC^\cV(ab \to ac)$.
\end{example}

\begin{defn}
We call a $\cV$-monoidal category $\cC$ \emph{closed} if every $\cV$-functor $a\otimes -$ has a right $\cV$-adjoint, denoted $[a,-]: \cC \to \cC$.
\end{defn}

\begin{example}
Recall that a $\cV$-monoidal category is called \emph{rigid} if the underlying monoidal category $\cC^\cV$ is rigid.
As in \cite{1701.00567}, to ease the notation, we assume $(ba)^*=a^*b^*$ for all $a,b\in \cC$.

Notice that a rigid $\cV$-monoidal category is closed with $[a,-] = a^*\otimes-$, where the $\cV$-adjunction is witnessed via the Frobenius reciprocity isomorphisms
$$
\theta_{b,c}
=
[(\coev_a j_b) \circ (-\otimes_\cC-)][(j_{a^*} \id_{\cC(ab \to c)}) \circ (-\otimes_\cC-)]\circ (-\circ_\cC-)
\in
\cV(\cC(ab \to c) \to \cC(b \to a^*c)).
$$
(It is an important exercise using the braided interchange relation to verify the above morphism is a natural isomorphism with the obvious inverse.)
\end{example}

\begin{lem}
\label{lem:ClosedAdjoints}
If $\cC$ is closed, then $\cC$ is oplax tensored if and only if $R_{1_\cC}$ admits a left adjoint.
\end{lem}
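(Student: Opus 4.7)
The forward direction is immediate: oplax-tensoredness applied to the object $a = 1_\cC$ already gives that $\cR_{1_\cC}$ admits a left adjoint.

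For the reverse direction, the essential input is the closedness hypothesis. Fix any $a \in \cC$. Since $\cC$ is closed, the $\cV$-functor $a\otimes - : \cC \to \cC$ from Example \ref{ex:aOtimes- is V-monoidal} has a right $\cV$-adjoint $[a,-]$, i.e.\ a family of $\cV$-isomorphisms
$$\theta_{b,c} \in \cV(\cC(ab \to c) \to \cC(b \to [a,c]))$$
satisfying the naturality diagrams \eqref{eq:V-Adjoint1} and \eqref{eq:V-Adjoint2}. Specializing to $b = 1_\cC$ and using the strict unitality $a \cdot 1_\cC = a$ of the $\cV$-monoidal structure, we obtain isomorphisms in $\cV$
$$\cR_a(c) = \cC(a \to c) \xrightarrow{\theta_{1_\cC, c}} \cC(1_\cC \to [a,c]) = \cR_{1_\cC}([a,c]).$$

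The plan is then to show these assemble into a natural isomorphism of ordinary functors $\cR_a \cong \cR_{1_\cC} \circ [a,-]^\cV : \cC^\cV \to \cV$. Naturality in $c \in \cC^\cV$ amounts to: for every $f\in \cC^\cV(c\to c')=\cV(1_\cV \to \cC(c\to c'))$, the map $\cR_a(f) = (\id_{\cC(a\to c)} f)\circ (-\circ_\cC-)$ is intertwined with $\cR_{1_\cC}([a,-]^\cV(f)) = (\id_{\cC(1_\cC \to [a,c])}[a,-]_{c\to c'}\circ f)\circ (-\circ_\cC-)$ via $\theta_{1_\cC, -}$; this is a direct consequence of the naturality axiom \eqref{eq:V-Adjoint2} for $\theta$ (using also the identification of $[a,-]^\cV$ via its defining adjunction, see Remark \ref{rem:UnderlyingAdjunction}).

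Assuming now that $\cR_{1_\cC}$ admits an ordinary left adjoint $\cL_{1_\cC}: \cV \to \cC^\cV$, and recalling that the $\cV$-adjunction $a\otimes - \dashv_\cV [a,-]$ descends to an underlying adjunction $(a\otimes -)^\cV \dashv [a,-]^\cV$ on $\cC^\cV$ (Remark \ref{rem:UnderlyingAdjunction}), we compose adjoints to conclude
$$\cL_a := (a\otimes -)^\cV \circ \cL_{1_\cC} \;\;\dashv\;\; \cR_{1_\cC}\circ [a,-]^\cV \;\cong\; \cR_a,$$
exhibiting the required left adjoint to $\cR_a$. Since $a$ was arbitrary, $\cC$ is oplax tensored.

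The only non-bookkeeping step is the verification that $\theta_{1_\cC, -}$ is natural in $c$ as a map of ordinary functors valued in $\cV$; this is where one must unfold the $\cV$-naturality of $\theta$ carefully, but it reduces to a routine mate computation from \eqref{eq:V-Adjoint2} and the definition of $[a,-]^\cV$ acting on underlying morphisms. I do not anticipate any genuine obstacle beyond this.
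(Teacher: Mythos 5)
Your proof is correct and is essentially the paper's argument: the paper likewise defines $\cL_a(v) := a\,\cF(v)$ (with $\cF$ the left adjoint of $\cR_{1_\cC}$) and verifies it via the chain $\cV(v \to \cC(a\to b)) \cong \cV(v \to \cC(1_\cC \to [a,b])) \cong \cC^\cV(\cF(v) \to [a,b]) \cong \cC^\cV(a\cF(v) \to b)$, which is exactly your composition of $\cL_{1_\cC} \dashv \cR_{1_\cC}$ with the underlying adjunction $(a\otimes -)^\cV \dashv [a,-]^\cV$ after identifying $\cR_a$ with $\cC(1_\cC \to [a,-])$ via $\theta_{1_\cC,-}$, whose naturality indeed follows from \eqref{eq:V-Adjoint2} as you indicate.
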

\begin{proof}
If $\cR_{1_\cC}: \cC^\cV \to \cV$ admits a left adjoint $\cF: \cV \to \cC^\cV$, we get a left adjoint $\cL_a :\cV\to \cC^\cV$ by $v\mapsto a\cF(v)$:
\begin{equation}
\begin{split}
\label{eq:ClosedR_aAdjunction}
\cV(v \to \cR_a(b))
&=
\cV(v \to \cC(a\to b))
\cong
\cV(v \to \cC(1_\cC\to [a,b]))
\\&\cong
\cC^\cV(\cF(v) \to [a,b])
\cong
\cC^\cV(a\cF(v)\to b)
=:
\cC^\cV(\cL_a(v)\to b).
\end{split}
\end{equation}
The other direction is trivial.
\end{proof}

\subsection{Classification of rigid \texorpdfstring{$\cV$}{V}-monoidal categories}

In \cite{1701.00567}, we proved a classification theorem for rigid $\cV$-monoidal categories, which is an analog of Theorem \ref{thm:OplaxVMod}.

\begin{thm}[{\cite[Thm.~1.1]{1701.00567}}]
\label{thm:OplaxVMonoidal}
Let $\cV$ be a braided monoidal category.
There is a bijective correspondence
\[
\left\{\,
\parbox{7cm}{\rm Rigid $\cV$-monoidal categories $\cC$ such that $\cR_{1_\cC} : \cC^\cV \to \cV$ admits a left adjoint}\,\right\}
\,\,\cong\,\,
\left\{\,\parbox{8cm}{\rm Pairs $(\cT,\cF^{\scriptscriptstyle Z})$ with $\cT$ a rigid monoidal category and $\cF^{\scriptscriptstyle Z}: \cV\to Z(\cT)$ braided oplax monoidal, such that $\cF:=\cF^{\scriptscriptstyle Z}\circ R$ admits a right adjoint}\,\right\}.
\]
\end{thm}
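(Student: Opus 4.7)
The plan is to establish the bijection through explicit constructions in both directions, essentially retracing the strategy of the original result \cite[Thm.~1.1]{1701.00567}, but organized so that later refinements (Theorem \ref{thm:ClosedOplaxVMonoidal}) can be grafted on. For the forward direction, given a rigid $\cV$-monoidal category $\cC$ with left adjoint $\cF \dashv \cR_{1_\cC}$, I would take $\cT := \cC^\cV$, which is rigid monoidal by the strict associativity/unitality of the tensor product on objects combined with rigidity of $\cC$. The substantive step is lifting $\cF : \cV \to \cT$ through $R : Z(\cT) \to \cT$. For each $v\in\cV$ and $c\in\cC$, define the half-braiding $e_{\cF(v), c}\in \cC^\cV(\cF(v)c \to c\cF(v))$ as the mate under $\cF \dashv \cR_{1_\cC}$ of a morphism in $\cV(v \to \cC(1_\cC \to c\cF(v)c^*))\cong \cV(v\to \cC(c\to c\cF(v)))$ built from the unit $\eta_v$ and the tensor product morphism $-\otimes_\cC -$, using rigidity to move $c$ across. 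The oplaxitor $\nu_{u,v}^{\scriptscriptstyle Z} : \cF(uv) \to \cF(u)\cF(v)$ is defined as the mate of $(\eta_u\eta_v)\circ(-\otimes_\cC-)$.

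With these definitions in hand, the hexagon for $e$, its naturality in $c$, its compatibility with $\nu^{\scriptscriptstyle Z}$, and the fact that $\cF^{\scriptscriptstyle Z}$ is \emph{braided} (as opposed to merely a functor to the center) all reduce to repeated invocations of the braided interchange relation \eqref{eq:BraidedInterchance}, with the $\cV$-braiding inserted in the last of these. Each verification is a mate calculation of the type already illustrated by Lemma \ref{lem:NaturalityForRepresentableFunctor} and by the proofs in Section \ref{sec:V-cat and V-mod}.

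For the backward direction, given $(\cT, \cF^{\scriptscriptstyle Z})$ with $\cF = \cF^{\scriptscriptstyle Z}\circ R$ admitting a right adjoint $\cR$, define $\cC$ to have the same objects as $\cT$ and hom-objects $\cC(a\to b) := \cR(a^* b)$. The identity element $j_a$ is the mate of $\coev_a$; composition is defined as the mate of $\cF\cR(a^*b)\,\cF\cR(b^*c) \xrightarrow{\varepsilon\varepsilon} a^*bb^*c \xrightarrow{\id\ev\id} a^*c$; the tensor product of hom-objects reshuffles $\cR(a^*b)\cR(c^*d) \to \cR(c^*a^*bd)$, and here the half-braiding encoded in $\cF^{\scriptscriptstyle Z}$ is precisely what is needed to slide $b$ past $c^*$ inside $\cT$ before reabsorbing the result under $\cR$. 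Rigidity of $\cC$ is inherited from $\cT$, and the $\cV$-monoidal axioms (including braided interchange) unpack from the hexagon for the half-braiding on $\cF(uv)$ together with the braided-oplax conditions on $\nu^{\scriptscriptstyle Z}$.

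Finally, to verify that the two constructions are mutually inverse, I would use Frobenius reciprocity $\cC(a\to b)\cong\cR_{1_\cC}(a^*b)$ (available because $\cC$ is rigid) to see that starting from $\cC$, reconstructing hom-objects gives back exactly $\cC(a\to b)$; and conversely, starting from a pair $(\cT, \cF^{\scriptscriptstyle Z})$, reconstructing the underlying monoidal category via $\cC^\cV(1_\cC\to c) = \cV(1_\cV\to\cR(c))\cong \cT(1_\cT\to c)$ recovers $\cT$ on the nose, with the reconstructed classifying functor matching the original $\cF^{\scriptscriptstyle Z}$ by Yoneda applied to the representable $\cC(1_\cC\to -)$. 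I expect the main obstacle to be the bookkeeping needed to match the half-braiding produced by the forward construction with the one packaged inside $\cF^{\scriptscriptstyle Z}$: this passes through several nested adjunctions and is where both the braided interchange relation on the $\cV$-monoidal side and the half-braiding hexagon on the centre side get exercised simultaneously. Introducing shorthand for mates and organizing the calculation as a sequence of Yoneda identifications (as in \eqref{eq:YonedaInverses}) should reduce the core verification to a manageable diagram chase.
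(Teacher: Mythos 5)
Your outline is essentially the approach the paper relies on (it cites \cite{1701.00567} for this theorem and re-sketches the same construction in the closed setting in \S\ref{sec:ClassificationOfClosedVMonoidalCategories}): take $\cT=\cC^\cV$, define the half-braiding and the oplaxitor as mates of $(\eta_v j_a)\circ(-\otimes_\cC-)$ and $(\eta_u\eta_v)\circ(-\otimes_\cC-)$, and conversely build the enriched category with hom objects $\cR(a^*b)$, composition and tensor product assembled from the counits and the half-braiding, with mutual inverses checked via Yoneda, so your proposal is correct in outline and follows the same route. One small fix: since $\cF$ is only oplax, your composition must be the mate of a morphism out of $\cF(\cR(a^*b)\cR(b^*c))$, so you need to precompose with the oplaxitor $\nu_{\cR(a^*b),\cR(b^*c)}$ before applying $\varepsilon\varepsilon$ and evaluation, exactly as in the paper's formula for $-\circ_{\cT\sslash\cF}-$.
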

Here, $R: Z(\cT) \to \cT$ denotes the forgetful functor.

\begin{remark}
As in \cite{1701.00567}, we abuse nomenclature by assuming our oplax monoidal functors $(\cF, \nu): \cV \to \cT$ are strongly unital; that is 
$\cF(1_\cV) = 1_\cT$ and $\nu_{v, 1_\cV} = \id_{\cF(v)} = \nu_{1_\cV, v}$ for all $v\in \cV$.
See also Remark \ref{remark:ClassifyingFunctorExtraStrictUnitalityAxiom}.
\end{remark}

\subsection{Classification of closed \texorpdfstring{$\cV$}{V}-monoidal categories}
\label{sec:ClassificationOfClosedVMonoidalCategories}

It is straightforward to generalize Theorem \ref{thm:OplaxVMonoidal}, relaxing rigidity on both sides to closed, obtaining a bijective correspondence between closed $\cV$-monoidal categories and pairs $(\cT, \cF^{\scriptscriptstyle Z})$ with $\cT$ closed.
This is a better analog of Theorem \ref{thm:OplaxVMod} than Theorem \ref{thm:OplaxVMonoidal}.

\begin{thm*}[Thm.~\ref{thm:ClosedOplaxVMonoidal}]
Let $\cV$ be a braided monoidal category.
There is a bijective correspondence
\[
\left\{\,
\parbox{7cm}{\rm Closed $\cV$-monoidal categories $\cC$ such that $\cR_{1_\cC}$ admits a left adjoint}\,\right\}
\,\,\cong\,\,
\left\{\,\parbox{8.3cm}{\rm Pairs $(\cT,\cF^{\scriptscriptstyle Z})$ with $\cT$ a closed monoidal category and $\cF^{\scriptscriptstyle Z}: \cV\to Z(\cT)$ braided oplax monoidal, such that $\cF:=\cF^{\scriptscriptstyle Z}\circ R$ admits a right adjoint}\,\right\}.
\]
\end{thm*}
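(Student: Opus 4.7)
The plan is to reuse the construction from \cite{1701.00567} that establishes Theorem \ref{thm:OplaxVMonoidal}, observing that the arguments there never used rigidity in an essential way—only closedness, which is exactly what is weakened in the statement. The only two new verifications needed are (a) that the forward direction lands in the class of closed monoidal $\cT$, and (b) that the backward direction produces a closed $\cV$-monoidal $\cC$.

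For the forward direction, given a closed $\cV$-monoidal $\cC$ with $\cR_{1_\cC}$ admitting a left adjoint, set $\cT := \cC^\cV$. Since each $\cV$-functor $a\otimes-:\cC\to \cC$ admits a right $\cV$-adjoint $[a,-]$, Remark \ref{rem:UnderlyingAdjunction} yields an underlying adjunction $a\otimes-\dashv [a,-]^\cV$ in $\cT=\cC^\cV$, so $\cT$ is closed. By Lemma \ref{lem:ClosedAdjoints}, every $\cR_a:\cC^\cV\to\cV$ admits a left adjoint, so we are in the setting of \cite{1701.00567}. The braided oplax monoidal $\cF^{\scriptscriptstyle Z}:\cV\to Z(\cT)$ is constructed as in \cite{1701.00567}: its underlying functor is the left adjoint $\cF$ of $\cR_{1_\cC}$, the half-braiding is induced by the braiding of $\cV$ together with the $\cV$-monoidal structure on $\cC$, and the oplaxitor arises from composition in $\cC$ via adjunction.

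For the backward direction, given a pair $(\cT, \cF^{\scriptscriptstyle Z})$ with $\cT$ closed monoidal, define $\cC$ with the same objects as $\cT$ and hom objects $\cC(a\to b) := \cR([a,b])$, where $[a,b]$ is the internal hom in $\cT$ and $\cR$ is the right adjoint of $\cF := \cF^{\scriptscriptstyle Z}\circ R$. All remaining structure—identity elements, composition, tensor product morphisms—is defined as in \cite{1701.00567}, replacing $a^*\otimes b$ by $[a,b]$ throughout. The verifications that $\cC$ is a $\cV$-monoidal category and that $\cR_{1_\cC}$ has left adjoint $\cF$ are formally identical to \cite{1701.00567}, since the arguments there use only the closed (rather than rigid) structure of $\cT$. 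Moreover, one readily checks $\cC^\cV \cong \cT$, since
\[
\cC^\cV(a\to b) = \cV(1_\cV\to \cR([a,b])) \cong \cT(\cF(1_\cV)\to [a,b]) = \cT(a\to b).
\]

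The one genuinely new step is proving $\cC$ is closed. For $a\in \cC$, define a candidate $\cV$-functor $[a,-]:\cC\to \cC$ by letting $[a,b]$ be the internal hom in $\cT$ on objects, and defining the morphism assignment via mates of evaluation maps in $\cT$. The underlying adjunction $(a\otimes-)^\cV \dashv ([a,-])^\cV$ in $\cC^\cV\cong \cT$ is then just the closed structure of $\cT$. To promote this to a $\cV$-adjunction, we apply Lemma \ref{lem:PromoteToVAdjunction}: it suffices to verify that the candidate maps $\theta_{b,c}$ and $\kappa_{b,c}$ from \eqref{eq:ThetaKappa} are mutually inverse morphisms between $\cC(ab\to c)$ and $\cC(b\to [a,c])$. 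Unpacking the definitions, both reduce to the image under $\cR$ of the natural isomorphism $[ab,c]\cong [b,[a,c]]$ coming from closedness of $\cT$. The main anticipated obstacle is the careful bookkeeping of mates across the several adjunctions in play; once this identification is confirmed, bijectivity of the overall correspondence follows from the mutually inverse nature of the two constructions, which again goes through as in \cite{1701.00567}.
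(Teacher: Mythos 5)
Your overall strategy coincides with the paper's: reuse the constructions of \cite{1701.00567} verbatim, observe that $\cT:=\cC^\cV$ is closed in the forward direction (via Remark \ref{rem:UnderlyingAdjunction} and Lemma \ref{lem:ClosedAdjoints}), and isolate the closedness of $\cT\dslash\cF$ as the genuinely new verification in the backward direction. Where you diverge is in how the $\cV$-adjunction $(a\otimes-)\dashv_\cV[a,-]$ on $\cT\dslash\cF$ is established. You propose to apply Lemma \ref{lem:PromoteToVAdjunction} directly, identifying both $\theta_{b,c}$ and $\kappa_{b,c}$ of \eqref{eq:ThetaKappa} with $\cR(\phi)$ and $\cR(\phi^{-1})$ for the canonical isomorphism $\phi:[ab,c]\cong[b,[a,c]]$ in $\cT$. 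This should work, but note that the identification is itself a substantial mate computation, not a formality: the enriched composition, the structure maps $(a\otimes-)_{b\to c}$, and $[a,-]_{b\to c}$ in $\cT\dslash\cF$ are all defined through the oplaxitor $\nu$ and the half-braidings $e$, so you must push $j_a$ past $e$ and collapse $\nu$ against the counits $\varepsilon^{\cT\dslash\cF}$ (and also verify that your candidate $[a,-]$ is a $\cV$-functor). The paper instead defines $[a,-]$, notes that the underlying adjunction is just closedness of $\cT$ (Lemma \ref{lem:UnderlyingAdjunctionOfTensorInT}), and invokes Theorem \ref{thm:LiftToVAdjunction}: it suffices to show $a\otimes-$ is tensored, and a short computation gives $\mu^{a\otimes-}_{b,v}=\id_{ab\cF(v)}$. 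Since Theorem \ref{thm:LiftToVAdjunction} is itself proved via Lemma \ref{lem:PromoteToVAdjunction}, the two routes are close relatives; the paper's buys a shorter final computation by reusing machinery it needs elsewhere anyway, while yours avoids the tensored-functor criterion at the cost of a longer direct bookkeeping argument that your proposal defers.

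The one genuine gap is in the forward direction. The assertion that the arguments of \cite{1701.00567} ``never used rigidity in an essential way'' is not literally true: the construction of the half-braiding $e_{a,\cF(v)}$ and the verification of its properties rest on the computation of the mate of $\id_{a\cF(v)}$ under Adjunction \eqref{eq:ClosedR_aAdjunction} (\cite[Lem.~4.6 and App.~B]{1701.00567}), whose proof there uses duals. A complete proof must re-establish that identity using only the closed structure; this is exactly Lemma \ref{lem:MateOfIdOfTensorClosed}, whose proof uses the isomorphisms $\theta$ of the $\cV$-adjunction $a\otimes-\dashv_\cV[a,-]$ via \eqref{eq:V-Adjoint1} together with the braided interchange \eqref{eq:BraidedInterchance}. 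Once that lemma is supplied, the remainder of your forward direction and the bijectivity argument go through as you describe.
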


We now give a brief description of both directions, together with a proof of the essential lemmas needed when $\cC$ (respectively $\cT$) is closed rather than rigid.

Starting with a $\cV$-monoidal $\cC$ such that $\cR_{1_\cV}: \cC^\cV \to \cV$ admits a left adjoint $\cF: \cV \to \cC^\cV$, we let $\eta$ be the unit of the adjunction.
We see that $\cF$ can be endowed with the structure of a strongly unital oplax monoidal functor by defining $\nu_{u,v}\in \cC^\cV(\cF(uv) \to \cF(u)\cF(v))$ to be the mate of 
$(\eta^\cC_{1_\cC,u}\eta^\cC_{1_\cC,v})\circ (-\otimes_\cC-)$ under the adjunction
$$
\cC^\cV(\cF(uv) \to \cF(u)\cF(v))
\cong
\cV(uv \to \cC(1_\cC \to \cF(u)\cF(v))).
$$

\begin{remark}
\label{remark:ClassifyingFunctorExtraStrictUnitalityAxiom}
In \cite[Lem.~4.4]{1701.00567}, it was shown that $\cF(1_\cV)$ is canonically isomorphic to $1_{\cC^\cV}$ via the Yoneda Lemma, and thus we may identify $\cF(1_\cV) = 1_{\cC^\cV}$.
That paper (in error!) omitted to show the easily verified that under this identification, $\nu_{1_\cV, v} = \id_{\cF(v)} = \nu_{v, 1_\cV}$ for all $v\in \cV$.
Indeed, the verification is the same as the proof of Lemma \ref{lem:AlphaInvertibleWhenOneArgumentIs1}.
\end{remark}

Since $\cC$ is closed, we have a $\cV$-adjunction $a\otimes - \dashv_\cV [a,-]$ between $\cV$-functors $\cC\to \cC$.
(Notice that $\cC^\cV$ is obviously closed.)
This means there is a family of isomorphims $\theta_{b,c} \in \cV(\cC(ab \to c)\to \cC(b\to [a,c]))$ satisfying \eqref{eq:V-Adjoint1} and \eqref{eq:V-Adjoint2}.
Now we can understand the adjunction \eqref{eq:ClosedR_aAdjunction}:
starting with $g\in \cC^\cV(a\cF(v)\to b)$, its mate is given by
$(\eta_v (g\circ \theta_{\cF(v),b}^{a}))\circ (-\circ_\cC-)\circ \kappa_{1,b}^{a}$.

We now state the essential lemma, whose proof is now easier and more conceptual than the proof of \cite[Lem.~4.6 and Appendix B]{1701.00567}!

\begin{lem}
\label{lem:MateOfIdOfTensorClosed}
The mate of $\id_{a\cF(v)}$ under Adjunction \eqref{eq:ClosedR_aAdjunction} is $(j_a \eta_v)\circ (-\otimes_\cC-)$.
\end{lem}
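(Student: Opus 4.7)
The plan is to apply the explicit formula for the mate under \eqref{eq:ClosedR_aAdjunction} that is recorded in the paragraph immediately preceding the lemma: the mate of $g\in\cC^\cV(a\cF(v)\to b)$ is $(\eta_v\,(g\circ \theta^a_{\cF(v),b}))\circ (-\circ_\cC-)\circ \kappa^a_{1,b}$. Evaluating at $g=\id_{a\cF(v)}=j_{a\cF(v)}$ and $b=a\cF(v)$, and using Remark~\ref{rem:UnderlyingAdjunction} applied to the $\cV$-adjunction $a\otimes-\dashv_\cV[a,-]$ to recognise the factor $j_{a\cF(v)}\circ \theta^a_{\cF(v),a\cF(v)}$ as the unit $\widetilde\eta\in\cC^\cV(\cF(v)\to[a,a\cF(v)])$ of its underlying adjunction at $\cF(v)$, the mate becomes
\[
(\eta_v\,\widetilde\eta)\circ (-\circ_\cC -)\circ \kappa^a_{1,a\cF(v)}.
\]

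The key step is to commute $\kappa^a_{1,a\cF(v)}$ past the composition it follows, via the $\cV$-naturality diagram~\eqref{eq:V-Adjoint1} for $a\otimes-\dashv_\cV[a,-]$ (with $1_\cC$, $\cF(v)$, and $a\cF(v)$ replacing the three generic labels there), which yields
\[
(-\circ_\cC -)\circ \kappa^a_{1,a\cF(v)} = \bigl((a\otimes-)_{1_\cC\to\cF(v)}\,\kappa^a_{\cF(v),a\cF(v)}\bigr)\circ (-\circ_\cC -).
\]
Bifunctoriality of $\otimes$ in $\cV$ then re-brackets the mate as
\[
\bigl([\eta_v\circ (a\otimes-)_{1_\cC\to \cF(v)}]\,[\widetilde\eta\circ \kappa^a_{\cF(v),a\cF(v)}]\bigr)\circ (-\circ_\cC -).
\]

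Since $\theta^a$ and $\kappa^a$ are mutually inverse, $\widetilde\eta\circ \kappa^a_{\cF(v),a\cF(v)}=j_{a\cF(v)}$, and then the identity axiom $(\id\,j_{a\cF(v)})\circ (-\circ_\cC -)=\id$ of a $\cV$-category reduces the expression to $\eta_v\circ (a\otimes-)_{1_\cC\to\cF(v)}$. Finally, Example~\ref{ex:aOtimes- is V-monoidal} gives $(a\otimes-)_{1_\cC\to\cF(v)}=(j_a\,\id)\circ(-\otimes_\cC-)$, and one more use of bifunctoriality of $\otimes$ in $\cV$ rewrites $\eta_v\circ(j_a\,\id)$ as $j_a\,\eta_v$, leaving $(j_a\,\eta_v)\circ(-\otimes_\cC-)$, as claimed. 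The only nontrivial step is the invocation of~\eqref{eq:V-Adjoint1}; everything else is formal bookkeeping with units, counits, and interchange in $\cV$.
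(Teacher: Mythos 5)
Your proposal is correct and follows essentially the same route as the paper: both start from the explicit mate formula recorded in the paragraph preceding the lemma, and the decisive step in each case is the naturality \eqref{eq:V-Adjoint1} of the $\cV$-adjunction $a\otimes-\dashv_\cV[a,-]$ followed by cancelling $\theta\circ\kappa$. The only cosmetic difference is the endgame: the paper expands $j_{a\cF(v)}=(j_aj_{\cF(v)})\circ(-\otimes_\cC-)$ and finishes via the braided interchange \eqref{eq:BraidedInterchance}, whereas you keep the second factor as $j_{a\cF(v)}$, dispose of it with the $\cV$-category unit axiom, and only then insert $(a\otimes-)_{1_\cC\to\cF(v)}=(j_a\,\id)\circ(-\otimes_\cC-)$ --- a marginally leaner finish that avoids invoking the braiding.
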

\begin{proof}
Setting $b=a\cF(v)$, we see that the mate of $j_{a\cF(v)}=(j_aj_{\cF(v)})\circ(-\otimes_\cC-)$ is given by
\begin{align*}
(
\eta_v &
[(j_aj_{\cF(v)})\circ(-\otimes_\cC-)\circ \theta_{\cF(v),a\cF(v)}]
)
\circ
(-\circ_\cC-)
\circ
\kappa_{1_\cC , a\cF(v)}
\\&=
(
[(j_a\eta_v)\circ(-\otimes_\cC-)]
[(j_aj_{\cF(v)})\circ(-\otimes_\cC-)\circ \theta_{\cF(v),a\cF(v)}\circ \kappa_{\cF(v), a\cF(v)}]
)
\circ
(-\circ_\cC-)
&&
\text{by \eqref{eq:V-Adjoint1}}
\\&=
(
[(j_a\eta_v)\circ(-\otimes_\cC-)]
[(j_aj_{\cF(v)})\circ(-\otimes_\cC-)]
)
\circ
(-\circ_\cC-)
\\&=
(
j_a
[(\eta_v j_{\cF(v)})\circ (-\circ_\cC-)]
)
\circ
(-\otimes_\cC-)
&&
\text{by \eqref{eq:BraidedInterchance}}
\\&=
(j_a \eta_v)\circ (-\otimes_\cC-).
&&
\qedhere
\end{align*}
\end{proof}

\begin{remark}
\label{rem:OtherMateOfMuForVMonoidal}
This lemma has many important implications.
First, one can use this lemma and the braided exchange relation to prove that 
$$
(\eta_{a,u}^\cC \eta_{a\cF(u), v}^\cC) \circ (-\circ_\cC-)
=
(\eta_{1_\cC,u}^\cC \eta_{\cF(u), v}^\cC) \circ (-\circ_\cC-) \circ (j_a \otimes_\cC -)
=
(\eta_{1_\cC,u}^\cC \eta_{1_\cC, v}^\cC) \circ (-\otimes_\cC-) \circ (j_a \otimes_\cC -).
$$
These maps above are thus all equal to the mate of $\alpha_{a, u,v}$ from \eqref{eq:MateOfAlpha} where the right $\cV$-module structure of $\cC$ is given by $a\vartriangleleft u := a\cF(u)$.
This also implies that $\alpha_{a,u,v} = \id_a \nu_{u,v}$ for all $a\in \cC$ and $u,v\in \cV$.
Setting $a=1_\cC$, we have $\nu_{u,v} = \alpha_{1_\cC, u,v}$.
We will use these facts heavily in the proof of Lemma \ref{lem:VMonoidalAdjoint} below.
\end{remark}

With Lemma \ref{lem:MateOfIdOfTensorClosed} in hand, we can lift $\cF: \cV\to \cC^\cV$ to a braided oplax monoidal functor $\cF^{\scriptstyle Z}: \cV \to Z(\cC^\cV)$ by defining a half-braiding $e_{a, \cF(v)} \in \cC^\cV(a\cF(v)\to \cF(v)a)$ to be the mate of $(\eta_v j_a)\circ (-\otimes_\cC-)$ under Adjunction \eqref{eq:ClosedR_aAdjunction}.
Indeed, all proofs in \cite[\S4 and 5]{1701.00567} now apply verbatim to the closed (rather than rigid) case, using Lemma \ref{lem:MateOfIdOfTensorClosed} rather than \cite[Lem.~4.6]{1701.00567}.

In the other direction, starting with a closed monoidal category $\cT$ and a strongly unital braided oplax monoidal functor $(\cF^{\scriptscriptstyle Z},\nu):\cV \to Z(\cT)$ such that $\cF := \cF^{\scriptscriptstyle Z}\circ R$ admits a right adjoint $\cR$ (where $R: Z(\cT) \to \cT$ is the forgetful functor), we notice that the functors $\cL_a: \cV \to \cT$ by $\cL_a(v)=a\cF(v)$ admit right adjoints:
\begin{equation}
\label{eq:V-T Adjunction Closed}
\begin{split}
\cT(\cL_a(v)\to b)
=
\cT(a\cF(v)\to b)
\cong
\cT(\cF(v)\to [a,b])
\cong
\cV(v\to \cR([a,b]))
\end{split}
\end{equation}
We thus define a $\cV$-category $\cT\dslash\cF$ as follows:
\begin{itemize}
\item
The objects of $\cT\dslash \cF$ are the same as those of $\cT$,
\item
The hom objects are given by $\cT\dslash\cF(a\to b)=\cR([a,b])$.
\item
the identity morphism
$j_a\in\cV(1_\cV\to \cR([a,a]))$ is the mate of $\id_a\in \cT(a\to a)$,
\item
Using the notation\,\footnote{
In \cite[Def.~6.2]{1701.00567}, we used the notation $[a\to b] := \cF(\cT\dslash\cF(a\to b))$.
However, now that we use $[a,-]$ for the right $\cV$-adjoint of $a\otimes -$ for $a\in \cT\dslash\cF$, we use $\langle a\to b\rangle$ to not overload the bracket notation.
} 
\begin{align*}
\langle a\to b\rangle
&:= 
\cF(\cT\dslash \cF(a\to b))
\text{ and} 
\\
\langle a\to b; b\to c;\cdots\rangle&:= \cF(\cT\dslash \cF(a\to b)\cT\dslash \cF(b\to c) \cdots ),
\end{align*}
we let $\varepsilon^{\cT\dslash\cF}_{a\to b} \in \cT(a\langle a\to b\rangle \to b)$ be the counit of Adjunction \eqref{eq:V-T Adjunction Closed}.
The composition $-\circ_{\cT \dslash\cF}- \in \cV(\cT\dslash\cF(a\to b)\cT\dslash\cF(b\to c) \to \cT\dslash\cF(a\to c))$ is the mate of
$(\id_a\nu_{\cT\dslash\cF( a\to b), \cT\dslash\cF (b\to c)})\circ (\varepsilon^{\cT\dslash\cF}_{a\to b} \id_{\langle b\to c\rangle})\circ \varepsilon^\cT_{b\to c}$.
\item
The tensor product $-\otimes_{\cT \dslash\cF}- \in \cV(\cT\dslash\cF(a\to b)\cT\dslash\cF(c\to d) \to \cT\dslash\cF(ac\to bd))$ is the mate of
$(\id_{ac}\nu_{\cT\dslash\cF( a\to b), \cT\dslash\cF (c\to d)})\circ(\id_a e_{c,\langle a\to b\rangle} \id_{\langle c\to d\rangle}) \circ (\varepsilon^{\cT\dslash\cF}_{a\to b} \varepsilon^{\cT\dslash\cF}_{c\to d})$.
\end{itemize}
The verification that $\cT\dslash \cF$ is a $\cV$-monoidal category is identical to \cite[\S 6.1-6.4]{1701.00567}, as is the monoidal equivalence $\cT \cong \cT\dslash \cF^\cV$.
Clearly $\cR_{1_{\cT\dslash \cF}}$ admits a left adjoint, as
$$
\cV(v \to \cR_{1_{\cT\dslash \cF}}(a))
=
\cV(v \to \cR([1_\cT, a]))
\cong
\cV(\cF(v) \to [1_\cT,a])
\cong
\cT(\cF(v)\to a)
\cong
\cT\dslash\cF(\cF(v)\to a).
$$

It remains to show that $\cT\dslash \cF$ is closed.
First, we can describe the right oplax $\cV$-module structure of $\cT\dslash \cF^\cV$ in more detail.

\begin{remark}
\label{rem:OplaxVModuleStructureOfT}
Since the hom objects of $ \cT\dslash\cF^\cV$ are defined using Adjunction \eqref{eq:V-T Adjunction Closed}, under the monoidal equivalence $\cT \cong \cT\dslash\cF^\cV$, we may identify the oplax right $\cV$-module structure by
$a\vartriangleleft v = a\cF(v)$ and $\alpha_{a,u,v} = \id_a \nu_{u,v}$ as in Remark \ref{rem:OtherMateOfMuForVMonoidal}.
\end{remark}

Recall that by Example \ref{ex:aOtimes- is V-monoidal}, for each $a\in \cT\dslash \cF$, we have a $\cV$-monoidal functor $a\otimes-:\cT\dslash \cF \to \cT\dslash \cF$ whose underlying functor is $a\otimes-: \cT \to \cT$.

\begin{defn}
We denote by $\varepsilon^\cT$ the counit of the adjunction $\cT(ab\to c)\cong \cT(b \to [a,c])$, which should not be confused with $\varepsilon^{\cT\dslash \cF}$!
We define $[a,-]_{b\to c}$ to be the mate of $(\varepsilon^\cT_{a\to b} \id_{\langle b\to c\rangle})\circ \varepsilon^{\cT\dslash\cF}_{b\to c}$ under the adjunction
\begin{equation}
\label{eqn: T closed adjunction}
\begin{split}
\cV(\cT\dslash \cF(b\to c) \to \cT\dslash\cF([a,b]\to [a,c]))
&=
\cV(\cT\dslash \cF(b\to c) \to \cR([[a,b], [a,c]]))
\\&\cong
\cT([a,b]\langle b\to c\rangle \to [a,c])
\\&\cong
\cT(a[a,b]\langle b\to c\rangle \to c).
\end{split}
\end{equation}

To verify $[a,-]$ is a $\cV$-functor, we calculate that the mates of $([a,-]_{b\to c}[a,-]_{c\to d})\circ (-\circ_{\cT\dslash \cF}-)$ and
$(-\circ_{\cT\dslash \cF}-)\circ [a,-]_{b\to d}$
agree under Adjunction \eqref{eqn: T closed adjunction}.
The mate of the former is given by
\begin{align*}
(\id_a \id_{[a,b]}(\cF([a,-]_{b\to c}[a,-]_{c\to d})
&\circ
\nu_{\cT\dslash\cF([a,b]\to[a,c]),\cT\dslash\cF([a,c]\to [a,d])}))
\\&
\circ
(\id_a \varepsilon^{\cT\dslash \cF}_{[a,b]\to [a,c]} \id_{\langle[a,c]\to [a,d]\rangle})
\circ
(\id_a \varepsilon^{\cT\dslash \cF}_{[a,c]\to [a,d]})
\circ
\varepsilon^\cT_{a\to d}
\\&=
(\id_a \id_{[a,b]} \nu_{\cT\dslash\cF(b\to c), \cT\dslash\cF(c\to d)})
\circ
(\id_a \id_{[a,b]} \cF([a,-]_{b\to c})\cF([a,-]_{c\to d}))
\\&\hspace{3cm}
\circ
(\id_a \varepsilon^{\cT\dslash \cF}_{[a,b]\to [a,c]} \id_{\langle [a,c]\to [a,d]\rangle})
\circ
(\id_a \varepsilon^{\cT\dslash \cF}_{[a,c]\to [a,d]})
\circ
\varepsilon^\cT_{a\to d}
\\&=
(\id_a \id_{[a,b]} \nu_{\cT\dslash\cF(b\to c), \cT\dslash\cF(c\to d)})
\circ
(\id_a \id_{[a,b]} \cF([a,-]_{b\to c})\id_{\langle c\to d\rangle})
\\&\hspace{3cm}
\circ
(\id_a \varepsilon^{\cT\dslash \cF}_{[a,b]\to [a,c]} \id_{\langle c\to d\rangle})
\circ
(\varepsilon^{\cT}_{a\to c}\id_{\langle c\to d\rangle})
\circ
\varepsilon^{\cT\dslash \cF}_{c\to d}
\\&=
(\varepsilon^{\cT}_{a\to b} \nu_{\cT\dslash\cF(b\to c), \cT\dslash\cF(c\to d)})
\circ
(\varepsilon^{\cT\dslash \cF}_{b\to c}\id_{\langle c\to d\rangle})
\circ
\varepsilon^{\cT\dslash \cF}_{c\to d}
\\&=
(\varepsilon^{\cT}_{a\to b} \cF(-\circ_{\cT\dslash \cF}-))
\circ
\varepsilon^{\cT\dslash \cF}_{b\to d},
\end{align*}
which is exactly the mate of the latter.
\end{defn}

\begin{lem}
\label{lem:UnderlyingAdjunctionOfTensorInT}
The underlying functor $(a\otimes-)^\cV$ is left adjoint to $[a,-]^\cV$.
\end{lem}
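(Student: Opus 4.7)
The plan is to leverage the monoidal equivalence $\cT\simeq\cT\dslash\cF^\cV$ (established in \cite[\S 6.1--6.4]{1701.00567}, as cited here). Under this equivalence, the underlying functor $(a\otimes -)^\cV:\cT\dslash\cF^\cV\to\cT\dslash\cF^\cV$ corresponds to $a\otimes-:\cT\to\cT$, which, since $\cT$ is closed, admits a right adjoint, namely the internal-hom functor $[a,-]:\cT\to\cT$ with counit $\varepsilon^\cT_{a\to c}\in\cT(a[a,c]\to c)$. On objects our functor $[a,-]^\cV$ agrees with this internal hom by definition, so by uniqueness of right adjoints it suffices to show that the morphism action of $[a,-]^\cV$ matches, under the equivalence, that of the internal-hom functor in $\cT$. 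Equivalently, for every $g\in\cT\dslash\cF^\cV(b\to c)$ with image $\bar g\in\cT(b\to c)$, I want to verify the counit-naturality equation
\[
(\id_a\otimes [a,-]^\cV(g))\circ\varepsilon^\cT_{a\to c} = \varepsilon^\cT_{a\to b}\circ\bar g,
\]
which is exactly the relation characterizing $[a,\bar g]$ as the mate of $\varepsilon^\cT_{a\to b}\circ\bar g$ under $a\otimes-\dashv[a,-]$ in $\cT$.

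To check this, I would unpack $[a,-]^\cV(g) = g\circ [a,-]_{b\to c}$ as an element of $\cV(1_\cV\to\cR([[a,b],[a,c]]))$, then transport it to a morphism in $\cT([a,b]\to[a,c])$ by taking the mate under $\cF\dashv\cR$ and applying the counit $\varepsilon^\cT_{[a,b]\to[a,c]}$. Since $[a,-]_{b\to c}$ was defined precisely as the mate of $(\varepsilon^\cT_{a\to b}\id_{\langle b\to c\rangle})\circ\varepsilon^{\cT\dslash\cF}_{b\to c}$ under the adjunction displayed just before the lemma, a routine mate chase using \eqref{eq:MateWithL}, \eqref{eq:MateWithR}, and the fact that $\bar g$ is obtained from $g$ by applying $\cF$ and composing with $\varepsilon^{\cT\dslash\cF}_{b\to c}$ (so $\bar g = (\id_b\cF(g))\circ\varepsilon^{\cT\dslash\cF}_{b\to c}$), yields the displayed equation. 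Once this morphism-level compatibility is in hand, the desired adjunction is witnessed by the chain of natural bijections
\[
\cT\dslash\cF^\cV(ab\to c)\cong\cT(ab\to c)\cong\cT(b\to[a,c])\cong\cT\dslash\cF^\cV(b\to[a,c]),
\]
which is natural in $c$ since the middle bijection is, and natural in $b$ since it is the composite of monoidal-equivalence isomorphisms.

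The main obstacle is organizing the bookkeeping between the two adjunctions in play: the external adjunction $\cF\dashv\cR$ used to pass between $\cV$ and $\cT$, and the internal closure adjunction $a\otimes-\dashv[a,-]$ living inside $\cT$. The definition of $[a,-]_{b\to c}$ was engineered precisely to interleave these, so the verification reduces to repeated use of mate naturality. No new ingredient is needed beyond the closedness of $\cT$ and the structure of $\cT\dslash\cF$ already built; in particular, this lemma is the ordinary-adjunction statement, and promoting it to a genuine $\cV$-adjunction is a separate step (to be handled via Theorem \ref{thm:LiftToVAdjunction}) and is not required here.
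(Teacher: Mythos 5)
Your proposal is correct and follows essentially the same route as the paper: transfer along the equivalence $\cT \cong \cT\dslash\cF^\cV$, under which $(a\otimes-)^\cV$ becomes $a\otimes -$ and $[a,-]^\cV$ becomes the internal hom, and then invoke closedness of $\cT$. The only difference is that you explicitly plan the mate chase identifying the morphism action of $[a,-]^\cV$ with $[a,\bar g]$ (the step needed for naturality in $c$), which the paper compresses into ``and similarly for $[a,-]^\cV$.''
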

\begin{proof}
Under the equivalence $\cT \cong \cT\dslash \cF^\cV$, the underlying functor $(a\otimes-)^\cV$ is just $a\otimes - : \cT\to \cT$, and similarly for $[a,-]^\cV$.
Since $\cT$ is closed, we are finished.
\end{proof}

\begin{prop}
The $\cV$-functor $a\otimes-$ is left $\cV$-adjoint to $[a,-]$.
\end{prop}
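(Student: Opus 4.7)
The plan is to apply Theorem \ref{thm:LiftToVAdjunction} to promote the underlying adjunction $(a\otimes -)^\cV \dashv [a,-]^\cV$ from Lemma \ref{lem:UnderlyingAdjunctionOfTensorInT} to a full $\cV$-adjunction. That theorem has two prerequisites: both $\cV$-categories involved must be oplax tensored, and the left $\cV$-functor must be tensored.

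For the oplax tensored hypothesis, I would observe that the argument inside the proof of Lemma \ref{lem:ClosedAdjoints} applies verbatim: since $\cT\dslash\cF^\cV \cong \cT$ is closed and $\cR_{1_{\cT\dslash\cF}}$ admits a left adjoint $\cF$ (as noted in the paragraph preceding the proposition), Lemma \ref{lem:UnderlyingAdjunctionOfTensorInT} supplies a left adjoint $\cL_a:\cV\to \cT\dslash\cF^\cV$ by $v\mapsto a\cF(v)$ for each $a\in\cT\dslash\cF$. Hence $\cT\dslash\cF$ is oplax tensored, with $b\vartriangleleft v = b\cF(v)$ as recorded in Remark \ref{rem:OplaxVModuleStructureOfT}.

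To see that the $\cV$-functor $a\otimes -$ is tensored, I first note that $(a\otimes b)\vartriangleleft v = ab\cF(v) = a\otimes(b\vartriangleleft v)$ on objects, so the source and target of $\mu^{a\otimes -}_{b,v}$ both equal $ab\cF(v)$; it suffices to show this laxitor is the identity. Unpacking Lemma \ref{lem:UnderlyingModuleFunctor} and substituting $(a\otimes -)_{b\to c}=(j_a\,\id)\circ(-\otimes_{\cT\dslash\cF}-)$ from Example \ref{ex:aOtimes- is V-monoidal}, by naturality the morphism $\mu^{a\otimes -}_{b,v}$ is the mate of $(j_a\,\eta^{\cT\dslash\cF}_{b,v})\circ(-\otimes_{\cT\dslash\cF}-)$. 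Then applying Lemma \ref{lem:MateOfIdOfTensorClosed} in the form $\eta^{\cT\dslash\cF}_{c,v}=(j_c\,\eta_v)\circ(-\otimes_{\cT\dslash\cF}-)$, where $\eta_v=\eta_{1_{\cT\dslash\cF},v}$ is the unit of the underlying $\cF\dashv\cR_{1_{\cT\dslash\cF}}$, a short manipulation using only the associativity of $-\otimes_{\cT\dslash\cF}-$ and the unitality relation $(j_aj_b)\circ(-\otimes_{\cT\dslash\cF}-)=j_{ab}$ reduces this to $(j_{ab}\,\eta_v)\circ(-\otimes_{\cT\dslash\cF}-)=\eta^{\cT\dslash\cF}_{ab,v}$. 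Since the mate of $\eta^{\cT\dslash\cF}_{ab,v}$ is $\id_{ab\vartriangleleft v}$ by definition, we conclude $\mu^{a\otimes -}_{b,v}=\id_{ab\cF(v)}$.

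The main obstacle is the bookkeeping of mates through several nested adjunctions in the identification $\mu^{a\otimes -}_{b,v}=\id$, but this is entirely routine once Lemma \ref{lem:MateOfIdOfTensorClosed} is invoked; no new conceptual input is required. Conceptually, $a\otimes -$ is in fact a strong $\cV$-module functor with identity structure morphism, simply because the right $\cV$-module action is itself implemented by the monoidal product of $\cT\dslash\cF$ and the tensor product is associative. With both hypotheses of Theorem \ref{thm:LiftToVAdjunction} verified, the desired $\cV$-adjunction $a\otimes - \dashv_\cV [a,-]$ follows.
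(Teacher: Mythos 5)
Your overall strategy coincides with the paper's: combine Lemma \ref{lem:UnderlyingAdjunctionOfTensorInT} with Theorem \ref{thm:LiftToVAdjunction} and reduce everything to showing that the laxitor $\mu^{a\otimes-}_{b,v}\in\cT(ab\cF(v)\to ab\cF(v))$ is the identity, and your opening reductions (the identification of source and target, and the rewriting of $\mu^{a\otimes-}_{b,v}$ as the mate of $(j_a\,\eta^{\cT\dslash\cF}_{b,v})\circ(-\otimes_{\cT\dslash\cF}-)$) are correct. The gap is the step where you invoke Lemma \ref{lem:MateOfIdOfTensorClosed} ``in the form $\eta^{\cT\dslash\cF}_{c,v}=(j_c\,\eta_v)\circ(-\otimes_{\cT\dslash\cF}-)$''. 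That lemma belongs to the \emph{other} direction of Theorem \ref{thm:ClosedOplaxVMonoidal}: its hypotheses are a closed $\cV$-monoidal category $\cC$, its adjunction is \eqref{eq:ClosedR_aAdjunction}, which is assembled from the $\cV$-adjunctions $a\otimes-\dashv_\cV[a,-]$, and its proof uses the naturality condition \eqref{eq:V-Adjoint1} for the corresponding $\theta$'s. For $\cT\dslash\cF$, the existence of those $\cV$-adjunctions is precisely the statement of the present proposition, so appealing to Lemma \ref{lem:MateOfIdOfTensorClosed} here is circular. Moreover, the unit $\eta^{\cT\dslash\cF}_{c,v}$ you manipulate is the unit of the adjunction \eqref{eq:V-T Adjunction Closed}, built from the closedness of the ordinary category $\cT$ and the ordinary adjunction $\cF\dashv\cR$, not of \eqref{eq:ClosedR_aAdjunction}, so even granting closedness the lemma does not transfer verbatim.

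What is actually required is a direct verification, from the construction of $\cT\dslash\cF$, that the mate of $(j_a\,\eta^{\cT\dslash\cF}_{b,v})\circ(-\otimes_{\cT\dslash\cF}-)$ under \eqref{eq:V-T Adjunction Closed} equals $\id_{ab\cF(v)}$, equivalently $(j_a\,\eta^{\cT\dslash\cF}_{b,v})\circ(-\otimes_{\cT\dslash\cF}-)=\eta^{\cT\dslash\cF}_{ab,v}$; your invoked identity is just the special case $b=1_\cT$. This is the real content of the proof: one must unpack $-\otimes_{\cT\dslash\cF}-$ as the mate of $(\id\,\nu_{\cdot,\cdot})\circ(\id\,e\,\id)\circ(\varepsilon^{\cT\dslash\cF}\varepsilon^{\cT\dslash\cF})$ and use the naturality and strong unitality of the oplaxitor $\nu$ together with the half-braiding $e$, which is exactly the computation the paper performs and which your proposal dismisses as routine bookkeeping contingent on the (unavailable) lemma. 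The remaining ingredients of your outline are fine, though note that oplax-tensoredness of $\cT\dslash\cF$ holds by the defining adjunction \eqref{eq:V-T Adjunction Closed} (the argument of Lemma \ref{lem:ClosedAdjoints}), not by Lemma \ref{lem:UnderlyingAdjunctionOfTensorInT}, which only supplies the underlying adjunction $(a\otimes-)^\cV\dashv[a,-]^\cV$ needed for Theorem \ref{thm:LiftToVAdjunction}.
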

\begin{proof}
By Lemma \ref{lem:UnderlyingAdjunctionOfTensorInT} and Theorem \ref{thm:LiftToVAdjunction}, it suffices to prove that $a\otimes -: \cT\dslash\cF \to \cT\dslash\cF$ is tensored.
For $b\in\cT\dslash\cF$ and $v\in \cV$, $\mu^{a\otimes -}_{b,v} \in \cT\dslash\cF^\cV(ab\vartriangleleft v \to (a\vartriangleleft v)b) = \cT(ab \cF(v) \to a\cF(v)b)$ is given by the mate of $\eta^{\cT\dslash\cF}_{b,v}\circ (a\otimes -)_{b \to b\vartriangleleft v}$ under the adjunction
$$
 \cT\dslash\cF^\cV(ab\vartriangleleft v \to a(b\vartriangleleft v))
 \cong
 \cV(v \to \cT\dslash\cF(ab \to a(b\vartriangleleft v))).
$$
By Remark \ref{rem:OplaxVModuleStructureOfT}, identifying $\cT\dslash\cF^\cV = \cT$ identifies $a\vartriangleleft u = a\cF(u)$ and 
$\cT\dslash\cF^\cV(ab\vartriangleleft v \to a(b\vartriangleleft v)) = \cT(ab\cF(v) \to ab\cF(v))$.
We claim that under this identification, $\mu^{a\otimes -}_{b,v} = \id_{ab\cF(v)}$, which is obviously invertible.
Indeed, by Example \ref{ex:aOtimes- is V-monoidal}, 
$$
\eta^{\cT\dslash\cF}_{b,v}\circ (a\otimes -)_{b \to b\vartriangleleft v} 
= 
\eta_{b,v}^{\cT\dslash\cF} \circ (j_a \id_{\cT\dslash\cF(b\to b\cF(v))}) \circ (-\otimes_{\cT\dslash \cF}-), 
$$
whose mate under the above adjunction is given by
\begin{align*}
(\id_{ab}\cF(\eta_{b,v}^{\cT\dslash\cF}& \circ (j_a \id_{\cT\dslash\cF(b\to b\cF(v))})))
\circ
(\id_{ab}\nu_{\cT\dslash\cF(a\to a),\cT\dslash\cF(b\to b\cF(v))})
\circ
(\id_a e_{b,\langle a\to a\rangle} \id_{\langle b\to b\cF(v)\rangle})
\circ
(\varepsilon^{\cT\dslash\cF}_{a\to a}\varepsilon^{\cT\dslash\cF}_{b\to b\cF(v)})
\\&=
(\id_{ab}[\cF(\eta_{b,v}^{\cT\dslash\cF}) \circ (j_{\cF(a)} \id_{\cT\dslash\cF(b\to b\cF(v))})])
\circ
(\id_a e_{b,\langle a\to a\rangle} \id_{\langle b\to b\cF(v)\rangle})
\circ
(\varepsilon^{\cT\dslash\cF}_{a\to a}\varepsilon^{\cT\dslash\cF}_{b\to b\cF(v)})
\\&=
[(\id_{a}j_{\cF(a)})\circ \varepsilon^{\cT\dslash\cF}_{a\to a}]
[(\id_{b}\cF(\eta_{b,v}^{\cT\dslash\cF}))\circ \varepsilon^{\cT\dslash\cF}_{b\to b\cF(v)}]
\\&=
\id_{a}\id_{b\cF(v)}.
\end{align*}
This completes the proof.
\end{proof}

Finally, \cite[\S7]{1701.00567} remains unchanged.
This concludes the proof of Theorem \ref{thm:ClosedOplaxVMonoidal}.
\qed

\section{Tensored \texorpdfstring{$\cV$}{V}-monoidal categories}

In this section, we assume that $\cV$ is a braided closed monoidal category so that we may form the $\cV$-monoidal category $\widehat{\cV}$ as in Example \ref{ex:SelfEnrichedVMonoidal}.

\subsection{Tensored \texorpdfstring{$\cV$}{V}-monoidal categories}

\begin{defn}
Similar to an ordinary $\cV$-category, we call a $\cV$-monoidal category \emph{tensored} if the $\cV$-representable functors $\cR^a=\cC(a\to -): \cC \to \widehat{\cV}$ admit left $\cV$-adjoints.
\end{defn}

Suppose $\cC$ is closed and $\cR^{1_\cC}=\cC(1_\cC\to -): \cC \to \widehat{\cV}$ admits a left $\cV$-adjoint $\cF$.
By Remark \ref{rem:UnderlyingAdjunction}, the underlying functor $\cF^\cV : \cV \to \cC^\cV$ is left adjoint to the underlying functor $\cR_{1_\cC}$.
As in the proof of Theorem \ref{thm:ClosedOplaxVMonoidal}, every representable functor $\cR_a=\cC(a\to -)^\cV : \cC^\cV \to \cV$ admits a left adjoint $\cL_a$.
This allows us to canonically equip $\cF^\cV$ with the structure of a strongly unital braided oplax monoidal functor $((\cF^\cV)^{\scriptscriptstyle Z},\nu): \cV \to Z(\cC^\cV)$.
In fact, we will see that $((\cF^\cV)^{\scriptscriptstyle Z},\nu)$ is strong monoidal.

\begin{lem}
\label{lem:VMonoidalAdjoint}
Suppose $\cC$ is closed and $\cR^{1_\cC}=\cC(1_\cC\to -): \cC \to \widehat{\cV}$ admits a left $\cV$-adjoint $\cF:\widehat{\cV} \to \cC$.
\begin{enumerate}
\item
$\cF$ is tensored with $(\mu^\cF_{u,v})^{-1} = \nu_{u,v}$ for all $u,v\in \widehat{\cV}$.
\item
$((\cF^\cV)^{\scriptscriptstyle Z},\nu): \cV \to Z(\cC^\cV)$ is strong monoidal.
\item
The morphisms $\nu_{u,v}\in \cC^\cV(\cF(uv) \to \cF(u)\cF(v))$ endow $\cF$ with the structure of a $\cV$-monoidal functor.
\item
$\cC$ is tensored.
\end{enumerate}
\end{lem}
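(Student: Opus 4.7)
The plan is to use the machinery already developed, especially Proposition \ref{prop:AlphaIsInvertible} and Remark \ref{rem:OtherMateOfMuForVMonoidal}, to reduce (1), (2), and (4) to formal consequences, leaving (3) as the only item requiring real work.

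For part (1), I would first invoke Remark \ref{rem:OtherMateOfMuForVMonoidal}: with the right $\cV$-module structure on $\cC^\cV$ given by $a\vartriangleleft v := a\cF(v)$, the oplaxitor satisfies $\alpha_{a,u,v} = \id_a\, \nu_{u,v}$, and in particular $\alpha_{1_\cC, u, v} = \nu_{u,v}$. On the other hand, $\cR^{1_\cC}$ has left $\cV$-adjoint $\cF$, so $\cL^{1_\cC} = \cF$, and Proposition \ref{prop:AlphaIsInvertible} then identifies $\alpha_{1_\cC, u, v} = (\mu^{\cF}_{u,v})^{-1}$. Combining these gives $(\mu^\cF_{u,v})^{-1} = \nu_{u,v}$; in particular $\mu^\cF_{u,v}$ is an isomorphism, so $\cF$ is tensored. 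Part (2) is then immediate, since $\nu_{u,v}$ is invertible and $(\cF^\cV)^{\scriptscriptstyle Z}$ is already braided oplax monoidal by Theorem \ref{thm:ClosedOplaxVMonoidal}. For part (4), with $\alpha_{a,u,v} = \id_a\,\nu_{u,v}$ now invertible for all $a,u,v$, the strongly unital oplax $\cV$-module $\cC^\cV$ from Theorem \ref{thm:OplaxVMod} is strong, and $\cL_a = a\otimes -\circ \cF^\cV$ has right adjoint $\cR_a$ by Lemma \ref{lem:ClosedAdjoints}, so Corollary \ref{cor:AlphaInvertibleImpliesCTensored} yields that $\cC$ is tensored.

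Part (3) is the main obstacle: we must promote $\nu$ to a $\cV$-monoidal structure, i.e.\ verify the $\cV$-naturality diagram \eqref{eq:NaturalityForVMonoidal}. Unitality of $\nu$ (i.e.\ $\nu_{u,1_\cV} = \id_{\cF(u)} = \nu_{1_\cV,u}$) and associativity of $\nu$ hold by Theorem \ref{thm:ClosedOplaxVMonoidal}/Remark \ref{remark:ClassifyingFunctorExtraStrictUnitalityAxiom}, so only \eqref{eq:NaturalityForVMonoidal} requires proof. My plan is to take mates of both composites in \eqref{eq:NaturalityForVMonoidal} under the adjunction
\[
\cV\bigl(\widehat{\cV}(a\to c)\widehat{\cV}(b\to d) \to \cC(\cF(ab)\to \cF(c)\cF(d))\bigr)
\cong
\cV\bigl(ab\, \widehat{\cV}(a\to c)\widehat{\cV}(b\to d) \to \cC(\cF(ab)\to \cF(c)\cF(d))\bigr)
\]
(combining adjunction \eqref{eq:InternalHom} with the $\cV$-adjunction $\cF\dashv_\cV \cR^{1_\cC}$, using $\cR_{1_\cC}=\cC(1_\cC \to -)^\cV$). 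Under this mate, each side should reduce to a composite built from the evaluations $\varepsilon^{\widehat{\cV}}$, the tensor products $-\otimes_\cC -$, the unit $\eta^\cC$, and the identity $(\mu^\cF)^{-1} = \nu$, and these should be equated using the braided interchange relation \eqref{eq:BraidedInterchance} in $\cC$ together with naturality of $\eta^\cC$ and of $\varepsilon^{\widehat{\cV}}$. The identity $(\eta^\cC_{1_\cC, u} \eta^\cC_{1_\cC, v})\circ (-\otimes_\cC -)$ being the mate of $\nu_{u,v}$ (which is the content of the construction of $\nu$ in Theorem \ref{thm:ClosedOplaxVMonoidal}) together with Lemma \ref{lem:NaturalityForRepresentableFunctor} should make the two mates visibly match.

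The hard part I expect is book-keeping the mate calculation for \eqref{eq:NaturalityForVMonoidal}, in particular getting the $-\otimes_\cC-$ to commute past the evaluations $\varepsilon^{\widehat{\cV}}$ via the braided interchange. Everything else is either a direct application of a theorem earlier in the paper, or a short chase.
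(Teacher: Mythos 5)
Your handling of (1), (2) and (4) is essentially the paper's own argument: $\nu_{u,v}=\alpha_{1_\cC,u,v}$ via Remark \ref{rem:OtherMateOfMuForVMonoidal}, mutual inversion with $\mu^\cF_{u,v}$ via Proposition \ref{prop:AlphaIsInvertible}, and then Proposition \ref{prop:LaTensored}/Corollary \ref{cor:AlphaInvertibleImpliesCTensored} for (4). One caution: Proposition \ref{prop:AlphaIsInvertible} sits under the standing hypothesis that $\cC$ is tensored, which is precisely what (4) is supposed to establish; you should instead run its \emph{proof} at the single object $a=1_\cC$, which only uses the given $\cV$-adjunction $\cF\dashv_\cV\cR^{1_\cC}$ (this is what the paper does). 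That is a citation-hygiene point, not a mathematical one.

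The genuine gap is (3). You correctly reduce it to the naturality square \eqref{eq:NaturalityForVMonoidal}, but that verification is the substance of the lemma and your proposal leaves it at the level of ``should reduce'' and ``should visibly match''. Two concrete problems. First, the displayed ``adjunction'' is not an adjunction as written: both sides have codomain $\cC(\cF(ab)\to\cF(c)\cF(d))$ while the source acquires the extra tensor factor $ab$. Combining \eqref{eq:InternalHom} with $\cF\dashv_\cV\cR^{1_\cC}$ actually yields $\cV(W\to\cC(\cF(ab)\to\cF(c)\cF(d)))\cong\cV(ab\,W\to\cC(1_\cC\to\cF(c)\cF(d)))$, or, as the paper does, one should mate into $\cC^\cV(\cF(uv)\vartriangleleft(\widehat{\cV}(u\to w)\widehat{\cV}(v\to x))\to\cF(w)\cF(x))$. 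Second, and more importantly, making the two mates meet requires two specific inputs your sketch does not isolate: (i) the identity $(\id_{\cF(u)}\cF(\cF_{u\to w}))\circ\varepsilon^\cC_{\cF(u)\to\cF(w)}=\nu^{-1}_{u,\widehat{\cV}(u\to w)}\circ\cF(\varepsilon^{\widehat{\cV}}_{u\to w})$ (the paper's \eqref{eq:MateOf F u to w}), which is exactly where part (1), i.e.\ $\mu^\cF=\nu^{-1}$, feeds into the computation; and (ii) the expression of the mate of $-\otimes_\cC-$ under these adjunctions in terms of the half-braiding $e_{\cF(v),\cF(\widehat{\cV}(u\to w))}$ supplied by Theorem \ref{thm:ClosedOplaxVMonoidal}. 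The braided interchange relation in $\cC$ by itself will not produce the braiding $\beta_{v,\widehat{\cV}(u\to w)}$ that occurs in $-\otimes_{\widehat{\cV}}-$ on the other side of \eqref{eq:NaturalityForVMonoidal}; it is the half-braiding of $\cF(\widehat{\cV}(u\to w))$ that carries that hom-object across $\cF(v)$. Until both composites are actually reduced, using (i) and (ii), to a common expression of the shape $\nu^{-1}_{uv,\widehat{\cV}(u\to w)\widehat{\cV}(v\to x)}\circ\cF(\cdots)\circ\nu_{w,x}$, item (3) remains a plan rather than a proof.
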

\begin{proof}
\mbox{}
\item[(1)]
First, $\cF$ is tensored by Theorem \ref{thm:LiftToVAdjunction}.
As explained in Remark \ref{rem:OtherMateOfMuForVMonoidal}, notice that $\nu_{u,v} = \alpha_{1_\cC, u,v}$ where the right $\cV$-module structure of $\cC$ is given by $a\vartriangleleft u := a\cF(u)$.
Now when $\cF$ is a left $\cV$-adjoint of $\cC(1_\cC \to -)$, we have $\alpha_{1_\cC, u,v}$ is invertible with inverse equal to $\mu^{\cL^{1_\cC}}_{u,v} = \mu^\cF_{u,v}$ by setting $a=1_\cC$ in the proof of Proposition \ref{prop:AlphaIsInvertible}.

\item[(2)]
By (1), $\nu_{u,v}\in \cV(1_\cV \to \cC(\cF(uv)\to \cF(u)\cF(v)))$ is invertible for all $u,v\in \cV$, and the result follows immediately.

\item[(3)]
Since $(\cF^\cV,\nu)$ is strong monoidal by (1), we see that the $\nu_{u,v}$ automatically satisfy the unitality and associativity axioms for $(\cF, \nu)$ to be a $\cV$-monoidal functor, since these are merely properties of the underlying functor $\cF^\cV$.
It remains to prove the naturality condition \eqref{eq:NaturalityForVMonoidal}.
Under the adjunction
$$
\cV(\widehat{\cV}(u\to w)\widehat{\cV}(v\to x)\to \widehat{\cV}(\cF(uv) \to \cF(w)\cF(x)))
\cong
\cC^\cV( \cF(uv)\widehat{\cV}(u\to w)\widehat{\cV}(v\to x) \to \cF(w)\cF(x)),
$$
the mate of
$(\cF_{u\to w}\cF_{v\to x})\circ (-\otimes_{\cC}-)\circ (\nu_{u,v}\circ_{\cC}-)$
is given by
\begin{align*}
(\id_{\cF(uv)}&[\cF(
(\cF_{u\to w}\cF_{v\to x})\circ (-\otimes_{\cC}-)\circ (\nu_{u,v}\circ_{\cC}-)
)]) 
\\&\hspace{2.8cm}\circ
(\varepsilon^\cC_{\cF(uv) \to \cF(u)\cF(v)} \id_{\cC(\cF(u)\cF(v) \to \cF(w)\cF(x))})
\circ
\varepsilon^\cC_{\cF(u)\cF(v) \to \cF(w)\cF(x)}
\\&=
(\nu_{u,v} \cF(\cF_{u\to w}\cF_{v\to x}))
\circ
(\id_{\cF(u)\cF(v)} \nu_{\widehat{\cV}(u\to w),\widehat{\cV}(v\to x)})
\circ
(\id_{\cF(u)} e_{\cF(v),\langle u\to w\rangle}\id_{\langle v\to x\rangle})
\circ
(\varepsilon^\cC_{\cF(u) \to \cF(w)}\varepsilon^\cC_{\cF(v)\to \cF(x)})
\\&=
(\nu_{u,v} \nu_{\widehat{\cV}(u\to w),\widehat{\cV}(v\to x)})
\circ
(\id_{\cF(u)} e_{\cF(v),\langle u\to w\rangle}\id_{\langle v\to x\rangle})
\\&\hspace{3.2cm}
\circ
(
[(\id_{\cF(u)}\cF(\cF_{u\to w})  )\circ \varepsilon^\cC_{\cF(u) \to \cF(w)}]
[(\id_{\cF(v)}\cF(\cF_{v\to x}))\circ \varepsilon^\cC_{\cF(v)\to \cF(x)}]
)
\\&=
(\nu_{u,v} \nu_{\widehat{\cV}(u\to w),\widehat{\cV}(v\to x)})
\circ
(\id_{\cF(u)} e_{\cF(v),\langle u\to w\rangle}\id_{\langle v\to x\rangle})
\\&\hspace{3.2cm}
\circ
(
[\nu^{-1}_{\cF(u), \widehat{\cV}(u\to w)}\circ \cF(\varepsilon^{\widehat{\cV}}_{u\to w})]
[\nu^{-1}_{\cF(v), \widehat{\cV}(v\to x)}\circ \cF(\varepsilon^{\widehat{\cV}}_{v\to x})]
)
\\&=
\nu^{-1}_{uv, \widehat{\cV}(u\to w)\widehat{\cV}(v\to x)}
\circ
\cF(
(\id_{u} \beta_{v, \widehat{\cV}(u\to w)} \id_{\widehat{\cV}(v\to x)})
\circ
(\varepsilon^{\widehat{\cV}}_{u\to w} \varepsilon^{\widehat{\cV}}_{v\to x})
)
\circ
\nu_{v,x},
\end{align*}
which is also equal to the mate of $(-\otimes_{\widehat{\cV}}-) \circ \cF_{uv\to wx} \circ (-\circ_\cC \nu_{w,x})$ through a similar calculation.
Here, we have used that
\begin{equation}
\label{eq:MateOf F u to w}
(\id_{\cF(u)}\cF(\cF_{u\to w})  )\circ \varepsilon^\cC_{\cF(u) \to \cF(w)} 
= 
\nu^{-1}_{u, \widehat{\cV}(u\to w)} \circ \cF(\varepsilon^{\widehat{\cV}}_{u \to w})
\end{equation}
and the analogous statement replacing $u$ and $w$ with $v$ and $x$ respectively.
Indeed, since $\nu_{u,\widehat{\cV}(u\to w)}^{-1} = \mu^\cF_{u,\widehat{\cV}(u\to w)}$ by Proposition \ref{prop:AlphaIsInvertible} as in (1), both maps in \eqref{eq:MateOf F u to w} are the mate of $\cF_{u\to w}$ under the adjunction
$$
\cC^\cV(\cF(u)\cF(\widehat{\cV}(u\to w)) \to \cF(w))
\cong
\cV(\widehat{\cV}(u\to w) \to \cC(\cF(u) \to \cF(w))).
$$

\item[(4)]
Recall from Remark \ref{rem:OtherMateOfMuForVMonoidal} that the right oplax $\cV$-module structure of $\cC$ is given by $a\vartriangleleft u := a\cF(u)$, and $\alpha_{a,u,v} = \id_a \nu_{u,v}$ for all $a\in \cC$ and $u,v\in \cV$.
Since $\nu_{u,v}$ is invertible by (1), we see $\alpha_{a,u,v}$ is invertible for every $a\in \cC$ and $u,v\in \cV$.
Hence $\cC$ is tensored by Proposition \ref{prop:LaTensored} and Corollary \ref{cor:AlphaInvertibleImpliesCTensored}.
\end{proof}

The following corollary is now immediate.

\begin{cor}
A closed $\cV$-monoidal category is tensored if and only if $\cR^{1_\cC}$ admits a left $\cV$-adjoint.
\end{cor}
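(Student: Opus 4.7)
The forward direction is immediate: if every $\cV$-representable functor $\cR^a$ admits a left $\cV$-adjoint, then so does $\cR^{1_\cC}$ in particular. So the content is entirely in the converse, and this is what I would argue.

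Assume $\cC$ is closed and that $\cR^{1_\cC}: \cC \to \widehat{\cV}$ admits a left $\cV$-adjoint $\cF: \widehat{\cV} \to \cC$. My plan is simply to invoke Lemma \ref{lem:VMonoidalAdjoint}(4), which proves precisely this implication. The reader who wants the skeleton of that argument can reconstruct it in three moves. First, by Theorem \ref{thm:LiftToVAdjunction}, the existence of a left $\cV$-adjoint $\cF$ to $\cR^{1_\cC}$ forces $\cF$ to be tensored, so the canonical maps $\mu^{\cF}_{u,v}$ are isomorphisms. Second, identify these isomorphisms with the oplaxitor $\nu_{u,v}$ of the classifying oplax monoidal functor from Theorem \ref{thm:ClosedOplaxVMonoidal} via the identity $(\mu^{\cF}_{u,v})^{-1} = \nu_{u,v}$ established in Lemma \ref{lem:VMonoidalAdjoint}(1), using that $\nu_{u,v} = \alpha_{1_\cC, u, v}$ (Remark \ref{rem:OtherMateOfMuForVMonoidal}).

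Third, recall from Remark \ref{rem:OtherMateOfMuForVMonoidal} that the oplax right $\cV$-module structure on $\cC^\cV$ arising from the closed $\cV$-monoidal structure is given by $a\vartriangleleft u := a\cF(u)$ with oplaxitor $\alpha_{a,u,v} = \id_a \nu_{u,v}$. Since $\nu_{u,v}$ is now known to be invertible for every pair $u,v\in \cV$, the morphism $\alpha_{a,u,v}$ is invertible for every $a\in \cC$, so the oplax $\cV$-module $\cC^\cV$ is actually a strong $\cV$-module. Applying Proposition \ref{prop:LaTensored} together with Corollary \ref{cor:AlphaInvertibleImpliesCTensored}, each underlying adjoint $\cL_a$ (whose existence follows from closure via Lemma \ref{lem:ClosedAdjoints}) promotes to a $\cV$-functor $\cL^a : \widehat{\cV} \to \cC$ that is a left $\cV$-adjoint of $\cR^a$. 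Hence every $\cV$-representable functor $\cR^a$ has a left $\cV$-adjoint, so $\cC$ is tensored.

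There is no main obstacle to this corollary beyond the lemma it rests on; the difficulty has already been absorbed into Lemma \ref{lem:VMonoidalAdjoint}. The only mildly subtle point in a self-contained presentation would be making sure that the hypothesis of closure is actually used—it enters twice, once through Lemma \ref{lem:ClosedAdjoints} to guarantee that every $\cR_a$ has an underlying left adjoint $\cL_a$ (built as $v \mapsto a\cF(v)$), and once through the shape of the classifying functor $\cF$ that makes the identification $a\vartriangleleft u = a\cF(u)$ meaningful.
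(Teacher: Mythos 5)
Your proposal is correct and follows the paper's own route: the paper states the corollary is immediate, with the converse being exactly Lemma \ref{lem:VMonoidalAdjoint}(4), whose proof proceeds just as you sketch (closure plus Lemma \ref{lem:ClosedAdjoints} for the underlying adjoints, invertibility of $\nu_{u,v}=\alpha_{1_\cC,u,v}$ via Theorem \ref{thm:LiftToVAdjunction} and Proposition \ref{prop:AlphaIsInvertible}, then Proposition \ref{prop:LaTensored} and Corollary \ref{cor:AlphaInvertibleImpliesCTensored}). No gaps; nothing further is needed.
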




%





\subsection{Classification of tensored closed \texorpdfstring{$\cV$}{V}-monoidal categories}
\label{sec:ClassificationOfTensoredClosedVMonoidalCategories}

We now prove the analog of Theorem \ref{thm:StrongVMod} for $\cV$-monoidal categories.
Recall that a monoidal functor $(\cF, \nu)$ is called \emph{strong} if $\nu$ is a natural isomorphism.

\begin{thm*}[Theorem \ref{thm:TensoredVMonoidalEquivalence}]
Let $\cV$ be a closed monoidal category.
Under Theorem \ref{thm:ClosedOplaxVMonoidal}, there is a bijective correspondence
\[
\left\{\,
\parbox{6.9cm}{\rm Tensored closed $\cV$-monoidal categories}
\,\right\}
\,\,\cong\,\,
\left\{\,\parbox{8.3cm}{\rm Pairs $(\cT,\cF^{\scriptscriptstyle Z})$ with $\cT$ a closed monoidal category and $\cF^{\scriptscriptstyle Z}: \cV\to Z(\cT)$ braided strong monoidal, such that $\cF:=\cF^{\scriptscriptstyle Z}\circ R$ admits a right adjoint}\,\right\}.
\]
We also get a bijective correspondence replacing closed with rigid on both sides above.
\end{thm*}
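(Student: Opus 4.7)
The plan is to show that the bijection of Theorem \ref{thm:ClosedOplaxVMonoidal} restricts to the one asserted, by adding ``tensored'' on the left and ``strong monoidal'' on the right. The preliminary observation is that every tensored closed $\cV$-monoidal category satisfies the ancillary hypothesis of Theorem \ref{thm:ClosedOplaxVMonoidal}: if $\cC$ is tensored, then $\cR^{1_\cC}$ admits a left $\cV$-adjoint $\cF: \widehat{\cV}\to\cC$, and passing to underlying functors via Remark \ref{rem:UnderlyingAdjunction} gives $\cF^\cV \dashv \cR_{1_\cC}$. Hence tensored closed $\cV$-monoidal categories form a well-defined subset of the left-hand side of Theorem \ref{thm:ClosedOplaxVMonoidal}.

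For the forward direction (tensored $\Rightarrow$ strong), given $\cC$ tensored and closed, Theorem \ref{thm:ClosedOplaxVMonoidal} produces the pair $(\cT=\cC^\cV, \cF^{\scriptscriptstyle Z})$ with oplaxitor $\nu$ constructed in that proof. I would invoke Lemma \ref{lem:VMonoidalAdjoint} parts (1) and (2), whose hypothesis is precisely that $\cR^{1_\cC}$ admits a left $\cV$-adjoint: part (2) states that $(\cF^{\scriptscriptstyle Z},\nu): \cV\to Z(\cT)$ is strong monoidal, with invertibility of $\nu_{u,v}$ given explicitly by $\nu_{u,v} = (\mu^{\cF}_{u,v})^{-1}$ (the tensoredness of $\cF$ being a consequence of $\cF\dashv_\cV \cR^{1_\cC}$ via Theorem \ref{thm:LiftToVAdjunction}). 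This places the image in the right-hand subset.

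For the backward direction (strong $\Rightarrow$ tensored), starting with $(\cT, \cF^{\scriptscriptstyle Z})$ where $\cF^{\scriptscriptstyle Z}$ is braided strong monoidal, Theorem \ref{thm:ClosedOplaxVMonoidal} produces $\cC = \cT\dslash\cF$. By Remark \ref{rem:OtherMateOfMuForVMonoidal}, the induced oplax right $\cV$-module structure on $\cC^\cV$ satisfies $a\vartriangleleft u = a\cF(u)$ and $\alpha_{a,u,v} = \id_a \nu_{u,v}$ for all $a\in\cC$ and $u,v\in\cV$. Strong monoidality makes each $\nu_{u,v}$ invertible, so each $\alpha_{a,u,v}$ is invertible, and thus $\cC^\cV$ is actually a strong right $\cV$-module. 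Corollary \ref{cor:AlphaInvertibleImpliesCTensored} then delivers that $\cC$ is tensored.

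Finally, for the rigid version, I would rerun the same argument with Theorem \ref{thm:OplaxVMonoidal} in place of Theorem \ref{thm:ClosedOplaxVMonoidal}. Every rigid monoidal category is closed, so each lemma invoked above applies without change, and the fact that rigidity of $\cT$ corresponds to rigidity of $\cT\dslash\cF$ is already established in \cite{1701.00567}. The main obstacle throughout is not any single hard computation but rather the careful bookkeeping needed to confirm that the oplaxitor $\nu$ appearing in Lemma \ref{lem:VMonoidalAdjoint}, the oplaxitor produced by the classification theorem, and the associator $\alpha$ of the $\cV$-module structure from Theorem \ref{thm:OplaxVMod} all coincide; Remark \ref{rem:OtherMateOfMuForVMonoidal} supplies the explicit identification that makes this coincidence transparent and glues the forward and backward directions together.
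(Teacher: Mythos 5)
Your proposal is correct, and its forward direction is exactly the paper's: tensoredness supplies a left $\cV$-adjoint of $\cR^{1_\cC}$, and Lemma \ref{lem:VMonoidalAdjoint}(1)--(2) then gives strong monoidality of the classifying functor. Where you genuinely differ is the converse. The paper first promotes $\cF$ by hand to a $\cV$-functor $\widehat{\cV}\to\cT\dslash\cF$ (an explicit mate computation), checks $\mu^\cF_{u,v}=\nu^{-1}_{u,v}$ so that Theorem \ref{thm:LiftToVAdjunction} upgrades the underlying adjunction to a $\cV$-adjunction for $\cR^{1}$, and only then invokes Lemma \ref{lem:VMonoidalAdjoint}(4). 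You instead argue entirely at the module level: the induced right $\cV$-module structure on $\cT\dslash\cF^\cV\cong\cT$ is $a\vartriangleleft v=a\cF(v)$ with $\alpha_{a,u,v}=\id_a\nu_{u,v}$ (for this direction the precise reference is Remark \ref{rem:OplaxVModuleStructureOfT}, which defers to Remark \ref{rem:OtherMateOfMuForVMonoidal}), strong monoidality of $\cF^{\scriptscriptstyle Z}$ makes this a strong module, and Proposition \ref{prop:LaTensored} with Corollary \ref{cor:AlphaInvertibleImpliesCTensored} delivers tensoredness. This shortcut is legitimate: the only role of Lemma \ref{lem:VMonoidalAdjoint}(1) in the paper's route is to supply invertibility of $\nu$, which here is a hypothesis, and the $\cV$-functor $\cL^{1}:\widehat{\cV}\to\cT\dslash\cF$ produced by the general machinery of \S\ref{sec:Tensored V-cat and Strong V-mod} from the strong module is precisely the promotion the paper constructs and verifies by hand, so nothing is lost. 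What your route buys is a shorter converse that reuses the already-proved $\cV$-category results instead of redoing the computation in the monoidal setting; what it costs is leaning on the identification in Remark \ref{rem:OplaxVModuleStructureOfT} (and on the $\nu$'s from Lemma \ref{lem:VMonoidalAdjoint} and the classification agreeing), bookkeeping the paper itself performs inside Lemma \ref{lem:VMonoidalAdjoint}(4), and which you correctly flag. The rigid addendum is treated as the paper implicitly does, via the rigid classification (Theorem \ref{thm:OplaxVMonoidal}) and the fact that rigid implies closed, so no gap there either.
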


\begin{proof}
Under the bijective correspondence from Theorem \ref{thm:ClosedOplaxVMonoidal}, it suffices to prove that $\cC$ tensored implies $(\cF^{\scriptscriptstyle Z},\nu): \cV \to Z(\cC^\cV)$ is strong monoidal, and that $(\cF^{},\nu): \cV \to Z(\cT)$ being strong monoidal implies $\cT\dslash\cF$ is tensored.

First, when $\cC$ is closed and tensored, $\cR^{1_\cC}=\cC(1_\cC \to -): \cC \to \widehat{\cV}$ admits a left $\cV$-adjoint $\cF: \cV \to \widehat{\cC}$.
By Remark \ref{rem:UnderlyingAdjunction}, the underlying functor $\cF^\cV$ is a left adjoint to the underlying functor $\cR_{1_\cC}$, and can be endowed with the structure of a braided oplax monoidal functor $((\cF^\cV)^{\scriptscriptstyle Z},\nu) : \cV \to Z(\cC^\cV)$.
By Lemma \ref{lem:VMonoidalAdjoint}, we see that $((\cF^\cV)^{\scriptscriptstyle Z},\nu)$ is strong monoidal.

Conversely, suppose $\cF^{\scriptscriptstyle Z}: \cV \to Z(\cT)$ is strong monoidal.
By (4) of Lemma \ref{lem:VMonoidalAdjoint}, it suffices to show we can promote $\cF = \cF^{\scriptscriptstyle Z} \circ R : \cV \to \cT$ (where $R: Z(\cT)\to \cT$ is the forgetful functor) to a $\cV$-functor $\cF: \widehat{\cV} \to \cT\dslash\cF$ which is left $\cV$-adjoint to $\cT\dslash\cF(1_\cT\to -)$.
Since $\cF$ is strong monoidal, we can define $\cF_{u \to v}$ as mate of $\nu^{-1}_{u,\widehat{\cV}(u\to v)}\circ \cF(\varepsilon^{\widehat{\cV}}_{u\to v})$
as on the right hand side of \eqref{eq:MateOf F u to w}
under the adjunction
$$
\cV(\widehat{\cV}(u\to v)\to \cT\dslash\cF(\cF(u)\to \cF(v)))
\cong
\cT(\cF(u)\cF(\widehat{\cV}(u\to v))\to \cF(v)).
$$
Under the adjunction
$$
\cV(\widehat{\cV}(u\to v)\widehat{\cV}(v\to w)\to \cT\dslash\cF(\cF(u)\to \cF(w)))
\cong
\cT\dslash\cF^\cV( \cF(u)\cF(\widehat{\cV}(u\to v)\widehat{\cV}(v\to w) \to \cF(u)),
$$
we see the mate of
$(\cF_{u\to v}\cF_{v\to w}) \circ (-\circ_{\cT\dslash\cF}-)$ 
is given by
\begin{align*}
&(\id_{\cF(u)} \cF(\cF_{u\to v}\cF_{v\to w}))
\circ
(\id_{\cF(u)} \circ \nu_{\cT\dslash\cF(u\to v), \cT\dslash\cF(v\to w)})
\circ
(\varepsilon^{\cT\dslash\cF}_{\cF(u) \to \cF(v)} \id_{\langle v\to w\rangle})
\circ
\varepsilon^{\cT\dslash\cF}_{\cF(v) \to \cF(w)}
\\&=
(\id_{\cF(u)} \circ \nu_{\widehat{\cV}(u\to v), \widehat{\cV}(v\to w)})
\circ
([(\id_{\cF(u)} \cF(\cF_{u\to v}))\circ\varepsilon^{\cT\dslash\cF}_{\cF(u) \to \cF(v)}] \id_{\langle v\to w\rangle})
\circ
[(\id_{\cF(v)} \cF(\cF_{v\to w}))\circ\varepsilon^{\cT\dslash\cF}_{\cF(v) \to \cF(w)}]
\\&=
(\id_{\cF(u)} \circ \nu_{\widehat{\cV}(u\to v), \widehat{\cV}(v\to w)})
\circ
([\nu^{-1}_{u,\widehat{\cV}(u\to v)}\cF(\varepsilon^{\widehat{\cV}}_{u\to v})] \id_{\langle v\to w\rangle})
\circ
[\nu^{-1}_{v,\widehat{\cV}(v\to w)}\cF(\varepsilon^{\widehat{\cV}}_{v\to w})]
\\&=
\nu^{-1}_{u, \widehat{\cV}(u\to v)\widehat{\cV}(v\to w)}
\circ
\cF(
(\varepsilon^{\widehat{\cV}}_{u\to v} \id_{\widehat{\cV}(v\to w)})
\circ
\varepsilon^{\widehat{\cV}}_{v\to w}
)
\\&=
\nu^{-1}_{u, \widehat{\cV}(u\to v)\widehat{\cV}(v\to w)}
\circ
\cF(
(\id_{u} (-\circ_{\widehat{\cV}}-))
\circ
\varepsilon^{\widehat{\cV}}_{u\to w}
)
\\&=
(\id_\cF(u) \cF(-\circ_{\widehat{\cV}}-))
\circ
\nu^{-1}_{u, \widehat{\cV}(u\to w)}
\circ
\cF(
\varepsilon^{\widehat{\cV}}_{u\to w}
)
\end{align*}
which is exactly the mate of $(-\circ_{\widehat{\cV}}-)\circ \cF_{u\to w}$.
Hence $\cF$ is a $\cV$-functor.

We already know the underlying functor $\cF^\cV: \cV \to \cT\dslash\cF^\cV$ is left adjoint to $\cR_{1_{\cT\dslash\cF}}: \cT \dslash\cF^\cV \to \cV$ by the proof of Theorem \ref{thm:ClosedOplaxVMonoidal} in \S\ref{sec:ClassificationOfClosedVMonoidalCategories}.
Thus by Theorem \ref{thm:LiftToVAdjunction}, to show $\cF$ is a left $\cV$-adjoint of $\cT\slash\cF(1_{\cT\slash\cF} \to -)$, it suffices to prove that $\cF$ is tensored.
Note that $\mu^\cF_{u,v}$ is by definition the mate of $\eta^{\widehat{\cV}}_{u,v}\circ \cF_{u\to uv}$ under the adjunction
$$
\cT\dslash\cF^\cV(\cF(u)\cF(v) \to \cF(uv))
\cong
\cV(v\to \cT\dslash\cF(\cF(u) \to \cF(uv))).
$$
But notice by definition of $\cF_{u\to uv}$, this mate is also given by
$$
(\id_{\cF(u)}\cF(\eta^{\widehat{\cV}}_{u,v}))
\circ
\nu^{-1}_{u, \widehat{\cV}(u\to uv)}
\circ
\cF(\varepsilon^{\widehat{\cV}}_{u\to uv})
=
\nu^{-1}_{u,v}
\circ
\cF(
(\id_u\eta^{\widehat{\cV}}_{u,v})
\circ
\varepsilon^{\widehat{\cV}}_{u\to uv}
)
=
\nu^{-1}_{u,v}.
$$
Hence $\mu^\cF_{u,v}=\nu^{-1}_{u,v}$ is invertible, and $\cF$ is tensored.
\end{proof}

\section{Completion for \texorpdfstring{$\cV$}{V}-monoidal categories}
\label{sec:CompletionForVMonoidalCategories}

We now discuss the completion operation for $\cV$-monoidal categories.
In this section, $\cV$ is braided and closed so we may form the self-enriched $\cV$-monoidal category $\widehat{\cV}$.

\begin{defn}
\label{defn:TensorProductMorphismForCompletion}
Given a $\cV$-monoidal category $\cC$, we define its completion $\overline{\cC}$ as an extension of Definition \ref{defn:CompletionOfVCategory}.
As before, $\overline{C}$ has objects of the form $a\blacktriangleleft u$ for $a\in \cC$ and $u\in \cV$.
We define $\overline{\cC}(a\blacktriangleleft u \to b\blacktriangleleft v)$, $j_{a\blacktriangleleft u}$, and $-\circ_{\overline{\cC}}-$ as before.
We additionally define:
\begin{itemize}
\item
$1_{\overline{\cC}} := 1_\cC \blacktriangleleft 1_\cV$.
\item
$(a\blacktriangleleft u)(b\blacktriangleleft v):=ab \blacktriangleleft uv$.
\item
$-\otimes_{\overline{\cC}}-$ is the mate of
$$
(\id_u \beta_{w, \widehat{\cV}(u\to\cC(a\to b)v)} \id_{\widehat{\cV}(w\to \cC(c\to d)x)})
\circ
(\varepsilon^{\widehat{\cV}}_{u \to \cC(a\to b)v} \varepsilon^{\widehat{\cV}}_{w \to \cC(c\to d)x})
\circ
(\id_{\cC(a\to b)} \beta_{v,\cC(c\to d)}^{-1} \id_x)
\circ
((-\otimes_\cC-)\id_{vx})
$$
under the adjunction
\begin{align*}
\cV(\widehat{\cV}(u\to \cC(a\to b)v)& \widehat{\cV}(w\to \cC(c\to d)x) \to \widehat{\cV}(uw\to \cC(ac\to bd)vx)
\\&\cong
\cV(uw \widehat{\cV}(u\to \cC(a\to b)v)\widehat{\cV}(w\to \cC(c\to d)x) \to \cC(ac\to bd)vx).
\end{align*}
\end{itemize}
It is a worthwhile exercise to verify that $\overline{\cC}$ satisfies the axioms of a $\cV$-monoidal category.
\end{defn}

Suppose $\cC$ is a $\cV$-monoidal category, and form $\overline{\cC}$ as above.
Note that $\overline{\cC}$ is tensored by Corollary \ref{cor:CBarTensored}, since this is merely a property of the underlying $\cV$-category of $\overline{\cC}$ (obtained by forgetting the $\cV$-monoidal structure).

\subsection{Universal property of completion for \texorpdfstring{$\cV$}{V}-monoidal categories}

\begin{defn}
We now endow our $\cV$-functor $\cI : \cC \to \overline{\cC}$ by $a\mapsto a\blacktriangleleft 1_\cV$
from Definition \ref{defn:InclusionVFunctor}
with the structure of a $\cV$-monoidal functor.
We define
$$
\nu^\cI_{a,b}
\in
\cV(1_\cV \to \overline{\cC}(ab\blacktriangleleft 1_\cV \to (a\blacktriangleleft 1_\cV)(b\blacktriangleleft 1_\cV)))
=
\cV(1_\cV \to \overline{\cC}(ab\blacktriangleleft 1_\cV \to ab\blacktriangleleft 1_\cV))
$$
to be $j_{ab\blacktriangleleft 1_\cV}$.
It is straightforward to check the necessary diagrams commute, and $(\cI, \nu)$ is $\cV$-monoidal.
\end{defn}

\begin{thm}
\label{thm:UniversalPropertyForVMonoidal}
Suppose $\cC$ and $\cD$ are $\cV$-monoidal categories with $\cD$ tensored 
and closed
and $(\cF,\nu^\cF): \cC \to \cD$ is a $\cV$-monoidal functor such that the underlying $\cV$-functor $\cF: \cC\to \cD$ is tensored.
There exists a tensored $\cV$-monoidal functor $(\overline{\cF}, \nu^{\overline{\cF}}): \overline{\cC} \to \cD$ such that $\cI \circ \overline{\cF}\cong \cF$ as $\cV$-monoidal functors.
\end{thm}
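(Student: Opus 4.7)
The strategy is to enrich Proposition \ref{prop:LiftToCBar}. That proposition already supplies a tensored $\cV$-functor $\overline{\cF}:\overline{\cC}\to\cD$ with $\overline{\cF}(a\blacktriangleleft u)=\cF(a)\vartriangleleft u$ and a $1_\cV$-graded $\cV$-natural isomorphism $\sigma_a=\eta^\cD_{\cF(a),1_\cV}$ having inverse $\rho^\cD_{\cF(a)}$. What remains is to endow $\overline{\cF}$ with a tensorator $\nu^{\overline{\cF}}$ satisfying Definition \ref{def:VMonoidalFunctor}, and then to verify that $\sigma$ upgrades to a $\cV$-monoidal natural isomorphism.

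I would define
$$\nu^{\overline{\cF}}_{a\blacktriangleleft u,\,b\blacktriangleleft v}\in \cD^\cV(\cF(ab)\vartriangleleft uv\to (\cF(a)\vartriangleleft u)(\cF(b)\vartriangleleft v))$$
as the mate under Adjunction \eqref{eq:V-cat to V-mod adjunction} of
$$\bigl(\nu^\cF_{a,b}\cdot[(\eta^\cD_{\cF(a),u}\,\eta^\cD_{\cF(b),v})\circ(-\otimes_\cD-)]\bigr)\circ(-\circ_\cD-)\in \cV(uv\to \cD(\cF(ab)\to (\cF(a)\vartriangleleft u)(\cF(b)\vartriangleleft v))).$$
Morally this represents the composite $\cF(ab)\vartriangleleft uv\to \cF(a)\cF(b)\vartriangleleft uv\to (\cF(a)\vartriangleleft u)(\cF(b)\vartriangleleft v)$, where the first arrow is $\nu^\cF_{a,b}\vartriangleleft \id_{uv}$ and the second uses the strong monoidal half-braided classifying functor $\cV\to Z(\cD^\cV)$ supplied by Theorem \ref{thm:TensoredVMonoidalEquivalence}.

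The unitality axioms for $\nu^{\overline{\cF}}$ reduce, after canonically identifying $1_\cD\vartriangleleft 1_\cV$ with $1_\cD$ via $\rho^\cD_{1_\cD}$, to the unitality of $\nu^\cF$ combined with Lemma \ref{lem:AlphaInvertibleWhenOneArgumentIs1}. Associativity follows by a direct mate computation from the associativity of $\nu^\cF$, the bifunctoriality of $-\otimes_\cD-$, and the associativity of the (strong) $\cV$-module structure on $\cD^\cV$. The naturality square \eqref{eq:NaturalityForVMonoidal} is the genuinely delicate point: it requires invoking the braided interchange relation \eqref{eq:BraidedInterchance} in $\cD$ in order to reorder the four hom-objects arising from the tensor product of two compositions, and the braiding $\beta$ inserted in Definition \ref{defn:TensorProductMorphismForCompletion} of $-\otimes_{\overline{\cC}}-$ is exactly what makes the two sides of the square agree. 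Tensoredness of $\overline{\cF}$ is already furnished by Proposition \ref{prop:LiftToCBar}.

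Finally, since $\cI$ is strict $\cV$-monoidal with $\nu^\cI_{a,b}=j_{ab\blacktriangleleft 1_\cV}$, the tensorator of $\cI\circ\overline{\cF}$ at $(a,b)$ is $\nu^{\overline{\cF}}_{a\blacktriangleleft 1_\cV,\,b\blacktriangleleft 1_\cV}$. Specializing the defining mate at $u=v=1_\cV$ and recognizing $\sigma_d=\eta^\cD_{\cF(d),1_\cV}$, that mate equals the element of $\cD^\cV(\cF(ab)\to (\cF(a)\vartriangleleft 1_\cV)(\cF(b)\vartriangleleft 1_\cV))$ given by $\nu^\cF_{a,b}\circ(\sigma_a\sigma_b)$, which transports across the adjunction to $\rho^\cD_{\cF(ab)}\circ\nu^\cF_{a,b}\circ(\sigma_a\sigma_b)$. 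Precomposing with $\sigma_{ab}=(\rho^\cD_{\cF(ab)})^{-1}$ yields $\nu^\cF_{a,b}\circ(\sigma_a\sigma_b)$, which is precisely the monoidality axiom; the unit condition $\sigma_{1_\cC}=j_{1_\cD}$ follows by the same identification. The main obstacle throughout is the naturality verification of $\nu^{\overline{\cF}}$, where the interplay of braided interchange in $\cD$ with the braiding in the definition of $-\otimes_{\overline{\cC}}-$ must be navigated via mate calculations; all other items reduce to routine checks.
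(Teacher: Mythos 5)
Your proposal is correct and follows essentially the same route as the paper: factor the underlying $\cV$-functor through $\overline{\cF}$ via Proposition \ref{prop:LiftToCBar}, build the tensorator from $\nu^\cF$ together with the strong monoidal classifying functor $\cV\to Z(\cD^\cV)$ coming from Theorem \ref{thm:TensoredVMonoidalEquivalence}, and check monoidality of $\sigma$ by specializing at $u=v=1_\cV$; your mate-of-units definition of $\nu^{\overline{\cF}}_{a\blacktriangleleft u,b\blacktriangleleft v}$ unravels to the paper's explicit composite $(\nu^\cF_{a,b}\nu^\cG_{u,v})\circ(\id_{\cF(a)}e_{\cF(b),\cG(u)}\id_{\cG(v)})$. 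The only caveat is that associativity requires slightly more than you list --- the naturality/hexagon property of the half-braidings and the fact that $\nu^\cG$ is a morphism in $Z(\cD^\cV)$, exactly as the paper notes --- but this surfaces automatically once your mate definition is rewritten in the explicit form above.
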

\begin{proof}
By Proposition \ref{prop:LiftToCBar}, we know that the underlying $\cV$-functor of $\cF$ (forgetting the $\cV$-monoidal structure) factors through a tensored $\cV$-functor $\overline{\cF}:\overline{\cC}\to \cD$, i.e., there is a $\cV$-natural isomorphism $\sigma: \cF\Rightarrow \cI \circ \overline{\cF}$.
It remains to show $\overline{\cF}$ can be endowed with the structure of a $\cV$-monoidal functor such that $\sigma$ is $\cV$-monoidal.

Since $\cD$ is tensored, 
under the bijective correspondence in Theorem \ref{thm:TensoredVMonoidalEquivalence}, there is a strong monoidal functor $(\cG^{\scriptscriptstyle Z},\nu^\cG): \cV \to Z(\cD^\cV)$, and we define $\cG := \cG^{\scriptscriptstyle Z} \circ R$ where $R: Z(\cD^\cV) \to \cD^\cV$ is the forgetful functor.
Moreover, note that the right $\cV$-module structure of $\cD^\cV$ is given by $d\vartriangleleft v = d\cG(v)$ with $\alpha_{d,u,v} = \id_d \nu^\cG_{u,v}$ similar to Remark \ref{rem:OtherMateOfMuForVMonoidal}.
We define $\nu^{\overline{\cF}}_{a\blacktriangleleft u,b\blacktriangleleft v}$ for $a\blacktriangleleft u,b\blacktriangleleft v\in\overline{\cC}$ 
to be equal to 
$(\nu^\cF_{a,b}\nu^\cG_{u,v}) \circ (\id_{\cF(a) e_{\cF(b), \cG(u)}}\id_{\cG(v)})$
in
$$
\cD^\cV( 
\overline{\cF}((a\blacktriangleleft u)(b\blacktriangleleft v)) 
\to 
\overline{\cF}(a\blacktriangleleft u)\overline{\cF}(b\blacktriangleleft v)
)
=
\cD^\cV( 
\cF(ab) \cG(uv) 
\to 
\cF(a)\cG(u)\cF(b)\cG(v)
).
$$
Here, $e_{\cF(b), \cG(u)}$ is the half-braiding for $\cG^{\scriptscriptstyle Z}(u)\in Z(\cD^\cV)$.
Associativity now follows immediately from associativity of $\nu^\cF$ and $\nu^\cG$, together with the hexagon axiom for the half-braidings $e$, and the fact that each $\nu^{\cG}$ is actually morphism in $Z(\cD^\cV)$.
(Indeed, this proof is similar to the displayed diagram in the proof of \cite[Prop.~7.4]{1607.06041}, where all tensorators are implicitly suppressed.)
Moreover, under the identification of the right $\cV$-module structure $d\vartriangleleft v := d \cG(v)$ as in Remark \ref{rem:OtherMateOfMuForVMonoidal}, we have that 
$$
\sigma_a 
:= 
\rho^\cD_a
\in 
\cV(1_\cV \to \cD(\cF(a) \to \cF(a)\vartriangleleft 1_\cV))
=
\cV(1_\cV \to \cD(\cF(a) \to \cF(a)))
$$ 
is identified with $j_{\cF(a)}$, and thus $\sigma$ is obviously monoidal.

Finally, to verify \eqref{eq:NaturalityForVMonoidal}, one shows the mates of the mophisms
$$
(\overline{\cF}_{a\blacktriangleleft u \to c\blacktriangleleft w}\overline{\cF}_{b\blacktriangleleft v\to d\blacktriangleleft x})
\circ
(-\otimes_{\cD}-)
\circ
(\nu_{a\blacktriangleleft u, b\blacktriangleleft v}^{\overline{\cF}}\circ_\cD-)
\qquad
\text{and}
\qquad
(-\otimes_{\overline{\cC}}-)
\circ
\overline{\cF}_{ab\blacktriangleleft uv \to cd\blacktriangleleft wx}
\circ
(-\circ_\cD\nu_{c\blacktriangleleft w, d\blacktriangleleft x}^{\overline{\cF}})
$$
are equal under the adjunction
\begin{align*}
\cV(
&\overline{\cC}(a\blacktriangleleft u \to c\blacktriangleleft w)
\overline{\cC}(b\blacktriangleleft v \to d\blacktriangleleft x)
\to
\cD(
\cF(ab)\vartriangleleft uv
\to 
(\cF(c)\vartriangleleft w)(\cF(d)\vartriangleleft x)
)
)
\\&=
\cV(
\widehat{\cV}(u \to \cC(a\to c)w)
\widehat{\cV}(v \to \cC(b\to d)x)
\to
\cD(\cF(ab)\cG(uv) \to \cF(c)\cG(w)\cF(d)\cG(x))
)
\\&\cong
\cD^\cV(
\cF(ab)
\cG(uv)
\cG(
\widehat{\cV}(u \to \cC(a\to c)w)
\widehat{\cV}(v \to \cC(b\to d)x)
)
\to
\cF(c)\cG(w)\cF(d)\cG(x)
),
\end{align*}
where one uses the definitions of $-\circ_\cD-$, $\overline{\cF}$, and $-\otimes_{\overline{\cC}}-$ as mates given in Section \ref{sec:V-mod to V-cat} (see also \cite[Prop.~4.9]{1701.00567}), Proposition \ref{prop:LiftToCBar}, and Definition \ref{defn:TensorProductMorphismForCompletion} respectively.
We leave this enjoyable exercise to the reader, who may wish to use the string diagrammatic calculus to perform this calculation.
We point out that one should keep in mind that $\cG$ applied to any morphism in $\cV$ is a morphism in $Z(\cD^\cV)$.
\end{proof}

\subsection{When tensored \texorpdfstring{$\cV$}{V}-monoidal categories are equivalent to their completions}

Since being tensored is a property of the underlying $\cV$-category of a $\cV$-monoidal category, we now adapt the results of \S\ref{sec:CompletionForVCats} to the $\cV$-monoidal setting by merely checking monoidality when necessary.

Recall from Lemma \ref{lem:MuIsANaturalTransformation} that $\tau_{a\blacktriangleleft u} : = \mu^\cI_{a,u}$ defines a $1_\cV$-graded $\cV$-natural transformation $\tau: \id^{\overline{\cC}} \Rightarrow  \overline{\id^\cC}\circ \cI$.

\begin{lem}
Suppose $\cC$ is closed.
The $1_\cV$-graded $\cV$-natural transformation $\tau: \id^{\overline{\cC}} \Rightarrow  \overline{\id^\cC}\circ \cI$ is monoidal.
\end{lem}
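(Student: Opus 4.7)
The plan is to verify the two axioms of a $\cV$-monoidal natural transformation for $\tau$, namely unitality at $1_{\overline{\cC}}$ and compatibility with the laxitors; the underlying naturality condition \eqref{eq:VNaturalTransformation} was already established in Lemma \ref{lem:MuIsANaturalTransformation}. For unitality, I would compute $\tau_{1_{\overline{\cC}}} = \tau_{1_\cC \blacktriangleleft 1_\cV} = \mu^\cI_{1_\cC, 1_\cV}$, which by \eqref{eq:Mu=Eta} agrees with $\eta^\cC_{1_\cC, 1_\cV}$ under the identification $\widehat{\cV}^\cV = \cV$. Lemma \ref{lem:StrictlyUnital} then identifies this with $(\rho^\cC_{1_\cC})^{-1}$, and after identifying $1_\cC \vartriangleleft 1_\cV$ with $1_\cC$ via $\rho^\cC_{1_\cC}$, we recover $j_{1_{\overline{\cC}}}$ as required.

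For monoidality, the task is to prove
\[
\tau_{ab\blacktriangleleft uv}\circ\nu^{\overline{\id^\cC}\circ\cI}_{a\blacktriangleleft u,b\blacktriangleleft v}
=
\nu^{\id^{\overline{\cC}}}_{a\blacktriangleleft u,b\blacktriangleleft v}\circ(\tau_{a\blacktriangleleft u}\tau_{b\blacktriangleleft v}).
\]
Both outer laxitors collapse, since $\nu^{\id^{\overline{\cC}}}$ and $\nu^\cI$ are each just an appropriate identity element $j$; in particular the composite laxitor $\nu^{\overline{\id^\cC}\circ\cI}$ reduces to $\cI(\nu^{\overline{\id^\cC}}_{a\blacktriangleleft u,b\blacktriangleleft v})$, and the identity to prove simplifies to
\[
\tau_{ab\blacktriangleleft uv}\circ\cI(\nu^{\overline{\id^\cC}}_{a\blacktriangleleft u,b\blacktriangleleft v}) = \tau_{a\blacktriangleleft u}\otimes_{\overline{\cC}}\tau_{b\blacktriangleleft v}.
\]

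I would verify this by taking mates of both sides under the adjunction
\[
\overline{\cC}^\cV(ab\blacktriangleleft uv\to(a\vartriangleleft u)(b\vartriangleleft v)\blacktriangleleft 1_\cV)\cong\cV(uv\to\cC(ab\to(a\vartriangleleft u)(b\vartriangleleft v))),
\]
rewriting each instance of $\tau=\mu^\cI$ as an instance of $\eta^\cC$ via \eqref{eq:MateOfMu=Eta}. On the left, the explicit formula for $\nu^{\overline{\id^\cC}}$ from the proof of Theorem \ref{thm:UniversalPropertyForVMonoidal}, combined with Remarks \ref{rem:OplaxVModuleStructureOfT} and \ref{rem:OtherMateOfMuForVMonoidal}, expresses this morphism as $(\id_{ab}\nu^\cG_{u,v})\circ(\id_a e_{b,\cG(u)}\id_{\cG(v)})$ in terms of the strong monoidal functor $\cG:\cV\to\cC^\cV$ from Theorem \ref{thm:TensoredVMonoidalEquivalence}. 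On the right, expanding $-\otimes_{\overline{\cC}}-$ via Definition \ref{defn:TensorProductMorphismForCompletion} yields a composite involving $\eta^\cC_{a,u}$, $\eta^\cC_{b,v}$, $-\otimes_\cC-$, and the braiding $\beta_{v,\cC(a\to a\vartriangleleft u)}^{-1}$ introduced to interchange $v$ with the relevant hom object. The two mates then coincide because $\cG^{\scriptscriptstyle Z}$ factors through $Z(\cC^\cV)$--so the half-braiding $e_{b,\cG(u)}$ is determined by the braiding $\beta$ of $\cV$--and because the braided interchange relation \eqref{eq:BraidedInterchance} of $\cC$ aligns $\eta^\cC_{a,u}\otimes_\cC\eta^\cC_{b,v}$ with $\eta^\cC_{ab,uv}$ postcomposed with $\nu^\cG_{u,v}$.

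The main obstacle will be the bookkeeping required to align the half-braiding appearing inside $\nu^{\overline{\id^\cC}}$ with the explicit braiding built into the definition of $-\otimes_{\overline{\cC}}-$. The conceptual reason that they agree is that both sides are measuring the same discrepancy: the failure of the tensor product in $\cC$ to strictly commute with the $\vartriangleleft$ action, a discrepancy which is controlled exactly by the centre structure on $\cG^{\scriptscriptstyle Z}$. Provided the mates are tracked patiently, the monoidality identity reduces to this single compatibility.
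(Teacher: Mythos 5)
Your skeleton matches the paper's proof: you reduce to the single identity $\tau_{ab\blacktriangleleft uv}\circ\cI(\nu^{\overline{\id^\cC}}_{a\blacktriangleleft u,b\blacktriangleleft v})=\tau_{a\blacktriangleleft u}\otimes_{\overline{\cC}}\tau_{b\blacktriangleleft v}$ (the unitality check you add is harmless and essentially trivial), you pass to mates, you rewrite $\tau=\mu^\cI=\eta^\cC$ via \eqref{eq:Mu=Eta}, and you invoke the explicit formula $(\id_{ab}\nu^\cG_{u,v})\circ(\id_a e_{b,\cG(u)}\id_{\cG(v)})$ for $\nu^{\overline{\id^\cC}}$. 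Note, however, that this formula lives one mate further down, in $\cC^\cV(ab\,\cG(uv)\to a\cG(u)b\cG(v))$, not in $\cV(uv\to\cC(ab\to (a\vartriangleleft u)(b\vartriangleleft v)))$; the paper takes exactly that extra mate, and that is the level at which the two sides can actually be compared.

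The gap is in the decisive step, and the mechanism you propose for it would not work as described. First, no braiding survives on the right-hand side: in Definition \ref{defn:TensorProductMorphismForCompletion} the braidings move the second grading object past the first internal hom and move the first target grading object past a hom object of $\cC$; since both $\tau$'s are $1_\cV$-graded and their targets are graded by $1_\cV$, naturality of $\beta$ makes these braidings disappear, so the mate of $\tau_{a\blacktriangleleft u}\otimes_{\overline{\cC}}\tau_{b\blacktriangleleft v}$ is simply $(\eta^\cC_{a,u}\eta^\cC_{b,v})\circ(-\otimes_\cC-)$ --- there is no surviving $\beta^{-1}_{v,\cC(a\to a\vartriangleleft u)}$ to ``align'' with the half-braiding. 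Second, $e_{b,\cG(u)}$ is \emph{not} determined by the braiding of $\cV$ for general $b\in\cC$; braidedness of $\cG^{\scriptscriptstyle Z}$ only controls $e_{\cG(v),\cG(u)}$. What is actually needed is the identity
\begin{equation*}
\mate\bigl((\eta^\cC_{a,u}\eta^\cC_{b,v})\circ(-\otimes_\cC-)\bigr)
=
(\id_{ab}\,\nu^\cG_{u,v})\circ(\id_a\, e_{b,\cG(u)}\,\id_{\cG(v)}),
\end{equation*}
which rests on the expression of the mate of $-\otimes_\cC-$ as $(\id_{ab}\,\nu^\cG)\circ(\id_a\, e\,\id)\circ(\varepsilon^\cC\varepsilon^\cC)$ coming from the classification (Lemma \ref{lem:MateOfIdOfTensorClosed}, Remark \ref{rem:OtherMateOfMuForVMonoidal}, \S\ref{sec:ClassificationOfClosedVMonoidalCategories}), together with naturality of $\nu^\cG$ and of $e$ and the triangle identity $(\id_a\cG(\eta^\cC_{a,u}))\circ\varepsilon^\cC_{a\to a\cG(u)}=\id_{a\cG(u)}$. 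This chain is the entire content of the lemma and is exactly the displayed computation in the paper; the braided interchange enters only indirectly, through those classification formulas. As written, your plan stops short of this computation, so the proof is incomplete.
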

\begin{proof}
We must verify for $a\blacktriangleleft u,b\blacktriangleleft v\in\overline{\cC}$ that the composites 
$
\tau_{ab\blacktriangleleft uv}
\circ 
\nu^{\overline{\id^\cC}\circ \cI}_{a\blacktriangleleft u, b\blacktriangleleft v}
$ 
and  
$
\nu^{\id^{\overline{\cC}}}_{a\blacktriangleleft u,b\blacktriangleleft v}
\circ 
(\tau_{a\blacktriangleleft u}\tau_{b\blacktriangleleft v})
$
are equal in
\begin{align}
\overline{\cC}^\cV(
(a\blacktriangleleft u)(b\blacktriangleleft v) 
\to 
(a\vartriangleleft u\blacktriangleleft 1_\cV)(b\vartriangleleft v\blacktriangleleft 1_\cV)
)
&=
\overline{\cC}^\cV(
ab\blacktriangleleft uv
\to 
a\cF(u)b\cF(v)\blacktriangleleft 1_\cV
)
\notag
\\&=
\cV(uv \to \cC(ab \to a\cF(u)b\cF(v)))
\notag
\\&\cong
\cC^\cV(ab\cF(uv) \to a\cF(u)b\cF(v))
\label{eq:TauMonoidalAdjunction}
\end{align}
where $\cF: \cV \to Z(\cC^\cV)$ is the strong monoidal functor from Theorem \ref{thm:TensoredVMonoidalEquivalence},
and we have identified $\widehat{\cV}^\cV = \cV$ as in Example \ref{ex:UnderlyingCategoryOfVhat}.
Under the above Adjunction \eqref{eq:TauMonoidalAdjunction},
the mate of 
$\mu^\cI_{a,u} \mu^\cI_{b,v} = (\eta^\cC_{a,u} \eta^\cC_{b,v})\circ (-\otimes_\cC -)$
is given by
\begin{align*}
(\id_{ab}\cF(\eta^\cC_{a,u} \eta^\cC_{b,v}))
&\circ
(\id_{ab} \nu_{\cC(a\to a\cF(u)), \cC(b\to b\cF(v))})
\circ
(\id_a e_{b,\langle a\to a\cF(u)\rangle} \id_{\langle b\to b\cF(v)\rangle})
\circ
(\varepsilon^\cC_{a\to a\cF(u)} \varepsilon^\cC_{b\to b\cF(v)})
\\&=
(\id_{ab} \nu_{u,v})
\circ
(\id_{ab}\cF(\eta^\cC_{a,u})\cF(\eta^\cC_{b,v}))
\circ
(\id_a e_{b,\langle a\to a\cF(u)\rangle} \id_{\langle b\to b\cF(v)\rangle})
\circ
(\varepsilon^\cC_{a\to a\cF(u)} \varepsilon^\cC_{b\to b\cF(v)})
\\&=
(\id_{ab} \nu_{u,v})
\circ
(\id_a e_{b,\cF(u)} \id_{\cF(v)})
\circ
([
(\id_{a}\cF(\eta^\cC_{a,u}))
\circ
\varepsilon^\cC_{a\to a\cF(u)}
]
[
(\id_{b}\cF(\eta^\cC_{b,v}))
\circ
\varepsilon^\cC_{b\to b\cF(v)}
])
\\&=(\id_{ab} \nu_{u,v})\circ (\id_a e_{b,\cF(u)}\id_{\cF(v)}),
\end{align*}
which is exactly the mate of 
$
\mu_{ab, uv}^\cI
\circ 
\nu^{\overline{\id^\cC}\circ \cI}_{a\blacktriangleleft u, b\blacktriangleleft v}
$.
\end{proof}

\begin{thm}
\label{thm:EquivalentConditionsForVMonoidalCComplete}
The following are equivalent for a tensored closed $\cV$-monoidal category $\cC$.
\begin{enumerate}
\item
Every $\cV$-representable functor $\cR^a =\cC(a\to -): \cC \to \widehat{\cV}$ is tensored.
\item
The $\cV$-monoidal functor $\cI: \cC \to \overline{\cC}$ given by $a\mapsto a\blacktriangleleft 1_\cV$ is tensored.
\item
The $1_\cV$-graded monoidal $\cV$-natural transformation $\tau: \id^{\overline{\cC}} \Rightarrow \overline{\id^\cC} \circ \cI$ is an isomorphism.
\item
The $\cV$-monoidal functors $\cI: \cC \to \overline{\cC}$ 
and 
$\overline{\id^\cC}: \overline{\cC} \to \cC$
witness a $\cV$-monoidal equivalence.
\end{enumerate}
\end{thm}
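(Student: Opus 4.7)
The strategy is to reduce this theorem to its $\cV$-categorical analog, Theorem \ref{thm:WhenTensoredIsEquivalentToCompletion}, by observing that every one of conditions (1)--(3) is intrinsic to the underlying $\cV$-category of $\cC$, while the monoidal content required for (4) has already been packaged by the universal property of completion (Theorem \ref{thm:UniversalPropertyForVMonoidal}) and by the preceding lemma showing $\tau$ is monoidal.

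First, I would observe that conditions (1), (2), and (3) make sense and hold or fail at the level of underlying $\cV$-categories alone: a $\cV$-representable functor is tensored by definition of \S\ref{sec:TensoredVFunctors}, the $\cV$-functor $\cI$ is tensored in the sense of Definition \ref{defn:TensoredVFunctor}, and $\tau_{a\blacktriangleleft u}=\mu^{\cI}_{a,u}$ being invertible is a condition on morphisms of the underlying $\cV$-category. Moreover, the monoidality of $\tau$ is automatic by the preceding lemma, so whenever the $1_\cV$-graded $\cV$-natural transformation $\tau$ is an isomorphism, it is automatically a monoidal isomorphism. Applying Theorem \ref{thm:WhenTensoredIsEquivalentToCompletion} to the tensored underlying $\cV$-category of $\cC$ therefore yields the equivalences (1)$\Leftrightarrow$(2)$\Leftrightarrow$(3) immediately.

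Next I would dispatch (4)$\Rightarrow$(1) by noting that any $\cV$-monoidal equivalence restricts to a $\cV$-equivalence of underlying $\cV$-categories, at which point the implication (4)$\Rightarrow$(1) from Theorem \ref{thm:WhenTensoredIsEquivalentToCompletion} directly applies.

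For the remaining implication (3)$\Rightarrow$(4), the plan is to apply Theorem \ref{thm:UniversalPropertyForVMonoidal} to the trivially tensored $\cV$-monoidal functor $\cF=\id^\cC:\cC\to\cC$, using that $\cC$ is closed and tensored by hypothesis. This produces a tensored $\cV$-monoidal functor $\overline{\id^\cC}:\overline{\cC}\to\cC$ together with a $\cV$-monoidal natural isomorphism $\sigma:\id^\cC\Rightarrow \cI\circ\overline{\id^\cC}$, so $\cI\circ\overline{\id^\cC}\cong\id^\cC$ as $\cV$-monoidal functors. In the other direction, condition (3) furnishes a monoidal $\cV$-natural isomorphism $\tau:\id^{\overline{\cC}}\Rightarrow \overline{\id^\cC}\circ\cI$. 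Together these exhibit $\cI$ and $\overline{\id^\cC}$ as the two halves of a $\cV$-monoidal equivalence, giving (4). The main obstacle in this argument -- establishing that $\overline{\id^\cC}$ can be upgraded from a tensored $\cV$-functor to a tensored $\cV$-monoidal functor in a way compatible with $\sigma$ -- has already been absorbed into Theorem \ref{thm:UniversalPropertyForVMonoidal}, and the analogous obstacle on the other side has been absorbed into the preceding lemma on monoidality of $\tau$; what remains in the present theorem is essentially bookkeeping.
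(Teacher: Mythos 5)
Your proposal is correct and follows essentially the same route as the paper: the monoidal structures on $\cI$ and $\overline{\id^\cC}$ and the monoidality of $\sigma$ and $\tau$ are supplied by Theorem \ref{thm:UniversalPropertyForVMonoidal} and the preceding lemma, after which everything reduces formally to the $\cV$-categorical Theorem \ref{thm:WhenTensoredIsEquivalentToCompletion}. Your explicit breakdown into implications (including checking that $\id^\cC$ is tensored so the universal property applies) just spells out what the paper compresses into one sentence.
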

\begin{proof}
We know the $\cV$-functors $(\cI,\nu^\cI): \cC \to \overline{\cC}$ and $(\overline{\id^\cC}, \nu^{\overline{\id^\cC}}): \overline{\cC} \to \cC$ are monoidal
and the $1_\cV$-graded $\cV$-natural isomorphism $\sigma: \overline{\id^\cC} \circ \cI\Rightarrow \id^{\overline{\cC}}$ and the $1_\cV$-graded $\cV$-natural transformation $\tau: \id^{\overline{\cC}} \Rightarrow \overline{\id^\cC} \circ \cI$ are monoidal.
Hence the result follows formally from Theorem \ref{thm:WhenTensoredIsEquivalentToCompletion}.
\end{proof}

Combining the above theorem with \S\ref{sec:VRigid}, when $\cV$ is rigid, we get the following corollaries.

\begin{cor}
Suppose $\cV$ is rigid and $\cC$ is a closed $\cV$-monoidal category.
Then $\overline{\overline{\cC}}$ is $\cV$-monoidally equivalent to $\overline{\cC}$.
\end{cor}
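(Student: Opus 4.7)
The plan is to apply Theorem \ref{thm:EquivalentConditionsForVMonoidalCComplete} to $\overline{\cC}$ itself, obtaining that the inclusion $\cI: \overline{\cC} \to \overline{\overline{\cC}}$ together with $\overline{\id^{\overline{\cC}}}$ witnesses a $\cV$-monoidal equivalence. To invoke that theorem we must verify that $\overline{\cC}$ (in place of $\cC$) is a tensored closed $\cV$-monoidal category every one of whose $\cV$-representable functors is tensored.

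First, since being tensored is a property of the underlying $\cV$-category, Corollary \ref{cor:CBarTensored} gives that $\overline{\cC}$ is tensored, regardless of any assumption on $\cV$. Second, the assumption that $\cV$ is rigid together with $\cC$ being closed allows us to apply Proposition \ref{prop:ClosedImpliesCompletionClosed} to conclude that $\overline{\cC}$ is closed. Third, the corollary at the end of \S\ref{sec:VRigid} asserts that when $\cV$ is rigid, every $\cV$-representable functor $\cR^{a\blacktriangleleft u}: \overline{\cC} \to \widehat{\cV}$ is tensored; note the identification
$$\overline{\cC}(a\blacktriangleleft u \to b \blacktriangleleft v) = \widehat{\cV}(u \to \cC(a\to b) v) = u^*\cC(a\to b) v = \overline{\cC}(a\blacktriangleleft u \to b \blacktriangleleft 1_\cV) v$$
was the key input there, which has nothing to do with the $\cV$-monoidal structure.

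With these three facts established for $\overline{\cC}$, condition (1) of Theorem \ref{thm:EquivalentConditionsForVMonoidalCComplete} is satisfied by $\overline{\cC}$, and hence condition (4) holds as well: $\cI: \overline{\cC} \to \overline{\overline{\cC}}$ and $\overline{\id^{\overline{\cC}}}: \overline{\overline{\cC}} \to \overline{\cC}$ witness a $\cV$-monoidal equivalence. The main substantive input is Proposition \ref{prop:ClosedImpliesCompletionClosed} on closedness of the completion when $\cV$ is rigid, but this is handled independently; everything else is an immediate reassembly of results already in hand. No further calculation is required.
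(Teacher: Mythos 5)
Your proof is correct and follows essentially the same route as the paper, which obtains this corollary by combining Theorem \ref{thm:EquivalentConditionsForVMonoidalCComplete} (applied to $\overline{\cC}$, using Corollary \ref{cor:CBarTensored} for tensoredness) with the rigidity results of \S\ref{sec:VRigid}. If anything, you are slightly more explicit than the paper in noting that the closedness of $\overline{\cC}$ needed to invoke that theorem is supplied by Proposition \ref{prop:ClosedImpliesCompletionClosed}, which the paper only records in the subsequent subsection.
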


\begin{cor}
Suppose $\cV$ is rigid and $\cC$ is a tensored closed $\cV$-monoidal category.
Then $\cC$ is $\cV$-monoidally equivalent to $\overline{\cC}$.
\end{cor}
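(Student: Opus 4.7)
The plan is to invoke Theorem \ref{thm:EquivalentConditionsForVMonoidalCComplete}, which gives four equivalent characterizations of when a tensored closed $\cV$-monoidal category is $\cV$-monoidally equivalent to its completion. Since being tensored for both $\cC$ and for the $\cV$-functor $\cI: \cC \to \overline{\cC}$ is purely a property of underlying $\cV$-categories (and $\cV$-functors between them), the corollary will follow from the analogous $\cV$-categorical statement already proved in \S\ref{sec:VRigid}.

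First I would recall that $\cC$ being tensored as a $\cV$-monoidal category means, by definition, that the underlying $\cV$-category of $\cC$ is tensored. Then, applying the penultimate corollary of \S\ref{sec:VRigid} (the one asserting that when $\cV$ is rigid and $\cC$ is a tensored $\cV$-category, $\cI: \cC \to \overline{\cC}$ is a tensored $\cV$-functor, which uses Lemma \ref{lem:RigidCharacterization} via Proposition \ref{prop:IsomorphismOfRepresentableFunctors}), I conclude that the underlying $\cV$-functor of $\cI: \cC \to \overline{\cC}$ is tensored. This is precisely condition (2) of Theorem \ref{thm:EquivalentConditionsForVMonoidalCComplete}.

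By the equivalence $(2) \Leftrightarrow (4)$ in Theorem \ref{thm:EquivalentConditionsForVMonoidalCComplete}, the $\cV$-monoidal functors $\cI: \cC \to \overline{\cC}$ and $\overline{\id^\cC}: \overline{\cC} \to \cC$ witness a $\cV$-monoidal equivalence, which yields the claim. The only conceptual point worth highlighting is that Theorem \ref{thm:EquivalentConditionsForVMonoidalCComplete} was proved by reducing to Theorem \ref{thm:WhenTensoredIsEquivalentToCompletion} after observing that the relevant $\cV$-monoidal functors $\cI$ and $\overline{\id^\cC}$ and the natural transformations $\sigma, \tau$ are all canonically monoidal, so no additional verification is required at the $\cV$-monoidal level.

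There is essentially no obstacle here since all the hard work has been done: rigidity of $\cV$ forces every $\cV$-representable functor $\widehat{\cV}(v \to -)$ to be tensored (Lemma \ref{lem:RigidCharacterization}), which propagates through Proposition \ref{prop:IsomorphismOfRepresentableFunctors} to give condition (1) of Theorem \ref{thm:EquivalentConditionsForVMonoidalCComplete}, and the $\cV$-monoidal upgrade is automatic because tensoredness is a property of the underlying $\cV$-structure.
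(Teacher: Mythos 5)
Your argument is correct and follows essentially the paper's own (implicit) route: the final corollary of \S\ref{sec:VRigid} gives that $\cI:\cC\to\overline{\cC}$ is a tensored $\cV$-functor when $\cV$ is rigid and $\cC$ is tensored, and then the equivalence $(2)\Leftrightarrow(4)$ of Theorem \ref{thm:EquivalentConditionsForVMonoidalCComplete} (whose monoidal upgrade of Theorem \ref{thm:WhenTensoredIsEquivalentToCompletion} is already established) yields the $\cV$-monoidal equivalence, exactly as the paper's ``combining the above theorem with \S\ref{sec:VRigid}'' indicates. Only your parenthetical attributions are slightly off: that corollary is proved by constructing an explicit equivalence $\Phi:\overline{\cC}^\cV\to\cC^\cV$ and checking $\Phi(\mu^\cI_{a,u})=\alpha_{a,u,1_\cV}$, not via Lemma \ref{lem:RigidCharacterization} feeding into Proposition \ref{prop:IsomorphismOfRepresentableFunctors} (which takes condition (1) as a hypothesis rather than producing it), but since you only use the corollary's statement this does not affect your proof.
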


\subsection{Is the completion closed?}
\label{sec:CompletionClosedWhenVRigid}

In addition to Remark \ref{remark:OpenQuestionAboutCompletion}, we have the following interesting question.
If $\cC$ is a closed $\cV$-monoidal category, when is $\overline{\cC}$ closed?
Of course, $\overline{\cC}$ is closed under any of the hypotheses of Theorem \ref{thm:EquivalentConditionsForVMonoidalCComplete}, as $\overline{\cC}$ is $\cV$-monoidally equivalent to $\cC$.
But perhaps $\overline{\cC}$ is closed under some weaker assumptions, e.g., one could additionally assume $\cC(1_\cC \to -)$ admits either a left adjoint or a left $\cV$-adjoint.

The problem arises when trying to define the object $[a\blacktriangleleft u, c\blacktriangleleft w]$ in a way where we get an underlying adjunction
$$
\overline{\cC}^\cV((a\blacktriangleleft u)(b\blacktriangleleft v) \to c\blacktriangleleft w)
\cong
\overline{\cC}^\cV(b\blacktriangleleft v \to [a\blacktriangleleft u, c\blacktriangleleft w]).
$$
Under the identification $\widehat{\cV}^\cV = \cV$, the left hand side above is equal to
$\cV(uv \to \cC(ab\to c)w)$, and at this point it is not clear to us how to proceed unless $\cV$ is rigid.

\begin{example}
Building on Example \ref{ex:VRigid}, when $\cV$ is rigid, we can describe the $\cV$-monoidal structure of $\widehat{\cV}$ in more detail without taking mates.
The tensor product morphism is given by
$$
-\otimes_{\widehat{\cV}} - 
=
\beta^{-1}_{u^*w,v^*}\id_{x}
\in 
\cV(\widehat{\cV}(u\to w)\widehat{\cV}(v\to x) \to \widehat{\cV}(uv \to wx)) 
= 
\cV(u^*wv^*v \to v^*u^*wx).
$$ 

We can describe the $\cV$-monoidal structure of $\overline{\cC}$ in greater detail without taking mates.
Indeed,
the tensor product morphism is given by
\begin{align*}
-\otimes_{\overline{\cC}} - 
&=
(\beta^{-1}_{u^*\cC(a\to c)w,v^*}\id_{\cC(b\to d)x})
\circ
(\id_{v^*u^*\cC(a\to c)}\beta^{-1}_{w,\cC(b\to d)}\id_x)
\circ
(\id_{v^*u^*}(-\otimes_\cC-)\id_{wx})
\\&\hspace{1cm}\in
\cV(
\overline{\cC}(a\blacktriangleleft u \to c \blacktriangleleft w)
\overline{\cC}(b\blacktriangleleft v \to d \blacktriangleleft x)
\to
\overline{\cC}(ab\blacktriangleleft uv \to cd \blacktriangleleft wx)
)
\\&\hspace{1cm}=
\cV(u^*\cC(a\to c)wv^*\cC(b\to d)x \to v^*u^*\cC(ab\to cd) wx)
\end{align*}
\end{example}

\begin{defn}
Suppose $\cC$ is a closed $\cV$-monoidal category and $\cV$ is rigid.
By Lemma \ref{lem:RigidCharacterization}, every $\cV$-representable functor $\widehat{\cV}(u\to- ): \widehat{\cV} \to \widehat{\cV}$ is tensored.
In other words, $\widehat{\cV}(u\to vw) = u^*vw = \widehat{\cV}(u\to v)w$ for all $u,v,w\in \cV$.

Recall from Example \ref{ex:aOtimes- is V-monoidal} that 
\begin{align*}
(a\blacktriangleleft u\otimes -)_{b\blacktriangleleft v \to c\blacktriangleleft w}
&:=
(j_{a\blacktriangleleft u} \id_{\overline{\cC}(b\blacktriangleleft v\to c\blacktriangleleft w)})\circ (-\otimes_{\overline{\cC}}-)
\\&=
\id_{v^*}
[
(\coev_{u}\id_{\cC(b\to c)})
\circ
\beta^{-1}_{u^*, \cC(b\to c)}
\circ
(j_a\otimes_\cC -)
]
\id_w
\end{align*}
gives a well-defined $\cV$-functor $\overline{\cC} \to \overline{\cC}$.
It is easy to verify that setting $[a\blacktriangleleft u,b\blacktriangleleft v] := [a,b]\blacktriangleleft u^*v$
and
\begin{align*}
[a\blacktriangleleft u,-]_{b\blacktriangleleft v\to c\blacktriangleleft w}
&=
\id_{v^*}
[
(\id_{v^*} [a,-]_{b\to c} \id_w)
\circ
(\id_{v^*} \coev_{u^*} \id_{\cC([a,b] \to [a,c])} \id_{w})
\circ
(\id_{u^{**}} \beta^{-1}_{u^*, \cC([a,b] \to [a,c]})
]
\id_w
\\&\hspace{1cm}\in
\cV(v^*\cC(b\to c) w \to v^*u^{**}\cC([a,b]\to [a,c])u^*w)
\\&\hspace{1cm}=
\cV(\overline{\cC}(b\blacktriangleleft v\to c\blacktriangleleft w) \to \overline{\cC}([a,b]\blacktriangleleft \widehat{\cV}(u\to v) \to [a,c]\blacktriangleleft \widehat{\cV}(u\to w)))
\end{align*}
gives a well defined $\cV$-functor $[a\blacktriangleleft u,-]:\overline{\cC}\to\overline{\cC}$.
\end{defn}

\begin{prop}
\label{prop:ClosedImpliesCompletionClosed}
The $\cV$-functor $((a\blacktriangleleft u)\otimes -)$ is left $\cV$-adjoint to $[a\blacktriangleleft u,-]$ via the isomorphism
\begin{align*}
\overline{\theta}_{b\blacktriangleleft v , c\blacktriangleleft v}
&:=
\id_{v^*} 
[
\beta^{-1}_{u^*, \cC(ab\to c)}
\circ
(\id_{u^*} \theta_{b,c})
]
\id_w
\\&\hspace{1cm}\in 
\cV(v^*u^* \cC(ab\to c) w\to v^*\cC(b\to [a,c]u^*w),
\\&\hspace{1cm}=
\cV(
\overline{\cC}((a\blacktriangleleft u)(b\blacktriangleleft v) \to c\blacktriangleleft w)
\to
\overline{\cC}(b\blacktriangleleft v \to [a\blacktriangleleft u, c\blacktriangleleft w])
),
\end{align*}
where $\theta$ is the isomorphism for the $\cV$-adjunction $(a\otimes-)\dashv_\cV [a,-]$.
Thus $\overline{\cC}$ is closed.
\end{prop}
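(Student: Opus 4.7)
The plan is to exploit the explicit rigid-case formulas displayed just before the proposition. First I would observe that $\overline{\theta}_{b\blacktriangleleft v,c\blacktriangleleft w}$ is a bijection: it is the composite of a single instance of $\id_{v^*u^*}\theta_{b,c}\id_w$ with a braiding $\beta^{-1}$ (and identities). The middle factor is invertible because $\theta_{b,c}$ is, and braidings are automatically isomorphisms in $\cV$; so $\overline{\theta}$ is invertible with inverse $\overline{\kappa}$ given by the analogous formula in which $\theta_{b,c}$ is replaced by $\kappa_{b,c}:=\theta_{b,c}^{-1}$ and $\beta^{-1}$ by $\beta$ in the opposite direction.

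Next I would verify the two naturality diagrams \eqref{eq:V-Adjoint1} and \eqref{eq:V-Adjoint2} for $\overline{\theta}$. In each case I would expand both legs using the explicit rigid-case formulas for $-\circ_{\overline{\cC}}-$, $(a\blacktriangleleft u)\otimes-$, $[a\blacktriangleleft u,-]$, and $\overline{\theta}$ listed just before the proposition. Each side becomes a composite in $\cV$ consisting of evaluations, coevaluations, braidings, identities, together with the underlying morphisms $-\circ_\cC-$, $(a\otimes-)_{\bullet\to\bullet}$, $[a,-]_{\bullet\to\bullet}$, and $\theta_{\bullet,\bullet}$. The braidings and $\ev$/$\coev$ involving the duals $u^*,v^*$ and the hom-arguments $w,x$ match up and cancel in pairs via the zig-zag and hexagon identities, leaving on each side the corresponding leg of \eqref{eq:V-Adjoint1} (respectively \eqref{eq:V-Adjoint2}) for $\theta$ itself, sandwiched between identities. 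Each axiom for $\overline{\theta}$ thus reduces to the known corresponding axiom for the $\cV$-adjunction $(a\otimes-)\dashv_\cV[a,-]$ in $\cC$.

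The main obstacle will be bookkeeping, as the calculation involves a proliferation of braidings between duals, hom-objects and tensor factors, and one must verify that every $\beta^{\pm 1}$ introduced by $-\otimes_{\overline{\cC}}-$ or $[a\blacktriangleleft u,-]$ is cancelled by a matching one coming from $\overline{\theta}$ or $\overline{\kappa}$. The cleanest presentation is diagrammatic: in the graphical calculus for rigid braided categories each $\beta^{\pm1}$ is a crossing and each $\ev$/$\coev$ a cap/cup, and the required cancellations become manifest isotopies, so that the only genuine content is the step that invokes naturality of $\theta$. I would therefore present the argument in string diagrams and leave the fully algebraic expansion to the reader, much as the paper does at analogous points (e.g.\ the end of the proof of Theorem \ref{thm:UniversalPropertyForVMonoidal}).
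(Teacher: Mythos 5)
Your proposal matches the paper's proof: both verify invertibility of $\overline{\theta}$ (immediate from invertibility of $\theta$ and of the braiding) and then check the two axioms \eqref{eq:V-Adjoint1} and \eqref{eq:V-Adjoint2} for $\overline{\theta}$ by expanding the explicit rigid-case formulas for $-\circ_{\overline{\cC}}-$, $(a\blacktriangleleft u)\otimes-$, and $[a\blacktriangleleft u,-]$ and reducing each to the corresponding axiom for the $\cV$-adjunction $(a\otimes-)\dashv_\cV[a,-]$ in $\cC$. The paper likewise leaves the bookkeeping of braidings and duals as a routine check, so your approach is essentially identical.
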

\begin{proof}
First, \eqref{eq:V-Adjoint1} holds if and only if 
$$
(((a\blacktriangleleft u)\otimes-)_{b\blacktriangleleft v \to d\blacktriangleleft x} \id_{\overline{\cC}(ad\blacktriangleleft ux\to c\blacktriangleleft w)}) 
\circ
(-\circ_{\overline{\cC}} -)
\circ
\overline{\theta}_{b\blacktriangleleft v,c\blacktriangleleft w}
=
(\id_{\overline{\cC}(b\blacktriangleleft v\to d\blacktriangleleft x)} \overline{\theta}_{d\blacktriangleleft x,c\blacktriangleleft w})
\circ
(-\circ_{\overline{\cC}}-).
$$
It is straightforward to check that the above equality follows from the fact that
$$
((j_a\otimes_\cC -) \id_{\cC(ad\to c)}) \circ (-\circ_\cC-) \circ \theta_{b,c}
=
(\id_{\cC(b\to d)} \theta_{d,c}) \circ (-\circ_\cC -),
$$
which is exactly \eqref{eq:V-Adjoint1} for $\theta$ for the  $\cV$-adjunction $(a\otimes -) \dashv_\cV [a,-]$.
Similarly, the fact that \eqref{eq:V-Adjoint2} holds for $\overline{\theta}$ follows immediately from 
$$
(-\circ_\cC -) \circ \theta_{b, d}
=
(\theta_{b,c} [a,-]_{c\to d}) \circ (-\circ_\cC-),
$$
which is exactly \eqref{eq:V-Adjoint2} for $\theta$ for the  $\cV$-adjunction $(a\otimes -) \dashv_\cV [a,-]$.
\end{proof}

\renewcommand*{\bibfont}{\small}
\setlength{\bibitemsep}{0pt}
\raggedright
\printbibliography

\end{document}